\documentclass[a4 paper,11pt]{amsart}

\usepackage{amsfonts}
\usepackage{amsthm}
\usepackage{amssymb}
\usepackage{amsmath}
\usepackage[utf8]{inputenc}
\usepackage[english]{babel}
\usepackage[T1]{fontenc}
\usepackage{xcolor}
\usepackage{enumitem}
\usepackage{manfnt} 
\usepackage{hyperref}
\usepackage{graphicx}
\usepackage{quiver}
\usepackage{mathrsfs}
\usepackage{dsfont}
\usepackage{stmaryrd}

\title{Aubert duality and co-tempered Langlands data}
\author{Nans BONNEL}
\address{Institut Mathématiques de Jussieu-Paris Rive Gauche\\ Sorbonne Université\\ Campus Pierre et Marie Curie\\
4, place Jussieu, 75252 Paris Cedex 05}
\email{bonnel@imj-prg.fr}

\def\X{{\rm X}}

\newcommand{\s}{\mathfrak{s}}
\newcommand{\m}{\mathfrak{m}}

\newcommand{\e}{\varepsilon}
\newcommand{\Cusp}{\mathscr{C}}

\newcommand{\Data}{\mathrm{Data}}

\newcommand{\la}{\lambda}

\newcommand{\GL}{\mathrm{GL}}

\newcommand{\SO}{\mathrm{SO}}
\newcommand{\Sp}{\mathrm{Sp}}

\newcommand{\Z}{\mathbb{Z}}
\newcommand{\R}{\mathbb{R}}

\newcommand{\Seg}{\mathrm{Seg}}

\newcommand{\Irr}{\mathrm{Irr}}

\newif\ifdraft\drafttrue
\ifdraft
\newcommand{\comment}[1]{\marginpar{\textcolor{red}{\dbend}}\textcolor{red}{#1}}
\newcommand{\commentlatex}[1]{\marginpar{\textcolor{brown}{\dbend}}\textcolor{brown}{#1}}

\else
\newcommand{\comment}[1]{}
\newcommand{\commentlatex}[1]{}
\fi

\theoremstyle{plain}
\newtheorem{deff}{Definition}[subsection]

\theoremstyle{remark}
\newtheorem{rem}[deff]{Remark}

\theoremstyle{remark}
\newtheorem{ex}[deff]{Example}

\theoremstyle{plain}
\newtheorem{prop}[deff]{Proposition}

\theoremstyle{plain}
\newtheorem{lem}[deff]{Lemma}

\theoremstyle{plain}

\newtheorem{theorem}[deff]{Theorem}

\theoremstyle{plain}
\newtheorem{coro}[deff]{Corollary}

\newcommand{\AD}{\mathrm{AD}}

\newcommand{\ADp}{\mathrm {AD}_\rho}
\newcommand{\ATemp}{\mathrm{CTemp}}
\newcommand{\Temp}{\mathrm{Temp}}
\newcommand{\Mult}{\mathrm{Mult}}
\newcommand{\DMult}{\mathrm{DMult}}
\newcommand{\Symm}{\mathrm{Symm}}
\newcommand{\DSymm}{\mathrm{DSymm}}
\newcommand{\SSymm}{\mathfrak S \mathrm{ymm}}
\newcommand{\trans}{\mathrm{trans}}
\newcommand{\T}{\mathrm{T}}
\newcommand{\CT}{\mathrm{CT}}

\newcommand{\Rep}{\mathrm{Rep}}

\newcommand{\USymm}{\mathfrak{S}ymm}
\newcommand{\USeg}{\mathfrak{S}eg}

\usepackage{csquotes}
\usepackage[style=alphabetic, maxnames=99, maxalphanames=6, minalphanames=6]{biblatex}

\everymath{\displaystyle}

\begin{document}

\begin{abstract}Let $F$ be a non-archimedean local field of characteristic 0, and let $G$ be either $\Sp_{2n}(F)$ or $\SO_{2n+1}(F)$. We introduce a new algorithm to compute the Aubert dual at the level of Langlands data. This algorithm acts as the dual to the recent Lanard-Mínguez algorithm. It fundamentally differs in two ways: it follows a bottom-up approach rather than a top-down one, and its internal computations strictly preserve the temperedness of the representations. Consequently, this approach naturally yields a new constructive characterization of co-tempered representations.

By operating exclusively within the realm of tempered data, this algorithm enables inductive proofs of new properties for co-tempered representations. In particular, we provide a precise description of their tempered components and establish an explicit duality formula for a large class of tempered representations. 
\end{abstract}

\maketitle

\tableofcontents
\section{Introduction}

In \cite{A}, Aubert defined an involution on the set of irreducible representations for any connected reductive group over a non-archimedean local field $F$ of characteristic 0. This involution is particularly important for computing the wavefront sets of representations (see \cite{CMBO2, HLLS}). Furthermore, it allows us to define a crucial class of representations: we call \textit{co-tempered} the representations that are the Aubert duals of tempered representations, which play an essential role in Arthur's classification (see \cite{ART, ATIKMS}) of classical groups. 

While tempered representations possess a very natural and clear characterization in terms of Langlands data, co-tempered representations lack such a direct description. Since the Aubert involution is intrinsically defined on the representation-theoretic side, a major challenge is to translate it into a combinatorial operation at the level of Langlands data in order to explicitly understand the co-tempered spectrum.

For $\GL_n(F)$, M{\oe}glin and Waldspurger introduced in \cite{MW86} a combinatorial algorithm to compute the dual of a representation using its Langlands parameter. Through this algorithm, one easily recovers a remarkably simple and direct structural description of co-tempered representations that already follows from Zelevinsky's work (see \cite[9.3, 9.5, 8.4]{zel}): their multisegments consist exclusively of segments of length 1.

Later, Atobe and Mínguez gave in \cite{AM} an algorithm to compute the dual for classical groups. Finally, Lanard and Mínguez in \cite{LM} introduced a symmetrical version of Langlands data for classical groups on which they formulated a powerful descendant algorithm analog to the one from M{\oe}glin and Waldspurger. However, the direct combinatorial characterization of co-tempered representations fails for classical groups, and even the Lanard-Mínguez algorithm does not yield a straightforward structural description of them.

The core of this article is to compute the Aubert duals of some natural operators that build Langlands data segment by segment for the classical groups $\Sp_{2n}(F)$ and $\SO_{2n+1}(F)$. By defining operators that add a segment, together with a sign if it is centered, we can generate all Langlands data from the empty datum. Constructing the exact duals of these operators provides an ascendant algorithm for the Aubert involution. This new algorithm relates in a very specific way to the Lanard-Mínguez algorithms; it can be viewed as an ascendant dual of these descendant algorithms. This provides a new, simpler, constructive description of co-tempered data.

A major advantage of this algorithm is that it preserves the temperedness along the computations, thus enabling inductive reasoning on the set of tempered data. By diving deep into the combinatorics of this new algorithm, we are able to prove new results about co-tempered data, such as an explicit description of their tempered part and even a direct formula for the Aubert duality on a broad class of tempered data.

Let us be more specific now.

\subsection{The case of $\GL_n$}
Let $F$ be a non-archimedean local field of characteristic 0.

In the case of $\GL_n(F)$, according to Zelevinsky (\cite[Theorem 6.1]{zel}), the irreducible representations are parameterized by multisegments. If $\pi$ is an irreducible representation and $\m$ its corresponding multisegment, we write $\m^t$ for the multisegment associated with the Aubert dual $\hat \pi$ of $\pi$.

On the set of segments we have two variations of the lexicographic order $>$ and $>'$ that are symmetric to each other.  We say that $[x,y]> [x',y']$ if $x>x'$ or if $x=x'$ and $y<y'$ and we say that $[x,y]>' [x',y']$ if $y>y'$ or if $y=y'$ and $x<x'$.

The key idea for our new algorithm is to define, for any segment $s=[b,e]$, an operator $\T_s$ that adds one copy of $s$ to a multisegment containing only segments smaller than $s$ with respect to the order $>'$.

This simple operator allows us to reformulate the M{\oe}glin-Waldspurger algorithm in the following way: from $\m$, the algorithm extracts a segment $s$ and a multisegment $\m^\#$ such that every segment in $\m^\#$ is smaller than $s$ and $\m^t=\T_s((\m^\#)^t)$.

We define an other operator $\CT_s$ and we show in Proposition \ref{prop duality GLn} that it is the dual of the additive operator $\T_s$: by that we mean that $\CT_s=\T_s((\cdot)^t)^t$.

Since any multisegment can be generated by sequentially applying these additive operators, the operators $\CT_s$ naturally yields an ascendant algorithm to compute the Aubert dual of a datum. Indeed, if $\m=\T_{s_n}\circ\ldots\circ\T_{s_1}(\varnothing)$ then $\m^t=\CT_{s_n}\circ\ldots\circ\CT_{s_1}(\varnothing)$.

To define $\CT_s(\m)$, we first extract an increasing sequence $\Theta=(\Theta_1,\ldots,\Theta_l)$ of segments from $\m$. This sequence is a minimal increasing sequence for the order $>$ such that the first segment $\Theta_1$ ends in $b-1$ and the endings of the segments $\Theta_i$ are increasing by one (so that $\Theta_i$ ends in $b-2+i$). We then construct $\CT_s(\m)$ by extending the segments of $\Theta$ by one on the right and, where necessary, appending additional segments of length one.

This operator is designed such that when we apply the M{\oe}glin-Waldspurger algorithm to $\CT_s(\m)$, the segments that we have to shorten are exactly the ones we have extended or added, effectively making $\CT_s$ a section of the map $\m \mapsto \m^\#$. This implies indeed that $\CT_s$ is the dual of $\T_s$ and this is the way we prove it.

In the $\GL_n(F)$ case, it is already well established that co-tempered multisegments are the ones containing exclusively segments of length one (subject to a symmetry condition). In the remainder of the paper, our goal is to adapt this construction to classical groups, for which no such direct characterization of co-tempered data currently exists.

\subsection{Langlands data for classical groups}
Let $G_n$ denote either the symplectic group $\Sp_{2n}(F)$ or the split special orthogonal group $\SO_{2n+1}(F)$. We write $\Irr(G)$ for the disjoint union of the sets of irreducible representations of $G_n$ over all $n \ge 0$ (keeping the series of groups either strictly orthogonal or strictly symplectic).

According to the Langlands subrepresentation theorem, any representation $\pi \in \Irr(G)$ is uniquely determined by a tempered representation $\pi_{\mathrm{temp}}$ and a multisegment $\mathfrak{n}$ in which every segment has a strictly negative center. Furthermore, Arthur's classification \cite{ART} parameterizes such tempered representations by a Langlands parameter $\phi$ and a character $\eta$ of the component group. Consequently, to any $\pi \in \Irr(G)$, we can associate a unique triplet $(\mathfrak{n}; \phi, \eta)$, where $\eta$ is subject to a certain positivity condition. Such a triplet is called a \textit{Langlands datum}, and we denote the set of all such data by $\Data(G)$.

\subsection{Symmetrical Langlands  data and the Lanard-Mínguez algorithm}
For classical groups, an initial algorithm to compute the Langlands data of the dual $\hat{\pi}$ was developed in \cite{AM}. To refine this approach and establish a closer analogue to the M{\oe}glin-Waldspurger algorithm, Lanard and Mínguez \cite{LM} introduced the notion of \textit{symmetrical Langlands data} (with signs). A symmetrical Langlands datum is a multisegment where every segment appears with the same multiplicity as its reflection across $0$, equipped with a sign function on the centered segments.

By symmetrizing $\mathfrak{n}$ and extracting centered signed segments from $(\phi, \eta)$, one obtains a natural bijection between the standard data $\Data(G)$ and the set of symmetrical data, denoted $\Symm^\e(G)$. Thus, the tempered data correspond to the symmetrical data containing only centered segments. Through this identification, Lanard and Mínguez formulated a new descendant algorithm in the same spirit as the M{\oe}glin-Waldspurger for $\GL_n(F)$. This result serves as the starting point for our work.

\subsection{Decomposition on supercuspidal lines}
Any segment within a symmetrical datum is supported on a supercuspidal line $\Z_\rho$, where $\rho$ is a supercuspidal representation of some $\GL_m(F)$. This allows us to cleanly decompose the space of symmetrical data as a direct sum 
$$\Symm^{\e}(G) \simeq \bigoplus_{\rho} \Symm_{\rho}^{\e}(G)$$

Crucially, the Aubert involution preserves the supercuspidal support; thus, it acts independently on each factor (see, for instance, \cite[Section 4]{LM}). We can therefore restrict our study to the duality map on a single supercuspidal line, denoted $\ADp: \Symm^{\e}_\rho(G) \to \Symm^{\e}_\rho(G)$. The behavior of this map depends heavily on the \textit{parity} of $\rho$, which can be \textit{good}, \textit{bad}, or \textit{ugly} (see Definition \ref{def:good/bad/ugly}). Let us now fix such a $\rho$. There exists a unique $x \geq 0$ such that $\rho_u:=\rho | \cdot|^x$ is unitary. We call $\rho_u$ the unitarization of $\rho$ and $\rho^\vee$ its contragredient.

\subsection{The bad and ugly parity cases}

In the bad and ugly parity cases, symmetrical data do not involve signs, rendering the combinatorial situation remarkably similar to that of $\GL_n(F)$.

Specifically, the ugly parity case follows directly from that of $\GL_n(F)$. While the bad parity case cannot be deduced directly, our construction for $\GL_n(F)$ adapts to it effortlessly. Both cases yield a characterization of co-tempered data (see Propositions \ref{thm ugly} and \ref{thm bad}) which are highly analogous to the $\GL_n(F)$ setting (Lanard-Mínguez algorithm was sufficient for these characterizations).

We note that Hazeltine and Lo \cite{HL} related the Lanard-Mínguez algorithm for bad parity to the Pyasetskii involution, providing a geometric interpretation of this process.
 
The true combinatorial challenge arises in the good parity case.

\subsection{The good parity case and the dual operators}

The good parity case is significantly more challenging, as signs play a key role in the computation of the Aubert dual. However, we follow the exact same strategy as for $\GL_n(F)$, substituting the set of multisegments with $\Symm_\rho^\e(G)$ and the M{\oe}glin-Waldspurger algorithm with that of Lanard-Mínguez. The added complexity is therefore primarily technical. For the remainder of the paper, we assume $\rho$ is of good parity, restrict our study to $\Symm_\rho^\e(G)$, and drop $\rho$ from the subscripts.

Let $\s$ denote either a segment $s$ with a strictly positive center, or a signed centered segment $(s,\eta)$. The order $<'$ defined on the segments extend to the set of such $\s$ by forgetting the eventual sign.

This allows us to define the bounded subset of symmetrical data $\Symm^{\le' \s}$ (consisting of data whose positive and centered segments are all bounded by $\s$ with respect to $\le'$), as well as its dual space $\DSymm^{\le' \s} = \AD(\Symm^{\le' \s})$.

On the set $\Symm^{\le' \s}$, we define the adding operator $\T_{\s}$, which maps $(\m, \e)$ to $(\m, \e) + \tilde{\s}$, where $\tilde{\s} = s+s^\vee$ if $\s = s$ has a strictly positive center, and $\tilde{\s} = \s$ otherwise. Clearly, any symmetrical Langlands datum $(\m,\e)$ can be uniquely generated from the empty datum via a sequence of these operators:
$$(\m,\e) = \T_{\s_n}\circ\ldots\circ \T_{\s_1}(\varnothing)$$

This observation naturally leads to the following strategy: if we can find an explicit formula for the conjugated operator $\AD\circ \T_\s\circ \AD: \DSymm^{\le' \s} \to \DSymm^{\le' \s}$, we obtain a complete algorithm to compute the dual $\AD(\m,\e)$ for any $(\m,\e) \in \Symm^\e(G)$. Indeed, we have:
$$\AD(\m,\e)=(\AD\circ \T_{\s_n}\circ \AD)\circ \ldots\circ(\AD\circ \T_{\s_1}\circ \AD)(\varnothing)$$

In the Subsection \ref{subsec def CT} we define, for any such $\s$, an operator $\CT_{\s}: \DSymm^{\le' \s} \to \DSymm^{\le' \s}$. The main result is then to prove that it is the dual of $\T_\s$:

\begin{theorem}[Theorem \ref{main theorem}]\label{main theorem intro}
Let $\s \in \Seg^{>0} \cup (\Seg^{=0}\times\{1,-1\})$. We have the following correspondence of operators under Aubert duality:
$$\CT_{\s}\stackrel{\AD}{\longleftrightarrow} \T_{\s}$$
By this, we mean that the following square is commutative:
\[\begin{tikzcd}[sep=small]
	{\Symm^{\le' \s}} && {\DSymm^{\le' \s}} \\
	\\
	{\Symm^{\le' \s}} && {\DSymm^{\le' \s}}
	\arrow["{\AD}", tail reversed, from=1-1, to=1-3]
	\arrow["{\T_{\s}}"', from=1-1, to=3-1]
	\arrow["{\CT_{\s}}", from=1-3, to=3-3]
	\arrow["{\AD}"', tail reversed, from=3-1, to=3-3]
\end{tikzcd}\]
\end{theorem}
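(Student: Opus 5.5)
The plan follows the $\GL_n$ strategy of Proposition \ref{prop duality GLn}, with the Lanard--Mínguez descendant algorithm playing the role of Mœglin--Waldspurger. Recall its shape on a good parity line. From a symmetrical datum $(\m,\e)$ it extracts a distinguished segment (possibly signed) $\s'$ and a smaller datum $(\m,\e)^\#$, and it satisfies
\[
\AD(\m,\e)=\T_{\s'}\bigl(\AD((\m,\e)^\#)\bigr),
\]
where $\s'$ is maximal for $\le'$ among the positive and centered segments of $\AD(\m,\e)$. Since $\AD$ is an involution, it suffices to prove
\[
\CT_\s\circ\AD=\AD\circ\T_\s \quad\text{on } \Symm^{\le'\s}.
\]
Equivalently, for $(\m',\e')=\AD(\m,\e)\in\DSymm^{\le'\s}$, applying the Lanard--Mínguez algorithm to the datum $\CT_\s(\m',\e')$ must do two things. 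It must extract exactly $\s$ as its first output. Its residual datum $\CT_\s(\m',\e')^\#$ must equal $(\m',\e')$.

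Once this holds, the formula above gives
\[
\AD(\CT_\s(\m',\e'))=\T_\s\bigl(\AD(\m',\e')\bigr)=\T_\s(\m,\e),
\]
which is the claimed commutative square. Thus the core is to show that $\CT_\s$ is a section of $(\cdot)^\#$ over data whose dual is bounded by $\s$, with first extracted segment $\s$.

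I will carry this out in the following order.

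First, I check that $\CT_\s$ preserves $\DSymm^{\le'\s}$, or at least lands in data whose $\#$-step we control. Concretely, the chain $\Theta$ of the definition in Subsection \ref{subsec def CT} consists of symmetric pairs of segments, together with possibly centered ones, that get extended; new length-one segments are appended where needed.

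Second, I run the first step of Lanard--Mínguez on $\CT_\s(\m',\e')$. Its first segment is chosen by the largest ending or by symmetric considerations. By construction of $\Theta$ (minimal for $>$, with endings increasing by one and starting at $b-1$), the segments the algorithm successively picks are exactly the extended or added ones. Hence the extracted segment has beginning $b$ and ending $e$, so it equals $s$. Shortening them undoes the extension, and removing the added singletons gives back $\m'$.

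For minimality I argue as in $\GL_n$. If the algorithm chose a different segment at some step, it would contradict the minimality of $\Theta$. Here I use that $(\m',\e')\in\DSymm^{\le'\s}$, which bounds the endings appearing in $\m'$ relative to $e$ through the maximality property of the first extracted segment.

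Third, I treat the case split:
\begin{itemize}
\item $\s=s$ with strictly positive center, where both $s$ and $s^\vee$ are added;
\item $\s=(s,\eta)$ centered.
\end{itemize}
In the centered case the chain reaches across $0$ and meets its own reflection, and the Lanard--Mínguez algorithm involves the sign rules. I must verify that the signs assigned by $\CT_\s$ to the modified centered segments are exactly those whose shortening in the algorithm restores $\e'$, and that the sign output for the extracted centered segment is $\eta$.

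The main obstacle is this sign bookkeeping in the centered case. Two points need care. One is where the extended chain meets the centered segments. The other is the alternation conditions in the Lanard--Mínguez extraction, where a segment may or may not be extendable depending on the signs of neighbouring centered segments. I would first settle the case where $\Theta$ contains no centered segment, which reduces to the $\GL_n$ argument applied symmetrically. I would then handle the crossing case by induction on the length of $\Theta$, comparing the sign change rules step by step with the defining sign rules of $\CT_\s$.
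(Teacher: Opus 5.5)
Your overall strategy is the paper's: run one Lanard--Mínguez step on $(\bar\m,\bar\e)=\CT_\s(\m',\e')$ and show that it returns $\s$ and $(\m',\e')$, i.e.\ that $\CT_\s$ is a section of $\#$. The gap is in your second step, at ``the extracted segment has beginning $b$ and ending $e$, so it equals $s$''. In good parity the output of a Lanard--Mínguez step is not determined by the endpoints of the descending chain $\bar\Delta_1\succeq\dots\succeq\bar\Delta_{\bar l}$. It is governed by the flag $\bar\e_0$, which becomes $-1$ when the chain meets $[0,0]^{\ge0}$ or $[0,0]^{=0}$. On a half-integral line the corresponding segments are $[1/2,1/2]$, or $[-1/2,1/2]^{\ge0},[-1/2,1/2]^{=0}$ carrying sign $-1$. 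In that case the extracted datum is a signed centered segment, not $s+s^\vee$. Moreover $\#$ multiplies the signs of all untouched centered segments by $\bar\e_0$, so without $\bar\e_0=\e_0'$ you cannot recover $\e'$ either. You therefore have to prove two things. For $\s=s\in\Seg^{>0}$, the chain avoids these special segments. For $\s=(s,\eta)$, the chain goes exactly one step below $b=\tilde 1$, onto the segment coming from $\delta$, so that $\bar l=e-b+2$ and $\bar\e_0=-1$. This is the last part of Proposition \ref{prop key tilde Theta and Delta}, which uses the parity of $m_{\bar\m}([0,0])$ when $0\notin T$, the absence of $[1,1]$ from $\bar\m$ in the relevant case, and the half-integral analogues.

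This issue is not sign bookkeeping along $\Theta$, and it already arises when $\Theta$ has no centered segment. So that case does not reduce to ``the $\GL_n$ argument applied symmetrically''. Take $\s=[0,e]$ and $(\m',\e')=\varnothing$. Then $\Theta=\varnothing$, $I=\{0,1\}$, and $\CT_\s(\varnothing)=[-1,0]+[0,1]+\sum_{i=2}^{e}([i,i]+[-i,-i])$. The added segments near $0$ have length two precisely so that the chain ends with $[0,1],[-1,0]$. The symmetric $\GL_n$ recipe, which adds points $[i,i]+[-i,-i]$ for all $i\in[0,e]$, would instead end with $[1,1],[0,0]^{\ge0}$ and produce a centered output.

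Your argument must therefore also do the following.
\begin{itemize}
\item Treat the length-two segments $s_i=[i-1,i]$, $i\in I$, which disappear under $\#$ by being shortened on both sides (cf.\ Lemma \ref{lemma Delta and s of length 2}).
\item Cope with $\Theta$ not being a single chain starting at $b-1$. For centered $\s$ it is $\Theta^{\tilde1-1,0}$ or empty according to $\eta$. When $f<0$ it is $\Theta^{b-1}+\Theta^{\tilde1-1,0}$, with a jump in endings, and the minimality argument has to be redone across that junction.
\item Match the two involutions: $\CT_\s$ is built with $\iota'$, while $\#$ uses the contragredient on labelled segments. You must check that the indices shortened by $\#$ are exactly the $i_j,i'_j$ of $\CT_\s$; the paper does this by showing $\bar\Delta_i=\bar\Delta_i^\vee$ if and only if $\iota'(\Theta_r)=\Theta_r$.
\end{itemize}
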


Restricting this process exclusively to operators $\T_\s$ where $\s$ is a signed centered segment generates the entire tempered spectrum. By duality, this provides a constructive description for the co-tempered spectrum:

\begin{coro}[Theorem \ref{coro pairing temp}]\label{coro pairing temp intro}
If $\rho$ is of good parity, then for any non-empty co-tempered symmetrical Langlands datum $(\m,\e)$, there exists a unique tuple of centered segments $(s_i,\eta_i)_{1\leq i \leq r}$ satisfying $s_1 \subseteq \dots \subseteq s_r$ (where $s_i=s_{i+1}$ implies $\eta_i=\eta_{i+1}$), such that:
$$(\m,\e)=\CT_{s_r,\eta_r}\circ\dots\circ \CT_{s_1,\eta_1}(\varnothing,+)$$
Moreover, in this case, we have $\AD(\m,\e)=\sum_{i=1}^r(s_i,\eta_i)$.
\end{coro}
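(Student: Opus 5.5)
Since $(\m,\e)$ is co-tempered, I would write $(\m,\e)=\AD(\tau)$ with $\tau$ a tempered symmetrical datum, and use the identification of tempered data with symmetrical data made only of centered signed segments. The plan is to decompose $\tau$ as a sum of centered signed segments, build it with the adding operators $\T_{\s}$, and then transport this construction through Theorem \ref{main theorem intro}.

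First I order the segments of $\tau$. For centered segments $[-a,a]$, the order $\le'$ (compare right endpoints, then left endpoints in reverse) is exactly inclusion. So $\tau=\sum_{i=1}^r (s_i,\eta_i)$ can be listed with $s_1\subseteq\dots\subseteq s_r$.

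Next I check the sign condition. If $s_i=s_{i+1}$, I claim $\eta_i=\eta_{i+1}$. This should follow from the fact that a symmetrical datum carries its sign function on the underlying centered segment: in the Lanard--Mínguez convention, all copies of a given centered segment share the same sign, as $\eta$ is a character of the component group evaluated at that segment. With this, the tuple is determined by $\tau$, as a multiset listed in the forced increasing order with the forced signs.

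Then I write the construction. Since each $(s_i,\eta_i)$ is $\le'$-maximal among $(s_1,\eta_1),\dots,(s_i,\eta_i)$, each partial sum $\tau_{i-1}=\sum_{j<i}(s_j,\eta_j)$ lies in $\Symm^{\le'(s_i,\eta_i)}$. Hence
$$\tau=\T_{s_r,\eta_r}\circ\dots\circ\T_{s_1,\eta_1}(\varnothing).$$
Applying Theorem \ref{main theorem intro} step by step, with $\AD\circ\T_{\s}=\CT_{\s}\circ\AD$ on $\Symm^{\le'\s}$ and $\AD(\varnothing)=\varnothing$, gives
$$(\m,\e)=\AD(\tau)=\CT_{s_r,\eta_r}\circ\dots\circ\CT_{s_1,\eta_1}(\varnothing,+).$$
Since $\AD$ is an involution, this also gives $\AD(\m,\e)=\tau=\sum_i(s_i,\eta_i)$, which proves existence and the final formula.

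For uniqueness, suppose another admissible tuple $(s'_i,\eta'_i)$ also produces $(\m,\e)$. I run the same commutation argument backwards. Each intermediate datum $\CT_{s'_{i}}\circ\dots(\varnothing)$ lies in $\DSymm^{\le'(s'_{i+1},\eta'_{i+1})}$, by induction and because $s'_i\subseteq s'_{i+1}$ with matching signs when equal. Therefore
$$(\m,\e)=\AD\Bigl(\sum_i (s'_i,\eta'_i)\Bigr).$$
Injectivity of $\AD$ then forces $\sum_i(s'_i,\eta'_i)=\tau$, and the ordering and sign conventions force the two tuples to coincide.

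The main obstacle is the bookkeeping of domains. One must check that the condition "$s_i=s_{i+1}\Rightarrow\eta_i=\eta_{i+1}$" is exactly what keeps $\tau_{i-1}$ in $\Symm^{\le'(s_i,\eta_i)}$, and that $\le'$ on signed segments ignores the sign. One must also confirm that tempered data with repeated centered segments carry a single sign per segment, so that the uniqueness statement holds.
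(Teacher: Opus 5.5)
Your proposal is correct and is essentially the paper's proof. Like the paper, you write the tempered datum $\AD(\m,\e)$ as $\T_{s_r,\eta_r}\circ\dots\circ\T_{s_1,\eta_1}(\varnothing)$ with $s_1\subseteq\dots\subseteq s_r$ and apply Theorem \ref{main theorem} $r$ times. Your uniqueness argument makes explicit a step the paper leaves implicit: any admissible tuple produces $\AD(\sum_i(s'_i,\eta'_i))$, so injectivity of $\AD$ determines the tuple.

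One small correction: the paper's extension of $\le'$ does not ignore signs; it makes $(s,1)$ and $(s,-1)$ incomparable. That is exactly why the condition $s_i=s_{i+1}\Rightarrow\eta_i=\eta_{i+1}$ is needed to keep $\tau_{i-1}$ in $\Symm^{\le'(s_i,\eta_i)}$, and that condition holds automatically by condition (1) in the definition of $\Symm^\e(G)$.
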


\subsection{Strategy of proof and link with the Lanard-Mínguez algorithm}

We now outline the underlying strategy for constructing $\CT_{\s}$. Given a symmetrical datum $(\m,\e)$, the Lanard-Mínguez algorithm isolates a term $(\m_1,\e_1)$ and a residual datum $(\m^\#,\e^\#)$ such that $\AD(\m,\e)=(\m_1,\e_1)+\AD(\m^\#,\e^\#)$.

As shown in \cite{LM}, the extracted datum $(\m_1,\e_1)$ always takes the form of $\tilde{\s}$ for some maximal element $\s$ with respect to $\le'$. In terms of our operators, this relation translates exactly to $\AD(\m,\e)=\T_{\s}(\AD(\m^\#,\e^\#))$.

This fundamental observation implies that the map $(\m,\e) \mapsto (\m^\#,\e^\#)$ admits sections parameterized by elements $\s$. Our objective is to construct $\CT_{\s}$ precisely as the section associated with $\s$. The proof strategy then emerges naturally: applying our operator $\CT_\s$ followed by the Lanard-Mínguez algorithm must bring us exactly back to the starting point.

This mechanism is precisely what we mean when characterizing our approach as the ascendant dual of the Lanard-Mínguez algorithm. Below, we summarize the key symmetries and differences between the two algorithms (note that this proof strategy and comparison also hold for $\GL_n(F)$ and the bad and ugly parities):

\begin{itemize}
    \item \textbf{Combinatorial action:} Our definition of $\CT_\s$ relies on an \textit{increasing} sequence of segments $(\Theta_i)_{1\le i\le l}$ that must be extended, whereas the $\#$ operation in the Lanard-Mínguez algorithm involves a \textit{decreasing} sequence of segments $(\Delta_i)_{1\le i\le k}$ that must be shortened.
    \item \textbf{Algorithmic flow:} Our algorithm is \textit{ascendant}, meaning we successively compute the duals of larger and larger data to build the final result. In contrast, the Lanard-Mínguez algorithm is \textit{descendant}, recursively computing the duals of smaller and smaller data.
    \item \textbf{Preservation of structure:} Our algorithm preserves \textit{temperedness}: computing the dual of a tempered datum only requires computing the duals of smaller tempered data. Conversely, the Lanard-Mínguez algorithm preserves \textit{co-temperedness}: computing the dual of a co-tempered datum only involves computing the duals of smaller co-tempered data.
    \item \textbf{Complexity:} For a given datum $(\m,\e)$, our algorithm requires $n$ steps, where $n$ is the number of positive and centered segments in $(\m,\e)$. The Lanard-Mínguez algorithm requires $m$ steps, where $m$ is the number of such segments in its dual $\AD(\m,\e)$. Since a tempered datum consists of fewer segments than its co-tempered dual, our algorithm is significantly faster for tempered data, while the Lanard-Mínguez approach is faster for co-tempered data. Executing one step for both algorithm is comparable.
\end{itemize}
\subsection{Applications}

Finally, by diving deeper into the combinatorics of our algorithm (more precisely, its restriction to tempered data, which is simpler), we uncover new properties about co-tempered representations and their pairing with tempered representations. First, we provide a complete characterization of the set $\mathcal{T}$, which consists of all possible tempered components of co-tempered data (Theorem \ref{thm im phi=T}).

Next, we introduce a reduction process $(\m,\e)\mapsto(\m^{red},\e^{red})$ for tempered symmetrical data. This reduction simplifies the computation of the dual by restricting it to multisegments where no three consecutive segments share the same sign. Building on this, we define the class $\Temp^d$ of \textit{decreasing} tempered data. Obtaining a direct, closed-form characterization of co-tempered data from our inductive algorithm is highly non-trivial, as it requires tracking the evolution of the sequence $\Theta$ step by step. The class $\Temp^d$ overcomes this combinatorial obstacle and although its representation-theoretic meaning is not yet known, $\Temp^d$ provides an ideal setting where this tracking is fully resolved, yielding an explicit formula for the Aubert dual (Theorem \ref{thm explicit tempd}). Furthermore, this class is remarkably rich: it allows us to directly construct a large family of co-tempered data whose tempered components exhaust the entire set $\mathcal{T}$.

\subsection{Wavefront sets}

The wavefront set of a representation is a particularly important invariant, and its computation often relies on the Aubert duality (for an upper bound see \cite{HLLS}, \cite{AtoCiu}, or for an equality see \cite{CMBO2}). Thus, our algorithm also provides a new way to compute the wavefront set (or at least an upper bound) in many cases.

One of our motivations for this algorithm was to prove a formula for the geometric wavefront set of tempered unipotent representations of $\SO_{2n+1}(F)$. Waldspurger showed in \cite{Waldtemp} that the wavefront set of these representations is a singleton, and for some of them, he computed it explicitly in terms of partitions.

In \cite[Subsection 11.2]{HLLS}, Hazeltine, Liu, Lo, and Shahidi conjectured that the wavefront set of such a representation $\pi$ is parameterized by the partition $d_{BV}(\underline p(\phi_{\hat \pi}))$, where $d_{BV}$ is the Barbasch-Vogan duality map on partitions and $\underline p(\phi_{\hat \pi})$ is the partition of lengths of the $L$-parameter attached to the Aubert dual of $\pi$.

Since we have $\phi_{\hat \pi}=\AD(\phi_\pi)$, we can try to use our algorithm to prove by induction that this quantity equals the one computed by Waldspurger; the preservation of temperedness being crucial for such an induction. 

Independently, Antor and Okada recently proved in \cite{AntorOkada} a compatibility result between the Iwahori-Matsumoto involution and the Aubert duality. The previous conjecture (in its general case) then follows from \cite[Theorem 5.7]{AntorOkada} combined with the computations in \cite[Section 7]{La}, which show that the wavefront set of such a $\pi$ is $d_{BV}(\underline p(\phi_{\mathrm{IM}( \pi)}))$, where $\mathrm{IM}(\pi)$ is the Iwahori-Matsumoto dual of $\pi$.

\subsection*{Structure of the paper} 
In Section \ref{section preliminaries}, we set up the necessary background and notation. Section \ref{section GLn} is devoted to the cases of $\GL_n$ and bad/ugly parity. The remainder of the paper focuses exclusively on the good parity case. In Section \ref{section good parity}, we explicitly construct the dual operators $\CT_{\s}$. The proof of our main result, Theorem \ref{main theorem}, is carried out in Section \ref{section proof}. Finally, in Section \ref{section implications on anti-temp data}, we give the simplified version of our algorithm when restricted to tempered data and Theorem \ref{coro pairing temp intro}, then we delve into the combinatorics of $\CT_{\s}$ and derive the aforementioned applications of our algorithm.

\subsection*{Acknowledgments} 
The author is deeply grateful to Alberto Mínguez for suggesting this problem during a 9-month internship at the University of Vienna, and thanks the university for providing excellent working conditions. Special thanks are also due to Thomas Lanard for generously sharing his time to discuss intricate details and for his careful reading of early drafts of this manuscript. Finally the author would also like to thank Emile Okada for a recent discussion on his recent joint paper with Jonas Antor \cite{AntorOkada} and its applications to the wavefront set of tempered unipotent representations of $\SO_{2n+1}(F)$.

\section{Preliminaries}\label{section preliminaries}

In order to state our main results, we first set up the necessary notation. We follow closely the formalism developed in \cite[Sections 2--4]{LM}. As our exposition will be brief, we refer the reader to their article for a comprehensive introduction to these objects.

Throughout this article, $F$ denotes a non-archimedean local field of characteristic 0 with absolute value $|\cdot|$. Let $\textbf{G}$ be a connected reductive group defined over $F$, and let $G=\textbf{G}(F)$ be its group of $F$-points. Let $\Rep(G)$ be the category of smooth complex representations of $G$ of finite length, and let $\Irr(G)$ denote the set of its irreducible objects. For any $\pi \in \Rep(G)$, we denote its contragredient by $\pi^\vee$. Finally, for $\pi \in \Irr(G)$, let $\hat{\pi}$ denote the Aubert dual of $\pi$.

\subsection{Supercuspidal representations of $\GL_n(F)$}\label{subsection supercusp of GL}

We start by defining $$\Irr^{\GL}:= \bigcup_{n \geq 0} \Irr(\GL_n(F))$$ and let $\Cusp^{\GL} \subset \Irr^{\GL}$ denote the subset of supercuspidal representations.

The equivalence relation on $\Cusp^\GL$ generated by $\rho \sim \rho|\cdot|$ partitions $\Cusp^\GL$ into equivalence classes called \emph{supercuspidal lines}. The supercuspidal line containing $\rho \in \Cusp^\GL$ is given by
$$ \Z_\rho:= \{\rho |\cdot|^n \mid n \in \Z\}. $$
We denote the set of all such equivalence classes by $\Cusp^\GL/{\sim}$.

We say that two supercuspidal representations $\rho,\rho'\in \Cusp^\GL$ are \emph{line equivalent}, denoted by $\rho \sim' \rho'$, if $\rho \sim \rho'$ or $\rho \sim {\rho'}^\vee$. Equivalently, $\rho \sim' \rho'$ if $\Z_\rho \cup \Z_{\rho^\vee} = \Z_{\rho'} \cup \Z_{{\rho'}^\vee}$.

\subsection{Segments}

A \emph{segment} $\Delta$ is a finite, non-empty subset of $\Cusp^\GL$ of the form
$$ \Delta = \{\rho |\cdot|^x, \rho |\cdot|^{x+1}, \dots, \rho |\cdot|^y\}, $$
where $\rho \in \Cusp^\GL$ and $x, y \in \R$ with $y - x \in \mathbb N$. We denote such a segment by $[x,y]_\rho$. Note that $[x,y]_\rho = [x',y']_{\rho'}$ if and only if $\rho|\cdot|^x = \rho'|\cdot|^{x'}$ and $\rho|\cdot|^y = \rho'|\cdot|^{y'}$; hence, one can assume that $\rho$ is unitary whenever necessary.

We denote by $\Seg$ the set of all segments. Let $\rho \in \Cusp^\GL$ be unitary and let $\Delta=[x,y]_\rho \in \Seg$. We define:
\begin{itemize}
    \item the \emph{beginning} $b(\Delta):= x$ and the \emph{end} $e(\Delta):= y$;
    \item the \emph{length} $\ell(\Delta):= y - x + 1$;
    \item the \emph{degree} $\deg(\Delta):= \deg(\rho)\ell(\Delta)$;
     \item the \emph{center} $c(\Delta):= (x+y)/2 \in \R$;
    \item the \emph{determinant} $\det(\Delta):= \det(\rho)^{\ell(\Delta)} |\cdot|^{\ell(\Delta)c(\Delta)}$.
\end{itemize}

We denote by $\Seg^{>0}$ (resp. $\Seg^{0}$, $\Seg^{<0}$) the subset of $\Seg$ consisting of segments $\Delta$ such that $c(\Delta)>0$ (resp. $c(\Delta)=0$, $c(\Delta)<0$). We say that $\Delta$ is \emph{centered} if $c(\Delta)=0$.

We define the following operations on a segment $\Delta = [x, y]_\rho$:
\begin{align*}
\Delta^- &= [x, y - 1]_\rho, & {}^-\Delta &= [x + 1, y]_\rho, \\
\Delta^+ &= [x, y + 1]_\rho, & {}^+\Delta &= [x - 1, y]_\rho, \\
\Delta^\vee &= [-y, -x]_{\rho^\vee}.
\end{align*}
By convention we define $\Delta^-$ and ${}^-\Delta$ to be the empty set $\varnothing$ if $\ell(\Delta) > 1$.

\begin{deff}\label{def orders on seg}
We define two orders on $\Seg$:
\begin{itemize}
    \item We say that $[x,y]> [x',y']$ if $x>x'$ or if $x=x'$ and $y<y'$.
    \item We say that $[x,y]>' [x',y']$ if $y>y'$ or if $y=y'$ and $x<x'$.
\end{itemize}
\end{deff}
These two orders are symmetric with respect to each other.

\subsection{Multisegments}\label{subsect multiseg}

Given a set $\X$, let $\mathbb{N}(\X)$ denote the commutative semigroup of maps from $\X$ to $\mathbb{N}$ with finite support. A \emph{multisegment} is a multiset of segments, i.e., an element of $\Mult:= \mathbb{N}(\Seg)$. We view a multisegment $\m$ as a finite sum $\m = \Delta_1 + \dots + \Delta_N$ with $\Delta_i \in \Seg$. We denote the multiplicity of a segment $\Delta$ in $\m$ by $m_{\m}(\Delta)$. The definitions of contragredient, length, and degree are extended from segments to multisegments by linearity. For $\m = \Delta_1 + \dots + \Delta_N \in \Mult$, we define its \emph{support} as the multiset $\Delta_1 \sqcup \dots \sqcup \Delta_N \in \mathbb{N}(\Cusp^{\GL})$ and its \emph{determinant} as 
$ \det(\m):=\det(\Delta_1)\times \dots \times \det(\Delta_N) $.

We say that $\m$ is \emph{positive} (resp. \emph{negative}) if $c(\Delta_i) > 0$ (resp. $c(\Delta_i) < 0$) for all $i=1, \dots, N$. We denote by $\Mult^{\clubsuit}$ the subset of $\Mult$ consisting of multisegments whose segments all belong to $\Seg^{\clubsuit}$, where $\clubsuit \in \{>0, 0, <0\}$. The natural projection $\Mult \to \Mult^{>0} \times \Mult^0 \times \Mult^{<0}$ is denoted by $\m \mapsto (\m^{>0}, \m^0, \m^{<0})$.

For $\rho \in \Cusp^\GL$, let $\Mult_\rho$ be the submonoid of multisegments supported in $\Z_\rho$, and let $\Irr^\GL_\rho$ be the set of irreducible representations with supercuspidal support in $\Z_\rho$. We set $\Mult_\rho^{\clubsuit}:= \Mult_\rho \cap \Mult^{\clubsuit}$. There is a natural map $\Mult \to \Mult_\rho$, denoted $\m \mapsto \m_\rho$, which sends $\m$ to the sum of its segments supported in $\Z_\rho$.

According to Zelevinsky \cite{zel}, there is a bijection between $\Irr^{\GL}$ and $\Mult$. To classify irreducible representations of $\Sp_{2n}(F)$ and $\SO_{2n+1}(F)$, we must introduce signs. A \emph{signed centered segment} is a pair $(\Delta, \varepsilon)$ where $\Delta \in \Seg^0$ and $\varepsilon \in \{-1, 1\}$. Formally, we define $\Mult^\varepsilon$ as the set of maps 
$$ \Seg^{<0} \cup \Seg^{>0} \cup (\Seg^0 \times \{-1, 1\}) \longrightarrow \mathbb{N} $$
with finite support. For $\mathfrak{s} \in \Mult^\varepsilon$, the \emph{underlying multisegment} $\m \in \Mult$ is obtained by forgetting the signs. We usually write $\mathfrak{s} = (\m, \varepsilon)$, where for each $\Delta \in \m^0$, $\varepsilon(\Delta) \in \{-1, 1\}$ is the sign assigned to $\Delta$.

\subsection{Zelevinsky classification }Let $\Irr^\GL=\bigcup_{n\ge 0}\Irr(\GL_n(F))$.

The Zelevinsky classification (see \cite[Theorem 6.1]{zel}) provides a bijection $L:\Mult\to\Irr^\GL$ that sends the set of multisegments containing only centered segments on the set of tempered representation.

In \cite{MW86}, M{\oe}glin and Waldspurger defined recursively an involutive map $\m\mapsto \m^t$. Thus they proved the following result:

\begin{theorem}[\cite{MW86}]
    For any $\m\in\Mult$, we have:
    $$L(\m^t)=\widehat{L(\m)}$$
\end{theorem}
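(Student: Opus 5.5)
We would prove the theorem by induction on the degree of $\m$, comparing at each step the recursion defining $\m\mapsto\m^t$ with the behaviour of Aubert duality under Jacquet functors. Recall the shape of the M{\oe}glin–Waldspurger recursion. Let $e$ be the maximal end among the segments of $\m$. One extracts a decreasing chain $\Delta_1,\dots,\Delta_k$ of segments, with $e(\Delta_i)=e-i+1$, chosen minimal at each step. Shortening each $\Delta_i$ on the right gives $\m^\#$, and one sets
$$\m^t=(\m^\#)^t+[e-k+1,e]_\rho .$$
It is therefore enough to prove the following two statements:
\begin{itemize}
\item[(i)] $L(\m^\#)$ is the representation obtained from $L(\m)$ by removing the supercuspidals $\rho|\cdot|^{e},\dots,\rho|\cdot|^{e-k+1}$ through a sequence of highest right derivatives (Jacquet module restrictions);
\item[(ii)] $\widehat{L(\m)}$ is the unique irreducible subquotient of a parabolic induction built from $\widehat{L(\m^\#)}$ and the representation attached to the one-segment multisegment $[e-k+1,e]_\rho$, and it corresponds to the sum of these multisegments.
\end{itemize}

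For (i), we would use the theory of $\rho$-derivatives in Jantzen's and Mínguez's form. Restricted to a single supercuspidal line, the highest right $\rho|\cdot|^x$-derivative of $L(\m)$ is computed combinatorially: one pairs segments ending at $x$ with segments ending at $x-1$ and deletes the unpaired ends. Applying this successively for $x=e,e-1,\dots,e-k+1$ and keeping track of the minimal choices should reproduce exactly the chain $(\Delta_i)$. This would give
$$L(\m)\hookrightarrow L(\m^\#)\times \rho|\cdot|^{e}\times\dots\times\rho|\cdot|^{e-k+1}$$
with maximal multiplicities.

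For (ii), we would use Aubert's results that duality is compatible with Jacquet functors up to exchanging the parabolic with its opposite, and that it commutes with parabolic induction in the Grothendieck group up to the corresponding twist. Dualizing the embedding from (i), $\widehat{L(\m)}$ becomes a subrepresentation of $\widehat{L(\m^\#)}$ induced with the dual of the product of the $\rho|\cdot|^{x}$. The maximality of the derivatives forces the corresponding left-derivative statement on the dual side. Using the induction hypothesis $\widehat{L(\m^\#)}=L((\m^\#)^t)$, we would then identify $\widehat{L(\m)}$ as $L((\m^\#)^t+[e-k+1,e]_\rho)$. This uses the fact that an irreducible representation is determined by its highest derivative together with the segment recording it.

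The main obstacle is step (i): showing rigorously that the iterated highest derivatives coincide with the chain of the algorithm, in particular that the minimal choices of the $\Delta_i$ correspond to the combinatorics of the Jacquet module. We would first reduce to a single supercuspidal line, which is harmless because both sides preserve supercuspidal support. We would then argue by a secondary induction on the number of segments ending at $e$, using the Mínguez–Lapid criterion for irreducibility of $L(\m)\times\rho|\cdot|^x$.
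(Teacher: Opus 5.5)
The paper does not prove this statement. It is quoted from M{\oe}glin--Waldspurger and used as a black box, so your sketch has to stand on its own. It has a genuine gap: step (i) is false as formulated.

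\emph{Highest derivatives overshoot.} The MW step shortens exactly one segment at each level $e,e-1,\dots,e-k+1$, whereas a highest derivative strips every admissible copy. For $\m=[0,0]+[0,0]$ one has $k=1$ and $\m^\#=[0,0]$. But the highest $\rho$-derivative of $L(\m)=\rho\times\rho$, on either side, has multiplicity $2$.

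\emph{Single derivatives starting at $x=e$ do not follow the chain either.} In $\m=[0,0]+[1,1]$ your pairing rule pairs the end of $[1,1]$ with $[0,0]$, so it removes nothing at $x=e=1$. Yet MW starts by shortening $[1,1]$ (here $k=2$, $\m^\#=\varnothing$). More generally, whichever classification and side you use, one of $\m=[0,1]$ and $\m=[0,0]+[1,1]$ has zero $\rho|\cdot|$-derivative on that side. Indeed, their images under $L$ are $L([0,1])$ and $\widehat{L([0,1])}$, with Jacquet modules $\rho|\cdot|\otimes\rho$ and $\rho\otimes\rho|\cdot|$, and Zelevinsky's classification swaps the two. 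In both cases the chain nevertheless begins by shortening a segment ending at $e=1$.

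\emph{Your display fails with the paper's $L$.} The paper's $L$ is the Langlands classification, since centered multisegments give tempered representations. For $\m=[0,1]$ your display reads $L([0,1])\hookrightarrow\rho\times\rho|\cdot|$. This is false: $L([0,1])$ is the quotient of $\rho\times\rho|\cdot|$, not a subrepresentation.

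What the MW step really encodes, granting the theorem, is
\[
L(\m^t)\hookrightarrow L((\m^\#)^t)\times L([e-k+1,e])\hookrightarrow L((\m^\#)^t)\times\rho|\cdot|^{e}\times\cdots\times\rho|\cdot|^{e-k+1}.
\]
This is your display for $\m^t,(\m^\#)^t$ (equivalently, with Zelevinsky's classification in place of $L$). Dually, $L(\m)\hookrightarrow\widehat{L([e-k+1,e])}\times L(\m^\#)$.

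Read from the side where one differentiates, the cuspidal factors begin with $\rho|\cdot|^{e-k+1}$, not $\rho|\cdot|^{e}$. Their multiplicities also need not be maximal: for $\m=[0,1]+[1,1]$, $\rho|\cdot|$ is removed once although it can be removed twice. So this is one derivative with respect to the whole segment $[e-k+1,e]$, and it cannot be built by stripping $\rho|\cdot|^{e}$ first.

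The missing idea is a proof that this segment derivative exists with $k$ exactly the length of the MW chain and with residual exactly $L(\m^\#)$. In other words, you must show that $[e-k+1,e]$ is the $\le'$-largest segment of the dual and that removing it leaves $(\m^\#)^t$. That is the actual content of the theorem, and your sketch does not address it.

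Step (ii) inherits the problem. Aubert duality is compatible with induction and Jacquet functors only in the Grothendieck group, so dualizing an embedding yields only a subquotient. The maximality argument you use to upgrade it works for highest $\rho$-derivatives, but not for this non-maximal segment derivative.

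Highest $\rho$-derivatives are compatible with duality (left and right exchanged) and can be organized into an algorithm for $\m\mapsto\m^t$. That algorithm is not the MW recursion, however, and identifying the two would require a further combinatorial theorem.
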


\subsection{Langlands data for classical groups}

Throughout this paper, $G_n$ denotes either the split special orthogonal group $\SO_{2n+1}(F)$ or the symplectic group $\Sp_{2n}(F)$ of rank $n$. We fix this choice of type throughout the article. We define
$$ \Irr^G:= \bigcup_{n \geq 0} \Irr(G_n), $$
where the union is taken over groups of the fixed type.

A Langlands datum for $G$ is a triple $(\mathfrak{n}; \phi, \eta)$ where $\phi$ is a Langlands parameter for $G$, $\eta$ is a character of the component group $\mathcal{S}_\phi$, and $\mathfrak{n} \in \Mult^{<0}$. We denote by $\Data(G)$ the set of all such data, and by $\Data^+(G)$ the subset of triples $(\mathfrak{n}; \phi, \eta)$ satisfying $\eta(z_\phi)=1$, where $z_\phi$ is the central element of the enhanced component group $\mathcal{A}_\phi$. We refer the reader to \cite[Sections 4.3--4.4]{LM} for a detailed exposition.

Using the Langlands subrepresentation theorem and Arthur's classification of tempered representations for classical groups, we have the following result:

\begin{theorem}[Arthur-Langlands]\label{thm L data}
To any datum $(\mathfrak{n}; \phi, \eta) \in \Data^+(G)$, one can associate an irreducible representation $L(\mathfrak{n}; \phi, \eta) \in \Irr^G$. This association induces a bijection between $\Data^+(G)$ and $\Irr^G$. Finally, this representation $L(\mathfrak n;\phi,\eta)$ is tempered if and only if $\mathfrak{n} = 0$.
\end{theorem}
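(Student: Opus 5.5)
The statement is the Arthur--Langlands classification, a compilation of known results, so I would prove it by assembling two standard ingredients rather than by new combinatorics.

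\textbf{Langlands quotient/subrepresentation theorem.} Every $\pi \in \Irr(G_n)$ is the unique irreducible subrepresentation of a standard module
$$\Delta_1\times\dots\times\Delta_N\rtimes\tau,$$
with the following data:
\begin{itemize}
\item $\tau$ is an irreducible tempered representation of a smaller $G_{n'}$;
\item the $\Delta_i$ are essentially square-integrable representations of general linear groups, with strictly negative exponents (centers), ordered so that the centers are non-decreasing.
\end{itemize}
The pair $(\mathfrak n, \tau)$, with $\mathfrak n = \Delta_1+\dots+\Delta_N \in \Mult^{<0}$ (by Zelevinsky/Bernstein--Zelevinsky, essentially square-integrable representations correspond to segments), is uniquely determined by $\pi$. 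Conversely, every such pair gives an irreducible $\pi$. This is the Silberger/Borel--Wallach form of the theorem for $p$-adic groups, stated with subrepresentations via the negative-center convention.

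\textbf{Arthur's classification of tempered representations.} This gives a bijection $\tau \leftrightarrow (\phi,\eta)$, where:
\begin{itemize}
\item $\phi$ is a tempered $L$-parameter of $G$;
\item $\eta$ is a character of $\mathcal S_\phi$ satisfying $\eta(z_\phi)=1$, which is the condition picking out the split pure inner form.
\end{itemize}
For the groups $\Sp_{2n}$ and split $\SO_{2n+1}$ this is the \cite{ART} result, with the character normalization as in \cite{LM}.

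\textbf{Assembling the bijection.} I would combine the two steps by setting $L(\mathfrak n;\phi,\eta)$ equal to the unique irreducible subrepresentation of $L(\mathfrak n)\rtimes \tau(\phi,\eta)$, where the segments are ordered so this is a standard module. Injectivity and surjectivity then follow from the uniqueness and existence parts of the two theorems. The rank bookkeeping is $n = n' + \deg(\mathfrak n)$, so $\Data^+(G)$ maps onto $\Irr^G$ graded by $n$.

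\textbf{Temperedness criterion.} The final claim follows from Casselman's criterion.
\begin{itemize}
\item If $\mathfrak n=0$, then $\pi=\tau$ is tempered.
\item If $\mathfrak n\neq 0$, the embedding into a module induced with strictly negative exponents gives a Jacquet module exponent violating Casselman's temperedness criterion. Equivalently, the uniqueness in the Langlands classification forces a tempered $\pi$ to have $\mathfrak n=0$.
\end{itemize}

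\textbf{Main obstacle.} The only delicate point is matching conventions: the sign normalization of $\eta$ on $\mathcal S_\phi$ versus $\mathcal A_\phi$, and the use of subrepresentations versus quotients. For both I would cite \cite[Sections 4.3--4.4]{LM} as the reference fixing these conventions.
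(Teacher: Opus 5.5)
Your proposal is correct and takes the same route as the paper: the paper gives no separate proof, and simply obtains the statement by combining the Langlands subrepresentation theorem with Arthur's classification of tempered representations, with conventions as in \cite[Sections 4.3--4.4]{LM}. Your additional justification of the temperedness criterion, via uniqueness of the Langlands datum or Casselman's criterion applied to the negative exponents, is the standard supplement and is sound.
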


\subsection{Symmetrical Langlands data of classical groups}

Let $\Symm \subseteq \Mult$ be the set of symmetrical multisegments, defined by $\Symm:= \{ \m \in \Mult \mid \m^{\vee} = \m \}$. We define $\Symm^\e \subseteq \Mult^\e$ as the subset of signed multisegments whose underlying multisegment belongs to $\Symm$. Let $\Symm^\e(G)$ be the subset of $\Symm^\e$ consisting of elements $(\m, \e)$ satisfying the following conditions for every pair of centered segments $\Delta, \Delta' \in \m$:
\begin{enumerate}
    \item If $\Delta = \Delta'$, then $\e(\Delta) = \e(\Delta')$.
    \item If $\Delta$ is supported in $\Z_\rho$ with $\rho$ bad or ugly, then $\e(\Delta) = 1$.
    \item If $\Delta$ is supported in $\Z_\rho$ with $\rho$ bad, then the multiplicity of $\Delta$ in $\m$ is even.
\end{enumerate}
Finally, we define $\Symm^{\e,+}(G)$ as the subset of $\Symm^\e(G)$ consisting of elements $(\m, \e)$ such that $\det(\m) = 1$ and, if $\Delta_1, \dots, \Delta_m$ are the centered segments of $\m$ (counted with multiplicity), then $\e(\Delta_1)\times...\times\e(\Delta_m)=1$.

In \cite[Section 4.6]{LM}, Lanard and Mínguez introduced a transfer map to associate a symmetrical Langlands datum with a regular Langlands datum. For $(\mathfrak{n}; \phi, \eta) \in \Data(G)$, we define $\trans(\mathfrak{n}; \phi, \eta) = (\m, \e)$ where
\[ \m = \sum_{s \in \mathfrak{n}} (s + s^\vee) + \sum_{\rho \boxtimes S_a \in \phi} \left[ \frac{-a+1}{2}, \frac{a-1}{2} \right]_\rho, \]
and for any $\rho \boxtimes S_a \in \phi$, the sign is given by
\[ \e\left( \left[ \frac{-a+1}{2}, \frac{a-1}{2} \right]_\rho \right) = \eta(\rho \boxtimes S_a). \]

The map $\trans$ is a bijection from $\Data(G)$ to the subset of $\Symm^\e(G)$ consisting of elements with determinant 1, and it maps $\Data^+(G)$ onto $\Symm^{\e,+}(G)$. Consequently, Theorem \ref{thm L data} induces a bijection between $\Irr^G$ and $\Symm^{\e,+}(G)$. Since $L(\mathfrak{n}; \phi, \eta)$ is tempered if and only if $\mathfrak{n} = 0$, tempered representations in $\Irr^G$ correspond to elements of $\Symm^{\e,+}(G)$ containing only centered segments. For any $\pi = L(\mathfrak{n}; \phi, \eta)$, we denote $\phi^{sym}_\pi:= \trans(\mathfrak{n}; \phi, \eta)$.

\subsection{Good, bad and ugly parity}
 A supercuspidal representation is unitary if and only if its central character is unitary. Therefore, given any supercuspidal representation $\rho$, there exists a unique $x \geq0$ such that $\rho_u:=\rho | \cdot|^x$ is unitary. We call $\rho_u$ the unitarization of $\rho$.

\begin{deff}\label{def:good/bad/ugly}
Let $\rho \in \Cusp^{\GL}$. We write $\rho = \rho_u |\cdot|^x$ with $\rho_u$ unitary and $x \in \R$.
\begin{enumerate}
 \item We say that $\rho$ is \emph{ugly} if $\rho_u$ is not self-dual or $x \notin (1/2)\Z$ (that is, $\mathbb{Z}_\rho \neq \mathbb{Z}_{\rho^\vee}$).
 \item We say that $\rho$ is \emph{good} if $\rho_u$ is self-dual and:
 \begin{itemize}
  \item If $\rho_u$ is of the same type as $G$, then $x \in \Z$.
  \item If $\rho_u$ is of the opposite type as $G$, then $x \in (1/2)\Z \setminus \Z$.
 \end{itemize}
 \item We say that $\rho$ is \emph{bad} if $\rho_u$ is self-dual and:
 \begin{itemize}
  \item If $\rho_u$ is of the same type as $G$, then $x \in (1/2)\Z \setminus \Z$.
  \item If $\rho_u$ is of the opposite type as $G$, then $x \in \Z$.
 \end{itemize}
\end{enumerate}

\end{deff}

\subsection{Lanard-Mínguez algorithm}

For $\rho \in \Cusp^{\GL}$, let $\Symm_{\rho}^{\e}(G)$ be the subset of $\Symm^{\e}(G)$ consisting of elements whose underlying multisegment belongs to $\Mult_\rho$ (if $\rho$ is good or bad) or to $\Mult_\rho \times \Mult_{\rho^{\vee}}$ (if $\rho$ is ugly). This yields a natural decomposition
\[
 \Symm^{\e}(G) \simeq \bigoplus_{\rho \in \Cusp^\GL/{\sim'}} \Symm_{\rho}^{\e}(G).
\]

In \cite{LM}, Lanard and Mínguez defined a map $\ADp: \Symm_\rho^\e(G) \to \Symm_\rho^\e(G)$ recursively. They then defined $\AD:= \oplus_{\rho} \ADp$ and the induced map on Langlands data $\mathbf{AD}:= \trans^{-1} \circ \AD \circ \trans$. Their main result identifies this combinatorial map with the Aubert involution:

\begin{theorem}[\cite{LM}, Theorem 5.4.1]\label{thm lanard Mínguez}
 Let $\pi = L(\mathfrak{n}; \phi, \eta) \in \Irr^G$. Then its Aubert dual is given by
 \[ \widehat{\pi} = L(\mathbf{AD}(\mathfrak{n}; \phi, \eta)). \]
\end{theorem}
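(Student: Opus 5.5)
The plan is to argue by induction on the rank $n$, or equivalently on the total degree of the supercuspidal support of $\pi$. I would first reduce to a single supercuspidal line. The Aubert involution preserves supercuspidal support, and it is compatible with parabolic induction from Levi subgroups $\GL_{m_1}\times\dots\times G_{n'}$. Using the decomposition $\Symm^\e(G)\simeq\bigoplus_\rho \Symm^\e_\rho(G)$, together with the fact that the Langlands data of $\pi$ split along the lines $\Z_\rho\cup\Z_{\rho^\vee}$, it should suffice to prove $\trans(\phi^{sym}_{\hat\pi})_\rho=\ADp(\phi^{sym}_\pi)_\rho$ one line at a time. Concretely, $\pi$ is the unique irreducible subrepresentation of an induced representation built from its line components, and dualizing reverses the induction in a controlled way.

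For the inductive step I would follow the recursive definition of $\ADp$. From $(\m,\e)$ it extracts a maximal $\s$ (for $\le'$) and a residual datum $(\m^\#,\e^\#)$, and it sets $\ADp(\m,\e)=\tilde\s+\ADp(\m^\#,\e^\#)$. The representation-theoretic counterpart should be a highest-derivative statement. Let $x=e(\s)$ be the largest exponent. I would take the Jantzen/Atobe--Mínguez highest derivative $D^{(k)}_{\rho|\cdot|^{x}}$ of $\pi$, or more generally the $\Delta$-derivative along the extracted decreasing chain $(\Delta_i)$, and show that it is the irreducible representation with datum $(\m^\#,\e^\#)$. The explicit formulas of Atobe--Mínguez for derivatives and socles in terms of Langlands data would do this. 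Next I would use the compatibility of Aubert duality with Jacquet modules, $\widehat{D^{(k)}_{\rho|\cdot|^{x}}(\pi)}=D^{(k)}_{\rho^\vee|\cdot|^{-x}}(\hat\pi)$, up to the appropriate contragredient and sign twist. This gives that $\hat\pi$ embeds into $L(s)\rtimes\widehat{\pi^\#}$, where $s$ is the segment underlying $\s$ read in the dual direction. By induction, $\widehat{\pi^\#}$ has datum $\ADp(\m^\#,\e^\#)$.

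It then remains to identify the unique irreducible subrepresentation of $L(s)\rtimes L(\ADp(\m^\#,\e^\#))$ and check that its symmetrical datum is $\tilde\s+\ADp(\m^\#,\e^\#)$. For $c(s)>0$ this is a socle computation. Maximality of $\s$ for $\le'$ should guarantee that $s$ can be inserted into the standard module without disturbing the Langlands ordering, so the socle is the Langlands quotient with $s+s^\vee$ added. In the bad and ugly cases no signs occur, and the argument is essentially the Mœglin--Waldspurger one for $\GL_n$.

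The main obstacle is the good parity case when $\s$ is a centered signed segment $(s,\eta)$. There one has to show two things. First, the socle is tempered in the $s$-direction, so that $s$ is added as a block of $\phi$ and not as a pair $s+s^\vee$. Second, its character on $\rho\boxtimes S_{\ell(s)}$ is exactly $\eta$. My first attempt would be Mœglin's explicit construction of tempered representations through Jacquet modules and her sign rules: compare $\eta(\rho\boxtimes S_a)\eta(\rho\boxtimes S_{a-2})$ with the vanishing or nonvanishing of the relevant partial Jacquet module of $\hat\pi$, and transport that condition through duality. I would also use Theorem~\ref{thm L data} to make sure that the resulting sign normalisation lands in $\Symm^{\e,+}(G)$.
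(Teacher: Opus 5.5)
The paper does not prove this statement. It is imported from \cite[Theorem 5.4.1]{LM} and used as a black box: Theorem \ref{main theorem} and everything after it are combinatorial consequences of the Lanard--M\'inguez recursion. Your outline therefore has to stand on its own. Its architecture (derivatives, duality on Jacquet modules, socles, in the spirit of \cite{AM}) is reasonable, but it has a genuine gap, and one step cannot work as stated.

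That step is the uniform mechanism ``$\hat\pi\hookrightarrow L(s)\rtimes\widehat{\pi^\#}$, then take the unique irreducible subrepresentation'' when $\s=(s,\eta)$ is centered. There $s^\vee=s$, so $L(s)\rtimes\sigma$ contributes $s+s^\vee=2s$. Its constituents have symmetrical data with support $\mathrm{supp}(\sigma)+2\,\mathrm{supp}(s)$; for tempered $\sigma$ the parameter gains two copies of $\rho\boxtimes S_{\ell(s)}$. By contrast, $\tilde\s=(s,\eta)$ adds a single copy. Supports already rule the embedding out: in the Lanard--M\'inguez recursion $\mathrm{supp}(\m)-\mathrm{supp}(\m^\#)$ is one copy of $\mathrm{supp}(s)$, and the Aubert involution preserves supports. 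Moreover, for tempered $\sigma$ the representation $L(s)\rtimes\sigma$ is unitarily induced from a tempered representation. It is therefore semisimple and possibly of length two, so there is no unique irreducible subrepresentation to identify. The centered step must be realised differently, through the non-centered part $[\tilde 1,e]$ and an interaction with the small centered blocks $[0,0]$ or $[-\frac12,\frac12]$. Compare $b=\tilde 1$ and the correction term $\delta$ in the definition of $\CT_{(s,\eta)}$. Deciding which sign the new block carries is exactly the sign bookkeeping of \cite{LM}: the parity of $n_0$, the value of $\e_0$, and the signs of $[0,0]$ and $[-\frac12,\frac12]$. You only name a ``first attempt'' via M{\oe}glin's sign rules, so the heart of the good parity case is missing.

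The non-centered case also needs more than you state.
\begin{itemize}
\item \textbf{Which derivative.} A single-point highest derivative $D^{(k)}_{\rho|\cdot|^x}$ only removes copies of $\rho|\cdot|^x$. By contrast, $\m^\#$ removes one point from each member of a chain with ends $x,x-1,\dots,e(\Delta_k)$, so you need a derivative along the whole segment. Since $s$ may occur with multiplicity greater than one in $\AD(\m,\e)$, removing a single copy is in general not a highest derivative. Its irreducibility, its compatibility with the involution, and the passage from semisimplified Jacquet modules to an actual embedding $\hat\pi\hookrightarrow L(s^\vee)\rtimes\widehat{\pi^\#}$ must therefore be proved, not quoted.
\item \textbf{Maximality of $\s$.} Identifying the socle of $L(s^\vee)\rtimes\widehat{\pi^\#}$ with the representation of datum $\tilde\s+\ADp(\m^\#,\e^\#)$ uses that $s\ge'$ every segment of $\ADp(\m^\#,\e^\#)$. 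This is what lets you move $L(s^\vee)$ to the front of the standard module. It is a statement about the unknown dual: it is \cite[Proposition 7.1.5]{LM}, recalled in Remark \ref{rem largest segment in the dual LM}. You must establish it inside your induction, e.g.\ from the fact that the chain $(\Delta_i)$ cannot be prolonged, rather than read it off from the construction of $\s$ in $\m$.
\item \textbf{Reduction to one line.} This is not a parabolic induction statement. A Levi subgroup of $G_n$ has only one classical factor, so two good-parity components cannot be induced together. The reduction rests on Jantzen's theorem that $\pi\mapsto(\pi_\rho)_\rho$ commutes with the Aubert involution and is compatible with Langlands data (cf.\ \cite[Section 4]{LM}).
\end{itemize}
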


Let $\Temp \subset \Symm^\e(G)$ denote the subset of \emph{tempered symmetrical data}, i.e., those whose underlying multisegments contain only centered segments. These correspond via the map $\trans$ to the Langlands data of tempered representations. We define the set of \emph{co-tempered symmetrical data} as $\ATemp:= \AD(\Temp)$. For any $\rho \in \Cusp^\GL$, we let $\Temp_\rho$ and $\ATemp_\rho$ be the respective projections of these sets onto $\Symm^\e_\rho(G)$.

By convention, we treat the empty set $\varnothing$ as a centered segment of length $0$ and multiplicity $1$ in any multisegment. We define the \emph{empty symmetrical Langlands datum} as the pair $(\varnothing, +1)$, denoted by $(\varnothing, +)$ or just $\varnothing$. This datum is tempered and satisfies $\ADp(\varnothing, +) = (\varnothing, +)$, making it also co-tempered.

\section{The cases of $\GL_n$ and the bad and ugly parities}\label{section GLn}

In the cases of bad and ugly parity, signs do not play any role in the Langlands data, and as we can see in the Lanard-Mínguez algorithm, the Aubert duality acts in a very similar way to that of $\GL_n$.

\subsection{The case of $\GL_n$}

First, we recall the M{\oe}glin-Waldpsurger algorithm. Take $\m\in\Mult$. To begin, let $e_{max}$ be the maximum of the ends $e(s)$ of the segments $s\in\m$. Now let $\Delta_1$ be the largest element of $\m$, for the order $>$, such that $e(\Delta_1)=e_{max}$. Then we define recursively the sequence $\Delta_1\geq \cdots\geq \Delta_k$. Suppose $\Delta_i$ is already constructed for an index $i\geq 0$; then we define $\Delta_{i+1}$ as the largest segment in $\m$ such that $e(\Delta_{i+1})=e(\Delta_i)-1$ and $\Delta_{i+1}\leq \Delta_i$.

If no such segment exists in $\m$, then we set $k=i$ and we stop. 

From this sequence, we define:
$$\m_1=[e(\Delta_k),e(\Delta_1)]_{\rho_u}
$$
and
$$\m^\#=\m+\sum_{i=1}^l(\Delta_i^--\Delta_i)$$

That finally allows us to define $\ADp$ recursively by the following formula:
$$\m^t=\m_1+(\m^\#)^t$$

We now define $\Mult^{\le' s}$ as the set of multisegments $\m \in \Mult$ such that every segment $\Delta \in \m$ satisfies $\Delta\le' s$.

For $\m \in \Mult^{\le' s}$, we define the operator $\T_s: \Mult^{\le' s} \to \Mult^{\le' s}$ by:
    $$\T_s(\m) = \m + s$$

Now let $\DMult^{\le's}$ be the dual $(\Mult^{\le's})^t$ of $\Mult^{\le's}$. This definition is not intrinsic because we need to compute the dual of a multisegment to determine whether it belongs to $\DMult^{\le's}$.

However, take $\m\in\Mult$. By applying the M{\oe}glin-Waldspurger algorithm, we obtain a segment $\m_1$ and a multisegment $\m^\#$ such that $\m^t=\m_1+(\m^\#)^t$, the segment $\m_1$ being the largest one in $\m^t$ for the order $<'$. Let us denote this segment $\m_1$ by $s(\m)$.

Then, we do not need to compute $\m^t$ to determine whether it belongs to $\DMult^{\le's}$: it suffices to compute $s(\m)$ and to compare it to $s$:
$$\DMult^{\le's}=\{\m\in\Mult~|~s(\m)\le's\}$$

We now fix a segment $s$ and we define an operator $\CT_s$ on $\DMult^{\le's}$. To do this, we first define an increasing sequence of segments of $\m$.

Define $\Theta^s_1$ as the smallest segment in $\m$ for the order $<$ that ends at $b(s)-1$, and as $\varnothing$ if there are no segments in $\m$ satisfying this condition.

Now we define recursively the sequence $\Theta^s_1\le\ldots\le\Theta^s_l$ with $\Theta^s_i$ being the smallest segment in $\m$ such that $e(\Theta^s_i)=e(\Theta^s_{i-1})+1$ and $\Theta^s_i>\Theta^s_{i-1}$.
If no such segment exists in $\m$, then we set $l=i$ and we stop.
Of course, if $\m=\varnothing$, we have $\Theta_1^s=\varnothing$ (regardless of what $s$ is).
Even if $\Theta^s_i$ depends on the segment $s$, since this choice has been fixed, we will denote it by $\Theta_i$ in what follows.

Finally we define:
$$\CT_s(\m)=\m+\sum_{i=1}^l(\Theta_i^+-\Theta_i)+\sum[j,j]$$
where the last sum is indexed by the set $\{j\in e(s)-\mathbb N~|~j> e(\Theta_l)+1\}$.

Before the proofs we provide an example:
\tikzset{
    segment/.style={line width=1.5pt, black},
    start pt/.style={circle, fill=blue, inner sep=1.5pt},
    end pt/.style={circle, fill=red, inner sep=1.5pt},
    point halo/.style={circle, fill=black, inner sep=2pt},
    theta link/.style={line width=3pt, green!50, opacity=0.7},
    point theta/.style={circle, fill=green!50, opacity=0.7,inner sep=2pt}
}

\begin{ex}
We reproduce a detailed manual calculation to further illustrate the algorithm. Let $s = [0, 2]$. We apply $\CT_\s$ to the multisegment $\mathfrak{m} = [-2,1] + [-2,-1]+ [-1,0] + [0,0] \in\DMult^{\le'[0,2]}$.
\begin{enumerate}[noitemsep,topsep=0pt]
    \item \textbf{Initial sequence $\Theta^\s$:} We compute $\Theta^{[0,2]}(\mathfrak{m},\e)$. To do that we first order the segments of $\m$ increasingly for the order $<$:
    
 \hspace{2.1cm}
\begin{tikzpicture}[scale=0.8]
    \foreach \x in {-3,...,3} {
        \draw[dashed, gray!40] (\x, -2.0) -- (\x, 2.0);
        \node[above] at (\x, 2.0) {\small $\x$};
    }
    
    \draw[theta link] (-1, -0.5) -- (0, 0.5);
    
    \draw[segment] (-2, -1.5) -- (1, -1.5);
    \fill[start pt] (-2, -1.5) circle; \fill[end pt] (1, -1.5) circle;
    
     \draw[segment] (-2, -0.5) -- (-1, -0.5);
    \fill[start pt] (-2, -0.5) circle; \fill[end pt] (-1, -0.5) circle;
    
      \draw[segment] (-1, 0.5) -- (0, 0.5);
    \fill[start pt] (-5, -2.0) circle; \fill[end pt] (-4, -2.0) circle;

    \node[point halo] at (0, 1.5) {}; \node[right] at (0.1, 0.0) {};

    \node at (0, -2.5) {$\mathfrak{m}$};
 
\end{tikzpicture}

    The only segment ending at $-1$ is $\Theta_1  = [-2,-1]$.  The smallest segment ending at $0$ and bigger than $[-2,-1]$ is $[-1,0]$. the sequence ends here since $[-2,1]$ is smaller than $\Theta_2$.
    
    \item \textbf{Result $\CT_s(\m)$:} By extending the segments $\Theta_1$ and $\Theta_2$ and by adding the point $[2,2]$ we get $\CT_s(\m)=[-2,1]+[-2,0]+[-1,1]+[0,0]+[1,1]$.
    
    \begin{center}      
    \begin{tikzpicture}[scale=0.8]
    \foreach \x in {-3,...,3} {
        \draw[dashed, gray!40] (\x, -2.5) -- (\x,2.5);
        \node[above] at (\x, 2.5) {\small $\x$};
    }

    \draw[segment] (-2, -2) -- (1, -2);
    \fill[start pt] (-2, -1.5) circle; \fill[end pt] (1, -1.5) circle;
    
     \draw[segment] (-2, -1) -- (0, -1);
    \fill[start pt] (-2, -0.5) circle; \fill[end pt] (0, -0.5) circle;
    
      \draw[segment] (-1, 0) -- (1, 0);
    \fill[start pt] (-1, 0.5) circle; \fill[end pt] (1, 0.5) circle;

    \node[point halo] at (0, 1) {}; \node[right] at (0.1, 0.0) {};
    
    \node[point halo] at (2, 2) {}; \node[right] at (0.1, 0.0) {};

    \node at (0, -3) {$\CT_s(\m)$};
 
    \end{tikzpicture}
    \end{center}

\end{enumerate}
We can now try to apply the M{\oe}glin-Waldpsurger algorithm to this new multisegment and we verify that $\CT_s(\m)_1=s$ and $\CT_s(\m)^\#=\m$ so $\CT_s(\m)^t=s+\m=\T_s(\m)$. This is the key result and we will prove it in the general case with the same strategy: applying M{\oe}glin-Waldspurger algorithm to $\CT_s(\m)$.
       
\end{ex}

Write $s=[e,b]\in \Seg$ and fix $\m\in\DMult^{\le's}$. We start with a lemma.

\begin{lem}\label{lemm bound ending GLn}
    We necessarily have $e(\Theta_l)\le e-1$, and if $e(\Theta_l)= e-1$, there are no segments in $\m$ that end at $e$ and that begin strictly after $b(\Theta_l)$.
\end{lem}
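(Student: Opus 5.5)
The plan is to compare the increasing chain $\Theta_1<\dots<\Theta_l$ with the decreasing chain $\Delta_1\ge\dots\ge\Delta_k$ produced by the M{\oe}glin--Waldspurger algorithm on $\m$. The hypothesis $\m\in\DMult^{\le' s}$ enters only through $s(\m)=[e(\Delta_k),e(\Delta_1)]\le' s$. Throughout, write $s=[b,e]$ with $b=b(s)$ and $e=e(s)$. By definition of $\le'$, the hypothesis means: either $e_{max}=e(\Delta_1)<e$, or $e_{max}=e$ and $e(\Delta_k)\ge b$.

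If $\Theta_1=\varnothing$ there is nothing to prove, since the chain is empty (its "end" is at most $b-1\le e-1$). So assume $\Theta_1\neq\varnothing$; then $e(\Theta_i)=b-2+i$.

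\textbf{First assertion.} Suppose for contradiction that $e(\Theta_l)\ge e$. Since $\Theta_l\in\m$, we get $e_{max}\ge e$, so the hypothesis forces $e_{max}=e$ and $e(\Delta_k)\ge b$. The ends of the $\Theta_i$ go up by one at each step, so some index $j_0$ has $e(\Theta_{j_0})=e$. I would prove by induction on $i$ that $\Delta_i$ exists and $\Delta_i\ge\Theta_{j_0-i+1}$ for $i=1,\dots,j_0$.
\begin{itemize}
\item Base case: $\Delta_1$ is the $>$-largest segment of $\m$ ending at $e_{max}=e$, and $\Theta_{j_0}$ is such a segment, so $\Delta_1\ge\Theta_{j_0}$.
\item Inductive step: $\Theta_{j_0-i}$ ends at $e(\Delta_i)-1$ and satisfies $\Theta_{j_0-i}<\Theta_{j_0-i+1}\le\Delta_i$. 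So it is an admissible candidate for $\Delta_{i+1}$, and the algorithm does not stop at step $i$. Since $\Delta_{i+1}$ is the largest admissible candidate, $\Delta_{i+1}\ge\Theta_{j_0-i}$.
\end{itemize}
Taking $i=j_0$ shows the M{\oe}glin--Waldspurger chain reaches a segment ending at $e(\Theta_1)=b-1$. Hence $e(\Delta_k)\le b-1<b$, contradicting $s(\m)\le' s$. Therefore $e(\Theta_l)\le e-1$.

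\textbf{Second assertion.} Assume $e(\Theta_l)=e-1$, and suppose some $\Gamma\in\m$ satisfies $e(\Gamma)=e$ and $b(\Gamma)>b(\Theta_l)$. By the definition of $>$, a strictly later beginning gives $\Gamma>\Theta_l$. Also $e(\Gamma)=e(\Theta_l)+1$. So $\Gamma$ is an admissible candidate for $\Theta_{l+1}$, which contradicts the fact that the recursion stopped at $l$. (This part does not even need the hypothesis on $s(\m)$; alternatively, it follows from the first assertion, since $\Gamma$ would give a $\Theta_{l+1}$ ending at $e$.)

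The only delicate point is the induction in the first assertion. The order comparisons must go the right way: the $\Theta$'s are chosen \emph{smallest} and the $\Delta$'s \emph{largest}, so each $\Theta$ is dominated by the corresponding $\Delta$ and is always available as a candidate. One must also handle the degenerate conventions, namely an empty $\Theta_1$ and the algorithm's stopping rule.
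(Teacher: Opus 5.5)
Your proof is correct and follows the paper's own argument. You dominate the increasing chain termwise by the M{\oe}glin--Waldspurger chain, starting from $\Theta_l\le\Delta_1$ and propagating downwards, which forces $e(\Delta_k)\le b-1$ and contradicts $s(\m)\le' s$; the second assertion then comes, exactly as in the paper, from the stopping rule for $\Theta$. Your explicit indexing $\Delta_i\ge\Theta_{j_0-i+1}$ is in fact slightly cleaner than the paper's phrasing, whose condition ``$j\ge b-1$'' should read $j\ge b$.
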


\begin{proof}
    Since $\m\in\DMult^{\le 's}$, we have $s(\m)\le' \s$. If the endpoint $e'=e(s(\m))$ of $s(\m)$ is strictly smaller than $e=e(s)$, the result is clear since $e'$ is by definition the largest endpoint among the segments of $\m$. Now assume $e'=e$. By definition, we then have $b'=b(s(\m))$ greater than $b=b(s)$. We now prove that having $e(\Theta_l)=e$ contradicts the fact that the sequence $\Delta_1,\ldots,\Delta_k$ stops at $\Delta_k$ with $e(\Delta_k)=b'$. 

    Assume that $e(\Theta_l)=e$; then by definition of $\Delta_1$, we have $\Theta_l\le\Delta_1$. Now consider $\Delta_i$ and assume that $\Theta_r\le\Delta_i$ with $r$ such that $\Theta_r$ ends at $j:=e(\Delta_i)$. If $j\ge b-1$, we have $r\ge 2$ and then $\Theta_{r-1}\in\m$. Moreover, $\Theta_{r-1}\le\Theta_r\le \Delta_i$ and $e(\Theta_{r-1})=e(\Theta_r)-1=e(\Delta_i)-1$. This implies that $k\ge i+1$. 

    By induction, this implies that $e(\Delta_k)\le b-1<b$, which is impossible because $e(\Delta_k)=b'\ge b$. This implies that $e(\Theta_l)\le e-1$. 

    Now if $e(\Theta_l)=e-1$, there are no segments that end at $e$ and that begin strictly after $b(\Theta_l)$
 because such a segment would imply the existence of $\Theta_{l+1}$ ending at $e$.    
    \end{proof}

Thanks to this lemma, we can now prove the key result concerning $\CT_s$, which is that $\CT_s$ is dual to $\T_s$.

\begin{prop}\label{prop duality GLn}
    For any $\m\in\DMult^{\le's}$, we have $ (\CT_s(\m))^t=\T_s(\m^t)$.
\end{prop}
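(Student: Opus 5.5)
The plan is the one announced before the example: apply the M{\oe}glin-Waldspurger algorithm to $\m':=\CT_s(\m)$ and show that it produces $\m'_1=s$ and $(\m')^\#=\m$. The recursive formula $(\m')^t=\m'_1+((\m')^\#)^t$ then gives $(\CT_s(\m))^t=s+\m^t$. This equals $\T_s(\m^t)$ because $\m^t\in\Mult^{\le' s}$: the largest segment of $\m^t$ for $<'$ is $s(\m)\le' s$.

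\textbf{Step 1: the largest end.} Since every segment of $\m$ ends at most at $e(s(\m))\le e(s)=e$, the added point $[e,e]$ (when $e(\Theta_l)+1<e$) or the extended $\Theta_l^+$ (when $e(\Theta_l)=e-1$, allowed by Lemma \ref{lemm bound ending GLn}) ends at the maximum $e_{max}=e$. I claim that $\Delta_1$ is exactly this new segment.
\begin{itemize}
\item In the first case, $[e,e]$ is the largest segment ending at $e$ for $>$.
\item In the second case, Lemma \ref{lemm bound ending GLn} says that no segment of $\m$ ending at $e$ begins strictly after $b(\Theta_l)$. If one begins exactly at $b(\Theta_l)$, it coincides with $\Theta_l^+$, so we may take the added copy.
\end{itemize}

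\textbf{Step 2: the descending chain.} Run the chain downwards through $j=e,e-1,\dots$. In the range $j>e(\Theta_l)+1$, the chain takes the length-one segments $[j,j]$, since these are the maximal ones ending at $j$ and they form a $\le$-chain. In the range $b\le j\le e(\Theta_l)+1$, I claim that $\Delta$ at end $j$ is $\Theta_r^+$ with $e(\Theta_r)=j-1$. To show this I must check two things.
\begin{enumerate}
\item $\Theta_r^+\le\Theta_{r+1}^+$. This follows from $\Theta_r<\Theta_{r+1}$ with consecutive ends.
\item No segment $\Gamma$ of $\m'$ ending at $j$ satisfies $\Theta_r^+<\Gamma\le\Theta_{r+1}^+$. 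Such a $\Gamma$ would either be an unmodified segment of $\m$ ending at $j$, or come from the $\Theta$'s. In the first case, $\Gamma$ would have $b(\Gamma)$ strictly between, or equal with care to, the beginnings of $\Theta_r$ and $\Theta_{r+1}$, and would end at $e(\Theta_r)+1$. That contradicts the minimality in the choice of $\Theta_{r+1}$ as the smallest segment ending at $e(\Theta_r)+1$ and greater than $\Theta_r$.
\end{enumerate}
Finally, I must show that the chain stops exactly after $\Theta_1^+$, which ends at $b$. Any segment $\Gamma\le\Theta_1^+$ ending at $b-1$ would lie in $\m$, end at $b-1$, and satisfy $\Gamma\le\Theta_1$ after comparing beginnings, with equality excluded by a similar consideration. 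This would contradict the choice of $\Theta_1$ as the smallest segment ending at $b-1$. When $\Theta_1=\varnothing$, all points down to $b$ are added, and the chain stops because $[b,b]$ has no predecessor ending at $b-1$ below it.

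\textbf{Step 3: conclusion.} We get $\m'_1=[b,e]=s$. Shortening each $\Delta_i$ removes the added points $[j,j]$ and turns $\Theta_r^+$ back into $\Theta_r$, so $(\m')^\#=\m$.

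The main obstacle is Step 2. The difficulty is verifying that the \emph{largest} segment in the descending MW chain is always the modified $\Theta_r^+$ and not an original segment of $\m$ lying between, and that the chain halts precisely at end $b$. The duality of the two minimality/maximality choices (smallest above for $\Theta$, largest below for $\Delta$) must be handled with careful tie-breaking when beginnings coincide. I would treat that by comparing beginnings case by case, using $\Theta_r<\Theta_{r+1}$ strictly.
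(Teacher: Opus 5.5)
Your proposal follows the paper's proof essentially step for step. You run the M{\oe}glin--Waldspurger algorithm on $\CT_s(\m)$ and show that the chain consists of the added points and the extended segments $\Theta_r^+$: the Lemma handles the top, minimality of $\Theta_{r+1}$ handles the descent, and minimality of $\Theta_1$ (or absence of segments ending at $b-1$) handles the stop. You then read off $\CT_s(\m)_1=s$ and $\CT_s(\m)^\#=\m$. The one handoff you never state is the step from the lowest added point $[e(\Theta_l)+2,e(\Theta_l)+2]$ down to $\Theta_l^+$. It is settled, exactly as in the paper's second sub-case and in your Step~1, by the fact that the $\Theta$-sequence stops at $\Theta_l$: no segment of $\m$ ending at $e(\Theta_l)+1$ begins after $b(\Theta_l)$.
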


\begin{proof}
Take $\m\in\DMult^{\le's}$.
To prove this result, we apply the M{\oe}glin-Waldspurger algorithm to $\bar \m:=\CT_s(\m)$ and we verify that $\bar \m^\#=\m$ and $\bar \m_1=s$, so that we obtain $\bar \m^t=s+\m^t$.
Then, since $s$ is larger than any segment in $\m$ by hypothesis, we get:
$$(\CT_s(\m))^t=\bar \m^t=\T_s(\m^t)$$

To do this, we need to compute the sequence $\bar \Delta_1,\ldots,\bar \Delta_k$ of $\bar \m$. First, note that all the segments in $\m$ have to end before $e$ since $\m\in\DMult^{\le's}$. Then we deduce that the largest endpoint $\bar e_{\max}$ of $\bar \m$ has to be $e$. Indeed, according to Lemma \ref{lemm bound ending GLn}, either $\Theta^s_l$ ends at $e-1$ and then $\Theta_l^+$ ends at $e$, or $\Theta_l$ ends strictly before $e-1$ and then $[e,e]\in\bar \m$ ends at $e$. According to Lemma \ref{lemm bound ending GLn}, when $e(\Theta_l)=e-1$, we deduce that the largest segment in $\bar \m$ ending at $e$ is $\Theta_l^+$, so $\bar \Delta_1=\Theta_l^+$. In the second case, $[e,e]$ is the largest possible segment ending at $e$, so we necessarily have $\bar \Delta_1=[e,e]$. 

Now consider $\bar \Delta_i$ and assume that it is either a point $[j,j]$ or $\Theta_r^+$.
\begin{itemize}
    \item If $\bar \Delta_i=[j,j]$, we have three possibilities.
    \begin{itemize}
        \item Assume that $j-1> e(\Theta_l)+1$; then $[j-1,j-1]\in\bar \m$, and this is the largest possible segment that is smaller than $[j,j]$ and that ends at $j-1$. Thus, we deduce that $k\ge i+1$ and that $\bar \Delta_{i+1}=[j-1,j-1]$.

        \item Assume that $j-1=e(\Theta_l)+1$. The segment $\Theta^+_l\in\bar \m$ ends at $j-1$ and is smaller than $[j,j]$, so we know that $k\ge i+1$. Now we show that $\bar \Delta_{i+1}=\Theta_l^+$. To do this, it suffices to prove there is no segment $u$ in $\bar \m$ strictly larger than $\Theta_l^+$ that ends at $j-1$. However, such a segment would have to belong to $\m$, and this would be a larger segment than $\Theta_l$, which would imply that the sequence $\Theta$ does not stop with $\Theta_l$. Thus, $\bar \Delta_{i+1}=\Theta_l^+$.
        \item Assume that $\Theta_1$ is empty and $j=b$. The fact that $\Theta_1$ is empty implies that there are no segments in $\m$ that end at $b-1$, the same being also true in $\bar \m$, and we then deduce in this case that $k=i$.
    \end{itemize}
    \item If $\bar \Delta_i=\Theta_r^+$, denote by $j$ the endpoint of $\bar \Delta_i$. We have two possibilities:
    \begin{itemize}

    \item If $j>b$, then $r\ge 2$ and the segment $\Theta_{r-1}^+\in\bar \m$ ends at $j-1$. Since $\Theta_{r-1}\le \Theta_r$, we have $\Theta_{r-1}^+\le \Theta_r^+$ and we deduce that $k\ge i+1$. We now prove that $\bar \Delta_{i+1}=\Theta_{r-1}^+$. Assume there exists $u\in \bar \m$ that ends at $j-1$, that is smaller than $\Theta_r^+$, and that is strictly larger than $\Theta_{r-1}^+$. Then this segment lies in $\m$, but this contradicts the minimality of $\Theta_r$, so we deduce there is no such $u$ and $\bar \Delta_{i+1}=\Theta_{r-1}^+$ 

    \item If $j=b$, then $r=1$. Assume there exists a segment $u$ in $\bar \m$ which ends at $b-1$ and is strictly smaller than $\Theta_1^+$. Then $u$ has to be in $\m$, and this segment is strictly smaller than $\Theta_1$, so it contradicts the definition of $\Theta_1$. We conclude that there is no such $u$ and that $k=i$.
    \end{itemize}
\end{itemize}

By induction, all of this implies that $\bar \m_1=[e(\bar \Delta_k),e(\bar \Delta_1)]=s$  and:
$$\sum_{i=1}^k\bar \Delta_i=\sum_{r=1}^l\Theta_r^++\sum[j,j]$$
where the second sum is indexed as in the definition of $\CT_s(\m)$.
We then get:
\begin{align*}
\bar \m^\#&=\bar \m-\sum_{i=1}^k\bar \Delta_i+\sum_{i=1}^k\bar \Delta_i^-\\
&=\left(\m-\sum_{r=1}^l\Theta_r+\sum_{r=1}^l\Theta_r^++\sum[j,j]\right)-\sum_{r=1}^l\Theta_r^++\sum[j,j]+\sum_{r=1}^l(\Theta_r^+)^-+\sum[j,j]^-\\
&=\m-\sum_{r=1}^l\Theta_r+\sum_{r=1}^l\Theta_r\\
&=\m 
\end{align*}
since $[j,j]^-=\varnothing$ for any $j$ and $(u^+)^-=u$ for any segment.
This concludes the proof.
\end{proof}

This proposition provides a new algorithm to compute $\m^t$. Indeed, write $\m=s_1+\ldots+s_n$ with $s_1\le'\ldots\le's_n$, then we have:
$$\m=\T_{s_n}\circ\ldots \T_{s_1}(\varnothing)$$
and according to Proposition \ref{prop duality GLn}, we also have:
$$\m^t=\CT_{s_n}\circ\ldots\circ\CT_{s_1}(\varnothing)$$

\begin{rem}
The characterization of co-tempered data in this case was already established in \cite{zel} (it follows from 9.3, 9.5 and 8.4). Indeed, in this case, it is well known that co-tempered multisegments  are the $\m$ such that:
\begin{itemize}
  \item $\m$ consists only of points (segments of length 1).
  \item for any $x\in\mathbb R$, we have $m_\m([x,x]_{\rho_u})=m_\m([-x,-x]_{\rho_u})$
  \item for any $x\in\mathbb R^+$, we have $m_\m([x,x]_{\rho_u})\ge m_\m([x+1,x+1]_{\rho_u})$
\end{itemize}
We can easily recover this result inductively by applying the M{\oe}glin-Waldspurger algorithm to such a multisegment, or by using our new algorithm on a tempered multisegment. We then see that while the M{\oe}glin-Waldspurger algorithm preserves co-temperedness, our new algorithm preserves temperedness. 
\end{rem}

\subsection{Ugly parity case}

This subsection will be particularly short because, as we will see, according to the Lanard-Mínguez algorithm, the ugly parity case follows immediately from the previous case of $\GL_n$.

Suppose here that $\rho$ is of ugly parity. By definition of $\Symm_\rho^\e(G)$ in this case, all the signs are $+1$; then we can identify any element of $\Symm_\rho^\e(G)$ with its underlying multisegment which lies in $\Mult_\rho\times \Mult_{\rho^\vee}$.

Take $\m\in\Symm^\e_\rho$ and write $\m = \m_\rho + \m_{\rho^{\vee}}$ with $\m_\rho \in \Mult_\rho$ and $\m_{\rho^{\vee}} \in \Mult_{\rho^{\vee}}$. According to Remark 5.1.3 of \cite{LM}, $\AD_\rho(\m)=\m_\rho^t+(\m_\rho^t)^{\vee}$.

Because of this remark, our new algorithm to compute the M{\oe}glin-Waldspurger dual automatically gives a new algorithm for the ugly parity case. One just needs to apply it to $\m_\rho$ and then add its symmetric to the result.

This remark also implies the following characterization for co-tempered data in the ugly parity case.

\begin{prop}\label{thm ugly}
Let us take $\rho\in\Cusp^\GL$ of ugly parity and $\m\in\Symm_\rho^\e(G)$. We write $\m = \m_\rho + \m_{\rho^{\vee}}$ with $\m_\rho \in \Mult_\rho$ and $\m_{\rho^{\vee}} \in \Mult_{\rho^{\vee}}$. Then $\m$ is co-tempered if and only if the following conditions are true for any $\alpha\in\{\rho_u,\rho_u^\vee\}$:
  \begin{itemize}
    \item $\m_\alpha$ consists only of points (segments of length 1).
    \item for any $x\in\mathbb R$, we have $m_{\m_\alpha}([x,x]_\alpha)=m_{\m_\alpha}([-x,-x]_\alpha)$
    \item for any $x\in\mathbb R^+$, we have $m_{\m_\alpha}([x,x]_\alpha)\ge m_{\m_\alpha}([x+1,x+1]_\alpha)$
  \end{itemize}
\end{prop}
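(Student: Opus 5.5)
The plan is to reduce the statement to the $\GL_n$ characterization recalled in the preceding Remark, via \cite[Remark 5.1.3]{LM}: for $\rho$ ugly and $\m=\m_\rho+\m_{\rho^\vee}\in\Symm^\e_\rho(G)$ we have $\ADp(\m)=\m_\rho^t+(\m_\rho^t)^\vee$. Since all signs are $+1$ in the ugly case, elements of $\Symm^\e_\rho(G)$ are identified with symmetric multisegments in $\Mult_\rho\times\Mult_{\rho^\vee}$, and symmetry means exactly $\m_{\rho^\vee}=\m_\rho^\vee$. A tempered datum $\tau$ in this factor consists only of centered segments, so $\tau_\rho$ consists only of centered segments, i.e. $L(\tau_\rho)$ is tempered for $\GL$. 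Conversely, any multisegment $\mathfrak a\in\Mult_\rho$ of centered segments gives a tempered datum $\mathfrak a+\mathfrak a^\vee\in\Temp_\rho$; here the ugly parity is used, since $\Z_\rho\neq\Z_{\rho^\vee}$ removes any parity or determinant constraint in the projection.

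First I will show that if $\m=\ADp(\tau)$ with $\tau$ tempered, then $\m_\rho=\tau_\rho^t$ (the $\rho$-component of $\tau_\rho^t+(\tau_\rho^t)^\vee$). Hence $\m_\rho$ is co-tempered for $\GL$, and the Remark (Zelevinsky) gives the three conditions for $\alpha=\rho_u$. For $\alpha=\rho_u^\vee$, I will use that $\m_{\rho^\vee}=\m_\rho^\vee$. Taking contragredients sends $[x,x]_{\rho_u}$ to $[-x,-x]_{\rho_u^\vee}$, so the three conditions transfer: points stay points, the symmetry $x\leftrightarrow -x$ is preserved, and the monotonicity on $\R^+$ is obtained from the symmetry condition combined with monotonicity.

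Conversely, assume the conditions hold for $\alpha=\rho_u$ (the case $\rho_u^\vee$ is then automatic). By the Remark, $\m_\rho=\mathfrak a^t$ for some centered multisegment $\mathfrak a\in\Mult_\rho$. Setting $\tau=\mathfrak a+\mathfrak a^\vee\in\Temp_\rho$, \cite[Remark 5.1.3]{LM} gives $\ADp(\tau)=\mathfrak a^t+(\mathfrak a^t)^\vee=\m_\rho+\m_\rho^\vee=\m$, so $\m$ is co-tempered.

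The only delicate point is bookkeeping rather than mathematics. One must check that the notion of ``centered'' for segments on $\Z_\rho$ with $\rho$ non-unitary is measured with $\rho_u$, which is the convention of the Remark and of the paper's $[x,y]_{\rho_u}$ notation. One must also check that the $\GL$ co-tempered characterization is stated on a single line $\Z_{\rho}$, so that it applies directly to $\m_\rho$.
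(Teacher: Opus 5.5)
Your proposal is correct and is essentially the paper's argument: the paper derives this proposition directly from \cite[Remark 5.1.3]{LM}, namely $\ADp(\m)=\m_\rho^t+(\m_\rho^t)^\vee$, combined with Zelevinsky's characterization of co-tempered multisegments for $\GL_n$ recalled in the preceding remark. You make explicit the bookkeeping the paper leaves implicit: the identification $\m_{\rho^\vee}=\m_\rho^\vee$ coming from symmetry, the transfer of the three conditions to $\rho_u^\vee$ by contragredient, and the fact that in the ugly case $\Temp_\rho$ consists exactly of the data $\mathfrak a+\mathfrak a^\vee$ with $\mathfrak a\in\Mult_\rho$ centered.
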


\begin{rem}
    Boiling down to the case of $\GL_n$ was not a necessity to define the algorithm or to get the characterization of the co-tempered data, we could have copied what we did for $\GL_n$ by replacing $\Mult$ with $\Symm^\e_\rho(G)$ and the M{\oe}glin-Waldspurger algorithm. 

    The rest of the article is to do such a thing for the bad and good parity cases since in these cases we cannot directly used the case of $\GL_n$. In the next Subsection we address the bad parity cases in which we can adapt everything pretty effortlessly.
\end{rem}

\subsection{Bad parity case}

Although this case shares some very strong similarities with the case of $\GL_n$, we cannot derive it directly from the case of $\GL_n$ as we did for the ugly parity case. The main difference from the ugly parity case is that in the bad case, we have $\rho=\rho^\vee$.

However, by the definition of $\Symm_{\rho}^\e(G)$ in this case, all the signs are $+1$; thus, we can identify any element of $\Symm_{\rho}^\e(G)$ with its underlying multisegment, which lies in $\Mult_{\rho}$.

We will proceed in a very analogous manner, and both the definitions and the results only need to be adapted.

First, we recall the Lanard-Mínguez recursive definition of $\ADp$ in this case. Let $\m\in\Symm_\rho^\e(G)$. To begin, let $e_{max}$ be the maximum of the right endpoints $e(s)$ among the segments $s\in\m$. Now, let $\Delta_1$ be the largest element of $\m$ such that $e(\Delta_1)=e_{max}$. Then, we recursively define the sequence $\Delta_1\geq \cdots\geq \Delta_l$. Suppose $\Delta_i$ has been constructed for an index $i\geq 0$; we then define $\Delta_{i+1}$ as the largest segment in $\m$ satisfying the following three conditions:
\begin{enumerate}
  \item $e(\Delta_{i+1})=\Delta_i-1$
  \item $\Delta_{i+1}\leq \Delta_i$
  \item If there exists $j<i+1$ such that $\Delta_{i+1}^\vee=\Delta_{j}$, then $m_{\m}(\Delta_j)\geq 2$
\end{enumerate}
If no such segment exists in $\m$, then we set $l=i$ and we stop. 
From this sequence, we define:
$$\m_1=[e(\Delta_l),e(\Delta_1)]_{\rho_u}+[-e(\Delta_1),-e(\Delta_l)]_{\rho_u}
$$
and
$$\m^\#=\m+\sum_{i=1}^l(\Delta_i^--\Delta_i+{}^-(\Delta_i^\vee)-\Delta_i^\vee)$$
This finally allows us to define $\ADp$ recursively by the following formula:
$$\ADp(\m)=\m_1+\ADp(\m^\#)$$
In what follows, we will denote $s(\m)=[-e(\Delta_1),-e(\Delta_l)]_{\rho_u}$.

Let $s$ be a segment with a strictly positive center. We now define $\Symm_\rho^{\le' s}$ as the set of data $\m \in \Symm^\e_\rho(G)$ such that every segment $\Delta \in \m$ satisfies $\Delta\le' s$.

For $\m \in\Symm_\rho^{\le' s}$, we define the operator $\T_s: \Symm_\rho^{\le' s} \to \Symm_\rho^{\le' s}$ by:
    $$\T_s(\m) = \m + s+s^\vee$$

Now let $\DSymm_\rho^{\le' s}$ be the dual $\ADp(\Symm_\rho^{\le' s})$ of $\Symm_\rho^{\le' s}$. This definition is not intrinsic because we need to compute the dual of a datum to determine whether it belongs to $\DSymm_\rho^{\le' s}$.

However, if we take $\m\in\Symm^\e_\rho(G)$, by applying the Lanard-Mínguez algorithm, we obtain two data $\m_1$ and $\m^\#$ such that $\ADp(\m)=\m_1+\ADp(\m^\#)$, where the datum $\m_1$ is $s(\m)+s(\m)^\vee$, with $s(\m)$ being the largest segment in $\ADp(\m)$ for the order $<'$.

Then, we have:
$$\DSymm_\rho^{\le's}=\{\m\in\Symm_\rho^\e(G)~|~s(\m)\le's\}$$

We now fix a segment $s$ and we define an operator $\CT_s$ on $\DSymm_\rho^{\le's}$. To do this, we first define an increasing sequence of segments of $\m$.

Define $\Theta^s_1$ as the smallest segment in $\m$ for the order $<$ that ends at $b(s)-1$, and as $\varnothing$ if there are no segments in $\m$ satisfying this condition.

Now we recursively define the sequence $\Theta^s_1\le\ldots\le\Theta^s_l$, with $\Theta^s_i$ being the smallest segment in $\m$ satisfying the following three conditions: 
\begin{enumerate}
    \item $e(\Theta^s_i)=e(\Theta^s_{i-1})+1$
    \item $\Theta^s_i>\Theta^s_{i-1}$
    \item If there exists $j<i+1$ such that $(\Theta_{i+1}^s)^\vee=\Theta^s_j$, then $m_\m(\Theta_{j}^s)\ge 2$
\end{enumerate}

If no such segment exists in $\m$, then we set $l=i$ and we stop.
Of course, if $\m=\varnothing$, we have $\Theta_1^s=\varnothing$ (regardless of what $s$ is).
Even if $\Theta^s_i$ depends on the segment $s$, since this choice has been fixed, we will denote it by $\Theta_i$ in what follows.

Finally we define:
$$\CT_s(\m)=\m+\sum_{i=1}^l(\Theta_i^+-\Theta_i+{}^+(\Theta_i^\vee)-\Theta_i^\vee)+\sum([j,j]+[j,j]^\vee)$$

where the last sum is indexed by the set $\{j\in e(s)-\mathbb N~|~j> e(\Theta_l)+1\}$.

Here is the duality result for this case:

\begin{prop}\label{prop duality bad}
    For any $\m\in\DSymm_\rho^{\le's}$, we have $ \ADp(\CT_s(\m))=\T_s(\ADp(\m))$.
\end{prop}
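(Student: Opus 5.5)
We follow exactly the strategy of Proposition \ref{prop duality GLn}. Set $\bar\m:=\CT_s(\m)$. We run the Lanard-Mínguez algorithm for bad parity on $\bar\m$ and show that its output is $\bar\m_1=s+s^\vee$ and $\bar\m^\#=\m$. Then
$$\ADp(\bar\m)=s+s^\vee+\ADp(\m)=\T_s(\ADp(\m)),$$
where the last equality uses $\ADp(\m)\in\Symm_\rho^{\le's}$, which is the definition of $\m\in\DSymm_\rho^{\le's}$. Note that $\CT_s(\m)$ is symmetric by construction, and that it lies in $\Symm^\e_\rho(G)$. For the latter, the new centered segments must have even multiplicity; this holds because a centered $\Theta_i^+$ arises together with ${}^+(\Theta_{i'}^\vee)$ for a matching index.

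\textbf{Step 1: bad-parity analogue of Lemma \ref{lemm bound ending GLn}.} We show that $e(\Theta_l)\le e(s)-1$, and that if $e(\Theta_l)=e(s)-1$ then no segment of $\m$ ends at $e(s)$ and begins strictly after $b(\Theta_l)$. The proof copies the $\GL_n$ argument. Assume $e(\Theta_l)=e(s)$. We prove by induction on $i$ that $\Theta_r\le\Delta_i$, where $r$ is the index with $e(\Theta_r)=e(\Delta_i)$. The point is that $\Theta_{r-1}$ is then an admissible candidate for $\Delta_{i+1}$, which forces $e(\Delta_k)\le b(s)-1$; this contradicts $s(\m)\le' s$. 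The new ingredient is condition (3). We must check that if $\Theta_{r-1}^\vee=\Delta_j$ for some $j\le i$, then the multiplicity condition holds. For this we use the corresponding condition (3) in the definition of $\Theta$, together with the fact that the $\Delta$'s and $\Theta$'s are interlaced (each $\Delta_j$ dominates the $\Theta$ with the same endpoint). This forces $\Delta_j$ and $\Theta_{r-1}^\vee$ to be the segment whose double appearance was already required.

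\textbf{Step 2: computing the sequence $\bar\Delta$ of $\bar\m$.} By Step 1, $\bar e_{\max}=e(s)$ and $\bar\Delta_1$ is either $\Theta_l^+$ or $[e(s),e(s)]$. We then run the same case analysis as in the $\GL_n$ proof. Each point $[j,j]$ is followed by $[j-1,j-1]$ or by $\Theta_l^+$. Each $\Theta_r^+$ is followed by $\Theta_{r-1}^+$, and the sequence stops at $b(s)$. The minimality of the $\Theta_r$ (for $<$) rules out any larger competitor ending at the right place, since such a competitor would have to already lie in $\m$. The two extra features are the following.
\begin{enumerate}
\item The modified segments ${}^+(\Theta_i^\vee)$ and the points $[j,j]^\vee=[-j,-j]$ could themselves be competitors. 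We rule them out by a sign argument: the $\bar\Delta_i$ end at values in $[b(s),e(s)]$, while these reflected pieces end at $-b(\Theta_i)$ or $-j$. When such an endpoint does coincide with some $e(\bar\Delta_i)$, we compare with the order $<$ and use $c(s)>0$.
\item The condition (3) checks for $\bar\Delta$ must be shown to agree with those for $\Theta$, so that the algorithm neither skips nor adds a step.
\end{enumerate}
We conclude that $\sum_i\bar\Delta_i=\sum_r\Theta_r^++\sum[j,j]$ and that $[e(\bar\Delta_k),e(\bar\Delta_1)]=s$.

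\textbf{Step 3: the final computation.} This is the same telescoping as in the $\GL_n$ case, now carried out with the added reflected terms. We use $(u^+)^-=u$ and ${}^-({}^+u)=u$, together with $[j,j]^-=\varnothing={}^-[-j,-j]$, and obtain $\bar\m^\#=\m$.

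\textbf{Main obstacle.} The hardest part is the condition (3) bookkeeping in Steps 1 and 2, namely self-dual segments and segments of the form $\Theta_i$ with $\Theta_i^\vee$ also in the sequence. This occurs when the $\Theta$-chain crosses the center, i.e. when $b(s)-1<0$. Here the extension $\Theta_i^+$ combined with ${}^+(\Theta_{i'}^\vee)$ could alter multiplicities of centered segments. The approach is to pair indices $i\leftrightarrow i'$ with $\Theta_{i'}=\Theta_i^\vee$ and verify that the multiplicities in $\bar\m$ of $\Theta_r^+$ are exactly what condition (3) for $\bar\Delta$ requires. We would first check this on small examples that cross $0$.
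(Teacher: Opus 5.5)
Your strategy is the paper's. You apply one step of the bad-parity Lanard--Mínguez algorithm to $\bar\m=\CT_s(\m)$ and check $\bar\m_1=s+s^\vee$ and $\bar\m^\#=\m$, after extending Lemma \ref{lemm bound ending GLn}. The gap is in how you dispose of competitors. In bad parity, the reflected pieces ${}^+(\Theta_i^\vee)$ and $[-j,-j]$ are genuine competitors. They are not in $\m$, and comparison for $<$ does not eliminate them. They are eliminated only by condition (3) of the Lanard--Mínguez algorithm, acting as an exclusion, not merely as bookkeeping to be matched with the $\Theta$-condition.

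Two examples show this.
\begin{itemize}
\item Take $s=[0,1]$ and $\m=\varnothing$. Then $\bar\m=2[0,0]+[1,1]+[-1,-1]$ and the chain is $\bar\Delta_1=[1,1]$, $\bar\Delta_2=[0,0]$. The point $[-1,-1]$ ends at $b(s)-1$ and satisfies $[-1,-1]<[0,0]$. It is rejected only because $[-1,-1]^\vee=[1,1]=\bar\Delta_1$ has multiplicity one in $\bar\m$. Without this rejection one would get $\bar\m_1=2[-1,1]$.
\item The same happens mid-chain. Take $s=[1,5]$ and $\m=[-5,0]+[0,5]+2[-3,1]+2[-1,3]+2[-2,2]$, so that $s(\m)=[5,5]$. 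Then $\Theta=([-5,0],[-3,1],[-2,2],[-1,3])$, $\bar\Delta_2=\Theta_4^+=[-1,4]$ and $\bar\Delta_4=\Theta_2^+=[-3,2]$. The piece ${}^+(\Theta_4^\vee)=[-4,1]$ satisfies $\Theta_1^+=[-5,1]<[-4,1]<[-3,2]$. It is discarded only because $[-1,4]$ has multiplicity one in $\bar\m$.
\end{itemize}

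The missing argument goes as follows. Since $\Theta_{t+1}>\Theta_t$ and $e(\Theta_{t+1})=e(\Theta_t)+1$, beginnings strictly increase along $\Theta$. Suppose ${}^+(\Theta_i^\vee)$ ends where $\Theta_r^+$ does.
\begin{itemize}
\item If $r>i$, then $b(\Theta_r)\ge b(\Theta_i)+(r-i)=b({}^+(\Theta_i^\vee))$, so the piece is harmless.
\item If $r<i$, or if the piece ends at $b(s)-1$, its contragredient $\Theta_i^+$ was already selected as an earlier $\bar\Delta$. Moreover $m_{\bar\m}(\Theta_i^+)=1$. Indeed, $\Theta_i^+\in\m$ would give $(\Theta_i^+)^\vee={}^+(\Theta_i^\vee)\in\m$, contradicting the minimality of $\Theta_{r+1}$ (resp.\ of $\Theta_1$). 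Hence condition (3) rejects the piece.
\item When $\Theta=\varnothing$ and $1-b(s)\in[b(s),e(s)]$, the added point $[b(s)-1,b(s)-1]$ is rejected in the same way. Its contragredient has multiplicity one in $\bar\m$, since a copy in $\m$ would make $\Theta_1\neq\varnothing$.
\end{itemize}

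Relatedly, the second clause of your Step 1 is false in bad parity; the paper's sketch also imports this statement from Lemma \ref{lemm bound ending GLn}. Take $s=[0,3]$ and $\m=[-3,-1]+[1,3]+2[-2,0]+2[0,2]+2[-1,1]$, so that $s(\m)=s$. Then $\Theta=([-3,-1],[-2,0],[-1,1],[0,2])$, because $[1,3]$ is barred by the $\Theta$-condition (3). Yet $[1,3]$ ends at $3$ and begins after $b(\Theta_4)=0$.

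What does hold is weaker. Such a segment is necessarily $\Theta_j^\vee$ with $m_\m(\Theta_j)=1$, and $\CT_s$ turns it into ${}^+(\Theta_j^\vee)=\Theta_l^+$. That suffices to get $\bar\Delta_1=\Theta_l^+$.

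The first clause of Step 1 is fine once you write out the sandwich $\Theta_{r-1}^\vee\le\Theta_{l-j+1}\le\Delta_j$. Its left inequality again comes from the strict increase of beginnings. It forces $\Theta_{l-j+1}=\Theta_{r-1}^\vee$, so the $\Theta$-condition (3) applies.
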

\begin{proof}
The idea of the proof is exactly the same as in the case of $\GL_n$: we first need to prove the statement of Lemma \ref{lemm bound ending GLn}, then we need to show that the algorithm exactly exhausts the sequence $(\Theta^s_i)_{1\le i\le l}$ and the points we add. This automatically implies that $\bar \m^\#=\m$ and $\m_1=s+s^\vee$.

The main difference throughout is the third point in the definition of the sequence $\Delta_1,\ldots,\Delta_k$ of Lanard-Mínguez. However, this difference is completely offset by our definition of $\Theta_1^s,\ldots,\Theta_l^s$ in this case.

For example, we prove in this case that if $\bar \Delta_i=\Theta_r^+$ and if $j:=e(\bar \Delta_i)>b$, then $k\ge i+1$ and $\bar \Delta_{i+1}=\Theta_{r-1}^+$.

Since $\Theta_{r-1}\le \Theta_r$, we have $\Theta_{r-1}^+\le \Theta_r^+$ and $e(\Theta_{r-1}^+)=e(\Theta_r^+)-1$. Moreover, if there exists $j<i+1$ such that $(\Theta_{i+1}^s)^\vee=\Theta_j^s$, then $m_\m(\Theta_{j}^s)\ge 2$, so we can deduce that $k\ge i+1$. The proof that $\bar \Delta_{i+1}=\Theta_{r-1}^+$ is exactly the same as for $\GL_n$. 

As a final example, we treat the case of the points. Assume that $\bar \Delta_i=[j,j]$ and assume that $j-1>e(\Theta_l)+1$. The only difference with the case of $\GL_n$ is that we need to prove that, if there exists $i'$ such that $\bar \Delta_{i'}=[-j+1,-j+1]$, we have $m_{\bar \m}([-j+1,-j+1])\ge2$. Assume such an $i'$ exists. Then we have both $j-1$ and $-j+1$ in $\{j\in e(s)-\mathbb N~|~j> e(\Theta_l)+1\}$. Since $[j-1,j-1]^\vee=[-j+1,-j+1]$, this implies that $m_{\bar \m}([-j+1,-j+1])=m_{\m}([-j+1,-j+1])+2\ge 2$.

\end{proof}

Again, this proposition provides a new algorithm to compute $\AD(\m)$. Indeed, write $\m=(s_1+s_1^\vee)+\ldots+(s_n+s_n^\vee)$ with $s_1\le'\ldots\le's_n$ and $c(s_i)\ge 0$ for any $1\le i\le n$; then we have:
$$\m=\T_{s_n}\circ\ldots \T_{s_1}(\varnothing)$$
and according to Proposition \ref{prop duality GLn}, we also have:
$$\ADp(\m)=\CT_{s_n}\circ\ldots\circ\CT_{s_1}(\varnothing)$$

We can now use this algorithm to obtain a characterization of the co-tempered data.

\begin{theorem}\label{thm bad}
  Let us take $\rho\in\Cusp^\GL$ of bad parity and $\m\in\Symm_\rho^\e(G)$. Then $\m$ is co-tempered if and only if the following conditions are true:
  \begin{itemize}
    \item $\m$ consists only of points
    \item for any $x\in\mathbb R^+$, we have $m_{\m}([x,x]_{\rho_u})\ge m_{\m}([x+1,x+1]_{\rho_u})$
  \end{itemize}
\end{theorem}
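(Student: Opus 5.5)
Tempered data in bad parity are sums of centered segments, each with even multiplicity, so every tempered $\m$ can be written as $\m=(s_1+s_1^\vee)+\dots+(s_n+s_n^\vee)$. Here each pair $s_i+s_i^\vee$ is one copy of a centered segment $C$ taken twice: writing $C=[-a,a]$, I set $s_i=C$, with the operator $\T_s$ understood as adding $s+s^\vee=2C$. One must check that $\CT_s$ and Proposition \ref{prop duality bad} still make sense for such centered $s$; if not, I would simply prove the analogous duality for centered segments with multiplicity two by the same Lanard--Mínguez computation. I order the segments so that $s_1\le'\dots\le's_n$, which for centered segments just means $s_1\subseteq\dots\subseteq s_n$. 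Then $\ADp(\m)=\CT_{s_n}\circ\dots\circ\CT_{s_1}(\varnothing)$, and the plan is to prove by induction on $n$ that this lies in the set $\mathcal P$ described in the theorem, and conversely that every element of $\mathcal P$ arises this way.

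The key inductive step is the following. Suppose $\m'=\CT_{s_{r-1}}\circ\dots\circ\CT_{s_1}(\varnothing)$ consists only of points, has symmetric multiplicities, and has multiplicities non-increasing in $|x|$. Apply $\CT_s$ with $s=[-a,a]$, so $b(s)=-a$ and $e(s)=a$. The sequence $\Theta$ starts with the point $[-a-1,-a-1]$ if it occurs in $\m'$, and then climbs through points $[x,x]$ with $x$ increasing by one at each step. I would then show that for a point datum the extension $\Theta_i\mapsto\Theta_i^+$ must produce a segment of length $2$, which breaks the points-only property. To avoid this, I would prove that $\Theta$ is always empty in our situation. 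Indeed, every point of $\m'$ lies in $[-a',a']$ with $a'\le a$, because the earlier segments $s_i$ are contained in $s$. Hence no point of $\m'$ ends at $b(s)-1=-a-1$, so $\Theta_1=\varnothing$. Therefore $\CT_s(\m')=\m'+\sum_{-a\le j\le a}([j,j]+[-j,-j])$, which adds the points $2[j,j]$ for $|j|\le a$. This operation preserves symmetry and the monotonicity condition, so the forward direction follows by induction. The equality case $s_{r}=s_{r-1}$ behaves in the same way.

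For the converse, take $\m\in\mathcal P$ and write $m(x)=m_\m([x,x])$. By symmetry and monotonicity the multiplicity function decomposes as a sum of indicator functions of symmetric intervals $[-a,a]$; each such interval contributes $2$ at every point, or $1$ at the centre $0$ when $a=0$. Choosing $s_i=[-a_i,a_i]$ accordingly, the computation above shows that $\m=\ADp\bigl(\sum_i(s_i+s_i^\vee)\bigr)$, which is the dual of a tempered datum. I expect the main obstacle to be precisely this parity bookkeeping. Segments must have even multiplicity for bad $\rho$, and the point $[0,0]$ (when it lies on the line) is self-dual, so adding $[j,j]+[j,j]^\vee$ gives multiplicity $2$ at $0$ and $2$ elsewhere. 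I would need to verify that this matches the intended meaning of the theorem's multiplicity conditions, using that $\rho$ bad forces the relevant exponents to lie in a fixed coset of $\Z$ (integers or half-integers). I would also need to confirm that the centered-$s$ version of Proposition \ref{prop duality bad} holds.
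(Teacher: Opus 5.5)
Your forward direction (co-tempered implies the two conditions) is the paper's own argument. Every point of $\m'$ lies in $s$, so $\Theta^s_1=\varnothing$ and $\CT_s$ only adds $\sum_{j\in s}([j,j]+[j,j]^\vee)$. Your worry about centered $s$ is harmless here. One Lanard--Mínguez step on $\m'+2\sum_{|j|\le a}[j,j]$ gives $\Delta_i=[a-i+1,a-i+1]$ down to $[-a,-a]$; the third condition holds because all these points have multiplicity $\ge 2$. You then get $\m_1=2[-a,a]$ and $\m^\#=\m'$.

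The gap is in the converse, and it cannot be closed, because the converse as stated is false. Each $\CT_{[-a,a]}$ adds multiplicity exactly $2$ at every point of $[-a,a]$, including $0$. Your phrase ``$1$ at the centre'' contradicts your own formula, and a single copy of $[0,0]$ is not even an allowed tempered datum for bad $\rho$. So your construction only reaches point data whose multiplicities are all even.

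This restriction is forced, not an artefact of the method:
\begin{itemize}
\item $\ADp$ preserves supercuspidal support.
\item For bad $\rho$ a tempered datum is $\sum_a n_a[-a,a]$ with every $n_a$ even, so its support has even multiplicity at every point.
\end{itemize}
Hence the following data satisfy both listed conditions but are not co-tempered:
\begin{itemize}
\item $[\tfrac12,\tfrac12]+[-\tfrac12,-\tfrac12]$ on a half-integral bad line. In one Lanard--Mínguez step the third condition rejects $[-\tfrac12,-\tfrac12]$ because $m_\m([\tfrac12,\tfrac12])=1$, so $\ADp(\m)=\m$, which is not tempered.
\item $2[0,0]+[1,1]+[-1,-1]$ on an integral bad line. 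The third condition rejects $[-1,-1]$ because $m_\m([1,1])=1$, so $\ADp(\m)=[0,1]+[-1,0]$, which is not tempered.
\end{itemize}

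The statement needs the extra condition that every $m_\m([x,x]_{\rho_u})$ is even. With it, your converse works: put $n_a=m_\m([a,a])-m_\m([a+1,a+1])$, which is even and non-negative. Your forward computation then gives $\ADp(\sum_a n_a[-a,a])=\m$ by telescoping.

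The paper's own justification of the converse (``since $\ADp$ is a bijection, this is sufficient'') does not settle it either. Injectivity of $\ADp$ does not give surjectivity onto the described set. What is actually needed is the support-matching argument above, and that is exactly where evenness enters.
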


\begin{proof}
    Let $\m$ be a tempered datum; we will show that its dual satisfies all three conditions of the statement. Since $\ADp$ is a bijection, this is sufficient. Since Aubert duality preserves the supercuspidal support, it is sufficient to show that the second condition holds for $\m$, which immediately follows from the fact that $\m$ is tempered. Now we only need to show that $\ADp(\m)$ consists exclusively of points. We proceed by induction. Take $\m$ and a segment $s=[e,b]$ with $c(s)\ge 0$ such that the results hold for $\m$ and $\m \in\DSymm^{\le's}$, and we now prove that it also holds for $\T_{s}(\m)$. According to Proposition \ref{prop duality bad}, we have $\ADp(\T_{s}(\m))=\CT_s(\ADp(\m))$. Write $\m'=\ADp(\m)$, which consists exclusively of points by assumption. We show that $\Theta^s_1=\varnothing$. The smallest starting point of segments of $\m'$ is the same as the smallest starting point of segments of $\m$, which is $-e(s(\m))\ge -e=b$. Thus, the smallest right endpoint must be $\ge b$, meaning there are no segments in $\m'$ ending at $b-1$, whence $\Theta^s_1=\varnothing$. This implies that:
    $$\AD(\m)=\m'+\sum_{j\in[b,e]}([j,j]+[j,j]^\vee)$$
    Since $\m'$ consists exclusively of points, this property also holds for $\m$.

    This concludes the proof.
\end{proof}

\begin{rem}
    This proof can be adapted almost verbatim to prove the characterization of the co-tempered multisegments. We can also prove this characterization by applying the Lanard-Mínguez algorithm to a datum $\m$ satisfying both conditions of the statement. 
\end{rem}

The goal of this article is now to achieve the same result for the good parity case. Although the strategy and the underlying ideas are the same, this case will be much more technical. The main reason is that we will have signs on the centered segments, and these signs play an essential role in the computation of the Aubert dual, as seen in the definition of the Lanard-Mínguez algorithm.

We will be able to define the equivalent operators and prove the duality result, but in this case, it will not readily yield a straightforward description of the co-tempered data. However, it will still provide an interesting constructive description as well as new properties.

\section{The good parity case}\label{section good parity}

The case of good parity is significantly more challenging than the previous ones because the signs play an essential role. Let us now fix $\rho\in\Cusp^{GL}$ of good parity until the end of this article.

\subsection{Some definitions}\label{subsec some def}

We will now define some specific subsets of symmetrical Langlands data. In the notation of those sets, the cuspidal representation $\rho$ should appear, but we will drop it since $\rho$ has been fixed and will not change in what follows. We will also drop the $\rho$ as an index of  the sets we have already introduced and write for instance $\Symm^\e(G)$ instead of $\Symm^\e_\rho(G)$, the fact that we only work on the supercuspidal line being implicit. Moreover, any segments that we write will be supported on $\rho_u$, the unitarization of $\rho$, and we will write $[x,y]$ instead of $[x,y]_{\rho_u}$.

We now define what we can view as a filtration of $\Symm^\e(G)$. To do so, observe that for any $(\m,\e)\in\Symm^\e(G)$ we can see $(\m,\e)^{\ge 0}:=(\m^{>0}+\m^{=0},\e_{|\m^{>0}+\m^{=0}})$ as an element of $\mathbb N(\Seg^{>0}\cup (\Seg^{=0}\times\{1,-1\}))$.

We extend the order $\le'$ to $\Seg^{>0}\cup (\Seg^{=0}\times\{1,-1\})$  by saying, for two elements $\s$ and $\s'$ of this set, that $\s>'\s'$ if the inequality holds for the underlying segments. Note that this order is not total since we cannot compare $(s,1)$ and $(s,-1)$. This will not matter since a symmetrical datum cannot contain two such signed segments.

We define, for any $\s\in \Seg^{>0}\cup (\Seg^{=0}\times\{1,-1\})$, the set 
$$\Symm^{\le'\s}=\{(\m,\e)\in\Symm^\e(G)~|~\s'\le'\s,~ \forall\s'\in(\m,\e)^{\ge 0}\}.$$

We can dualize this construction: for any $\s\in\Seg^{>0}\cup (\Seg^{=0}\times\{1,-1\})$ we define $$\DSymm^{\le'\s}=\AD(\Symm^{\le'\s})$$

\begin{rem}\label{rem largest segment in the dual LM}
This definition is not intrinsic in the sense that we can not decide if $(\m,\e)\in\DSymm^{\le'\s}$ without computing the dual $\AD(\m,\e)$. To solve that we can use Lanard-Mínguez algorithm.

Take $(\m,\e)\in\Symm^\e(G)$, Lanard and Mínguez define in \cite{LM} a quadruple $(\m_1,\e_1,\m^\#,\e^\#)$ such that $\AD(\m,\e)=(\m_1,\e_1)+\AD(\m^\#,\e^\#)$ with $(\m_1,\e_1) $ is either of the form $s+s^\vee$ with $s\in \Seg^{>0}$ or $(s,\eta)\in\Seg^{=0}\times\{1,-1\}$. In Proposition 7.1.5 of \cite{LM} they prove that in both cases, such a $s$ is the largest segment in $\AD(\m,\e)$ for $\le '$. This allows us to define, for any $(\m,\e)\in\Symm^\e(G)$, an element $\s(\m,\e)\in\Seg^{>0}\cup (\Seg^{=0}\times\{1,-1\})$ (this is $s$ if $\m_1=s+s^\vee$ and $(\m_1,\e_1)$ otherwise) and then:
$$\DSymm^{\le'\s}=\{(\m,\e)\in\Symm^\e(G)~|~\s(\m,\e)\le' \s\}.$$
Finally, remark that if  $\s(\m,\e)=s$ or $\s(\m,\e)=(s,\eta)$, the ending point of $s$ is always, by definition, the largest ending points of segments of $\m$.
\end{rem}

Note that by definition, for any $\s,\s'\in \Seg^{>0}\cup (\Seg^{=0}\times\{1,-1\})$ with $\s\le' \s'$, we have $\Symm^{\le'\s}\subset \Symm^{\le'\s'}$ and $\DSymm^{\le'\s}\subset \DSymm^{\le'\s'}$. Also by definition we have $(\m,\e)\in\DSymm^{\le'\s(\m,\e)}$ for any $(\m,\e)\in \Symm^\e(G)$.

We now define some operators on these sets but before doing it, for any $\s\in \Seg^{>0}\cup (\Seg^{=0}\times\{1,-1\})$, write $\tilde \s=s+s^\vee$ if $\s=s\in\Seg^{>0}$ and $\tilde \s=\s$ otherwise, so that $\tilde \s\in\Symm^\e(G)$.

\begin{deff}
    Let $\s\in \Seg^{>0}\cup (\Seg^{=0}\times\{1,-1\})$. We define the following operator:
$$\begin{array}{ccccc}
\T_{\s}: & \Symm^{\le'\s} & \to & \Symm^{\le'\s} \\
  & (\m, \e) & \mapsto & (\m, \e)+\tilde \s \\
\end{array}$$
\end{deff}

Take $(\m,\e)\in\Symm^\e(G)$ and write $(\m,\e)^{\ge 0}=\s_1+\ldots+\s_n$ with $\s_i\in\Seg^{>0}\cup (\Seg^{=0}\times\{1,-1\})$ for any $1\le i\le n$ with $\s_1\le'\ldots\le'\s_n$. It is possible to do that even if $\le'$ is not total because by definition of $\Symm^\e(G)$, two identical centered segments in $\m$ must share the same sign. Then we get: $$(\m,\e)=\T_{\s_n}\circ\ldots\circ \T_{\s_1}(\varnothing)$$

This observation leads us to the following: suppose that for any $\s$ in $\Seg^{>0}\cup (\Seg^{=0}\times\{1,-1\})$ we have an explicit formula for the map $\AD\circ \T_\s\circ \AD:\DSymm^{\le'\s}\to \DSymm^{\le'\s}$, then we would get an algorithm to compute the dual $\AD(\m,\e)$ of any $(\m,\e)\in \Symm^\e(G)$.  Indeed, we have:
$$\AD(\m,\e)=(\AD\circ \T_{\s_n}\circ \AD)\circ \ldots\circ(\AD\circ \T_{\s_s}\circ \AD)(\varnothing)$$
and we could compute $\AD(\m,\e)$ in $n$ steps where each step is applying such a dual operator $\AD\circ\T_\s\circ \AD$.

The core of our work is to define explicitly, for any such $\s$, an operator $\CT_\s:\DSymm^{\le'\s}\to \DSymm^{\le'\s}$ and to verify that we have $\AD\circ \T_\s=\CT_{\s}\circ \AD$, or equivalently, that $\CT_\s$ is the dual operator we are interested in.

To define such an operator we need a very specific way to order the segments of a symmetrical datum introduced by Lanard and Mínguez.

\subsection{Lanard-Mínguez order}

To define their algorithm, Lanard and Mínguez needed to order in a specific way the segments of symmetrical Langlands data. We recall this enrichment and ordering of $\Symm_\rho^\e(G)$ given in Subsection 5.3 of \cite{LM}.

 We formally enrich segments with a label $\clubsuit \in \{\ge 0, =0, \le 0\}$ that encodes their position.
\bigskip

Let $\USeg$ be the set of labeled pairs $(\Delta, \clubsuit)$, where $\Delta \in \Seg$ and $\clubsuit$ satisfies the following conditions:
\begin{itemize}
\item If $c(\Delta) > 0$, then $\clubsuit$ is equal to $\ge 0$;
\item If $c(\Delta) < 0$, then $\clubsuit$ is equal to $\le 0$.
\end{itemize}

In the case $c(\Delta) = 0$, then $\clubsuit$ can be any of the three values. So only centered segments ($c(\Delta) = 0$) carry a nontrivial choice of label; in all other cases, the label is determined uniquely and may be omitted. In those cases, we will simply write $\Delta$ instead of $(\Delta, \clubsuit)$. For centered segments, we usually indicate the label by a superscript $\Delta^{\clubsuit}$, \textit{e.g.}, $[-a, a]^{\ge 0}$, $[-a, a]^{=0}$, or $[-a, a]^{\le'0}$.

We now define a natural involution $\iota$ and a map $\iota'$ on the set $\USeg$ using the contragredient. For $\Delta^{\clubsuit} \in \USeg$, we define $\iota(\Delta^{\clubsuit}) \in \USeg$ by:
\begin{enumerate}
  \item If $c(\Delta) > 0$, then $\iota(\Delta^{\ge 0}) =\iota'(\Delta^{\ge 0})=(\Delta^{\vee})^{\le'0}$.
  \item If $c(\Delta) < 0$, then $\iota(\Delta^{\le'0})^{\vee}=\iota'(\Delta^{\le'0})^{\vee} = (\Delta^{\vee})^{\ge 0}$.
  \item If $c(\Delta) = 0$ and $\clubsuit$ is $= 0$, then $\iota(\Delta^{= 0}) = \iota'(\Delta^{= 0}) = (\Delta^{\vee})^{= 0}$.
  \item If $c(\Delta) = 0$ and $\clubsuit$ is $\ge0$, then $\iota(\Delta^{\ge 0}) = \iota'(\Delta^{\ge 0}) =(\Delta^{\vee})^{\le'0}$.
  \item If $c(\Delta) = 0$ and $\clubsuit$ is $\le0$, then $\iota(\Delta^{\le'0}) = (\Delta^{\vee})^{\ge 0}$ but  $\iota'(\Delta^{\le'0}) = (\Delta^{\vee})^{\le'0}$ .
\end{enumerate}

Clearly $\iota$ is an involution but $\iota'$ is not. In \cite{LM}, the authors use a similar formula to extend the contragredient to $\USeg$. The contragredient is similar to the map $\iota'$ but with $\mathcal S^{\ge 0}$ instead of $\mathcal S^{\le'0}$.

The contragredient and the maps $\iota$ and $\iota'$ naturally extend to multisets. Let $\USymm$ denote the multisets in $\USeg$ that are symmetric under $\iota$, i.e., those satisfying $\iota(\m) = \m$.  In what follows, we will need $\iota'$ (and not the contragredient as in \cite{LM}). \\

There is a natural surjection
\[
  p: \USymm \twoheadrightarrow \Symm
\]
which forgets the labels. This projection has a section
\[
  s: \Symm \to \USymm
\]
where $s(\m)$ equals
\[
\sum_{\Delta \in \m, c(\Delta) \neq 0} \Delta + \sum_{\Delta \in \m, c(\Delta) = 0}  \left\lfloor \frac{m_{\m}(\Delta)}{2} \right\rfloor \Delta^{\le'0}+ \left\lfloor \frac{m_{\m}(\Delta)}{2} \right\rfloor \Delta^{\ge 0}+  \left(m_{\m}(\Delta) -2 \left\lfloor \frac{m_{\m}(\Delta)}{2} \right\rfloor\right) \Delta^{=0}.
\]

Let $\Delta_1,\Delta_2 \in \Symm$. We define an order relation $\prec$ on the segments of $\USeg$ supported in $\Z_\rho$ in the following way:
\begin{itemize}
  \item $(\Delta_1,\le 0) \prec (\Delta_2,= 0)$.
  \item $(\Delta_1,= 0) \prec (\Delta_2,\ge 0)$.
  \item If $\clubsuit$ is $\ge 0$ or $\le 0$, then $\Delta_1^{\clubsuit} \preceq \Delta_2^{\clubsuit}$ if and only if $\Delta_1 \le \Delta_2$.
  \item If $\clubsuit$ is $= 0$, then $\Delta_1^{\clubsuit} \preceq \Delta_2^{\clubsuit}$ if and only if $e(\Delta_1) \le e(\Delta_2)$.
\end{itemize}
The transitive closure of these relations defines an order on $\USeg$.

Finally, we add signs to centered segments. Let $\USymm^{\varepsilon}(G)$ be the set of pairs $(\m,\varepsilon)$
with $\m \in \USymm$ and $\varepsilon: \{ \Delta \in \m,~c(\Delta)=0 \}  \to \{-1,1\}$. The order $\prec$ on $\USymm$ extends naturally to an order on $\USymm^{\varepsilon}$. The maps $p$ and $s$ give maps $p: \USymm^{\varepsilon}(G) \to \Symm^{\varepsilon}(G)$ and $s: \Symm^{\varepsilon}(G) \to \USymm^{\varepsilon}(G)$.

\subsection{Definition of $\Theta^{k}$}\label{subsec Theta}

Let $k\in 1/2\cdot\Z$, let $(\m,\e)\in \Symm^\e(G)$ be a symmetrical Langlands datum and let us write $(y,\e)=s(\m,\e)\in\SSymm^\e(G)$. In this subsection we define $\Theta^k=\Theta^k(\m,\e)\in \SSymm$.

We start by defining $\Theta^k_1$ as the smallest $u\in y$ such that:
\begin{enumerate}
  \item $e(u)=k$.
  \item if there is an integer $i\ge 1$ such that $p(u)=[-i+1/2,i-1/2]$, then $\e(u)=(-1)^i$.
\end{enumerate}
with $\Theta^k_1=\varnothing$ if there is no such $u$.

Now suppose we already have defined $\Theta^k_n$ for $n\ge 1$. We define $\Theta^k_{n+1}$ as the smallest element $u\in y$ such that:
\begin{enumerate}
  \item $u\succ\Theta_n$ if $\Theta_n\neq \varnothing$
  \item $e(u)=e(\Theta^k_{n})+1$
  \item if $c(u)=0$ and $c(\Theta^k_n)=0$, then $\e(u)=-\e(\Theta^k_n)$
\end{enumerate}
If there is no such element $u\in y$ we set $\Theta^k_{n+1}=\varnothing$. At the end, we define:
$$\Theta^k=\sum_{i=1}\Theta^k_i$$

We define a little variation of $\Theta^k$ by defining $\Theta^{k,0}$ in the same way except that we ask for $\Theta_1\in\mathcal S^{=0}\cup\mathcal S^{\ge0}$.

By induction we see that for any integer $n\ge 1$ such that $\Theta^k_{n}\neq \varnothing$, we have $e(\Theta^k_n)=n+k-1$ so we deduce that the terms become necessarily empty at some points.

We call the sequence $(\Theta^k_i)_{i\geq 1}$ the \textit{k-th increasing initial sequence} extracted from $(\m,\e)$ and we write $\Theta^k(\m,\e)$ or just $\Theta^k$ if there is no ambiguity on the symmetrical datum. Of course, if $\m=\varnothing$, we get $\Theta^k=\varnothing$ for any integer $k$.\\

Here is a first lemma that follows from the definition of $\Theta^k$.

\begin{lem}\label{lemme initial sequence} We prove two different points:
\begin{enumerate}
  \item Suppose that there exists $i\geq 1$ such that $\Theta^k_i\neq \varnothing$ is in $\mathcal S^{\geq 0}$ and is centered. Then if $\Theta^k_{i+1}$ is non-empty, it is not centered.
  \item If $\Theta^k_i$ is of length $1$ and $p(\Theta^k_i)\neq[0,0]$, then $\Theta^k_{i+1}$ is either a point or $\varnothing$.
\end{enumerate}
\end{lem}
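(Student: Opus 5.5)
The plan is to argue directly from the definition of $\Theta^k$ and the shape of the Lanard-Mínguez order $\prec$ on $\USeg$. The first step is a structural observation about $\prec$. The generating relations only go upward from label $\le 0$ to label $=0$ to label $\ge 0$, and within a fixed label they compare segments by $<$ (for $\ge 0$ and $\le 0$) or by endpoints (for $=0$). Hence, by transitivity:
\begin{itemize}
\item any $u\succ v$ with $v$ labeled $\ge 0$ is itself labeled $\ge 0$ and satisfies $p(u)>p(v)$;
\item any $u\succ v$ with $v$ labeled $\le 0$ is either labeled $\le 0$ with $p(u)>p(v)$, or labeled $=0$ or $\ge 0$.
\end{itemize}
Everything else is a short case analysis using condition (1) $u\succ\Theta_n$ and condition (2) $e(u)=e(\Theta^k_n)+1$ in the recursive definition.

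For point (1), write $p(\Theta^k_i)=[-a,a]$ with label $\ge 0$. By the observation, $\Theta^k_{i+1}$ is labeled $\ge 0$ and $p(\Theta^k_{i+1})>[-a,a]$. Suppose $\Theta^k_{i+1}$ were centered. Since it ends at $a+1$, it would equal $[-a-1,a+1]$. This begins at $-a-1<-a$, so $[-a-1,a+1]<[-a,a]$ for the order $<$, a contradiction. Hence $\Theta^k_{i+1}$ is not centered.

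For point (2), write $p(\Theta^k_i)=[x,x]$ with $x\neq 0$. Then $\Theta^k_i$ is not centered, so its label is forced, and we split according to the sign of $x$.
\begin{itemize}
\item If $x>0$, the label is $\ge 0$, so $\Theta^k_{i+1}$ is labeled $\ge 0$, ends at $x+1$, and satisfies $p(\Theta^k_{i+1})>[x,x]$. Being $>[x,x]$ means either $b>x$, or $b=x$ with end $<x$. The second option is impossible since the end is $x+1$. So $b(\Theta^k_{i+1})=x+1$, and $\Theta^k_{i+1}$ is a point.
\item If $x<0$ and $\Theta^k_{i+1}$ is labeled $\le 0$, the same argument gives the point $[x+1,x+1]$.
\end{itemize}

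The remaining possibility, which I expect to be the main obstacle, is $x<0$ with $\Theta^k_{i+1}$ labeled $=0$ or $\ge 0$. Then $\Theta^k_{i+1}$ is centered or positive, ends at $x+1$, and so $x+1\ge 0$. This leaves the boundary values $x=-1$ and $x=-\tfrac12$.
\begin{itemize}
\item For $x=-1$, a centered or positive segment ending at $0$ must be $[0,0]$, which is a point.
\item For $x=-\tfrac12$, the candidates ending at $\tfrac12$ are $[\tfrac12,\tfrac12]$, which is fine, and the centered segment $[-\tfrac12,\tfrac12]$, which is not a point.
\end{itemize}
I would first check whether the hypotheses exclude $[-\tfrac12,\tfrac12]$ here: for instance the sign condition (3) or the initial sign condition $\e(u)=(-1)^i$ of the definition, or good-parity integrality restricting the half-integral case. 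If that fails, I would check whether the precise form of the definition, e.g. which of $[-\tfrac12,\tfrac12]^{=0}$ and $[\tfrac12,\tfrac12]$ is minimal for $\prec$, forces a point. If neither works, the statement needs to be read as excluding this edge configuration, and I would make that explicit.
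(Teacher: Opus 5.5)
Your proof of (1) is the paper's argument: $\Theta^k_{i+1}\succ\Theta^k_i$ forces the label $\ge 0$ and $p(\Theta^k_{i+1})>p(\Theta^k_i)$, which excludes ${}^+p(\Theta^k_i)^+$. For (2) the paper only says that it ``comes directly from the definition of the order''. Your split by the sign of $x$ and by the label of $\Theta^k_{i+1}$ is the correct unpacking of that sentence, and it is right in every case except the one you isolated.

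That case cannot be repaired, because it is a counterexample to (2) as stated. The paper's one-line justification misses it too, and none of the escapes you list works:
\begin{itemize}
\item Condition (3) constrains $u$ only when $\Theta^k_n$ is centered, and here $c(\Theta^k_i)=-\tfrac12$.
\item The condition $\e(u)=(-1)^i$ concerns only $\Theta^k_1$.
\item Half-integral lines are of good parity precisely when $\rho$ and $G$ have opposite types, so integrality does not exclude them.
\item For $\prec$ one has $[-\tfrac12,\tfrac12]^{\le 0}\prec[-\tfrac12,-\tfrac12]^{\le 0}$, so that copy is not eligible. But $[-\tfrac12,\tfrac12]^{=0}\prec[-\tfrac12,\tfrac12]^{\ge 0}\prec[\tfrac12,\tfrac12]^{\ge 0}$, and all three are $\succ[-\tfrac12,-\tfrac12]^{\le 0}$, so minimality selects $[-\tfrac12,\tfrac12]$.
\end{itemize}

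The section $s$ gives every centered segment of positive multiplicity a copy labelled $=0$ (odd multiplicity) or $\ge 0$ (even multiplicity). Hence $\Theta^k_i=[-\tfrac12,-\tfrac12]$ together with $[-\tfrac12,\tfrac12]\in\m$ always forces $\Theta^k_{i+1}=[-\tfrac12,\tfrac12]$, which has length $2$.

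For instance, take $\rho$ of the opposite type to $G$, $\m=[-\tfrac12,-\tfrac12]+[-\tfrac12,\tfrac12]+[\tfrac12,\tfrac12]$ and $k=-\tfrac12$. Then $\Theta^k_1=[-\tfrac12,-\tfrac12]^{\le 0}$ and $\Theta^k_2=[-\tfrac12,\tfrac12]^{=0}$. This is exactly the transition from center $-\tfrac12$ to a centered term that Lemma \ref{lemma sign tilde} explicitly allows. It also occurs inside $\Theta^\s$: for this datum and $\s=[\tfrac12,\tfrac32]$ one has $\Theta^\s=\Theta^{-1/2}$.

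So you should commit to your fallback. Statement (2) needs the extra hypothesis $p(\Theta^k_i)\neq[-\tfrac12,-\tfrac12]$. It holds as written when $\rho$ and $G$ have the same type, or when $\Theta^k_i$ is not labelled $\le 0$. Under that hypothesis your argument is a complete proof.

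The later use in Lemma \ref{lem differ on points} is unaffected. There $\Theta$ is empty or equal to $\Theta^{\tilde 1-1,0}$, and none of its terms is labelled $\le 0$.
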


\begin{proof}
  For the first point, take such an index $i$. We know that $\Theta^k_{i+1}\succ\Theta^k_i$ so, since $\Theta^k_i\in\mathcal S^{\geq 0}$ they are both in $\mathcal S^{\ge 0}$ and by definition of the order $<$ on segments, we cannot have  $p(\Theta^k_{i+1})={}^+p(\Theta^k_i)^+$; we deduce that $\Theta^k_{i+1}$ is not centered. Finally, if $i>1$, with what we have just seen, we know that $\Theta^k_{i-1}$ is not in $\mathcal S^{\geq 0}$ otherwise, $\Theta^k_i$ would not be centered. For the last point, it comes directly from the definition of the order.
\end{proof}

We now define $\CT_{\s}$.

\subsection{Definition of $\CT_\s$}\label{subsec def CT}
We first fix $\s\in\Seg^{>0}\cup (\Seg^{=0}\times\{1,-1\})$ that we write either $\s=s$ or $\s=(s,\eta)$. Now take a fixed symmetrical datum $(\m,\e)\in\DSymm^{\le'\s}$. In what follows we let $\tilde 1$ be $1$ if $\rho$ is of the same type as $G$ and $\tilde 1=3/2$ otherwise. Finally let $(b,e)=(b(s),e(s))$ if $\s=s\in \Seg^{>0}$ and $(b,e)=(\tilde 1,e(s))$ if $\s=(s,\eta)\in\Seg^{=0}\times \{1,-1\}$.

\begin{deff}
    We define $\Theta^\s(\m,\e)$. Recall that $\s(\m,\e)\le'\s$. Let $n_0$ be the number of centered segments (counted with multiplicity) in $\m$ and let $\alpha_0$ be either $\e([0,0])$ if $[0,0]\in\m$ or -1 otherwise.
\begin{itemize}
    \item If $\s=(s,\eta)$:
    \begin{itemize}
        \item if $\eta=(-1)^{n_0+1}\cdot\eta_0$, define $\Theta^\s(\m,\e)=\varnothing$,
        \item otherwise define $\Theta^\s(\m,\e)=\Theta^{b-1,0}(\m,\e)$.
    \end{itemize}

    \item Otherwise, start by considering $\Theta^{b-1}(\m,\e)$ and denote by $f$ the largest ending point of the segments in this sequence if it is not empty, and $f=b$ otherwise:
    \begin{itemize}
        \item if $f\ge 0$, then define $\Theta^\s(\m,\e)=\Theta^{b-1}(\m,\e)$
        \item otherwise, define $\Theta^\s(\m,\e)=\Theta^{b-1}(\m,\e)+\Theta^{\tilde 1-1,0}(\m,\e)$.
    \end{itemize}
\end{itemize}
\end{deff}

\begin{rem}
    Here are some remarks:
    \begin{enumerate}
        \item We have $\Theta^\s(\varnothing)=\varnothing$
        \item If $\s=s$ with $b(s)\ge 0$, then clearly one has $f\ge 0$ and thus $\Theta^\s(\m,\e)=\Theta^{b-1}(\m,\e)$
        \item In each case one can have $\Theta^\s(\m,\e)=\varnothing$.
        \item The largest ending point of segments of $\Theta^\s(\m,\e)$ is always less or equal than $e(s)$ by definition of $\DSymm^{\le'\s}$.
    \end{enumerate}
\end{rem}

We will define $(\bar \m,\bar \e)=\CT_\s(\m,\e)$. We do this in two steps. First we explain how to extend the segments in $\Theta^\s(\m,\e)$ to get a new datum $(\tilde \m,\tilde \e)$ which is the most difficult part. Then the second step is to define $(\bar \m,\bar \e)$ from $(\tilde \m,\tilde \e)$ by adding some segments (mostly some points and sometimes some segments of length 2).\\

 Let $\e_0'$ be 1 if $\s=(s,\eta)$ and $\e_0'=-1$ otherwise. Put $(y,\e)=s(\m,\e)$ and write:
$$y=\sum_{j=1}^m\Phi_j$$
with $\Phi_1\preceq \dots\preceq\Phi_m$. Here we assumed that $m\geq 1$, since $\m$ is not empty. Write $l$ for the number of non-empty segments in $\Theta:=\Theta^\s(\m,\e)$ (that could be 0) and, for any $1\le i\le l$, let $\Theta_i$ the $i-th$ segments in $\Theta$ so that $\Theta=\Theta_1+\ldots+\Theta_l$ in the increasing way.

We construct two sequences of integers $i_1,\dots,i_l$ and $i'_1,\dots,i'_l$. Let $1\leq j\leq l$ and define
\begin{align*}
    i_j&:=\min\{i\in\{1,\dots,m\},\Phi_i=\Theta_j\}\\
    i'_j&:=\min\{i\in\{1,\dots,m\},\Phi_i=\iota'(\Theta_j)\}
\end{align*}

Note that even if these definitions share some similarities with the one in \cite{LM}, there is a subtle difference. The definition of $i_j'$ uses the involution $\iota$ and not the contragredient.

We can now define
$$\tilde \m=\tilde{\Phi}_1+\dots+\tilde{\Phi}_m
$$

with the following formula:
$$
\tilde{\Phi}_i = \left\{
  \begin{array}{ll}
    p(\Phi_i) & \mbox{if } i\notin\{i_1,\dots,i_l\}\mbox{ and }i\notin\{i'_1,\dots,i'_l\} \\
    p(\Phi_i)^+ & \mbox{if } i\in\{i_1,\dots,i_l\}\mbox{ and }i\notin\{i'_1,\dots,i'_l\}\\
    ^+p(\Phi_i) & \mbox{if } i\notin\{i_1,\dots,i_l\}\mbox{ and }i\in\{i'_1,\dots,i'_l\}\\
    ^+p(\Phi_i)^+ & \mbox{if } i\in\{i_1,\dots,i_l\}\mbox{ and }i\in\{i'_1,\dots,i'_l\}
  \end{array}
\right.
$$

\begin{rem}\label{rem bar m sym}
    An important remark is that by construction $\tilde \m$ is a symmetrical multisegment. Even if we used $\iota'$ instead of the contragredient, it does not matter since both of these applications on $\USeg$ give the contragredient on $\Seg$ through the projection $p$.
\end{rem}

We now define the signs $\tilde \e$. Let us take $1\leq i\leq m$ such that $c(\tilde\Phi_i)=0$.

\begin{enumerate}
    \item  If there exists $1\leq j\leq l$ such that $\tilde \Phi_i=\tilde \Phi_{i_j}$ with $c(\Theta_j)=0$, define $\tilde \e(\tilde \Phi_i)=\e_0'\cdot\e(\Theta_j)$.
    \item  If there exists $1\leq j\leq l$ such that $\tilde \Phi_i=\tilde \Phi_{i_j}$ with $c(\Theta_j)=-1/2$, then we necessarily have $l>j$ and $c(\Theta_{j+1})=0$ (see Lemma \ref{lemma sign tilde}) and we define $\tilde \e(\tilde \Phi_i)=-\e(\Theta_{j+1})$. 
    \item Otherwise, we necessarily have $\tilde \Phi_i\in\m$ and we define $\tilde \e(\tilde \Phi_i)=\e_0'\cdot\e(\tilde \Phi_i)$.
    $\tilde \e(\tilde \Phi_i)$.

\end{enumerate}

\begin{rem}
    Here are some remarks:
    \begin{itemize}
        \item In the situation of the case (2), if there are no centered segments in $\m$ ending in $e(\Theta_j)+1$, then $\Theta_{j+1}=\Theta_j^\vee$ and so we should have $\tilde \Phi_i={}^+p(\Theta_j)^+$ which implies that $\tilde \Phi_i$ is not centered.
        \item Note that cases (1) and (2) never happen simultaneously and case (2) can only happen when $\s=s$ with $b(s)\le 0$.
    \end{itemize}
\end{rem}

The last step to define $\CT_\s(\m,\e)=(\bar \m,\bar \e)$ is to add points, or sometimes segments of length 2, to the datum $(\tilde \m,\tilde \e)$. 

Start by defining $T=\{e(\Theta_1)+1,\ldots,e(\Theta_l)+1\}$. By definition we know that $\min(T)\ge b$. Now, since $\s(\m,\e)\le' \s$ and since the ending point of $\s(\m,\e)$ is the largest one in $\m$, we know that $e(\Theta_l)\le e$ and so $\max(T)\le e+1$ but as we will see it after (in Lemma \ref{lemma bound ending point}), we have $e(\Theta_l)\le e-1$ and then $T\subseteq [b,e]$.

Now define
$$I=\{i\in [b,e]\backslash T~|~ i\le-b+1\mbox{ and } -i+1\notin T\}$$ 
and $I^c$ its complement in $[b,e]\backslash T$. 

Finally we define a symmetrical multisegment $\delta$. First we set $\delta=\varnothing$ if $\s=s\in\Seg^{>0}$. Now assume $\s=(s,\eta)\in\Seg^{=0}\times\{-1,1\}$. If $\rho$ is of the same type as $G$ we define $\delta=[0,0]$. If $\rho$ is not of the same type as $G$ then $\delta=[-1/2,1/2]$ if $\eta=(-1)^{n_0}\cdot \eta_0$ and $\delta=[-1/2,-1/2]+[1/2,1/2]$ if $\eta=(-1)^{n_0+1}\cdot \eta_0$.

Define:
$$\bar \m=\tilde \m+\sum_{i\in I}[i-1,i]+\sum_{i\in I^c}([i,i]+[-i,-i])+\delta$$

For any $i\in[b,e]\backslash T$, let $s_i$ be $[i-1,i]$ if $i\in I$ and $[i,i]$ otherwise so that:
$$\bar \m=\tilde \m+\sum_{i\in [b,e]\backslash T}s_i+\delta$$

Note that for any $i\in[b,e]\backslash T$, we have $(i\in I)\iff (-i+1\in I)$ but since $[i-1,i]^\vee=[-i,-i+1]$ it means that for any $i\in[b,e]\backslash T$, $s_i$ has length 2 if and only if $s_{1-i}$ has length 2. So, since $\tilde \m$ is symmetric (see Remark \ref{rem bar m sym}), $\bar \m$ is also symmetric.

\begin{rem}\label{rem I}
    If $\s=s\in \Seg^{>0}$, we have $b=b(s)$ and then $b(s)\ge 1$ implies that $[b,-b+1]$ is empty and so is $I$ which implies that we get $\bar \m$ from $\tilde \m$ only by adding some points.

    If $\s=(s,\eta)\in\Seg^{=0}\times \{1,-1\}$, we have $b=\tilde 1$ that is $0$ when $\rho$ is of the same type as $G$ and $1/2$ otherwise. In both cases we get $I=\varnothing$.
\end{rem}

Finally we need to define $\bar \e$. Let $\eta_0$ be either $\e([0,0])$ if $[0,0]\in\m$, or $\e([-1/2,1/2])$ if $[-1/2,1/2]\in$ or 1 otherwise. Take $\Delta\in\bar \m$ a centered segment. If  $\Delta\notin\{[0,0],[-1/2,1/2]\}$ then $\Delta\in\tilde \m$, define $\bar \e(\Delta)=\tilde \e(\Delta)$.
If $[0,0]\in\bar \m$:
\begin{itemize}
    \item If $\s=(s,\eta)\in\Seg^{=0}\times \{-1,1\}$, define:
    
    $
\bar \e([0,0]) = \left\{
  \begin{array}{ll}
    \e_0'\cdot\eta_0=-\eta_0 & \mbox{if } \eta=(-1)^{n_0+1}\cdot \eta_0 \\
    -\e_0'\cdot\eta_0=\eta_0 & \mbox{otherwise }
    \end{array}
\right.
$
\item If $\s=s\in\Seg^{>0}$, define:

$
\bar \e([0,0]) = \left\{
  \begin{array}{ll}
    -\e_0'\cdot\eta_0=-\eta_0 & \mbox{if there exists } 1\le i\le l\mbox{ such that } p(\Theta_i)=[0,0]  \\
    \e_0'\cdot\eta_0=\eta_0 & \mbox{otherwise }
  \end{array}
\right.
$

\end{itemize}

Finally, if $[-1/2,1/2]\in\bar \m$, define:

$
\bar \e([-1/2,1/2]) = \left\{
  \begin{array}{ll}
    \e_0' & \mbox{if there exists } 1\le i\le l\mbox{ such that } p(\Theta_i)=[-1/2,1/2]  \\
    \e_0'\cdot\eta_0 & \mbox{otherwise }
  \end{array}
\right.
$

 \begin{rem} 
 Assume that $\rho$ is of the same type as $G$ and $[0,0]\in\m$.
 Note that if $\s=(s,\eta)\in\Seg^{=0}\times \{-1,1\}$, we have $p(\Theta_1)=[0,0]$ if and only if $\eta\neq (-1)^{n_0+1}\cdot \eta_0$. So, in this case, we can just define $\bar \e([0,0])=-\e_0'\cdot \e([0,0])$ if there exists $1\le i\le l$ such that $p(\Theta_i)=[0,0]$ and  $\bar \e([0,0])=\e_0'\cdot \e([0,0])$ otherwise. Now assume that $\rho$ is not of the same type as $G$ and $[-1/2,1/2]\in\m$, then if $\s=(s,\eta)\in\Seg^{=0}\times \{-1,1\}$ we always have $\bar \e([-1/2,1/2])=\e_0'$ because $p(\Theta_1)=[-1/2,1/2]$ if and only if $\eta_0=-1$.
 \end{rem}

This ends the definition of our symmetrical data $(\bar \m,\bar \e)$.

\begin{deff}
    We define $\CT_\s(\m,\e)=(\bar \m,\bar \e)$.
\end{deff}

We now provide an example:
\tikzset{
    segment/.style={line width=1.5pt, black},
    segment blue/.style={line width=1.5pt, blue},
    segment red/.style={line width=1.5pt, red},
    start pt/.style={circle, fill=blue, inner sep=1.5pt},
    end pt/.style={circle, fill=red, inner sep=1.5pt},
    point halo/.style={circle, fill=black, inner sep=2pt},
    point blue/.style={circle, fill=blue, inner sep=2pt},
    point red/.style={circle, fill=red, inner sep=2pt},
    theta link/.style={line width=3pt, green!50, opacity=0.7},
    point theta/.style={circle, fill=green!50, opacity=0.7,inner sep=2pt}
}

\begin{ex}
We reproduce a detailed manual calculation to further illustrate the algorithm. Let $\mathfrak{s} = [-3, 5] \in \Seg^{>0}$, meaning $b=-3$, $e=5$, and $\e'_0=1$. We apply $\CT_\s$ to the symmetrical datum $\mathfrak{m} = [-5,-4] + 2\cdot[-1,1] + [0,0] + [4,5]\in\DSymm^{\le'[-3, 5]} $, with signs $\e([-1,1])=-1$  and $\e([0,0])=1$.
\begin{enumerate}[noitemsep,topsep=0pt]
    \item \textbf{Initial sequence $\Theta^\s$:} We compute $\Theta^{b-1}(\mathfrak{m},\e) = \Theta^{-4}(\mathfrak{m},\e)$. We order the segments of $y=s(\mathfrak{m},\e)$ according to the Lanard-Mínguez order: we write $s=\Phi_1+\Phi_2+\Phi_3+\Phi_4+\Phi_5$ with $\Phi_1 = [-5,-4]^{\le 0}$, $\Phi_2 = [-1,1]^{\le 0}$, $\Phi_3 = [0,0]^{=0}$, $\Phi_4 = [-1,1]^{\ge 0}$ and $\Phi_5 = [4,5]^{\ge 0}$.

\begin{center}
\begin{tikzpicture}[scale=0.8]
    \foreach \x in {-6,...,6} {
        \draw[dashed, gray!40] (\x, -3.0) -- (\x, 3.0);
        \node[above] at (\x, 3.0) {\small $\x$};
    }
    
    \draw[theta link] (0, 0.0) -- (1, 1.0);
    \node[point theta] at (-4, -2.0) {}; \node[right] at (-4, -2.0) {};

    \draw[segment] (-5, -2.0) -- (-4, -2.0);
    \fill[start pt] (-5, -2.0) circle; \fill[end pt] (-4, -2.0) circle;
    
    \draw[segment blue] (-1, -1.0) -- (1, -1.0);
    \fill[start pt] (-1, -1.0) circle; \fill[end pt] (1, -1.0) circle;

    \node[point red] at (0, 0.0) {}; \node[right] at (0.1, 0.0) {};
    
    \draw[segment blue] (-1, 1.0) -- (1, 1.0);
    \fill[start pt] (-1, 1.0) circle; \fill[end pt] (1, 1.0) circle;
    
    \draw[segment] (4, 2.0) -- (5, 2.0);
    \fill[start pt] (4, 2.0) circle; \fill[end pt] (5, 2.0) circle;

    \node at (0, -3.5) {$(\m,\e)$};
 
\end{tikzpicture}
\end{center}

    The smallest segment ending at $-4$ is $\Theta_1 = \Phi_1 = [-5,-4]$. No segment ends at $-3$, so $\Theta^{-4}$ stops here with $f = -4 < 0$. 
    We then append $\Theta^{0,0}(\mathfrak{m},\e)$. The smallest segment in $\mathcal{S}^{=0} \cup \mathcal{S}^{\ge 0}$ ending at $0$ is $\Theta_2 = \Phi_3 = [0,0]$. The smallest segment strictly greater than $\Theta_2$ ending at $1$ is $\Theta_3 = \Phi_4 = [-1,1]$. 
    Thus, $\Theta = [-5,-4] + [0,0] + [-1,1]$. The set of ends plus one is $T = \{-3, 1, 2\}$.
    
    \item \textbf{Extension $\tilde{\mathfrak{m}}$:} The indices of $\Theta$ in $y$ are $i_1=1, i_2=3, i_3=4$. Applying $\iota'$ yields $\iota'(\Phi_1) = \Phi_5 \implies i'_1=5$, $\iota'(\Phi_3) = \Phi_3 \implies i'_2=3$, and $\iota'(\Phi_4) = \Phi_2 \implies i'_3=2$. 
    \begin{itemize}
        \item $\{i_j\} \setminus \{i'_j\} = \{1, 4\} \implies \tilde{\Phi}_1 = [-5,-3]$ and $\tilde{\Phi}_4 = [-1,2]$.
        \item $\{i'_j\} \setminus \{i_j\} = \{5, 2\} \implies \tilde{\Phi}_5 = [3,5]$ and $\tilde{\Phi}_2 = [-2,1]$.
        \item $\{i_j\} \cap \{i'_j\} = \{3\} \implies \tilde{\Phi}_3 = [-1,1]$.
    \end{itemize}
    This yields $\tilde{\mathfrak{m}} = [-5,-3] + [-2,1] + [-1,1] + [-1,2] + [3,5]$.
    
 \begin{center}       
\begin{tikzpicture}[scale=0.8]
    \foreach \x in {-6,...,6} {
        \draw[dashed, gray!40] (\x, -3.0) -- (\x, 3.0);
        \node[above] at (\x, 3.0) {\small $\x$};
    }

    \draw[segment] (-5, -2.0) -- (-3, -2.0);
    \fill[start pt] (-5, -2.0) circle; \fill[end pt] (-3, -2.0) circle;
    
    \draw[segment] (-2, -1.0) -- (1, -1.0);
    \fill[start pt] (-2, -1.0) circle; \fill[end pt] (1, -1.0) circle;

    \draw[segment] (-1, 0.0) -- (1, 0.0);
    \fill[start pt] (-1, -1.0) circle; \fill[end pt] (1, -1.0) circle;
    
    \draw[segment] (-1, 1.0) -- (2, 1.0);
    \fill[start pt] (-1, 1.0) circle; \fill[end pt] (2, 1.0) circle;
    
    \draw[segment] (3, 2.0) -- (5, 2.0);
    \fill[start pt] (3, 2.0) circle; \fill[end pt] (5, 2.0) circle;

    \node at (0, -3.5) {$\tilde \m$};
 
\end{tikzpicture}
\end{center}
    
    \item \textbf{Sets $I, I^c$ and $\bar{\mathfrak{m}}$:} We evaluate $[b,e] \setminus T = [-3,5] \setminus \{-3,1,2\} = \{-2, -1, 0, 3, 4, 5\}$. We keep elements $i$ satisfying $i \le -b+1=4$ and $1-i \notin T$.
    This filters the set down to $I = \{-2, 3\}$, leaving $I^c = \{-1, 0, 4, 5\}$.
    The set $I$ contributes $[-3,-2] + [2,3]$. 
    The set $I^c$ contributes $[-1,-1] + [1,1] + 2\cdot[0,0] + [4,4] + [-4,-4] + [5,5] + [-5,-5]$. 
     Adding everything together gives the final $\bar{\mathfrak{m}}$.
     \begin{center}
     \begin{tikzpicture}[scale=0.8]
    \foreach \x in {-6,...,6} {
        \draw[dashed, gray!40] (\x, -4.5) -- (\x, 4.5);
        \node[above] at (\x, 4.5) {\small $\x$};
    }


    \draw[segment] (-5, -3.5) -- (-3, -3.5);
    \fill[start pt] (-5, -5.0) circle; \fill[end pt] (-3, -5.0) circle;
    
     \node[point halo] at (-5, -3) {}; \node[right] at (0.1, 0.0) {};
     
     \node[point halo] at (-4, -2.5) {}; \node[right] at (0.1, 0.0) {};
     
     \draw[segment] (-3, -2.0) -- (-2, -2.0);
    \fill[start pt] (-3, -2.0) circle; \fill[end pt] (-2, -2.0) circle;
    
    \draw[segment] (-2, -1.5) -- (1, -1.5);
    \fill[start pt] (-2, -1.5) circle; \fill[end pt] (1, -1.5) circle;
    
     \node[point halo] at (-1, -1.0) {}; \node[right] at (0.1, 0.0) {};
     
      \node[point blue] at (0, -0.5) {}; \node[right] at (0.1, 0.0) {};

    \draw[segment red] (-1, 0.0) -- (1, 0.0);
    \fill[start pt] (-1, -1.0) circle; \fill[end pt] (1, -1.0) circle;
    
    
     \draw[segment] (-1, 0.5) -- (2, 0.5);
    \fill[start pt] (-1, 0.5) circle; \fill[end pt] (2, 0.5) circle;

    \node[point blue] at (0, 1) {}; \node[right] at (0.1, 0.0) {};
    
     \node[point halo] at (1, 1.5) {}; \node[right] at (0.1, 0.0) {};
    
     \draw[segment] (2, 2) -- (3, 2);
    \fill[start pt] (2, 2) circle; \fill[end pt] (3,2) circle;
    
    \draw[segment] (3, 2.5) -- (5, 2.5);
    \fill[start pt] (3, 2.0) circle; \fill[end pt] (5, 2.0) circle;
    
    \node[point halo] at (4, 3) {}; \node[right] at (0.1, 0.0) {};
    \node[point halo] at (5, 3.5) {}; \node[right] at (0.1, 0.0) {};

    \node at (0, -5) {$(\bar \m,\bar \e)$};
 
\end{tikzpicture}
\end{center}
    
    \item \textbf{Signs $\bar{\e}$:} The centered segment $[-1,1]$ originates from $\Theta_2 = [0,0]$. Its new sign is $\bar{\e}([-1,1]) = \e'_0 \cdot \e([0,0]) = 1$. For the 2 copies of $[0,0]$, the rule gives $\bar{\e}([0,0]) = -\e([0,0]) = -1$.

\end{enumerate}
       
\end{ex}

To finish this part we prove a short lemma we used to define $\tilde \e$.

\begin{lem}\label{lemma sign tilde}
    Let $1\le i\le m$ and assume there exists $1\le j\le l$ such that $\tilde \Phi_i=\tilde \Phi_{i_j}$ with $c(\Theta_j)=-1/2$, then we necessarily have $l>j$, and if $c(\tilde \Phi_i)=0$, then $c(\Theta_{j+1})=0$
\end{lem}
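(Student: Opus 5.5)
The proof just unwinds the definitions of $\Theta^k$, the order $\prec$, and the indices $i_j,i'_j$. Throughout, write $\Theta_j=[-x-1,x]$ with $x\ge -1/2$ and $e(\Theta_j)=x$.

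\textbf{Step 1: locating $\Theta_j$.} Since $c(\Theta_j)<0$, its label is $\le 0$. So $\Theta_j$ cannot be the first term of a sequence $\Theta^{k,0}$, whose first term lies in $\mathcal S^{=0}\cup\mathcal S^{\ge0}$. In every case of the definition of $\Theta^\s$, $\Theta_j$ therefore sits inside a single increasing initial sequence $\Theta^{k}$ or $\Theta^{k,0}$. To prove $l>j$ it is enough to show that this sequence does not stop at $\Theta_j$.

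\textbf{Step 2: $l>j$.} Because $y=s(\m,\e)$ is $\iota$-symmetric, it contains $\iota(\Theta_j)=\Theta_j^\vee=[-x,x+1]$. This segment has center $1/2>0$, hence label $\ge0$, and so $\Theta_j^\vee\succ\Theta_j$ (labels $\le0\prec{=0}\prec{\ge0}$). It ends at $e(\Theta_j)+1$, and the sign condition (3) is vacuous because neither segment is centered. So $\Theta_j^\vee$ satisfies all conditions for $\Theta_{j+1}$, the sequence continues, and $l>j$. A small point needs care: if $\Theta_j$ is the last term of the block $\Theta^{b-1}$ inside $\Theta^{b-1}+\Theta^{\tilde1-1,0}$, this argument shows that this cannot happen, since that block cannot terminate at $\Theta_j$.

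\textbf{Step 3: the centered case.} By definition, $\tilde\Phi_{i_j}$ is either $p(\Theta_j)^+=[-x-1,x+1]$, which is centered, or ${}^+p(\Theta_j)^+=[-x-2,x+1]$, which has center $-1/2$. So $c(\tilde\Phi_i)=0$ forces $i_j\notin\{i'_1,\dots,i'_l\}$.

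Suppose $\Theta_{j+1}=\Theta_j^\vee$. Then $\iota'(\Theta_{j+1})=\Theta_j$, so $i'_{j+1}$ is the minimal index $i$ with $\Phi_i=\Theta_j$, which is exactly $i_j$. This contradicts the previous paragraph.

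Hence $\Theta_{j+1}$ is strictly between $\Theta_j$ and $\Theta_j^\vee$ for $\prec$ (it is $\preceq\Theta_j^\vee$ by minimality) and ends at $x+1$. We now list the possible labels:
\begin{itemize}
\item A $\le0$-labeled segment ending at $x+1$ and larger than $\Theta_j$ would start after $-x-1$, so it would have positive center. This is impossible for label $\le 0$.
\item A $=0$-labeled segment is centered by definition.
\item A $\ge0$-labeled segment that is $\prec[-x,x+1]$ and ends at $x+1$ starts at some $b'\le -x-1$. Its center is then $\le0$, so $b'=-x-1$ and the segment is centered.
\end{itemize}
In every case $c(\Theta_{j+1})=0$.

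\textbf{Main obstacle.} The delicate point is Step 3. There one must use minimality in the definitions of $i_j$ and $i'_{j+1}$ with multiplicities, to get the equality $i'_{j+1}=i_j$. One must also check the tie-breaking conventions of $\prec$ between labels $=0$ and $\ge0$ at equal endpoints.
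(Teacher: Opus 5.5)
Your proof is correct and follows the same route as the paper. In both, $\Theta_j^\vee$ (center $1/2$, label $\ge 0$) is a valid successor, so $l>j$. If $\Theta_{j+1}=\Theta_j^\vee$, then $i'_{j+1}=i_j$, which makes $\tilde\Phi_{i_j}={}^+p(\Theta_j)^+$ non-centered. Otherwise $\Theta_j\prec\Theta_{j+1}\prec\Theta_j^\vee$ forces $\Theta_{j+1}$ to be centered; your label-by-label check spells out the step the paper states in one line.
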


\begin{proof}
    If $\Theta_j$ has center $-1/2$ then $\Theta_j^\vee$ has center $1/2$ and $e(\Theta_j^\vee)=e(\Theta_j)+1$ so since $\Theta_j^\vee\succeq\Theta_j$ we have $l>j$. Now, if $\Theta_{j+1}=\Theta_j^\vee$ this implies $i_j=i'_{j+1}$, resulting in $\tilde \Phi_{i_j}={}^+p(\Theta_j)^+$ which is not centered. Then in this case we must have $\Theta_{j+1}\prec\Theta_j^\vee$ and since $c(\Theta_j^\vee)=1/2$ we have $c(\Theta_{j+1})=0$.
\end{proof}

\section{The duality result and its proof}\label{section proof}

\subsection{Main result}

Take $\s\in\Seg^{>0}\cup \Seg^{=0}\times \{1,-1\}$. The goal of this section, now that we have built our operator $\CT_\s$, is to prove that it acts as intended, namely as the exact dual of $\T_\s$. This is the content of the following theorem:

\begin{theorem}\label{main theorem}
Let $\s\in\Seg^{>0}\cup \Seg^{=0}\times \{1,-1\}$, then we have the following correspondence of operators under Aubert duality:

$$\CT_{\s}\stackrel{\AD}{\longleftrightarrow} \T_{\s}$$
By this, we mean that the following diagram commutes:
\[\begin{tikzcd}[sep=small]
	{\Symm^{\le'\s}} && {\DSymm^{\le'\s}} \\
	\\
	{\Symm^{\le'\s}} && {\DSymm^{\le'\s}}
	\arrow["\AD", tail reversed, from=1-1, to=1-3]
	\arrow["{\T_{\s}}"', from=1-1, to=3-1]
	\arrow["{\CT_{\s}}", from=1-3, to=3-3]
	\arrow["\AD", tail reversed, from=3-1, to=3-3]
\end{tikzcd}\]
\end{theorem}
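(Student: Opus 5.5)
The plan follows the strategy already used for Proposition \ref{prop duality GLn} and Proposition \ref{prop duality bad}. We fix $\s$ and $(\m,\e)\in\DSymm^{\le'\s}$, and set $(\bar\m,\bar\e)=\CT_\s(\m,\e)$. We run one step of the Lanard-Mínguez algorithm on $(\bar\m,\bar\e)$ and aim to show two things: the extracted term satisfies $(\bar\m_1,\bar\e_1)=\tilde\s$, and the residual datum satisfies $(\bar\m^\#,\bar\e^\#)=(\m,\e)$. Given these, Theorem \ref{thm lanard Mínguez} and the recursive definition of $\AD$ yield
\[
\AD(\bar\m,\bar\e)=\tilde\s+\AD(\m,\e)=\T_\s(\AD(\m,\e)).
\]
This is well defined because $\AD(\m,\e)\in\Symm^{\le'\s}$. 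Since $\AD$ is an involutive bijection, this is exactly the commutativity of the square. It also shows that $\CT_\s$ maps $\DSymm^{\le'\s}$ into itself, because $\s(\bar\m,\bar\e)=\s$.

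\textbf{Preliminary lemma.} First we prove the analogue of Lemma \ref{lemm bound ending GLn}, namely the Lemma \ref{lemma bound ending point} already announced: $e(\Theta_l)\le e-1$. In addition, if $e(\Theta_l)=e-1$, then no element of $s(\m,\e)$ ending at $e$ lies strictly $\succ$-above $\Theta_l$ in a way that would prolong the sequence. The argument copies the $\GL_n$ one. Suppose $e(\Theta_l)=e$. Then the decreasing Lanard-Mínguez sequence $\Delta_1\succeq\Delta_2\succeq\cdots$ of $(\m,\e)$ can be dominated step by step by the $\Theta_r$. Hence it cannot stop before reaching $b-1$, which contradicts $\s(\m,\e)\le'\s$. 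Here we must check that the sign conditions defining $\Theta^k$ (alternating signs for consecutive centered segments, and the parity condition $\e(u)=(-1)^i$ at the start) are exactly the negations of the Lanard-Mínguez stopping conditions. This lemma also gives $T\subseteq[b,e]$.

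\textbf{Main step.} We compute the decreasing sequence $\bar\Delta_1\succeq\cdots\succeq\bar\Delta_k$ for $s(\bar\m,\bar\e)$ with respect to the order $\prec$. We show by descending induction on the endpoint $j$, from $e$ down to the stopping point, that $\bar\Delta$ at endpoint $j$ is one of two things. If $j-1\in T$, it is the labelled lift of $\Theta_r^+$ with $e(\Theta_r)=j-1$. If $j\notin T$, it is the added piece $s_j$: a point $[j,j]$, a length-two segment $[j-1,j]$ when $j\in I$, or the relevant piece of $\delta$ near $0$.

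The case analysis mirrors the $\GL_n$ proof, with three new features.
\begin{itemize}
\item The passage through the centre. Here the two parts $\Theta^{b-1}$ and $\Theta^{\tilde 1-1,0}$ glue together, and the labels $\le0,=0,\ge0$ and the map $\iota'$ determine which copy of a centered segment is hit.
\item The segments in $I$, of length $2$. We must show that the algorithm picks them and that shortening them gives the symmetric points, so that the $\iota'$-partners ${}^+p(\Phi_{i'_j})$ are exactly undone by the ${}^-(\Delta^\vee)$ part of $\#$.
\item Stopping at $b$. Minimality of $\Theta_1$, together with the sign condition, forces the Lanard-Mínguez sequence to stop there. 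In the centered case $\s=(s,\eta)$, the choice of $\Theta=\varnothing$ versus $\Theta^{b-1,0}$ according to $\eta$ versus $(-1)^{n_0+1}\eta_0$, together with $\delta$, must reproduce the sign $\eta$ in $\bar\e_1$.
\end{itemize}
Once the sequence is identified, the computation $\bar\m^\#=\m$ is formal, using $(u^+)^-=u$ and $({}^+u)\,{}^-$-type identities as in Proposition \ref{prop duality GLn}.

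\textbf{Main obstacle.} The hardest part will be the signs: proving $\bar\e^\#=\e$ and $\bar\e_1=\eta$. The Lanard-Mínguez update of signs on $\#$ flips signs by the global factor $\e_0'$ and treats $[0,0]$ and $[-1/2,1/2]$ specially. I would first isolate a lemma comparing the parity count $n_0$ and $\eta_0$ before and after $\CT_\s$. I would then check the three cases in the definition of $\tilde\e$ separately against the LM sign rule; case (2), which uses Lemma \ref{lemma sign tilde}, is the delicate one. Finally, I would handle $[0,0]$ and $[-1/2,1/2]$ by splitting on whether $\rho$ has the same type as $G$.
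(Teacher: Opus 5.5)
Your outline is the paper's proof. It runs one Lanard--Mínguez step on $\CT_\s(\m,\e)$, then proves the bound $e(\Theta_l)\le e-1$ (Lemma~\ref{lemma bound ending point}), identifies $\bar\Delta_1,\dots,\bar\Delta_{\bar l}$ by descending induction (Lemma~\ref{lem induction Delta easy cases}, Proposition~\ref{prop key tilde Theta and Delta}, Lemma~\ref{lemma Delta and s of length 2}), compares $\bar n_0$ with $n_0$ to get $\bar\e_1=\eta$ (Proposition~\ref{prop bar e1}), and finishes with the case-by-case check of $\bar\m^\#=\m$ and $\bar\e^\#=\e$.

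Two points need fixing. First, with $T=\{e(\Theta_r)+1\}$ the case split is $j\in T$ versus $j\notin T$, not $j-1\in T$ versus $j\notin T$. The segment reached at $j\in T$ is $\tilde\Theta_r$, and this equals ${}^+p(\Theta_r)^+$ rather than $p(\Theta_r)^+$ whenever $i_r$ is also some $i'_{r'}$, for instance when $\Theta_r\in\mathcal S^{=0}$. Second, in the bound lemma the domination argument alone does not contradict $\s(\m,\e)\le'\s$ whenever $\Theta^\s$ involves $\Theta^{\tilde 1-1,0}$, in particular when $\s$ is centered. There you need what the paper uses: $\Theta^\s=\varnothing$ by definition when $\eta=(-1)^{n_0+1}\eta_0$, and for $\e_0=1$ the Lanard--Mínguez sequence crosses the centre inside $\mathcal S^{\le 0}$, strictly below $\Theta_1\in\mathcal S^{=0}\cup\mathcal S^{\ge 0}$.
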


\subsection{Sketch of the proof of the main result}

Let us outline the proof of Theorem \ref{main theorem}. Start by fixing $\s\in\Seg^{>0}\cup\Seg^{=0}\times\{1,-1\}$ and $(\m,\e)\in \DSymm^{\le'\s}$.

Denote $(\bar \m,\bar \e)=\CT_\s(\m,\e)$. The idea is to apply the Lanard-Mínguez algorithm once to the symmetrical datum $(\bar \m,\bar \e)$ in order to obtain the following equality:
$$\AD(\bar \m,\bar \e)=(\bar \m_1,\bar \e_1)+\AD(\bar \m^\#,\bar \e^\#)$$
where the data $(\bar \m_1,\bar \e_1)$ and $(\bar \m^\#,\bar \e^\#)$ are constructed from $(\bar \m,\bar \e)$ by Lanard and Mínguez in \cite{LM}. By definition of $\s(\m,\e)$ we can reformulate it:
$$\AD(\bar \m,\bar \e)=\T_{\s(\bar \m,\bar \e)}(\AD(\bar \m^\#,\bar \e^\#))$$

So we are now left to prove that $\s(\bar \m,\bar \e)=\s$ and $(\bar \m^\#,\bar \e^\#)=(\m,\e)$ so that:
$$\AD(\CT_\s(\m,\e))=\AD(\bar \m,\bar \e)=\T_\s(\AD(\m,\e))$$
which is exactly the content of Theorem \ref{main theorem} if we apply the involution $\AD$ on both sides.

This strategy of proof highlights the following fact: if $\#$ is the map $(\m,\e)\mapsto(\m^\#,\e^\#)$ defined in \cite{LM}, then the fiber $\#^{-1}(\m,\e)$ is parametrized by the elements $\s\in\Seg^{>0}\cup \Seg^{=0}\times \{1,-1\}$ such that $\s\ge'\s(\m,\e)$, and the datum in the fiber corresponding to such an $\s$ is $\CT_\s(\m,\e)$. In other words, our operators are sections of the map $\#$.

Now let us say a few words about the way to prove that $(\bar \m^\#,\bar \e^\#)=(\m,\e)$. The datum $(\bar \m,\bar \e)$ is obtained by adding some new segments of length 1 or 2 and by extending some segments of $(\m,\e)$; on the other hand, $(\bar\m^\#,\bar \e^\#)$ is obtained by shortening some segments of $(\bar \m,\bar \e)$. Roughly speaking, the idea is to show that the segments we extend or add by applying $\CT_\s$ are the ones we shorten by applying $\#$.

\subsection{Proof}

Recall that $\s$ and $(\m,\e)$ have been fixed in the last subsection. We denote by $(\bar \m,\bar \e)$ the datum $\CT_\s(\m,\e)$. Again, we write $(y,\e)=s(\m,\e)$ and  $y=\Phi_1+\ldots+\Phi_m$
with $\Phi_1\preceq \dots\preceq\Phi_m$. In order to lighten the notation we will write $\Theta=\Theta^\s(\m,\e)$ and $\Theta=\Theta_1+\ldots+\Theta_l$ with the endings being increasing. Now we also write $(\bar y,\bar \e)=s(\bar \m,\bar \e)$ and $\bar y= \Lambda_1+\ldots+ \Lambda_{k}$ with $\Lambda_1\preceq \dots\preceq\Lambda_{k}$.

When exposing their algorithm in \cite{LM}, Lanard and Mínguez defined from $(\m,\e)\in \Symm_\rho^\e(G)$ the tuple $(l,(\Delta_j)_{1\leq j \leq \bar l},n_0,\e_0,\e_1,\m_1,(i_j)_j,(i'_j)_j,\e^\#,\m^\#)$. In the following, we need to apply the Lanard-Mínguez algorithm to $(\bar \m,\bar \e)$, so we introduce the tuple $$(\bar l,(\bar \Delta_j)_{1\leq j \leq l},\bar n_0,\bar \e_0,\bar \e_1,\bar \m_1,(\bar i_j)_j,(\bar i'_j)_j,\bar \e^\#,\bar \m^\#)$$ as the one attached to $(\bar \m,\bar \e)$ with the same procedure as the one exposed in \cite{LM}. 

Recall that $e$ is the ending point of the underlying segment of $\s$ which is fixed, and $b$ is either its beginning if its center is strictly positive, and $\tilde 1$ if it is centered.

The first step is to show that $p(\bar \Delta_1)$ is either $p(\Theta_l)$ if $e\in T$ and $s_e$ otherwise. This will essentially be the content of the Proposition \ref{prop key tilde Theta and Delta}. To this end we first prove this preliminary lemma that also justifies that $T\subseteq [b,e]$.

\begin{lem}\label{lemma bound ending point}
    If $\Theta$ is not empty, we have $e(\Theta_l)\le e-1$.
\end{lem}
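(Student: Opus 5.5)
We follow the strategy of Lemma \ref{lemm bound ending GLn} and argue by contradiction. Since $(\m,\e)\in\DSymm^{\le'\s}$, we have $\s(\m,\e)\le'\s$. By Remark \ref{rem largest segment in the dual LM}, the end $e'$ of the underlying segment of $\s(\m,\e)$ is the largest ending point among the segments of $\m$, so $e(\Theta_l)\le e'\le e$. If $e'<e$ there is nothing to prove. So we assume $e'=e$ and $e(\Theta_l)=e$, and aim for a contradiction. In this situation $\s(\m,\e)\le'\s$ forces the beginning $b'$ of $\s(\m,\e)$ to satisfy $b'\ge b(s)$; when $\s=(s,\eta)$ is centered, $\s(\m,\e)$ is also centered with the same end, hence has the same underlying segment.

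The core step is to run the Lanard--Mínguez descending sequence $\Delta_1\succeq\Delta_2\succeq\cdots$ of \cite{LM} on $(\m,\e)$ and compare it with $\Theta$. Since $\Delta_1$ is the largest element of $y$ ending at $e_{\max}=e$, we get $\Theta_l\preceq\Delta_1$. Inductively, suppose $\Theta_r\preceq\Delta_i$ with $e(\Theta_r)=e(\Delta_i)=j$ and $r\ge 2$. Then $\Theta_{r-1}$ ends at $j-1$ and satisfies $\Theta_{r-1}\prec\Theta_r\preceq\Delta_i$. We must also check that $\Theta_{r-1}$ meets the sign and multiplicity conditions of \cite{LM} for being a legitimate next term. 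For the sign condition, the condition (3) in the definition of $\Theta^k$ (alternating signs between consecutive centered terms) is designed to match the LM requirement $\e(\Delta_{i+1})=-\e(\Delta_i)$ for consecutive centered segments. The multiplicity condition concerns the case $\Delta_{i+1}=\iota(\Delta_{i'})$ with $i'<i+1$. The conclusion is that the LM sequence continues, and $\Theta_{r-1}\preceq\Delta_{i+1}$, because $\Delta_{i+1}$ is chosen maximal. Iterating, the LM sequence reaches at least the end point $e(\Theta_1)=k$. Here $k=b-1$, or $k=\tilde 1-1$ in the $\Theta^{\tilde1-1,0}$ part, where one has to pass through the junction between the two pieces of $\Theta^\s$ in the case $f<0$.

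Finally we compare the resulting length with $\s(\m,\e)$. The sequence $\Delta$ therefore extends down to an ending point $\le b-1$. When $\s=s\in\Seg^{>0}$, this gives $b'=e(\Delta_{\text{last}})\le b-1<b$, contradicting $b'\ge b$. When $\s=(s,\eta)$ is centered, the LM sequence goes past the point where it would stop to produce a centered $\s(\m,\e)$ of end $e$. Concretely, in the LM algorithm the extracted datum is centered exactly when the sequence reaches $\tilde1-1$ (it reaches the segments $[0,0]$ or $[-1/2,1/2]$ with appropriate sign) and then continues into negative ending points. In that case the existence of $\Theta_1\in\mathcal S^{=0}\cup\mathcal S^{\ge0}$ ending at $\tilde 1-1$ with the correct sign, so that $\Theta^\s\neq\varnothing$ requires $\eta\neq(-1)^{n_0+1}\eta_0$, forces the sign $\e_1$ of the extracted centered segment to differ from $\eta$, or forces the extracted segment to be non-centered. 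Either outcome contradicts $\s(\m,\e)\le'\s$.

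The main obstacle is the compatibility step. We need to verify that each $\Theta_{r-1}$ is admissible for the LM recursion, including the sign rules, the labels $\le0,=0,\ge0$ in $\prec$, and the LM multiplicity restriction on $\iota$-pairs. We also need to handle the parity/sign bookkeeping with $n_0$ and $\eta_0$ in the centered case. I would first treat $\s=s$ with $b(s)\ge 1$, where only non-centered segments in $\mathcal S^{\ge0}$ intervene, and then add the centered cases, using Lemma \ref{lemme initial sequence} to control where centered terms can occur in $\Theta$.
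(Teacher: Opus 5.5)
Your core idea, comparing $\Theta$ with the Lanard--Mínguez sequence $\Delta_1\succeq\Delta_2\succeq\cdots$ of $(\m,\e)$, is the paper's. The paper runs it upward, getting $\Theta_{i+j}\succ\Delta_{k-j}$ until $\Theta_l\succ\Delta_1$; your downward version is equivalent. The sign compatibility you single out is routine: if $\Theta_{r-1}$ and $\Delta_i$ are both centered, then $\Theta_{r-1}\prec\Theta_r\preceq\Delta_i$ forces $\Theta_r$ to be centered with $p(\Theta_r)=p(\Delta_i)$, so condition (3) in the definition of $\Theta^k$ supplies the alternation.

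\textbf{The gap is in the case organization.} You must first split on whether $\s(\m,\e)$ is centered, i.e.\ on $\e_0$. The centered case needs no descent.
\begin{itemize}
\item For $\s\in\Seg^{>0}$ one has $([-e,e],\pm1)>'\s$, a contradiction.
\item For centered $\s$, the relation $\le'$ forces $\s(\m,\e)=\s$ including the sign. So $\eta=(-1)^{n_0+1}\eta_0$, and $\Theta^\s=\varnothing$ by definition.
\end{itemize}
Your first paragraph instead claims that a centered $\s$ forces a centered $\s(\m,\e)$. This is false, since every $[b',e]$ with $b'>-e$ satisfies $[b',e]\le'[-e,e]$. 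Also, without knowing $\e_0=1$, the step ``the LM sequence continues'' is unjustified. The sequence may run into $[0,0]^{=0,\ge0}$, $[1/2,1/2]$, or $[-1/2,1/2]^{=0,\ge0}$ with sign $-1$.

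In the remaining case, $\s(\m,\e)=[b',e]$ with $\e_0=1$, your endgame fails twice.
\begin{itemize}
\item For centered $\s$, descending through $\Theta^{\tilde1-1,0}$ only yields $b'\le\tilde1-1$, which contradicts nothing. A non-centered extracted segment does \emph{not} contradict $\s(\m,\e)\le'\s$.
\item For $\s\in\Seg^{>0}$ with $f<0$, you cannot pass the junction. If $\Theta^{\tilde1-1,0}\ne\varnothing$, then $f\ne\tilde1-2$: otherwise its first term, which lies in $\mathcal S^{=0}\cup\mathcal S^{\ge0}$, would be an admissible successor of the last term of $\Theta^{b-1}$, which lies in $\mathcal S^{\le0}$. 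So the ends of $\Theta$ skip a value, the comparison stops at end $\tilde1-1$, and $b'\le b-1$ is out of reach.
\end{itemize}

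\textbf{The missing idea}, used in the paper, has two parts. Only the block of $\Theta$ containing $\Theta_l$ matters, and the contradiction comes from $\e_0$. At end $\tilde1-1$, the first term $\Theta_1\in\mathcal S^{=0}\cup\mathcal S^{\ge0}$ of that block has $\e(\Theta_1)=-1$ if $p(\Theta_1)=[-1/2,1/2]$. It would force $\Delta_i\succeq\Theta_1$ to be one of the segments listed above, i.e.\ $\e_0=-1$, contradicting $\e_0=1$. The paper phrases the same point upward: $\e_0=1$ puts the LM term at that end in $\mathcal S^{\le0}$, or makes it $[-1/2,1/2]$ with sign $+1$. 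Then $\Theta_1$ lies strictly above it, and the comparison propagates up to $\Theta_l\succ\Delta_1$.
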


\begin{proof}
Note that $e(\Theta_l)$ has to be smaller than or equal to the ending point of the underlying segment of $\s(\m,\e)$, which is the largest ending point in $\m$. Since $(\m,\e)\in\DSymm^{\le'\s}$, we have $\s(\m,\e)\le'\s$ so the largest ending point in $\m$ is smaller than or equal to $e$. If this ending point is strictly smaller than $e$, both points of the Lemma are true. Now assume the ending point of the underlying segment of $\s(\m,\e)$ is $e$. If $\s(\m,\e)$ is centered, because of the definition of $<'$, the only possibility is that $\s(\m,\e)=\s=(s,\eta)$ since we assumed they share the same ending point $e$. However according to Lanard-Mínguez algorithm, we have $\eta=(-1)^{n_0+1}\cdot \eta_0$ so, in this case, by definition, $\Theta=\Theta^\s(\m,\e)=\varnothing$. Thus, we may assume that $\s(\m,\e)=[b',e]\in \Seg^{>0}$ with $b'\ge b$ the beginning of $s$ which may be $-e$. We start by considering the initial sequence $(\Delta_i)_{1\le i\le k}$ defined from $(\m,\e)$ by Lanard and Mínguez. We have $k=b'-e+1$, $e(\Delta_1)=e$ and $e(\Delta_k)=b'$. By definition of $\Delta$, there are no segments in $\m$ smaller than $\Delta_k$ ending in $b'-1$.

Assume first that $\Theta=\Theta^{b-1}(\m,\e)$. If $e(\Theta_l)< b'-1\le e-1$ then we are done. Now if $e(\Theta_l)\ge b'-1$, there exists $1\le i\le l$ such that $e(\Theta_i)=b'-1$ but then, by definition of $\Delta$ and $k$, we have $\Theta_i\succ\Delta_k$. Consequently, $\Theta_{i+j}\succ\Delta_{k-j}$ for any $j\ge 0$ such that $i+j\le l$ and $k-j\ge 1$. If $e(\Theta_l)=e$ then $\Theta_l\succeq \Theta_{l-1}\succ\Delta_1$ which is impossible since $\Delta_1$ is the largest segment in $\m$ that ends in $e$. Then $e(\Theta_l)\le e-1$.

Now assume that $\Theta=\Theta^{\tilde 1-1,0}(\m,\e)$. If $b'> \tilde 1$, the exact same argument applies. Now if $b'\le\tilde 1$, there exists $1\le j\le k$ such that $e(\Delta_j)=\tilde 1$. Since we assumed that $\e_0=1$, we have either $\Delta_j\in \mathcal S^{\le'0}$ or $\Delta_j=[-1/2,1/2]$ with sign $\e(\Delta_j)=1$. Then, by definition of $\Theta^{\tilde 1-1,0}$ we have either $\Theta_1\succ\Delta_j$ (because $\Theta_1$ is in $\mathcal S^{=0}\cup\mathcal S^{\ge 0}$), or $\Theta_1=\varnothing$. In the second case the result is direct and in the first one, as before we can deduce that if $e(\Theta_l)=e$ we would have $\Theta_l\succ \Delta_1$ which is impossible.

Finally, if $\Theta=\Theta^{b-1}(\m,\e)+\Theta^{\tilde 1-1,0}(\m,\e)$. If $\Theta^{\tilde 1-1,0}(\m,\e)$ then we already treated that case so we can assume that it is non-empty and then we can do the same thing as in the previous case.
\end{proof}

Here is another lemma before the key proposition that will lead us to the proof of Theorem \ref{main theorem}.

\begin{lem}\label{lem induction Delta easy cases} We have these two points:
\begin{enumerate}
    \item For any $1\le i\le \bar l-1$, if there exists $2\le j\le l$ such that $p(\bar \Delta_i)=\tilde \Theta_j$ then $p(\bar \Delta_{i+1})=\tilde \Theta_{j-1}$. Moreover, if $\bar \Delta_i$ is centered, we either have $c(\Theta_r)=0$ and $\bar \Delta_i\in\mathcal S^{=0}$ or $c(\Theta_r)=-1/2$ and $\bar \Delta_i\in \mathcal S^{\le'0}$.
    
    \item For any $1\le i\le \bar l-1$, if $\ell(\bar \Delta_i)=1$ and $[b(\bar \Delta_i)-1,e(\bar \Delta_i)-1]\in\bar \m$ then $p(\bar \Delta_{i+1})=[e(\bar \Delta_i)-1,e(\bar \Delta_i)-1]$. Now if $\ell(\bar \Delta_i)=2$ and $[e(\bar \Delta_i)-1,e(\bar \Delta_i)-1]\in\bar \m$ then $p(\bar \Delta_{i+1})$ is either $[b(\bar \Delta_i),e(\bar \Delta_i)-1]$ if this segment exists in $\bar \m$ and $[b(\bar \Delta_i),e(\bar \Delta_i)-1]$ otherwise.
\end{enumerate}
    
\end{lem}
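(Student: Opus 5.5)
We would prove both points by unwinding, one step at a time, the definition of the Lanard-Mínguez initial sequence $(\bar\Delta_i)$ attached to $(\bar\m,\bar\e)$. Recall the two defining requirements on $\bar\Delta_{i+1}$. First, it must be an element of $\bar y=\Lambda_1+\dots+\Lambda_k$ which ends at $e(\bar\Delta_i)-1$ and satisfies $\bar\Delta_{i+1}\preceq\bar\Delta_i$. Second, it must satisfy the sign condition when both segments are centered, together with the multiplicity condition involving $\iota$-images of earlier terms. Among all such elements, $\bar\Delta_{i+1}$ is the $\prec$-largest. So in each case we have two things to check. The existence part: the expected candidate lies in $\bar y$ and satisfies all the conditions, which gives $i+1\le\bar l$. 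The maximality part: no strictly larger admissible element exists.

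For point (1), write $\tilde\Theta_r$ for the extended segment, i.e.\ $\tilde\Phi_{i_r}$, which is $p(\Theta_r)^+$ or ${}^+p(\Theta_r)^+$.

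\textbf{Existence.} We first check that $\Theta_{j-1}\preceq\Theta_j$ implies $\tilde\Theta_{j-1}\preceq\tilde\Theta_j$. This should follow directly from the definition of $\prec$: right-extension preserves the order between segments with the same label. Between different labels the comparison is by label, and the labels are carried over from $\Phi_{i_r}$ to $\Lambda$. For centered segments the order compares endpoints, and these shift by the same amount. The sign condition is where the definition of $\tilde\e$ enters. If $\Theta_{j-1}$ and $\Theta_j$ are both centered, the defining rule of $\Theta^k$ gives $\e(\Theta_j)=-\e(\Theta_{j-1})$, and case (1) of $\tilde\e$ multiplies both by $\e'_0$, so the opposite-sign condition is preserved. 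If instead $c(\Theta_{j-1})=-1/2$, case (2) of $\tilde\e$ was designed precisely so that the resulting centered segment has sign $-\e(\Theta_j)$. This also yields the ``moreover'' clause: a centered $\tilde\Theta_r$ arises either from a centered $\Theta_r$ with label $=0$, or from $c(\Theta_r)=-1/2$ with label $\le 0$ (Lemma \ref{lemma sign tilde}). The multiplicity condition involving $\iota$ is handled by using $\iota'$ in the indices $i'_r$: if $\tilde\Theta_{j-1}$ is the $\iota$-image of an earlier $\bar\Delta$, then both $\Phi_{i_{j-1}}$ and $\Phi_{i'_{j-1}}$ were modified, so the multiplicity in $\bar\m$ is at least $2$.

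\textbf{Maximality.} Suppose $u\in\bar y$ ends at $e(\tilde\Theta_{j-1})$, is admissible, and satisfies $\tilde\Theta_{j-1}\prec u\preceq\tilde\Theta_j$. We split according to whether $u$ is an unmodified $\tilde\Phi_i$, a left-extended image ${}^+p(\Phi_{i'_r})$, or an added $s_i$ or $\delta$.
\begin{itemize}
\item If $u$ is an unmodified $\tilde\Phi_i$, then it is an element of $y$ ending at $e(\Theta_{j-1})+1=e(\Theta_j)$, and it is admissible with respect to $\Theta_{j-1}$ with the same sign. Hence it would be a smaller valid choice than $\Theta_j$, contradicting the minimality in the definition of $\Theta^k_j$.
\item A left-extended image ${}^+p(\Phi_{i'_r})$ has the wrong ending point unless it equals some $\tilde\Theta$, which is excluded by the symmetry of $\Theta$.
\item The added segments $s_i$ and $\delta$ end outside $T$, so they do not end at $e(\tilde\Theta_{j-1})$.
\end{itemize}

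Point (2) is the analogous verification for the added short segments $s_i=[i,i]$ or $[i-1,i]$. The candidate $[e-1,e-1]$ (resp.\ the length-two or length-one segment ending at $e-1$ with the same beginning) is $\preceq\bar\Delta_i$ by the definition of the order on points. Maximality again reduces to the fact that there is nothing larger among points ending one step earlier. The main obstacle is the bookkeeping in the centered and near-centered region ($e$ around $0,\ 1/2,\ 1$). There the labels $\le 0$, $=0$, $\ge 0$ can switch between $y$ and $\bar y$, since $s(\bar\m)$ redistributes labels according to the new multiplicities, and the signs $\bar\e([0,0])$ and $\bar\e([-1/2,1/2])$ are defined separately. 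I would treat those cases by explicit enumeration using Lemma \ref{lemme initial sequence}, and use Lemma \ref{lemma sign tilde} for the $c=-1/2$ transition.
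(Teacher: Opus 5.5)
Your overall skeleton matches the paper's: exhibit a candidate for $\bar\Delta_{i+1}$, then use the minimality of $\Theta_j$ to rule out larger admissible elements. There is, however, a genuine gap, and it sits exactly where the content of the lemma lies. Your existence step rests on the claim that the labels ``are carried over from $\Phi_{i_r}$ to $\Lambda$''. They are not.

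\begin{itemize}
\item $\bar y=s(\bar\m,\bar\e)$ is recomputed from $\bar\m$. A centered segment of multiplicity $m$ in $\bar\m$ receives $\lfloor m/2\rfloor$ copies labelled $\le 0$, $\lfloor m/2\rfloor$ copies labelled $\ge 0$, and, if $m$ is odd, one copy labelled $=0$.
\item The passage $\m\to\bar\m$ changes centeredness: center $-1/2$ becomes $0$, and a centered $\Theta_j\in\mathcal S^{\ge0}$ acquires center $1/2$.
\item It also changes multiplicities: copies at $i_r,i'_r$ disappear and new ones appear.
\end{itemize}

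The label of the element $\alpha\in\bar y$ with $p(\alpha)=\tilde\Theta_{j-1}$ decides both whether $\alpha\preceq\bar\Delta_i$ and which copy is the largest admissible one. It is also precisely what the ``moreover'' clause asserts, and your justification of that clause never computes it.

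For example, take $\Theta_{j-1}=[-a,a]^{=0}$ and $\Theta_j=[-a-1,a+1]^{\ge0}$. Then $\tilde\Theta_{j-1}=p(\Theta_j)$, and any $\ge0$-labelled copy of it is $\succ\tilde\Theta_j=[-a-1,a+2]$. You need the minimality of $\Theta_j$ to see that $m_\m(p(\Theta_j))$ is even. Only then is $m_{\bar\m}(\tilde\Theta_{j-1})=m_\m(p(\Theta_j))-2+1$ odd, so that an $=0$ copy exists to serve as $\bar\Delta_{i+1}$. The paper's proof is built around exactly this computation: it splits into the six label configurations of $(\Theta_{j-1},\Theta_j)$, and in each one fixes the label of $\alpha$ by such a parity count before checking order, signs and maximality.

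The same omission undermines your maximality trichotomy, because competitors in $\bar y$ are labelled copies, not ``origins''. The case you dismiss does occur. Suppose $c(\Theta_{j-1})=-1/2$ and $\Theta_j=C^{\ge0}$ with $C=p(\Theta_{j-1})^+$. Then ${}^+p(\Phi_{i'_{j-1}})=C=\tilde\Theta_{j-1}$, $m_{\bar\m}(C)$ is even and positive, and $\bar y$ contains $\ge0$-labelled copies of $\tilde\Theta_{j-1}$ ending at the right place. You must show these are $\succ\bar\Delta_i$ in order to conclude that $\bar\Delta_{i+1}$ is the $\le0$ copy; the paper does this using $\Theta_j\preceq\Theta_{j-1}^\vee$.

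Two smaller points:
\begin{itemize}
\item The multiplicity condition on $\iota$-images that you verify belongs to the bad-parity algorithm. In good parity its role is played by the labels, which is exactly what is missing.
\item In the $c(\Theta_{j-1})=-1/2$ sign check you must compare $-\e(\Theta_j)$ with $\bar\e(\bar\Delta_i)=\e'_0\,\e(\Theta_j)$. This requires observing that the configuration only occurs for $\s\in\Seg^{>0}$, since for centered $\s$ all terms of $\Theta$ lie in $\mathcal S^{=0}\cup\mathcal S^{\ge0}$; that observation fixes $\e'_0$.
\end{itemize}

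Deferring the ``bookkeeping in the centered and near-centered region'' to an unspecified enumeration leaves the lemma unproved, since that bookkeeping is the proof.
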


\begin{rem}
    Note that the condition $e(\Theta_{j-1})=e(\Theta_j)-1$ in the first point may not be verified only if $\Theta=\Theta^{b-1}(\m,\e)+\Theta^{\tilde 1-1,0}(\m,\e)$ with both summand being non-empty and $j-1$ being the number of non-empty segments in $\Theta^{b-1}(\m,\e)$.
\end{rem}

\begin{proof}
Take such an $i$ and such a $j$ as in the first point. 
\begin{itemize}[noitemsep,topsep=0pt]
    \item Suppose $\Theta_{j-1}$ and $\Theta_j$ are both in $\mathcal S^{\ge 0}$. Then $\Delta_i\in\mathcal S^{\ge 0}$  by assumption. We have $c(\tilde \Theta_{j-1})=c( \Theta_{j-1})+1/2>0$ and so if $\alpha\in\bar y$ is such that $p(\alpha)=\tilde \Theta_i$, then $\alpha\in\mathcal S^{\ge 0}$.
    
    Now we show that $\bar \Delta_{i+1}=\alpha$. Since $\alpha$ and $\bar \Delta_i$ are both in $\mathcal S^{\ge 0}$ we have $\alpha\preceq\bar \Delta_i$ if and only if $p(\bar \Delta_i)=\tilde \Theta_j\ge \tilde \Theta_{j-1}$ but this is true because $p(\Theta_j)\ge p(\Theta_{j-1})$ since $\Theta_j$ and $\Theta_{j-1}$ are both in $\mathcal S^{\ge 0}$. The last point is the maximality of $\alpha$. Take $u\in\bar y$ ending in $e(\tilde \Theta_{j-1})$ with $u\succ\alpha$. Then by definition of $\bar \m$ we have $p(u)\in\m$ and since $u\succeq \alpha$ and $c(\tilde \Theta_{j-1})>0$ we have $c(u)>0$ and so the only $\alpha\in y$ such that $p(\alpha)=p(u)$ is in $\mathcal S^{\ge 0}$, i.e. $u\in y$. Consequently, $u\succeq \Theta_{j-1}$ and since $e(u)=e(\Theta_{j-1})+1$ we have $u\succeq \Theta_j$ and then $p(u)\succ \tilde \Theta_j$ so $u\succ \bar \Delta_i$. This proves that $\bar \Delta_{i+1}=\alpha$ and then $p(\bar \Delta_{i+1})=p(\alpha)=\tilde \Theta_{j-1}$.

    \item Suppose $\Theta_{j-1}\in\mathcal S^{=0}$ and $\Theta_{j}\in \mathcal S^{\ge 0}$. Then $c(\tilde \Theta_j)=c(\Theta_j)+1/2>0$. Take $\alpha\in \bar y$ such that $p(\alpha)=\tilde \Theta_{j-1}$. Since $\tilde \Theta_{j-1}$ is centered, we may have $\alpha \in \mathcal S^{\ge 0}\cup \mathcal S^{=0}\cup \mathcal S^{\le'0}$. The multiplicity $m_\m(p(\Theta_j))$ has to be even and so, by definition of $\tilde \m$, we have $m_{\bar m}(\tilde \Theta_{j-1})=m_{\m}(p(\Theta_j))-2+1$  odd and then we can take $\alpha\in\mathcal S^{=0}$.
    
    We now show that $\bar \Delta_{i+1}=\alpha$. We have $\alpha\in\mathcal S^{=0}$ so $\alpha\preceq \bar \Delta_i$. Note that even if $\Theta_j$ is centered, $\tilde \Theta_j$ is not. Now of course $e(\alpha)=e(\Theta_j)=e(\tilde \Theta_j)-1=e(\bar \Delta_i)-1$. Now take $u\succ \alpha$ with $e(u)=e(\alpha)$. 
    Either $p(u)=p(\alpha)$ with $u\in\mathcal S^{\ge 0}$ or either $e(u)>e(\alpha)$. In both cases, if $u\preceq \bar \Delta_j$, as in the previous points of this proof, it would imply the existence of a segment in $y$ contradicting the minimality of $\Theta_j$. Thus $u\succ \bar \Delta_j$ and $\bar \Delta_{j-1}=\alpha.$

    \item Suppose $\Theta_{j-1}$ and $\Theta_{j}$ are both in $\mathcal S^{= 0}$. We have $c(\tilde \Theta_{j-1})=0$  so if $\alpha\in\bar y$ is such that $p(\alpha)=\tilde \Theta_{j-1}$ we may have $\alpha \in \mathcal S^{\ge 0}\cup \mathcal S^{=0}\cup \mathcal S^{\le'0}$. Since $\Theta_{j-1}\in \mathcal S^{=0}$ we have $\tilde \Theta_{j-1}=p(\Theta_j)$ and since $\Theta_j\in\mathcal S^{=0}$ we have $m_\m(p(\Theta_j))$ is odd so by definition of $\m$ we have $m_{\bar \m}(\tilde \Theta_{j-1})=m_\m(p(\Theta_j))+1-1=m_\m(p(\Theta_j))$ odd. We then let $\alpha$ be in $\mathcal S^{=0}$ and we show that $\alpha=\bar \Delta_{i+1}$. 
    
    Both $\alpha$ and $\bar \Delta_i$ are in $\mathcal S^{=0}$ and $e(\alpha)=e(\bar \Delta_i)-1$ so $\alpha\prec\bar \Delta_i$. Now, $\bar \e(\alpha)= \e_0'\cdot\e(\Theta_{j-1})=- \e_0'\cdot\e(\Theta_j)=-\e(\tilde \Theta_j)=-\e(\bar \Delta_i)$. And finally, this case, there are no possible segment strictly between $\alpha$ and $\bar \Delta_i$ so we have $\tilde \Delta_{i+1}=\alpha$ and $p(\bar \Delta_{i+1})=p(\alpha)=\tilde \Theta_{j-1}$.
    
    \item Suppose $\Theta_{j-1}\in\mathcal S^{\le0}$ and $\Theta_{j}\in \mathcal S^{\ge 0}$. Take $\alpha\in\bar y$ such that $p(\alpha)=\tilde \Theta_{j-1}$. If $c(\Theta_{j-1})<1/2$ then $\alpha$ has to be in $\mathcal S^{\le'0}$. If $c(\Theta_{j-1})\in\{-1/2,0\}$, then $c(\tilde \Theta_{j-1})=0$ and so we may have $\alpha \in \mathcal S^{\ge 0}\cup \mathcal S^{=0}\cup \mathcal S^{\le'0}$ and we have to explain how to choose $\alpha$. If $\Theta_j$ is centered then $m_\m(p(\Theta_j))$ is even and  we have $\tilde \Theta_{j-1}=p(\Theta_j)$ so by definition of $\bar \m$ we have $m_{\bar \m}(\tilde \Theta_{j-1})=m_\m(p(\Theta_j))-2+1$ odd if $c(\Theta_{j-1})=0$ and $m_{\bar \m}(\tilde \Theta_{j-1})=m_\m(p(\Theta_j))-2+2$ even if $c(\Theta_{j-1})=-1/2$. In the first case  we take $\alpha\in\mathcal S^{=0}$ and in the second one we take $\alpha\in\mathcal S^{\le'0}$. Now finally, if $c(\Theta_j)>0$, we have $\tilde \Theta_j\notin \m$ so in particular $m_\m(\tilde \Theta_j)=0$ which is even so we do the same.

    We now prove that $\alpha=\bar \Delta_{i+1}$ start with the case $c(\alpha)<0$. First remark that in all cases, we have $e(\alpha)=e(\tilde \Theta_{j-1})=e(\tilde \Theta_j)-1=e(\bar \Delta_i)-1$ and $c(\bar \Delta_i)=c(\Theta_{j})+1/2>0$. Now suppose $c(\tilde \Theta_{j-1})\le-1/2$. We have $\alpha\in\mathcal S^{\le'0}$ and so $\alpha\prec \bar \Delta_i\in\mathcal S^{\ge 0}$. As in the previous cases, using $\Theta_j$ minimality in $\m$, we prove that if $u\in\bar y$ ends in $e(\alpha)$ and $u\succ \alpha$ then $u\succ \bar \Delta_i$ and so $\bar \Delta_{i+1}=\alpha$. Now if $c(\Theta_{j-1})\in\{-1/2,0\}$ this is very similar to the case $(\Theta_{j-1},\Theta_j)\in\mathcal S^{=0}\times \mathcal S^{\ge 0}$. The only specificity is to justify that $\bar \Delta_{i+1}\neq \alpha^\vee\in\mathcal S^{\ge 0}$. To do that we show $\alpha^\vee\succ \bar \Delta_i$. 
    We have $p(\bar \Delta_i)=\tilde \Theta_j$. Since $c(\Theta_j)=-1/2$, we have $\Theta_{j+1}\preceq \Theta_j^\vee$. We deduce that $\tilde \Theta_j\le{}^+p(\Theta_j^\vee)^+$. The segment $p(\alpha)$ is centered and ends in $e(\Theta_{j-1})+1=e({}^+p(\Theta_{j-1}^\vee)^+)+2$ so $p(\alpha)\ge {}^+p(\Theta_j^\vee)^+$ so $\alpha^\vee\succ \bar \Delta_i$ since $\alpha^\vee\in\mathcal S^{\ge 0}$ and $p(\bar \Delta_i)=\tilde \Theta_j\le {}^+p(\Theta_j^\vee)^+$.

    \item Suppose $\Theta_{j-1}\in\mathcal S^{\le0}$ and $\Theta_{j}\in \mathcal S^{= 0}$. This case is very similar to the previous one. The only new thing is a compatibility of signs we will check when $c(\Theta_{j-1})=-1/2$.
    In this case $\bar \e(\tilde \Theta_{j-1})=-\e(\Theta_j)=-\e_0\cdot\bar \e(\tilde \Theta_j)=-\e_0\cdot\e(\bar \Delta_{i})$. But when $\s=(\s,\eta)$ the elements of $\Theta$ are all in $\mathcal S^{=0}\cup \mathcal S^{\ge 0}$ so we have $\e_0=1$ and then $\bar \e(\alpha)=\bar \e(\tilde \Theta_{j-1})=-\e(\bar \Delta_{i})$ which is what we wanted for $\alpha$ to be $\bar \Delta_{i+1}$.

    \item Suppose $\Theta_{j-1}$ and $\Theta_{j}$ are both in $\mathcal S^{\le'0}$. Because of the order on $\mathcal S^{\le'0}$ and the definition of $\Theta$, they cannot be both centered and if $\Theta_{j-1}$ was centered we would have $\Theta_j\in \mathcal S^{\ge 0}$. Then we deduce that $\Theta_{j-1}$ is not centered and that $\Theta_j$ might be centered. The proof this is very similar to the previous case.
\end{itemize}

 The second point is  clear.
\end{proof}

\begin{prop}\label{prop key tilde Theta and Delta} Both following points are true.
    \begin{itemize}
        \item Assume that $\s=(s,\eta)\in \Seg^{=0}\times \{1,-1\}$. Then we get $\bar l=e-b+2$ and for any $1\le i\le e-b+1$, either $j:=e(\bar \Delta_i)=e-i+1\in T$, and there exists $1\le r\le l$ such that $p(\bar \Delta_i)=\tilde \Theta_r$, or $j\in[b,e]\backslash T$ and $p(\bar \Delta_i)=s_j=[j,j]$. If $p(\bar \Delta_i)=\tilde \Theta_r$ is centered, then either $c(\Theta_r)=0$ and $\bar \Delta_i\in \mathcal S^{=0}$ or $c(\Theta_r)=-1/2$ and $\bar \Delta_i\in\mathcal S^{\le'0}$. Finally, if $\rho$ is not of the same type as $G$ then $\bar \Delta_l=[1/2,1/2]$ if $\eta=(-1)^{n_0}$ and $\bar \Delta_{\bar l}=[-1/2,1/2]^{\le'0,=0}$ otherwise, and if $\rho$ is of the same type as $G$, we have $\bar \Delta_{\bar l}=[0,0]^{\ge 0,=0}$.

        \item Assume that $\s=s\in\Seg^{>0}$. Then we get $\bar l=e-b+1$ and for any $1\le i\le e-b+1$ let $j:=e(\bar \Delta_i)=e-i+1$. Either $j\in T$ and there exists $1\le r\le l$ such that $p(\bar \Delta_i)=\tilde \Theta_r$, or $j\notin T$ and $p(\bar \Delta_i)=[j,j]$ if $j\in I^c$ and $p(\bar \Delta_i)\in\{[j,j],s_j=[j-1,j]\}$ if $j\in I$. Finally, in all cases, $\bar \e_0=1$.
    \end{itemize}
     In both cases we get $\e_0'=\bar \e_0$ and $\bar \m_1=s$.

\end{prop}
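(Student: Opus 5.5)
We argue by descending induction along the Lanard-Mínguez initial sequence $(\bar\Delta_i)$ of $(\bar\m,\bar\e)$, exactly as in the proof of Proposition \ref{prop duality GLn}. The claim to propagate is: for each $i$, with $j=e-i+1$, the segment $p(\bar\Delta_i)$ is $\tilde\Theta_r$ (the extension of $\Theta_r$ with $e(\Theta_r)+1=j$) when $j\in T$, and a point $[j,j]$, or possibly $s_j=[j-1,j]$ when $j\in I$, otherwise. The labels in $\mathcal S^{=0}$ or $\mathcal S^{\le'0}$ are tracked for centered terms.

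\textbf{Base case.} We first determine $\bar e_{\max}$ and $\bar\Delta_1$. Every segment of $\m$ ends at most at $e$ because $\s(\m,\e)\le'\s$ (Remark \ref{rem largest segment in the dual LM}). By Lemma \ref{lemma bound ending point} we have $e(\Theta_l)\le e-1$, so either $e\in T$ and $\tilde\Theta_l$ ends at $e$, or $e\notin T$ and $s_e=[e,e]$ has been added. Thus $\bar e_{\max}=e$. In the first case, $\Theta_l$ is the last term of its sequence, and no segment of $\m$ ending at $e$ lies above $\Theta_l$ in $\prec$; this uses the argument in the proof of Lemma \ref{lemma bound ending point}. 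Hence $p(\bar\Delta_1)=\tilde\Theta_l$. In the second case the point $[e,e]$, labelled $\ge0$, is maximal among segments ending at $e$.

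\textbf{Induction step.} We split according to the nature of $\bar\Delta_i$.
\begin{enumerate}
\item If $p(\bar\Delta_i)=\tilde\Theta_r$ with $r\ge2$ and $e(\Theta_{r-1})=e(\Theta_r)-1$, then Lemma \ref{lem induction Delta easy cases}(1) gives $\tilde\Theta_{r-1}$, together with the labels.
\item If $\bar\Delta_i$ is a point or a length-two $s_j$ and $j-1\notin T$, then Lemma \ref{lem induction Delta easy cases}(2) gives $s_{j-1}$, or $[j-1,j-1]$.
\item The transition from a point $s_j$ to $\tilde\Theta_l$ when $j-1=e(\Theta_l)+1$ uses the maximality of $\Theta_l$, as in the GL case.
\item The junction in $\Theta^{b-1}+\Theta^{\tilde1-1,0}$, where the ends jump, is handled the same way. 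Here the last term of $\Theta^{b-1}$ is followed by the added points between $f+1$ and $\tilde1-1$, and the minimality of $\Theta^{\tilde1-1,0}_1$ inside $\mathcal S^{=0}\cup\mathcal S^{\ge0}$ replaces the minimality of $\Theta_1$.
\end{enumerate}

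\textbf{Termination.} For $\s=s$, the sequence stops at $j=b$. Either $\Theta_1$ exists and nothing in $\bar y$ ending at $b-1$ is admissible, by the minimality of $\Theta_1$ and the sign condition (2) in the definition of $\Theta^k_1$. Or no segment ends at $b-1$ with the right sign. This gives $\bar l=e-b+1$. We then check $\bar\e_0=1$ from the definition of $\e_0$ in \cite{LM}, using that all centered terms met were in $\mathcal S^{=0}$ or $\mathcal S^{\le'0}$ with the prescribed signs $\bar\e$.

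For $\s=(s,\eta)$, the sequence continues one step further, into $\delta$. We compute this last term by cases on the type of $\rho$ and on whether $\eta=(-1)^{n_0+1}\eta_0$; this is where $\bar\e([0,0])$ and $\bar\e([-1/2,1/2])$ were chosen. This gives $\bar l=e-b+2$ and the stated $\bar\Delta_{\bar l}$.

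In both cases $\bar\m_1$ equals $[e(\bar\Delta_{\bar l}),e]$, with its symmetric or its centered version. This is $s$, and $\e_0'=\bar\e_0$ follows.

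\textbf{Main obstacle.} We expect the hardest part to be the sign and label bookkeeping near the center, that is, the segments with $c(\Theta_r)\in\{-1/2,0\}$ and the final $\delta$ step. There we must verify the condition in \cite{LM} forbidding $\bar\Delta_{i+1}=\iota(\bar\Delta_{i'})$ without sufficient multiplicity. We must also check the alternating-sign requirement on consecutive $\mathcal S^{=0}$ terms. Our first approach is to compare multiplicity parities of $\tilde\Theta_{r-1}$ in $\bar\m$ versus $\m$, as in the proof of Lemma \ref{lem induction Delta easy cases}. Where $j\in I$, we must also decide between $[j,j]$ and $[j-1,j]$ by comparing them in $\prec$.
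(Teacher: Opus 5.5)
Your skeleton matches the paper's proof. You run a descending induction along $(\bar\Delta_i)$, split according to whether $j$ and $j-1$ lie in $T$, and use Lemma~\ref{lem induction Delta easy cases} for the two homogeneous cases. You then terminate at the end $b$, and take one more step into $\delta$ when $\s$ is centered.

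\textbf{Main gap: the check $\bar\e_0=1$ for $\s=s$.} What must be shown is the following: no $\bar\Delta_i$ equals $[0,0]^{\ge0}$ or $[0,0]^{=0}$ when $\rho$ has the same type as $G$. In the other case, no $\bar\Delta_i$ equals $[1/2,1/2]$, nor $[-1/2,1/2]^{\ge0}$ or $[-1/2,1/2]^{=0}$ when that segment carries sign $-1$. Knowing that centered terms lie in $\mathcal S^{=0}\cup\mathcal S^{\le'0}$ does not exclude this, since $[0,0]^{=0}\in\mathcal S^{=0}$. So the reason you give does not prove the claim.

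The missing input is a direct check at the centre, and this is what the length-two segments $s_i=[i-1,i]$, $i\in I$, are for. Take $b\le0$, since otherwise the end $0$ is never reached.
\begin{itemize}
\item If $0\in T$, the term ending at $0$ is an extended segment of length $\ge2$.
\item If $0\notin T$ but $1\in T$, the term of $\Theta$ ending at $0$ is $\Theta^{0,0}_1\in\{[0,0]^{=0},[0,0]^{\ge0}\}$. It removes one (resp.\ two) copies of $[0,0]$ from an odd (resp.\ even) multiplicity, while $0\in I^c$ adds two. Hence $m_{\bar\m}([0,0])$ is even and $[0,0]^{=0}$ is absent. Moreover, $[0,0]^{\ge0}$ can only follow $[1,1]$, because for $<$ the shorter of two segments with the same beginning is the larger. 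Here, however, the term ending at $1$ is an extended segment.
\item If $0,1\notin T$, then $0,1\in I$, so the segments added at $0$ and $1$ are $[-1,0]$ and $[0,1]$ rather than points. Also $[0,0]\notin\m$, since otherwise some term of $\Theta$ would end at $0$. Hence $[0,0]\notin\bar\m$.
\end{itemize}
The opposite-type case needs the analogous check around the ends $3/2$ and $1/2$, using the definition of $\bar\e([-1/2,1/2])$ and again the set $I$.

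\textbf{Smaller points.}
\begin{itemize}
\item In the step from an added point at $j$ to the lift $\alpha$ of $\tilde\Theta_r$ at $j-1$, ``as in the GL case'' is not enough. The datum $\bar\m$ also contains the mirrored points $[-i,-i]$ for $i\in I^c$, and if $[j-1,j-1]$ were among them it would beat $\alpha$. You must check that $1-j\notin I^c$: either $1-j\notin[b,e]$, or $1-j\in T$, or $1-j\in I$. You must also check that $\alpha\prec\bar\Delta_i$, using the labels and $\ell(\tilde\Theta_r)\ge2\ge\ell(\bar\Delta_i)$.
\item In the base case, if $e\in I$ (possible when $c(s)=1/2$), the added segment is $s_e=[e-1,e]$, not $[e,e]$. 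Then $p(\bar\Delta_1)=[e,e]$ only if that point already lies in $\bar\m$.
\item At the junction of $\Theta^{b-1}$ and $\Theta^{\tilde1-1,0}$, going down from $\tilde\Theta^{\tilde1-1,0}_1$, the sequence must \emph{continue} into the added point at $\tilde1-1$ rather than stop. So the minimality of $\Theta^{\tilde1-1,0}_1$ is not the relevant input. The paper instead uses that this term has length $\le2$, and reduces to Lemma~\ref{lem induction Delta easy cases}(2).
\end{itemize}
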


\begin{rem}
    Note that when $\s=(s,\eta)\in\Seg^{=0}\times \{1,-1\}$ we always have $I=\varnothing$ (see Remark \ref{rem I}).
\end{rem}

\begin{proof}
    We prove this by induction on $1\le i\le e-b+1$.
    
    We begin with the initialization at $i=1$. According to Lemma \ref{lemma bound ending point} we have $e(\Theta_l)\le e-1$.
    Assume that $e\in T$ or equivalently that $e(\Theta_l)=e-1$. Let $\alpha\in\bar y$ such that $p(\alpha)=\tilde \Theta_l$. If $c(\tilde \Theta_l)>0$ then $\alpha\in\mathcal S^{\ge 0}$, if $c(\tilde \Theta_l)<0$ we have $\alpha\in\mathcal S^{\le'0}$ and finally if $c(\tilde \Theta_l)=0$, then $\alpha$ can be in $\mathcal S^{\ge 0}\cup \mathcal S^{=0}$ so we choose it to be the largest possible, and then $\alpha\in\mathcal S^{=0}\cup\mathcal S^{\ge 0}$. Now we show that $\bar \Delta_1=\alpha$. First of all, $e(\alpha)=e(\tilde \Theta_l)=e(\Theta_l)+1=e-1+1=e$. Now take $u\in \bar y$ such that $e(u)=e(\alpha)$ and $u\succeq \alpha$. If $u\neq \alpha$ we get $p(u)\in\m$ by definition of $\bar \m$. Since $u\succ\alpha$ and $e(u)=e(\alpha)$, we have $b(u)>b(\alpha)$ so even if $c(\alpha)=0$ we have $c(u)>0$ and then $u\succ\Theta_l$ in $y$. But it is impossible because if it was true we would get $\Theta_{l+1}=u$. So it means that $u=\alpha$ then $\alpha$ is the largest segment in $\bar y$ ending in $e$, i.e. $\bar \Delta_1=\alpha$ and so $p(\bar \Delta_1)=p(\alpha)=\tilde \Theta_l$. Now if $\Theta$ is empty or $e(\Theta_l)<e-1$, we get $e\in[b,e]\backslash T$ and $s_e\in\bar \m$. If $e\notin I$ we have $s_e=[e,e]$ so this is the largest possible segment ending in $e$ thus $p(\bar \Delta_1)=s_e$. Now if $e\in I$ (which happens if and only if $b=-e+1$ and $b\notin T$) we have $s_e=[e-1,e]\in\bar \m$ so $p(\bar \Delta_1)=[e,e]$ if $[e,e]\in\bar \m$ and $p(\bar \Delta_1)=[e-1,e]$ otherwise. 
    Now assume that the result has been proved for $1\le i\le e-b$;  we show it is still true for $i+1\le e-b+1$. Denote by $j$ the end of $\bar \Delta_i$ which is $e-i+1\ge b+1$, there are many possibilities depending on whether $j$ is in $T$ or $[b,e]\backslash T$, and the same for $j-1$. Note that if $(j,j-1)\in T^2$ then the result follows from the first point of Lemma \ref{lem induction Delta easy cases}, and if $(j-1,j)$ is in $([b,e]\backslash T)^2$ then the result follows from the second point of Lemma \ref{lem induction Delta easy cases}. The cases we need to treat now are the following ones: $(j,j-1)\in T\times [e,b]\backslash T$ and $(j,j-1)\in [e,b]\backslash T\times T$. First assume that $j\in T$ and $j-1\notin T$. Then there exists $1\le r\le l$ such that $p(\bar \Delta_i)=\tilde \Theta_r$ with $\Theta_r$ being either the first term of $\Theta^{b-1}(\m,\e)$ or the first term of $\Theta^{\tilde 1-1,0}(\m,\e)$. The first term is excluded because the end of $\bar \Delta_i$ is $e-i+1\ge b+1$ and the end of $\tilde \Theta_r$ is $b$.  Then $\Theta_r$ has to be the first term of $\Theta^{\tilde 1-1,0}(\m,\e)$ with $b< \tilde 1$ and so it is a segment of length $\le 2$ and then by the second point of Lemma \ref{lem induction Delta easy cases} we deduce the result. Finally assume that $j\in[b,e]\backslash T$ and $j-1\in T$. Then $p(\bar \Delta_i)$ is either $[j-1,j]$ or $[j,j]$. Since $j-1\in T$ there exists $\alpha\in\bar y$ and $1\le r\le l$ such that $p(\alpha)=\tilde \Theta_r$ with $e(\tilde \Theta_r)=j-1$. If $c(\tilde \Theta_{r})>0$ (resp. $<0$), we necessarily have $\alpha\in\mathcal S^{\ge 0}$ (resp. $\mathcal S^{\le'0}$). If $c(\tilde \Theta_r)=0$ then $\Theta_r\in\mathcal S^{=0}$ and since $j\notin T$ we have $m_{\m}(\tilde \Theta_r)=0$ and then $m_{\bar \m}(\tilde \Theta_r)=1$ so we necessarily have $\alpha\in\mathcal S^{=0}$. Now that $\alpha$ has been fixed, we show $\alpha= \bar\Delta_{i+1}$. First note that $e(\alpha)=j-1=e(\bar \Delta_i)-1$. Now since $p(\bar \Delta_i)$ is either $[j,j]$ or $[j-1,j]$ and $e(\alpha)=e(\bar \Delta_i)-1$, $\alpha$ and $\bar \Delta_i$ cannot be both centered (and then there are no signs compatibility to check). Now we prove that $\alpha\prec\bar \Delta_i$. If $c(\alpha)=0$ we have $\alpha\in\mathcal S^{=0}$ and $\bar \Delta_i\in\mathcal S^{\ge 0}$ since it cannot be centered and it ends strictly after $\alpha$ so the inequality is clear. Now if $c(\alpha)\neq 0$. We could have $\bar \Delta_i\in\mathcal S^{=0}\cup \mathcal S^{\ge 0}$ if $c(\alpha)=-1/2$ but then $\alpha\in\mathcal S^{\le'0}$ so we have the inequality. In all the other cases we have either $\alpha$ and $\bar \Delta_i$ in $\mathcal S^{\ge 0}$ or $\mathcal S^{\le'0}$. Then we need to compare them with the order $\le$. We have $\ell(\alpha)=\ell(\tilde \Theta_r)\ge \ell(\Theta_r)+1\ge 2$ and $\ell(\bar \Delta_i)\le 2$. Then $\alpha$ ends in $e(\Delta_{i-1})$ and $\ell(\alpha)\ge \ell(\bar \Delta_i)$ so $\alpha\prec \bar \Delta_i$. Now we need to show that $\alpha$ is the largest segment verifying these points. The segments of $\bar y$ ending in $j-1$ are of different types: we have $\alpha$, we have the segments of $y$ ending in $j-1$ and also, eventually $[j-1,j-1]$ if $j-1\in I^c$. The segments in $y$ ending in $j-1$ have to be smaller than $\alpha$ because otherwise, we would have $j\in T$. So now it suffices to prove that $-j+1\notin I^c$. If $j>-b+1$ then $1-j$ does not even belong in $[b,e]$. Now if $j\le -b+1$, we have $-j+1\in I$ if and only if $j\in I$ if and only if $-j+1\notin T$. Thus, either $-j+1$ is in $T$, or $-j+1\in I$ but we never have $-j+1\in I^c$. This concludes the proof that $\bar \Delta_{i+1}=\alpha$ and then $p(\bar \Delta_{i+1})=\tilde \Theta_r$.

    Now to finish, suppose that $i=e-b+1$. We already constructed $\bar \Delta_i$ that ends in $b$. 
\begin{itemize}[noitemsep,topsep=0pt]
        \item  First assume that $\s=s\in\Seg^{>0}$. If $\Theta^{b-1}(\m,\e)$ is not empty, then $b\in T$  and $p(\bar \Delta_{e-b+1})=\tilde \Theta_1$. Then if there exist $u\in\bar y$ such that $u\preceq \bar \Delta_{e-b+1}$ and $e(u)=b-1$, then it would contradict the minimality of $\Theta_1$ in $y$. Thus we have $\bar l=b-e+1$. Now at this point we have to prove that $\bar \e_0=1$. To do that we need to justify that for any $1\le i\le \bar l$, we have $\bar \Delta_i\notin\{[0,0]^{\ge 0},[0,0]^{=0}\}$ when $\rho$ is of the same type as $G$ and, otherwise, that $\bar \Delta_i\neq [1/2,1/2]$, and if $[-1/2,1/2]$ is in $\bar \m$ with sign -1, then $\bar \Delta_i\notin\{[-1/2,1/2]^{\ge 0},[-1/2,1/2]^{=0}\}$.

        \begin{itemize}
            \item  First assume that $\rho$ is of the same type as $G$. First, since $e(\bar \Delta_i)=j$, if $j\neq 0$ we are done. If $j=0$ we have two possibilities. If $0\in T$ then $\bar \Delta_i$ is of length $\ge 2$ and so it cannot be $[0,0]$. Now assume that $0\notin T$.
            If $m_{\m}([0,0])=0$ we have $m_{\tilde \m}([0,0])=0$,  if $m_{\m}([0,0])$ is even and non-zero we have $[0,0]^{\ge 0}\in\Theta$ so $m_{\tilde \m}([0,0])=m_{\m}([0,0])-2$ even, finally if $m_{\m}([0,0])$ is odd we have $[0,0]^{=0}\in \tilde \m$ so $m_{\tilde \m}([0,0])=m_{\m}([0,0])-1$ even. Finally, $m_{\bar \m}([0,0])=m_{\tilde \m}([0,0])+2\cdot\delta_{0\in I^c}$ so it is still even in any cases.
            At this stage we know that $\bar \Delta_i\neq [0,0]^{=0}$. The only possibility to have $\bar \Delta_i=[0,0]^{\ge0}$ is if $i\ge 1$ and $\bar \Delta_{i-1}=[1,1]$. We will prove that $[1,1]\notin\bar \m$ in this case. To do that we prove that $[1,1]\notin \m$ and $i\in I$. The reason why $[1,1]\notin \m$ is because we assumed $0\notin T$ which is impossible if $[1,1]$ is in $\m$ because we would also have its contragredient $[-1,-1]\in\m$. Finally, $1\in I$ because $-(-1)+1=0\notin T$. We conclude in this case that $\bar \e_0=1$.

            \item  The case where $\rho$ is not of the same type as $G$ is similar. If the ending $j$ of $\bar \Delta_i$ is not $3/2$ the result is true so we assume $j=3/2$.
            
             If $3/2\in T$, let us take $1\le r\le l$ such that $p(\bar \Delta_i)=\tilde \Theta_r$. We have 4 possibilities. We may have $p(\bar \Delta_i)=[a,3/2]$ with $a<-3/2$ when $\Theta_r=[a,1/2]$ or $p(\bar \Delta_i)=[-3/2,3/2]$ when $\Theta_r\in\{[-1/2,1/2]^{\le0},[-1/2,1/2]^{=0}\}$ or $p(\bar \Delta_i)=[-1/2,3/2]$ when $\Theta_r=[-1/2,1/2]^{\ge 0}$ or finally $p(\bar \Delta_i)=[1/2,3/2]$ when $\Theta_r=[1/2,1/2]$. In the first case there is nothing to say. In the second one, if $\bar \Delta_i\in\mathcal S^{\le'0}$ we are done but its not necessarily the case, however, since $p(\Theta_r)=[-1/2,1/2]$, we have  $\bar \e([-1/2,1/2])=\e'_0=1$ so we are done in this case. In the third one, as in the previous case, we get $\bar \e([-1/2,1/2])=\e_0=1$ so we are done. In the fourth case, we either have $\e([-1/2,1/2])=+1$ or $m_{\m}([-1/2,1/2])=0$ in both cases we get $\bar \e([-1/2,1/2])=\eta_0=1$ so we are done. 
             
             If $3/2\notin T$, then $p(\bar \Delta_i)$ may be $[1/2,3/2]$ or $[3/2,3/2]$. First, the fact that $1/2\notin T$ implies that $[1/2,1/2]\notin \m$ and we have $1/2\in I$ so $[1/2,1/2]\notin \bar \m$. Secondly, as before, we either have $\e([-1/2,1/2])=+1$ or $m_{\m}([-1/2,1/2])=0$ and in both cases we get $\bar \e([-1/2,1/2])=+1$. So finally, we are done and $\bar \e_0=1$.
        \end{itemize}

    \item  Now assume that $\s=(s,\eta)\in\Seg^{=0}\times \{1,-1\}$. In this case $b=\tilde 1$. Now by definition of $\bar \m$ we have $\delta^{\ge 0}$ in $\bar \m$.

    \begin{itemize}
        \item  If $\rho$ is of the same type as $G$, we have $\delta^{\ge 0}=\delta=[0,0]$. There are three possibilities for $p(\bar \Delta_i)$: $[1,1]$ or $[0,1]$ or $[-1,1]$. In the first two cases, there is nothing more to say. In the last one we prove that $m_{\bar \m}([0,0])$ is odd and $\bar \e([-1,1])=-\e([0,0])$. Having $p(\bar \Delta_i)=[-1,1]$ implies that $\Theta_1=[0,0]^{=0}$ so $m_{\m}([0,0])$ is odd and $m_{\bar \m}([0,0])=m_{\tilde \m}([0,0])+2=m_{\m}([0,0])+2$ is also odd.
        Since $p(\Theta_1)=[0,0]$ we have $\bar \e([0,0])=\e([0,0])$ and $\bar \e([-1,1])=-\e([0,0])=-\bar \e([0,0])$.  In the end we obtain $\bar \Delta_{i+1}\in\{[0,0]^{=0},[0,0]^{\ge 0}\}$ so $\bar l=e-b+2$ and $\bar \e_0=-1$.
        
        \item If $\rho$ is not of the same type as $G$, we have two possibilities for $\delta^{\ge 0}$, either $\delta^{\ge 0}=\delta=[-1/2,1/2]$, or $\delta^{\ge 0}=[1/2,1/2]$.
        \begin{itemize}
            \item The case $\delta^{\ge 0}=[1/2,1/2]$ happens only if $\eta=(-1)^{n_0}$ and in this case we necessarily have $\bar \Delta_i=[3/2,3/2]$ so we deduce that $\bar l=i+1$ and $\Delta_{\bar l}=[1/2,1/2]$.

            \item In the other case, we have $\delta^{\ge 0}=[-1/2,1/2]$ and $\eta=(-1)^{n_0+1}$. If $p(\Theta)=[-1/2,1/2]$ we get $\bar \e([-1/2,1/2])=\e_0'=-1$, otherwise we necessarily have $\eta_0=1$ and $\bar \e([-1/2,1/2])=\e_0'\cdot \eta_0=-1$. We have four possibilities: indeed $p(\bar \Delta_i)$ may be $[3/2,3/2]$ or $[1/2,3/2]$ or $[-1/2,3/2]$ or finally $[-3/2,3/2]$. In the first case we have $3/2\notin T$ so $[1/2,1/2]\notin \m$ and then $[1/2,1/2]\notin\bar \m$ so we are done. In the second and third cases, we have nothing to say. In the last case, we prove that $m_{\bar \m}([-1/2,1/2])$ is odd and that $\bar \e([-3/2,3/2])=1$. Since $p(\bar \Delta_i)=[-3/2,3/2]$ we have $\Theta_1=[-1/2,1/2]^{=0}$ so we deduce that $m_{\m}([-1/2,1/2])$ is odd and $m_{\tilde \m}([-1/2,1/2])=m_{\m}([-1/2,1/2])-1$ and so $m_{\bar \m}([-1/2,1/2])=m_{\tilde \m}([-1/2,1/2])+1=m_{\m}([-1/2,1/2])$ odd. Finally, $\Theta_1=[-1/2,1/2]$ implies that $\e([1/2,1/2])=-1$ so $\e([-3/2,3/2])=\e_0'\cdot \e([-1/2,1/2])=(-1)\cdot (-1)=1$. At the end, we get that in all four cases, we have $\bar l=i+1=e-b+2$ with $\bar \Delta_{\bar l}\in\{[-1/2,1/2]^{=0},[-1/2,1/2]^{\ge0}\}$ and $\bar \e_0=-1$. 
        \end{itemize}    
    \end{itemize}
    \end{itemize}

\end{proof}

\begin{prop}\label{prop bar e1}
If $\s=(s,\eta)\in\Seg^{=0}\times \{1,-1\}$ we have $\bar \e_1(s)=\eta$ and then in general, we have $(\bar \m_1,\bar \e_1)=\s$.
\end{prop}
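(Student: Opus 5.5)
We will derive the statement from Proposition~\ref{prop key tilde Theta and Delta} together with the explicit formula for $\e_1$ in the Lanard-Mínguez algorithm. Recall from \cite{LM} that when the extracted datum $\m_1$ is centered, its sign is determined by $n_0$, the number of centered segments counted with multiplicity, and by $\e_0$, the sign attached to the last term of the sequence. In the notation of Remark~\ref{rem largest segment in the dual LM}, this sign is $(-1)^{n_0+1}\cdot\e_0$ up to the normalisation used there; this is the same relation already invoked in the proof of Lemma~\ref{lemma bound ending point}, where $\s(\m,\e)=(s,\eta)$ forced $\eta=(-1)^{n_0+1}\eta_0$.

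So the plan has three steps.
\begin{enumerate}
\item Proposition~\ref{prop key tilde Theta and Delta} already gives $\bar\m_1=s$. In the case $\s=s\in\Seg^{>0}$ it also gives $\bar\e_0=1$, so $\bar\m_1$ is of the form $s+s^\vee$ and there is nothing more to prove.
\item Assume $\s=(s,\eta)$. Proposition~\ref{prop key tilde Theta and Delta} gives $\bar\e_0=\e_0'=1$, and in fact identifies the terminal term $\bar\Delta_{\bar l}$ explicitly, namely $[0,0]$ or $[\pm1/2,1/2]$. This places us in the centered case of the Lanard-Mínguez algorithm, so we must show that $\bar\e_1(s)=(-1)^{\bar n_0+1}\bar\eta_0=\eta$.
\item We compute $\bar n_0$ and $\bar\eta_0$ in terms of $n_0$ and $\eta_0$.
\end{enumerate}

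For the parity of $\bar n_0$, track how centered segments are created or destroyed by $\CT_\s$. Passing from $\tilde\m$, each $\Theta_j$ that is centered becomes a centered $\tilde\Theta_j$ only if it lies in $\mathcal S^{=0}$. In that case the index satisfies $i_j=i'_j$, and one centered segment is replaced by one. Pairs $\Theta_j\in\mathcal S^{\ge0}$ centered, together with their $\iota'$-images, turn two centered segments into two non-centered ones. Segments with $c(\Theta_j)=-1/2$ and their mirrors produce two centered segments. Hence $\tilde n_0\equiv n_0 \pmod 2$. By Remark~\ref{rem I}, $I=\varnothing$, so the added points $[i,i]+[-i,-i]$ contribute an even number of $[0,0]$'s. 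Finally, $\delta$ contributes $1$ when it is centered, that is when $\delta=[0,0]$ or $\delta=[-1/2,1/2]$, and $0$ when $\delta=[-1/2,-1/2]+[1/2,1/2]$.

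Next, read off $\bar\eta_0$ from the definition of $\bar\e([0,0])$ and $\bar\e([-1/2,1/2])$. The definition distinguishes two cases, $\eta=(-1)^{n_0+1}\eta_0$ and the opposite, and yields $\bar\eta_0=\mp\eta_0$ accordingly. One must also keep track of the fact that $\eta_0$ is defined as $1$ when the relevant segment is absent, whereas the Lanard-Mínguez convention may differ.

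Substituting these values in each case should give $(-1)^{\bar n_0+1}\bar\eta_0=\eta$. The cases to check are: $\rho$ of the same type as $G$ versus not; the two possibilities for $\eta$ relative to $(-1)^{n_0}\eta_0$; and whether $[0,0]$ (respectively $[-1/2,1/2]$) already lies in $\m$. In every case the exponent change coming from $\delta$ is designed to cancel the sign flip in $\bar\eta_0$.

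The main obstacle is the bookkeeping in the case where $\rho$ is not of the same type as $G$, combined with the convention $\eta_0=1$ when the segment is absent. The two values of $\delta$ change $\bar n_0$ by different parities, and the sign of $[-1/2,1/2]$ is set to $\e_0'$ or $\e_0'\eta_0$ depending on whether $\Theta_1=[-1/2,1/2]$. I would first tabulate the subcases according to whether $\Theta_1$ is empty or equals $[-1/2,1/2]^{=0}$, using the final bullets of the proof of Proposition~\ref{prop key tilde Theta and Delta}, which already record $m_{\bar\m}([-1/2,1/2])$ modulo $2$ and $\bar\e([-1/2,1/2])$ in these subcases. With these, the required equality $\bar\e_1(s)=\eta$, and hence $(\bar\m_1,\bar\e_1)=\s$, becomes a direct check.
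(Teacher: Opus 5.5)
Your route is the paper's: only $\delta$ can change the parity of the number of centered segments. Centered $\Theta_j\in\mathcal S^{\ge 0}$ remove them in pairs, those in $\mathcal S^{=0}$ are replaced one for one, and $I=\varnothing$. One then reads off the sign of the small central segment. For $\rho$ of the same type as $G$ this closes exactly as in the paper: $\delta=[0,0]$ gives $(-1)^{\bar n_0}=-(-1)^{n_0}$, both branches of the definition give $\bar\e([0,0])=(-1)^{n_0}\eta$, hence $(-1)^{\bar n_0+1}\bar\e([0,0])=\eta$.

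There is one slip. In step 2 you write $\bar\e_0=\e_0'=1$ and call this the centered case, which contradicts step 1. The centered case is $\bar\e_0=-1$, and that is what the proof of Proposition~\ref{prop key tilde Theta and Delta} gives for $\s=(s,\eta)$. The stated definition of $\e_0'$ has its two values swapped; the formulas for $\bar\e([0,0])$ use $\e_0'=-1$ for centered $\s$.

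The genuine gap is the case where $\rho$ is not of the same type as $G$: the sign formula you plan to use is wrong there.

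\begin{itemize}
\item In that case the Lanard--M\'inguez sign of the extracted centered segment is $(-1)^{\bar n_0}$. This is the relation used in Lemma~\ref{lemma repet signs}; equivalently it is $(-1)^{\bar n_0+1}\alpha_0$ with $\alpha_0=-1$, since $[0,0]$ never occurs on a half-integral line. It does not involve the sign of $[-1/2,1/2]$.
\item Feeding $\bar\eta_0=\bar\e([-1/2,1/2])$ into $(-1)^{\bar n_0+1}\bar\eta_0$ fails. Take $\m=([-1/2,1/2],-1)=\CT_{[-1/2,1/2],-1}(\varnothing)$ and $\s=([-3/2,3/2],-1)$. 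Then $\eta$ equals the sign $\eta'=-1$ of $\s(\m,\e)$.
\item By Proposition~\ref{prop repet signs} and the simplified formulas of Section~\ref{section implications on anti-temp data}, this gives $\Theta=\varnothing$, $\bar\m=\m+\m(0,4)$ and $\bar\e=-\e$. So $\bar n_0=1$ and $\bar\e([-1/2,1/2])=+1$.
\item Your expression therefore gives $+1$, whereas Theorem~\ref{main theorem} forces $\bar\e_1(s)=-1=(-1)^{\bar n_0}$.
\end{itemize}

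Your remark that the convention for $\eta_0$ \emph{may differ} points at this but does not resolve it.

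With the correct formula this case is the easy one, not the main obstacle. The centered choice $\delta=[-1/2,1/2]$ occurs exactly when $\eta=(-1)^{n_0+1}$, as in the proof of Proposition~\ref{prop key tilde Theta and Delta} and in the formula for $\CT_{s,\eta}(\varnothing,+)$. So the parity count alone gives $(-1)^{\bar n_0}=\eta$. No tracking of $\bar\e([-1/2,1/2])$, or of whether $\Theta_1=[-1/2,1/2]$, is needed.
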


\begin{proof}
    To do that we relate $\bar n_0$ to $n_0$. Recall that $n_0$ is the number of centered segments in $\m$ and $\bar n_0$ is the number of centered segments in $\bar \m$. Let $\tilde n_0$ the number of centered segments in $\tilde \m$. By definition of $\tilde \m$ we have $\tilde n_0=n_0-2\cdot\{1\le i\le l~|~\Theta_i\in\mathcal S^{\ge 0}\}$ and $\bar n_0=n_0+1$ if $\delta$ is a centered segment and $\bar n_0=\tilde n_0$ otherwise. So finally $(-1)^{\bar n_0}=-(-1)^{n_0}$ if and only if $\rho$ is of the same type as $G$ or $\rho$ is not of the same type as $G$ and $\eta=(-1)^{n_0}$; and $(-1)^{\bar n_0}=(-1)^{n_0}$ if and only if $\rho$ is not of the same type as $G$ and  $\eta=(-1)^{n_0}$.
    
    If $\rho$ is of the same type as $G$, in both cases we get $\bar \e([0,0])=(-1)^{n_0+1}\cdot \eta\cdot\e_0'=(-1)^{n_0}\cdot \eta$ and then  $\bar \e_1(s)=(-1)^{\bar n_0+1}\cdot\bar \e([0,0])=(-1)^{\bar n_0+1}\cdot(-1)^{n_0}\cdot\eta=\eta$.
    
    If $\rho$ is not of the same type as $G$, we get $(-1)^{\bar n_0}=\eta$ and so $\bar \e_1(s)=\eta$.
\end{proof}

In order to prove Theorem \ref{main theorem} we are now left to prove that $(\bar \m^\#,\bar \e^\#)=(\m,\e)$. We will mainly use the key Proposition \ref{prop key tilde Theta and Delta} but we need it to complete it with a lemma in order to show that $\bar \m^\#=\m$.

\begin{lem}\label{lemma Delta and s of length 2}
Take $1\le i\le \bar l$ and $j:=e-i+1$, then if $j\in I$ and $j\le\tilde 1-1\in\{-1/2,0\}$, we have $p(\bar \Delta_{i})=s_j=[j-1,j]$.
\end{lem}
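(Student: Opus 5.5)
We will use Proposition \ref{prop key tilde Theta and Delta}, which already gives $p(\bar \Delta_i)\in\{[j,j],[j-1,j]\}$ whenever $j=e-i+1\in I$. So it suffices to show that the point $[j,j]$ does not occur in $\bar \m$ in the relevant situation. If $[j,j]\notin\bar\m$, then the only candidate ending at $j$ that survives is $s_j=[j-1,j]$, and we are done.

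First we pin down the setting. By Remark \ref{rem I}, $I\neq\varnothing$ forces $\s=s\in\Seg^{>0}$ with $b=b(s)\le 0$. Then $\delta=\varnothing$, and $\bar\m=\tilde\m+\sum_{i\in[b,e]\backslash T}s_i$. Now fix $j\in I$ with $j\le \tilde 1-1$. We count the copies of $[j,j]$ in $\bar \m$ from three sources.

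\begin{enumerate}
\item \emph{Added segments.} A point $[j,j]$ is added only via some $s_{j'}=[j',j']$ or its contragredient $[-j',-j']$ with $j'\in I^c$. The option $j'=j$ is excluded since $j\in I$. The option $-j'=j$ would need $-j\in I^c$. But $-j\in I$ if and only if $1+j\notin T$ by the symmetry $(i\in I)\iff(1-i\in I)$ noted after the definition of $\bar\m$ (applied carefully with $j$ and $1-j$). So we must check separately that the point $[-j,-j]$ with $-j\in I^c$ cannot produce $[j,j]$ by contragredience. This needs $-j\notin[b,e]$ or $-j\in T$, and we verify it using $j\le \tilde1 -1\le 0$ and $j\notin T$.
\item \emph{Points of $\tilde\m$ coming from $\m$.} Suppose $[j,j]\in\m$. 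Since $\m$ is symmetric, $[-j,-j]\in\m$ as well. We will argue that the existence of such a point would force $j\in T$ or $1-j\in T$, contradicting $j\in I$. The mechanism is that a point ending at $j-1$ or $j$ extends the increasing sequence $\Theta^{b-1}$, or the sequence $\Theta^{\tilde 1-1,0}$ when $f<0$. For this we use the second point of Lemma \ref{lemme initial sequence}: once $\Theta$ reaches a point, it continues through points. We also use the fact that $j\le \tilde 1-1$ lies in the negative region where $\Theta^{\tilde1-1,0}$ is appended.
\item \emph{Points of $\tilde\m$ coming from extensions.} A point $[j,j]$ could also arise as $^+p(\Phi)$ with $p(\Phi)=[j+1,j+1]$ and $\Phi=\iota'(\Theta_r)$. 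Then $\Theta_r=[-j-1,-j-1]$ ends at $-j-1$, which gives $-j\in T$. We show this contradicts $j\in I$ via the condition $-j+1\notin T$ together with the continuation-by-points property, since $\Theta$ would continue to end at $-j$, giving $-j+1\in T$.
\end{enumerate}

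The main obstacle is the second source: excluding $[j,j]\in\m$. This requires tracking how $\Theta^{b-1}$ and $\Theta^{\tilde1-1,0}$ pass through the negative range, and handling the sign condition at $[-1/2,1/2]$ and $[0,0]$ near the boundary $j=\tilde1-1$. We will first try to prove that every point of $\m$ with end in $[b,\tilde 1-1]$ lies on $\Theta$ or has its symmetric end in $T$. If that is not sufficient, we will fall back on the maximality of $\bar\Delta_{i-1}$ and use the first point of Lemma \ref{lem induction Delta easy cases} directly, to rule out choosing $[j,j]$ over $[j-1,j]$.
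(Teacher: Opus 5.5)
\textbf{There is a genuine gap: the statement you reduce to is false.} The point $[j,j]$ can lie in $\bar\m$ for $j\in I$ in the negative range, so step (2) cannot be proved.

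Take $\rho$ of the same type as $G$ (integral exponents), $\m=[-1,-1]+[1,1]$ (no centered segments, hence no signs) and $\s=[-1,2]$, which has center $1/2>0$. The Lanard--M\'inguez chain of $\m$ is just $[1,1]$, so $\s(\m,\e)=[1,1]\le'\s$ and $(\m,\e)\in\DSymm^{\le'\s}$. Here $b=-1$.
\begin{itemize}
\item No segment of $\m$ ends at $-2$, and none ends at $0$. So $\Theta^{-2}$ and $\Theta^{0,0}$ are both empty, $\Theta^\s=\varnothing$ and $T=\varnothing$.
\item Consequently $I=\{-1,0,1,2\}$ and $\bar\m=[-1,-1]+[1,1]+[-2,-1]+[-1,0]+[0,1]+[1,2]$.
\item So $j=-1\in I$, $j\le 0$, and $[-1,-1]\in\bar\m$.
\end{itemize}
Your mechanism in (2) breaks down for a structural reason. $\Theta^{b-1}$ must start at a segment ending exactly at $b-1$, and $\Theta^{\tilde 1-1,0}$ only involves ends $\ge\tilde 1-1$. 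Both can be empty whatever points of $\m$ lie in between, so $[j,j]\in\m$ forces neither $j$ nor $1-j$ into $T$. Separately, your source (3) is vacuous: ${}^+p(\Phi)$ always has length $\ge 2$, so extensions never create points.

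The lemma still holds in this example, but for a different reason. The chain of $(\bar\m,\bar\e)$ is $[1,2],[0,1],[-1,0],[-2,-1]$. At the last step $[-1,-1]$ is rejected because it is not $\preceq[-1,0]$: it has the same beginning and is shorter, hence larger for $<$.

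This order constraint is the missing idea, and it is exactly what the paper's proof uses. Suppose $\bar\Delta_i,\bar\Delta_{i+1}$ both lie in $\mathcal S^{\le'0}$ (or both in $\mathcal S^{\ge0}$). Then $\bar\Delta_{i+1}\prec\bar\Delta_i$ together with $e(\bar\Delta_{i+1})=e(\bar\Delta_i)-1$ forces $\ell(\bar\Delta_i)\le\ell(\bar\Delta_{i+1})$. Hence if $p(\bar\Delta_i)$ were the point $[j,j]$, every chain element above it in the negative range would also have to be a point. Following this run of points upward gives a contradiction in each case:
\begin{itemize}
\item at an element of $T$, where the chain element is some $\tilde\Theta_r$ of length $\ge 2$;
\item at a point missing from $\bar\m$;
\item when the run reaches the centre, where symmetry and the definition of $\Theta$ force $j$ or $1-j$ into $T$.
\end{itemize}
Your fallback (maximality of $\bar\Delta_{i-1}$ plus the first point of Lemma \ref{lem induction Delta easy cases}) does not supply this argument. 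That lemma only controls chain steps between extended segments $\tilde\Theta_r$. It says nothing about the competition between an added segment $s_j=[j-1,j]$ and a surviving point $[j,j]$ of $\m$, which is precisely the situation at stake.
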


\begin{proof}
    This lemma relies mainly on the following observation, that is for any $1\le i\le \bar l-1$, if $\bar \Delta_i$ and $\bar \Delta_{i+1}$ are both in $\mathcal S^{\ge 0}$ or $\mathcal S^{\le'0}$, then $\ell(\bar \Delta_i)\le \ell(\bar \Delta_{i+1})$. Indeed, if $e(\bar \Delta_{i+1})=e(\bar \Delta_i)-1$ and $\ell(\bar \Delta_i)>\ell(\bar \Delta_{i+1})$ we would have $\bar \Delta_{i+1}\succ \bar \Delta_i$ which is impossible by definition.

    Assume there exists $1\le i\le \bar l$ such that $j\in I$ and $j\le -\tilde 1-1$ but $p(\bar \Delta_i)=[j,j]$. Define $j_0$ as the largest element of $j+\mathbb N$ such that for any $j'\in[j,j_0]$ we have $j'\in I$, $j'\le-\tilde 1+1$ and $[j',j']\in\tilde \m$. By maximality $j_0+1$ does not verify these conditions. If $j_0+1>-\tilde 1+1$ we have $j_0=-\tilde 1+1$ and we deduce that for any $j'\in [j,-\tilde 1+1]$ we have $[j',j']\in\bar \m$ but also its contragredient and then we must have $j\in T$ which is absurd since we assumed $j\in I$. Now assume that $j_0+1\le -\tilde 1+1$ and so $j_0<-\tilde1+1$. There are two possibilities, either $j_0+1\in T$ or $[j_0+1,j_0+1]\notin \bar \m$. Suppose the second one is true. By our preliminary observation we see that for any $j'\in[j,j_0]$ we have $p(\bar \Delta_{i-j'+j})=[j',j']$ so in particular $p(\bar \Delta_{i-j_0+j})=[j_0,j_0]$. Since $j_0<-\tilde 1+1$ we clearly have $i-j_0+j>1$ and the only possibility for $p(\bar \Delta_{i-j_0+j-1})$ is $[j_0+1, j_0+1]$ which means that this segment is in $\bar \m$. Finally, assume $j_0+1\in T$, then there exists $1\le r\le l$ such that $p(\bar \Delta_{i-j_0+j-1})=\tilde \Theta_r$ of length $\ge 2$ which is impossible because since $\Delta_{i+j_0-j}$ and $\Delta_{i+j_0-j-1}$ are in $\mathcal S^{\le'0}$ we need $1=\ell(\bar \Delta_{i+j_0-j})\ge \ell(\bar \Delta_{i-j_0+j-1})$. All the cases led to an absurdity which proves there are no such $i$.
 \end{proof}

We now have everything to prove Theorem \ref{main theorem} and the proof will just be a verification.

\begin{proof}

As we have already explained it, it is sufficient to prove that $(\bar \m_1,\bar \e_1)=\s$ and $(\bar \m^\#,\bar \e^\#)=(\m,\e)$. The first points have already been proven in Propositions \ref{prop key tilde Theta and Delta} and \ref{prop bar e1}. Here we prove that $(\bar \m^\#,\bar \e^\#)=(\m,\e)$.

So if we write $(y,\e)=s(\m,\e)$ and $y=\Phi_1+\dots+\Phi_m$, we want to prove that $\bar{\m}^\#=p(\Phi_1)+\dots+p(\Phi_m)~(=\m)$. Recall that we have
$$\bar{\m}=\tilde{\Phi}_1+\dots+\tilde{\Phi}_m+\sum_{i\in I}s_i+\sum_{i\in I^c}(s_i+s_i^\vee)+\delta.$$
Among those $\bar m$ segments we have $(p(\bar \Delta_i))_{1\le i\le \bar l}$. If $e-i+1\in T$, we have $p(\bar \Delta_i)\in\{\tilde \Phi_k~|~1\le k\le m\}$, if $e-i+1\in I^c$ we have $p(\bar \Delta_i)=s_i=[i,i]$, if $e-i+1\in I$, we may have  $p(\bar \Delta_i)=[i,i]\in\{\tilde \Phi_k~|~1\le k\le m\}$ if $e-i+1\ge 1/2$ but if it is not the case we have $p(\bar \Delta_i)=s_i=[i-1,i]$. This follows from Proposition \ref{prop key tilde Theta and Delta} and Lemma \ref{lemma Delta and s of length 2}.

Here is a last important detail needed because the definition of $\bar \m$ uses the involution $\iota'$ and the definition of $\m^\#$ uses the other involution $(\cdot)^\vee$. Assume that $p(\bar \Delta_i)=\tilde \Theta_r$. 

We show that $\bar \Delta_i=\bar \Delta_i^\vee$ if and only if $\iota'(\Theta_r)=\Theta_r$. Assume that $\bar \Delta_i=\bar \Delta_i^\vee$, then $\bar \Delta_i$ is centered, so is $\tilde \Theta_r$, and $\bar \Delta_i\in\mathcal S^{=0}\cup \mathcal S^{\ge 0}$. According to Proposition \ref{prop key tilde Theta and Delta}, if $\bar \Delta_i$ is centered, then $\bar \Delta_i\in\mathcal S^{=0}\cup \mathcal S^{\le'0}$. So we deduce that $\bar \Delta_i\in\mathcal S^{=0}$. But, again according to Proposition \ref{prop key tilde Theta and Delta}, $\Theta_r$ is centered and thus it has to be in $\mathcal S^{=0}\cup \mathcal S^{\le'0}$ so that $c(\tilde \Theta_r)=0$ and then we have $\iota'(\Theta_r)=\Theta_r$.  

Now assume $\bar \Delta_i\neq \bar \Delta_i^\vee$. The first possibility is $c(\bar \Delta_i)\neq 0$ but then either $\Theta_r$ is not centered or $\Theta_r$ is centered but in $\mathcal S^{\ge 0}$, in both cases we have $\iota'(\Theta_r)\neq \Theta_r$. The second possibility is $\bar \Delta_i$ is centered and in $\mathcal S^{\le'0}$, but then according to Proposition \ref{prop key tilde Theta and Delta}, we have $c(\Theta_r)=-1/2$ so $\Theta_r$ is not centered and $\iota'(\Theta_r)\neq \Theta_r$.

Because of what we have just said, we can write $\bar y=\bar \Lambda_1+\ldots+\bar \Lambda_{\bar m}$ such that for any $1\le i\le m$ we have $p(\bar \Lambda_i)=\tilde \Phi_i$ and $\{i_1,\ldots,i_l\}=\{i_k,~1\le k\le l~| e-k+1\in T\}$ and $\{i_1',\ldots,i_l'\}=\{i_k',~1\le k\le l~|~ e-k+1\in T\}$. Moreover we can also suppose that $p(\bar \Lambda_{\bar m})=\delta^{\ge 0}$ when $\s=(s,\eta)\in\Seg^{=0}\times \{1,-1\}$.

 Take $1\le i\le \bar m$ if $\s=s\in\Seg^{>0}$ and $1\le i\le \bar m-1$. We now compute $\bar \Lambda_i^\#$.

 Assume $\s=(s,\eta)\in\Seg^{=0}\times \{1,-1\}$. According to Proposition \ref{prop key tilde Theta and Delta} we know that $\bar \Delta_{\bar l}$ may be $[1/2,1/2]$, $[-1/2,1/2]^{=0,\ge 0}$ or $[0,0]^{=0,\ge 0}$ so we deduce in each case that  $\bar \Lambda_{\bar i_{\bar l}}^\#=\bar \Lambda^\#_{\bar i_{\bar l'}}=\varnothing$.

 From now on assume $i\notin\{\bar i_{\bar l},\bar i_{\bar l}'\}$.

 \begin{itemize}[noitemsep,topsep=0pt]
     \item If $i\notin \{\bar i_1,\ldots,\bar i_{\bar l}\}$ and $i\notin \{\bar i_1,\ldots,\bar i_{\bar l}\}$ we have $\bar \Lambda_i^\#=p(\bar \Lambda_i)$. It may only happen if $1\le i\le m$ but then $i\notin\{i_1,\ldots,i_l\}\cup\{i_1',\ldots,i_l'\}$ so $p(\bar \Lambda_i)=\tilde \Phi_i=p(\Phi_i)$.

     \item If $i\in \{\bar i_1,\ldots,\bar i_{\bar l}\}$ and $i\notin \{\bar i_1,\ldots,\bar i_{\bar l}\}$ we have $\bar \Lambda_i^\#=p(\bar \Lambda_i)^-$. There are three cases. The first possibility is $i=i_k\le m$ with $e-k+1\in T$ and so there exists $1\le r\le l$ such that $p(\bar \Lambda_i)=p(\bar \Delta_k)=\tilde \Theta_r$ with $i=i_r\notin\{i_1',\ldots,i'_l\}$ so  $\tilde \Theta_r=p(\Phi_i)^+$. Whence $\bar \Lambda_i^\#=(p(\Phi_i)^+)^-=p(\Phi_i)$. The second possibility is $i=\bar i_k\le m$ with $e-k+1\in I$ such that $p(\bar \Delta_k)\neq s_{e-k+1}$. In this case we get $p(\bar \Lambda_i)=p(\bar \Delta_k)$ is a singleton so $\bar \Lambda_i^\#=\varnothing$. Finally, the last possibility is $i>m$ and such that $p(\bar \Lambda_i)=s_j$ with $j\in I^c$ and again we get $\bar \Lambda_i^\#=\varnothing$.

     \item If $i\notin \{\bar i_1,\ldots,\bar i_{\bar l}\}$ and $i\in \{\bar i_1,\ldots,\bar i_{\bar l}\}$ we have $\bar \Lambda_i^\#={}^-p(\bar \Lambda_i)$. There are 3 cases.  The first possibility is $i=i_k\le m$ with $e-k+1\in T$ and so there exists $1\le r\le l$ such that $p(\bar \Lambda_i)=p(\bar \Delta_k)=\tilde \Theta_r$ with $i=i'_r\notin\{i_1,\ldots,i_l\}$ so  $\tilde \Theta_r={}^+p(\Phi_i)$. Whence $\bar \Lambda_i^\#={}^-(^+p(\Phi_i))=p(\Phi_i)$.
     The second possibility is $i>m$ and $p(\bar \Lambda_i)=s_j^\vee=[-j,-j]$ with $j\in I^c$. Then $\bar \Lambda_i^\#={}^-p(\bar \Lambda_i)=\varnothing$. The third possibility is $i>m$ and $p(\bar \Lambda_i)=s_j$ with $j\in I$ and $p(\bar \Delta_{e-j+1})\neq s_j$. In this case we get $\bar \Lambda_i^\#={}^-s_j=[j,j]$.

     \item If $i\in \{\bar i_1,\ldots,\bar i_{\bar l}\}$ and $i\in \{\bar i_1,\ldots,\bar i_{\bar l}\}$ we have $\bar \Lambda_i^\#={}^-p(\bar \Lambda_i)^-$. There are two cases. The first possibility is $i=i_k\le m$ with $e-k+1\in T$ and so there exists $1\le r\le l$ such that $p(\bar \Lambda_i)=p(\bar \Delta_k)=\tilde \Theta_r$ with $i=i_r\in\{i_1',\ldots,i'_l\}$ so  $\tilde \Theta_r={}^+p(\Phi_i)^+$. Whence $\bar \Lambda_i^\#={}^-(^+p(\Phi_i)^+)^-=p(\Phi_i)$. The only other possibility is $i>m$ such that $p(\bar \Lambda_i)=s_j$ with $j\in I$ and we then get $\bar \Lambda_i^\#={}^-p(\bar \Lambda_i)^-=\varnothing$.
     
 \end{itemize}
 
Let us summarize the situation:

For any $1\le i\le m$,
\begin{itemize}[noitemsep,topsep=0pt]
    \item $\bar \Lambda_i^\#=\varnothing$ if $i=\bar i_k$ with $e-k+1\in I$ such that $p(\bar \Delta_k)\neq s_{e-k+1}$ (in this case $p(\Phi_i)=[e-k+1,e-k+1])$,
    \item $\bar \Lambda_i^\#=p(\Phi_i)$ otherwise
\end{itemize}
and, for any $i>m$,
\begin{itemize}[noitemsep,topsep=0pt]
    \item $\bar \Lambda_i^\#=[j,j]$ if $p(\bar \Lambda_i)=s_j$ with $j\in I$ such that $p(\bar \Delta_{e-j+1})\neq s_{j}$,
    \item $\bar \Lambda_i^\#=\varnothing$ otherwise.
\end{itemize}

Denote by $I_s$ the subset of $I$ that consists in the $j\in I$ such that $p(\bar \Delta_{e-j+1})\neq s_j$, then we have:
This implies that $$\bar \m^\#=\sum_{1\le i\le m}\bar \Lambda_i^\#+\sum_{m+1\le i\le \bar m}\bar \Lambda_i^\#=\left(\sum_{1\le i\le m}p(\Phi_i)-\sum_{j\in I_s}[j,j]\right)+\sum_{j\in I_s}[j,j]=\sum_{1\le i\le m}p(\Phi_i)=\m$$
To finish the proof we now have to check that $\bar \e^\#=\e$. Take $1\le i\le \bar m$ and assume that $\bar \Lambda_i^\#$ is centered and non-empty. We have just proven that $\bar \Lambda_i^\#=p(\Phi_i)$ so we now have to prove that $\bar \e^\#(\bar \Lambda_i^\#)=\e(\Phi_i)$.

We now assume that $\bar \Lambda_i^\#=p(\Phi_i)$ is centered. Recall that we have $\e_0'=\bar \e_0$ according to Proposition \ref{prop key tilde Theta and Delta}. 
\begin{itemize}[noitemsep,topsep=0pt]
  \item If there exists $1 \le j \le \bar l$ such that $\bar \Lambda^{\#}_i=\bar \Lambda^{\#}_{\bar i_j}$ and $c(\bar \Delta_j)=0$; then $\varepsilon^\#(\bar \Lambda^{\#}_i)=\varepsilon_0 \cdot \varepsilon(\bar \Delta_j)$. In this case we necessarily have $\ell(p(\bar \Lambda_i))=\ell(\bar \Lambda_i^\#)+2\ge  3$ so $\bar \e(\tilde \Phi_i)=\tilde \e(\tilde \Phi_i)$ and there exists $1\le r\le l$ such that $i=i_r$ and $p(\bar \Delta_j)=\tilde \Theta_r$. We have $\bar \e^\#(\bar \Lambda_i^\#)=\bar \e_0\cdot\bar\e(\bar \Delta_j)=\bar \e_0\cdot\e_0'\cdot\e(\Theta_r)=\e(\Theta_r)=\e(\Phi_i)$.

  \item If there exists $1 \le j \le l$ such that $\bar \Lambda^{\#}_i=\bar \Lambda^{\#}_{\bar i_j}$ and $c(\bar \Delta_j)=1/2$. Then, we necessarily have $\bar \Lambda^{\#}_i \in \bar \m$. This argumentissimilar to Lemma \ref{lemma sign tilde}, if $c(\bar \Delta_j)=1/2$ we know that $\bar l\ge j+1$ and $\bar \Delta_{j+1}\succeq\bar \Delta_j^\vee$ 
  (since $\bar \Delta_j^{\vee}\prec\bar \Delta_j$ and $e(\bar \Delta_j^\vee)=e(\bar \Delta_j)-1$) but if we have $\bar \Delta_{j+1}=\bar \Delta_j^\vee$ we must have $c(\bar \Lambda_i^\#)=1/2$ so $\bar \Delta_{j+1}$ is centered and $\bar \Lambda_i^\#=p(\bar \Delta_{j+1})\in\bar \m$.  Then by definition $\bar \varepsilon^\#(\bar \Lambda^{\#}_i)=-\bar \varepsilon_0  \cdot \bar \varepsilon(\bar \Lambda^{\#}_i)$. We then have three possibilities:
  \begin{itemize}
      \item If $\bar \Lambda_i^\#=[0,0]$, we have $\Theta_1=[0,0]^{\ge 0}$ so $\bar \e([0,0])=-\e_0'\cdot \e([0,0])$ and thus $\bar \e^\#([0,0])=-\bar \e_0\cdot(-\e_0'\cdot\e([0,0]))=\e([0,0])$.

      \item If $\bar\Lambda_i^\#=[-1/2,1/2]$, we have $\Theta_1=[-1/2,1/2]^{\ge 0}$ so $\e([-1/2,1/2])=-1$ but $\bar \e([-1/2,1/2])=\e_0'$ so $\bar \e^\#([-1/2,1/2])=-\bar \e_0\cdot \e_0'=-1=\e([-1/2,1/2]).$

      \item Otherwise, we have $\bar \e(\bar \Delta_{j+1})=\tilde \e(\bar \Delta_{j+1})$ and there are $1\le r\le l$ such that $p(\bar\Delta_j)=\tilde \Theta_r$ and then $\bar \e(\bar \Delta_{j+1})=\e_0'\cdot \e(\Theta_r)$. We have $p(\bar \Delta_j)=\tilde \Theta_{r+1}$ and $\Phi_i=\Theta_{r+1}$ but by definition $\e(\Theta_{r+1})=-\e(\Theta_r)$ so $\bar \e^\#(\bar \Lambda_i^\#)=-\bar \e_0\cdot \bar \e(\bar \Delta_{j+1})=-\e(\Theta_r)=\e(\Phi_i)$.
  \end{itemize}

  \item Otherwise, $\bar \e^\#(\bar \Lambda^{\#}_i)=\bar \e_0 \cdot \bar \e(\bar \Lambda^{\#}_i)$. Again we consider three possibilities:

  \begin{itemize}
      \item If $\bar \Lambda_i^\#=[0,0]$ then, in this case $\bar \e([0,0])=\e_0'\cdot \e([0,0])$ so $\bar \e^\#(\bar \Lambda_i^\#)=\bar \e_0\cdot\e_0'\cdot\e([0,0])=\e([0,0])$.

      \item If $\bar \Lambda_i^\#=[-1/2,1/2]$, we have $\bar \e([-1/2,1/2])=\e_0'\cdot\e([-1/2,1/2])$ and thus $\bar \e^\#([-1/2,1/2])==\bar \e_0\cdot \e_0'\cdot \e([-1/2,1/2])=\e([-1/2,1/2])$.

      \item Otherwise, $\bar \e(\tilde \Phi_i)=\tilde \e(\tilde \Phi_i)$. In this case $\bar \Lambda_i^\#=\tilde \Phi_i=p(\Phi_i)$  and we have  $\bar \e(\tilde \Phi_i)=\e_0'\cdot\e(\Phi_i)$. So we get $\bar \e^\#(\bar \Lambda_i^\#)=\bar \e_0\cdot \e_0'\cdot \e(\Phi_i)=\e(\Phi_i)$.
  \end{itemize}
  
\end{itemize}

\end{proof}

\section{Some implications on co-tempered data}\label{section implications on anti-temp data}
In this final section we restrict our focus on tempered and co-tempered symmetrical data. Our goal is to use our new algorithm on the tempered symmetrical data in order to get some new results on co-tempered data.
In all of this section, we suppose $\rho$ is a fixed supercuspidal of good parity. We work on the set $\Symm_\rho^{\e}(G)$ and, just as before, we will drop the index $\rho$ from the sets and $\rho_u$ from the segments.

\subsection{Restriction of the algorithm on tempered data}

Restricting our algorithm to tempered data allows some simplifications that we give in this first subsection. Roughly, the simplifications come from the fact that we only have to consider the operators $\CT_{s,\eta}$ restricted to co-tempered data for $(s,\eta)\in\Seg^{=0}\times \{-1,1\}$.

We first introduce some multisegments parameterized by a pair of integers that will be useful in our constructions. We denote by:
$$\m(l,k)=\sum_{l+2\leq i\leq k, ~2|i-k}^{k/2}([(i-1)/2,(i-1)/2]+[-(i-1)/2,-(i-1)/2])$$
for any integers $k\geq  l\geq 0 $ such that $k$ and $l$ have the same parity, with, by convention, $\m(k,k)$ being  $\varnothing$. Notice that the ends and the beginnings of the segments in $\m(l,k)$ are all integers if $l$ and $k$ are odd and all half-integers if $l$ and $k$ are even.

\begin{deff}
    Let $\Temp$ be the set of tempered data, i.e. the set of the elements  $(\m,\e)\in\Symm^\e(G)$ such that $\m$ only contains centered segments. Let also $\ATemp$ be the set of co-tempered data, i.e. the set $\AD(\Temp)$. Finally for any $(s,\eta)\in\Seg^{=0}\times \{-1,1\}$, we define $\Temp^{\le (s,\eta)}=\Temp\cap \Symm^{\le (s,\eta)}$ and $\ATemp^{\le (s,\eta)}=\ATemp\cap \DSymm^{\le (s,\eta)}$.
\end{deff}

In the following we fix $(s,\eta)\in\Seg^{=0}\times \{-1,1\}$ and $(\m,\e)\in\ATemp^{\le (s,\eta)}$ and we explain how to simplify the definition of $\CT_{(s,\eta)}(\m,\e)$.

First we treat the case $\m=\varnothing$. We can verify that we have:
$$
\CT_{s,\eta}(\varnothing,+)=\left\{
  \begin{array}{ll}
   \m(1,k)+([0,0],\eta) & \mbox{if } \rho\mbox{ has the same type as }G \\
   \m(2,k)+([-1/2,1/2],\eta) & \mbox{if } \rho\mbox{ has the opposite type as }G\mbox{ and }\eta=-1 \\
  \m(0,k) & \mbox{if } \rho\mbox{ has the opposite type as }G\mbox{ and }\eta=1
  \end{array}
\right.
$$

This follows from our general definition and we can also, as in the proof, verify it by applying one step of the Lanard-Mínguez algorithm.

For the rest of this Subsection, assume that $\m$ is non empty.

\begin{lem}\label{lemma repet signs}
    If $\rho$ is of the same type as $G$ we have $[0,0]\in\m$. If $\rho$ is not of the same type as $G$ we have $[1/2,1/2]$ or $[-1/2,1/2]$ in $\m$.
    
    In general, we have $\s(\m,\e)=(s',(-1)^{n_0+1}\cdot \alpha_0)$ with $\ell(s')\le \ell(s)$. Moreover, if $\ell(s)=\ell(s')$ implies $\eta=(-1)^{n_0+1}\cdot \alpha_0$.
\end{lem}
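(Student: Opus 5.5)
The plan is to exploit that $(\m,\e)$ is co-tempered: write $(\m,\e)=\AD(\mathfrak t)$ with $\mathfrak t\in\Temp^{\le(s,\eta)}$. Since $\m$ is non-empty, $\mathfrak t$ is non-empty. I will prove the two assertions separately: the first by the ascendant algorithm (Theorem \ref{main theorem}), the second by reading off $\s(\m,\e)$ from $\mathfrak t$ via Remark \ref{rem largest segment in the dual LM}.

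\emph{First assertion.} Write $\mathfrak t^{\ge 0}=\s_1+\dots+\s_r$ with $\s_1\le'\dots\le'\s_r$, where each $\s_i=(s_i,\eta_i)$ is a signed centered segment because $\mathfrak t$ is tempered. Then $\mathfrak t=\T_{\s_r}(\mathfrak t')$ with $\mathfrak t'\in\Temp^{\le'\s_r}$. By Theorem \ref{main theorem} this gives
$$(\m,\e)=\CT_{\s_r}(\AD(\mathfrak t')),\qquad \AD(\mathfrak t')\in\DSymm^{\le'\s_r}.$$
Now I inspect the last step of the definition of $\CT_{\s_r}$. Since $\s_r$ is centered, the multisegment $\delta$ is added to $\tilde\m$, and it is added \emph{after} all extensions, so it survives in $\bar\m=\m$. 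If $\rho$ has the same type as $G$, then $\delta=[0,0]$. Otherwise $\delta$ is either $[-1/2,1/2]$ or $[-1/2,-1/2]+[1/2,1/2]$. In every case $\m$ contains the required segment. When $\AD(\mathfrak t')=\varnothing$, the same conclusion can be read directly from the explicit formula for $\CT_{s,\eta}(\varnothing,+)$ given above.

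\emph{Second assertion.} By Remark \ref{rem largest segment in the dual LM}, the underlying segment of $\s(\m,\e)$ is the $\le'$-largest segment of $\AD(\m,\e)=\mathfrak t$. Moreover $\s(\m,\e)$ is either a positive segment or a signed centered segment of $\mathfrak t$. Since $\mathfrak t$ has only centered segments, $\s(\m,\e)=(s',\eta')$ with $(s',\eta')\in\mathfrak t$. The sign attached by the Lanard--Mínguez step in the centered case is $\eta'=(-1)^{n_0+1}\cdot\alpha_0$. This is the same identity already used in the proof of Lemma \ref{lemma bound ending point} ("according to Lanard--Mínguez algorithm, we have $\eta=(-1)^{n_0+1}\eta_0$"), which I will cite from \cite{LM}.

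Since $\mathfrak t\in\Symm^{\le'(s,\eta)}$, we have $s'\le' s$. For centered segments $\le'$ is simply comparison of ends, so $\ell(s')\le\ell(s)$. If $\ell(s')=\ell(s)$ then $s'=s$. Since $\mathfrak t$ contains both $(s',\eta')$ and (by the bound) is compared against $(s,\eta)$, and $(s,1)$, $(s,-1)$ are $\le'$-incomparable, the bound $(s,\eta')\le'(s,\eta)$ forces $\eta=\eta'=(-1)^{n_0+1}\alpha_0$.

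The main obstacle I expect is the sign bookkeeping. The statement uses $\alpha_0$, defined via $[0,0]$ with default $-1$, whereas \cite{LM} and the definition of $\bar\e$ use $\eta_0$, which also involves $[-1/2,1/2]$. In the opposite-type case I must check carefully that the Lanard--Mínguez sign $\e_1$ for a centered $\m_1$ agrees with $(-1)^{n_0+1}\alpha_0$. My intended route is to use the first assertion, which guarantees that $[\pm1/2,\pm1/2]$ or $[-1/2,1/2]$ is present, together with the computation of $\bar\e_0$ in Proposition \ref{prop key tilde Theta and Delta} and Proposition \ref{prop bar e1} applied to the last step $\CT_{\s_r}$. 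If the conventions genuinely differ, I would read $\alpha_0$ as $\eta_0$ there.
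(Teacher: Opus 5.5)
Your proof is correct. The second assertion is handled as in the paper, but the first assertion takes a different route.

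\emph{The paper's route.} It applies one Lanard--Mínguez step directly to $(\m,\e)$. Since $\AD(\m,\e)$ is tempered, $\s(\m,\e)$ must be centered, so the step is in the case $\e_0=-1$. LM's definition of $\e_0$ then forces the last segment $\Delta_l$ of the decreasing chain to be $[0,0]$ (same type) or $[1/2,1/2]$ or $[-1/2,1/2]$ (opposite type). The sign $\eta'$ is read off from that same step.

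\emph{Your route.} You peel off the $\le'$-largest signed segment of $\mathfrak t=\AD(\m,\e)$ and use Theorem~\ref{main theorem} to write $(\m,\e)=\CT_{\s_r}(\AD(\mathfrak t'))$. You then note that $\CT$ of a centered $\s_r$ always adds $\delta$, with the empty case covered by the explicit formula for $\CT_{s,\eta}(\varnothing,+)$. This is valid and not circular, since the main theorem is proved before this lemma, and it shows where the segment comes from.

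\emph{Comparison.} Your route invokes the full main theorem. The paper needs only LM's definition of $\e_0$, and it gets both assertions from one LM step. Your treatment of the bound, via $\mathfrak t\in\Symm^{\le'(s,\eta)}$ and the $\le'$-incomparability of $(s,1)$ and $(s,-1)$, is exactly the paper's ``follows from the definition of $\DSymm^{\le(s,\eta)}$''.

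One caution about your fallback: do not replace $\alpha_0$ by the later $\eta_0$.
\begin{itemize}
\item With $\alpha_0$ as defined ($\e([0,0])$, and $-1$ when $[0,0]\notin\m$), the opposite-type sign is $(-1)^{n_0}$. This is what LM gives and what the paper uses.
\item With $\eta_0$, which looks at $\e([-1/2,1/2])$ and defaults to $1$, the formula is false. For $(\m,\e)=[-1/2,-1/2]+[1/2,1/2]=\AD([-1/2,1/2],+1)$ you have $n_0=0$ and $\s(\m,\e)=([-1/2,1/2],+1)$, while $(-1)^{n_0+1}\eta_0=-1$.
\end{itemize}
So the identity you quote from the proof of Lemma~\ref{lemma bound ending point} has to be read with $\alpha_0$.
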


\begin{rem}
    
Recall that $\alpha_0$ is $\e([0,0])$ if $[0,0]\in\m$ and 1 otherwise. Then, when $\rho$ is of the same type as $G$, we always have $(-1)^{n_0+1}\cdot \alpha_0=(-1)^{n_0+1}\cdot \e([0,0])$, and when $\rho$ is not of the same type as $G$, we always have $(-1)^{n_0+1}\cdot \alpha_0=(-1)^{n_0}$.
\end{rem}

\begin{proof}
    We apply Lanard-Mínguez algorithm to $(\m,\e)$. Since $(\m,\e)\in \ATemp$ we have  $\s(\m,\e)\in \Seg^{=0}\times \{-1,1\}$ so $\e_0=-1$ and it implies that $p(\Delta_l)=[0,0]$ so $[0,0]\in \m$ when $\rho$ is of the same type as $G$, and $p(\Delta_l)\in\{[1/2,1/2],[-1/2,1/2]\}$ so one of these two segments is in $\m$. Write $\s(\m,\e)=(s',\eta')$, by definition of the Lanard-Mínguez algorithm we have $\eta'=(-1)^{n_0+1}\cdot\e([0,0])=(-1)^{n_0+1}\cdot\alpha_0$ if $\rho$ is of the same type as $G$. Now assume that $\rho$ is not of the same type as $G$, we have $\eta'=(-1)^{n_0}=(-1)^{n_0+1}\cdot\alpha_0$. The rest of the statement follows immediately from the definition of $\DSymm^{\le (s,\eta)}$.
\end{proof}

In the following we denote $\s(\m,\e)=(s',\eta')$.

\begin{prop}\label{prop repet signs}
    We have $\Theta:=\Theta^\s(\m,\e)=\varnothing$ if and only if $\eta=\eta'$.
\end{prop}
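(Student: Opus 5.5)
The plan is to compare the two conditions directly. Lemma \ref{lemma repet signs} computes $\eta'$, and the definition of $\Theta^\s$ has exactly two branches. By Lemma \ref{lemma repet signs} we have $\eta'=(-1)^{n_0+1}\cdot\alpha_0$, and this is precisely the sign appearing in the first branch of the definition of $\Theta^\s(\m,\e)$ for $\s=(s,\eta)$. I read the $\eta_0$ written in that branch as this same quantity, as the proof of Lemma \ref{lemma bound ending point} already implicitly does; this identification should be stated explicitly.

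\textbf{The direction $\eta=\eta'\Rightarrow\Theta=\varnothing$.} This is immediate: we are in the first branch, so $\Theta^\s(\m,\e)=\varnothing$ by definition.

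\textbf{The converse.} Assume $\eta\neq\eta'$. Then $\Theta=\Theta^{b-1,0}(\m,\e)$ with $b=\tilde 1$. It suffices to show that $\Theta_1\neq\varnothing$, that is, to exhibit some $u\in y=s(\m)$ satisfying three conditions:
\begin{itemize}
\item $u$ lies in $\mathcal S^{=0}\cup\mathcal S^{\ge 0}$;
\item $e(u)=\tilde 1-1$;
\item if $p(u)=[-1/2,1/2]$, then $\e(u)=-1$.
\end{itemize}
To find $u$, I will rerun the beginning of the proof of Lemma \ref{lemma repet signs}. Since $(\m,\e)$ is co-tempered and non-empty, $\s(\m,\e)$ is a signed centered segment, so the Lanard--Mínguez algorithm applied to $(\m,\e)$ has $\e_0=-1$. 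This forces the last segment $\Delta_l$ of its decreasing sequence to have a specific form. Reading off the description of the case $\e_0=1$ used in the proof of Lemma \ref{lemma bound ending point}, the case $\e_0=-1$ means the following:
\begin{itemize}
\item if $\rho$ has the same type as $G$, then $\Delta_l\in\{[0,0]^{=0},[0,0]^{\ge0}\}$;
\item if $\rho$ has the opposite type, then $\Delta_l$ is either $[1/2,1/2]$ (necessarily labelled $\ge0$), or $[-1/2,1/2]$ carrying label $=0$ or $\ge 0$ and sign $-1$.
\end{itemize}
In every case $\Delta_l$ ends at $\tilde 1-1$ and satisfies the three conditions above. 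So $u=\Delta_l$ is admissible, $\Theta_1\neq\varnothing$, and hence $\Theta\neq\varnothing$.

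For the same-type case there is also a cruder argument that avoids the fine structure of $\Delta_l$. By Lemma \ref{lemma repet signs}, $[0,0]\in\m$, and the section $s$ places a copy of it in $\mathcal S^{=0}$ (odd multiplicity) or in $\mathcal S^{\ge0}$ (even multiplicity). No sign condition applies to $[0,0]$, so this copy serves as $u$.

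\textbf{Main obstacle.} The only delicate point is the opposite-type case when $[1/2,1/2]\notin\m$. There we must guarantee that $[-1/2,1/2]$ has sign $-1$ and admits a nonnegative label in $s(\m)$. Lemma \ref{lemma repet signs} by itself does not give this, so I will extract it from the precise definition of $\e_0$ in \cite{LM}, namely $\e_0=-1$ exactly when $\Delta_l$ is of the listed forms, exactly as that definition is used (with $\e_0=1$) in the proof of Lemma \ref{lemma bound ending point}.
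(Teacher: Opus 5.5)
Your proof is correct and follows the paper's route. Like the paper, you combine the value $\eta'=(-1)^{n_0+1}\alpha_0$ from Lemma \ref{lemma repet signs} with the two branches of the definition of $\Theta^\s$, reading the $\eta_0$ in that definition as $\alpha_0$ exactly as the paper's proof does. Your use of the special last segment $\Delta_l$ (forced by $\e_0=-1$) is more careful than the paper's one-line argument: the paper only notes that $[1/2,1/2]$ or $[-1/2,1/2]$ lies in $\m$, which by itself does not give the sign $-1$ that $[-1/2,1/2]$ needs in order to qualify as $\Theta_1$ in the opposite-type case.
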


\begin{proof}
    This follows from Lemma \ref{lemma repet signs}. Indeed we know that $[0,0]\in\m$ when $\rho$ is of the same type as $G$, and $[-1/2,1/2]$ or $[1/2,1/2]$ is in $\m$ otherwise. This implies that $\Theta=\varnothing$ if and only if $\eta=(-1)^{n_0+1}\cdot \alpha_0=\eta'$.
\end{proof}

Now remark that when $\s=(s,\eta)$ we always have $I=\varnothing$. It implies that the definition of $\bar \m$ is particularly simple when $\eta=\eta'$ since $\Theta=\varnothing$.

\begin{prop}
    If $\eta=\eta'$, we have: 
    $$
\bar{\m}=\left\{
  \begin{array}{ll}
   \m+\m(1,k)+[0,0] & \mbox{if } \rho\mbox{ has the same type as }G \\
  \m+\m(0,k) & \mbox{if } \rho\mbox{ is not of the same type as }G
  \end{array}
\right.
$$
If $\m$ is empty and $\rho$ has the same type as $G$ we have $\bar \e([0,0])=\eta$, else  $\bar \e=-\e$.
\end{prop}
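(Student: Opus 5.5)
This is a direct unwinding of the definition of $\CT_{(s,\eta)}$ in Subsection \ref{subsec def CT}, in the special situation $\Theta=\varnothing$ provided by Proposition \ref{prop repet signs}. Write $s=[-k',k']$; here $k$ is the integer with $e=e(s)=(k-1)/2$, so that $\m(l,k)$ collects the points $[\pm(i-1)/2]$ with $i\equiv k \pmod 2$ and $l+2\le i\le k$.

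\textbf{Multisegment.} Since $\eta=\eta'$, Proposition \ref{prop repet signs} gives $\Theta=\varnothing$, so $l=0$. Then no index $i_j,i'_j$ is defined and $\tilde\m=\m$. Moreover $T=\varnothing$, and by Remark \ref{rem I} $I=\varnothing$, so $I^c=[b,e]$ with $b=\tilde 1$. Hence
\[
\bar\m=\m+\sum_{i\in[\tilde 1,e]}([i,i]+[-i,-i])+\delta .
\]

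When $\rho$ has the same type as $G$, we have $\tilde 1=1$ and $\delta=[0,0]$. The sum runs over the integers $1\le i\le e$, which is exactly $\m(1,k)$ after the substitution $i\mapsto (i-1)/2$.

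When $\rho$ has the opposite type, we have $\tilde 1=3/2$ and the sum gives the points $\pm 3/2,\dots,\pm e$. It remains to identify $\delta$. By Lemma \ref{lemma repet signs} and the remark after it, $\eta'=(-1)^{n_0}$, and in the definition of $\delta$ the relevant $\eta_0$ equals $1$ in this type. So $\eta=\eta'=(-1)^{n_0+1}\cdot\eta_0$ with the convention used there, which gives $\delta=[-1/2,-1/2]+[1/2,1/2]$. Adding these two points to the sum produces exactly $\m(0,k)$.

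\textbf{Main obstacle.} The delicate point is reconciling the two slightly different conventions $\alpha_0$ and $\eta_0$, and the exponents $(-1)^{n_0}$ versus $(-1)^{n_0+1}$, in the definitions of $\Theta^\s$ and of $\delta$. I would check this consistency against the empty-datum formula $\CT_{s,1}(\varnothing,+)=\m(0,k)$ stated just before Lemma \ref{lemma repet signs}. I would also check it against Proposition \ref{prop key tilde Theta and Delta}: there $\bar\Delta_{\bar l}=[1/2,1/2]$ in the case where $\delta$ consists of two points, which forces the two-point choice of $\delta$ above.

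\textbf{Signs.} Here $\e_0'=1$ since $\s$ is centered, but I would use $\bar\e_0=\e_0'$ from Proposition \ref{prop key tilde Theta and Delta} together with the sign rules; as they are written, these should produce the global sign flip $\bar\e=-\e$. Concretely, every centered segment of $\bar\m$ other than $[0,0]$ and $[-1/2,1/2]$ lies in $\tilde\m=\m$ and falls into case (3), so its sign is $\tilde\e=\e_0'\cdot\e$. For $[0,0]$ in the same-type case, the rule for $\s=(s,\eta)$ with $\eta=(-1)^{n_0+1}\eta_0$ gives $-\eta_0=-\e([0,0])$. If $\m=\varnothing$, then $\eta_0=1$ and $n_0=0$, and the value is instead read off as $\bar\e([0,0])=\eta$, matching $\CT_{s,\eta}(\varnothing,+)$. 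In the opposite type, $[-1/2,1/2]$ occurs in $\bar\m$ only if it occurs in $\m$, and the rule $\e_0'\cdot\eta_0$ gives the value with the same convention.

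The only genuinely non-mechanical part is the sign bookkeeping just described, namely confirming that $\e_0'$ and $\eta_0$ combine to give $-\e$ uniformly. If needed, I would settle it independently by running one step of the Lanard--Mínguez algorithm on $\bar\m$: Proposition \ref{prop key tilde Theta and Delta} gives $\bar\e_0=-1$ in this case, and recovering $(\m,\e)$ via $\bar\e^\#=\bar\e_0\cdot\bar\e$ forces $\bar\e=-\e$.
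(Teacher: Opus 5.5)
Your route is the paper's own: Proposition \ref{prop repet signs} gives $\Theta=\varnothing$, hence $\tilde\m=\m$ and $T=\varnothing$, and also $I=\varnothing$. So $\bar\m=\m+\sum_{i\in[\tilde 1,e]}([i,i]+[-i,-i])+\delta$, and everything is read off from the definition. But the two places with actual content are the choice of $\delta$ in the opposite type and the value of $\e_0'$. Each is handled with a wrong convention value, and as written both steps fail.

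\textbf{The choice of $\delta$.} You correctly get $\eta'=(-1)^{n_0}$. You then take $\eta_0=1$ and write $\eta'=(-1)^{n_0+1}\eta_0$, which is false. With $\eta_0=1$ you would have $\eta=(-1)^{n_0}\eta_0$, so the definition would select $\delta=[-1/2,1/2]$ and give $\m+\m(2,k)+[-1/2,1/2]$ instead of $\m+\m(0,k)$. The sign in the definitions of $\Theta^\s$ and of $\delta$ must be read as $\alpha_0$. This equals $-1$ in the opposite type, because $[0,0]$ is not on the line there. Then $(-1)^{n_0+1}\alpha_0=(-1)^{n_0}=\eta'$, and this selects $\delta=[-1/2,-1/2]+[1/2,1/2]$. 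It is the same reading that makes Proposition \ref{prop repet signs} true, so it is forced by the step you already used.

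\textbf{The value of $\e_0'$.} It must be $-1$ when $\s$ is centered. The sentence introducing $\e_0'$ has its two values swapped relative to every later use:
the rule $\bar\e([0,0])=\e_0'\cdot\eta_0=-\eta_0$;
the remark in this subsection that one may replace $\e_0'$ by $-1$;
$\e_0'=1$ in the worked example with $\s\in\Seg^{>0}$;
and $\e_0'=\bar\e_0=-1$ in Proposition \ref{prop key tilde Theta and Delta}.

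With your value $\e_0'=1$, case (3) gives $\tilde\e=+\e$ on the centered segments of $\m$ other than $[0,0]$ and $[-1/2,1/2]$. It also gives $\bar\e([-1/2,1/2])=\e_0'\cdot\eta_0=+\e([-1/2,1/2])$, so $\bar\e=-\e$ would fail. Indeed, your text asserts $\e_0'=1$ together with $\bar\e_0=\e_0'$ and $\bar\e_0=-1$.

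With $\e_0'=-1$ the check is immediate:
case (3) gives $-\e$;
in the same type, $[0,0]\in\m$ by Lemma \ref{lemma repet signs}, so $\eta_0=\e([0,0])=\alpha_0$, the hypothesis $\eta=\eta'$ puts you in the first branch, and $\bar\e([0,0])=-\e([0,0])$;
in the opposite type, $\bar\e([-1/2,1/2])=-\e([-1/2,1/2])$;
for $\m=\varnothing$, both branches of the $[0,0]$ rule return $\eta$, so you do not need to import the empty-datum formula.

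Your cross-checks against Proposition \ref{prop key tilde Theta and Delta} point to exactly these values. State $\alpha_0=-1$ and $\e_0'=-1$ as the conventions in force, instead of carrying the contradictory ones.
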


\begin{proof}
    This immediately follows the definition when we have $I=\varnothing$ and $\Theta=\varnothing$.
\end{proof}

When $\eta=\eta'$ there is no simplification of the definition of $\tilde \m$. The definition of $\tilde \e$ can be simplified in two ways: first we can replace $\e_0'$ by -1, secondly we can remove the case $c(\Theta_j)=-1/2$ since the elements of $\Theta$ are all in $\mathcal S^{= 0}\cup\mathcal S^{\ge 0}$ so they have a positive center.

The fact that $I=\varnothing$ can slightly simplify the definition of $\bar \m$.

\begin{prop}

If $\eta\neq \eta'$, denote by $\Theta_m$ the segment of $\Theta$ with the largest end, we have:
    $$
\bar{\m}= \left\{
  \begin{array}{ll}
   \tilde \m+\m(2e(\Theta_m)+3,\ell(s))+[0,0] & \mbox{if } \rho\mbox{ has the same type as }G \\
  \tilde \m+\m(2e(\Theta_m)+3,\ell(s))+[-1/2,1/2] & \mbox{if } \rho\mbox{ is not of the same type as }G
  \end{array}
\right.
$$
\end{prop}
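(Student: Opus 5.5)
This is a direct unwinding of the definition $\bar \m=\tilde \m+\sum_{i\in [b,e]\backslash T}s_i+\delta$ when $\s=(s,\eta)$ is centered and $\eta\neq\eta'$. By Proposition \ref{prop repet signs}, $\Theta$ is then non-empty. Since $\s$ is centered, $\Theta=\Theta^{b-1,0}(\m,\e)$ with $b=\tilde 1$, and by Remark \ref{rem I} we have $I=\varnothing$. Hence every $i\in[b,e]\backslash T$ is in $I^c$ and contributes $[i,i]+[-i,-i]$. It remains to identify $T$, the sum $\sum_{i\in[b,e]\backslash T}([i,i]+[-i,-i])$, and $\delta$.

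First I identify $T$. By construction $e(\Theta_i)=b-1+i-1$, so the ends of the $\Theta_i$ are consecutive. Hence $T=\{b,b+1,\ldots,e(\Theta_m)+1\}$ is an interval starting at $b$, and Lemma \ref{lemma bound ending point} gives $e(\Theta_m)+1\le e$. Therefore $[b,e]\backslash T=\{e(\Theta_m)+2,\ldots,e\}$, stepping by $1$.

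Next I compare with the definition of $\m(l,k)$. Reindexing its sum by $x=(i-1)/2$, the multisegment $\m(l,k)$ is $\sum([x,x]+[-x,-x])$ over $x\in\{(l+1)/2,(l+3)/2,\ldots,(k-1)/2\}$. Since $s$ is centered, $e=(\ell(s)-1)/2$, so the top endpoint matches for $k=\ell(s)$. The bottom point $x=e(\Theta_m)+2$ corresponds to $l=2e(\Theta_m)+3$. The parity of $l$ and $k$ agree because $e(\Theta_m)$ and $e$ differ by an integer. So the added points are exactly $\m(2e(\Theta_m)+3,\ell(s))$.

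Finally I check $\delta$. If $\rho$ has the same type as $G$, then $\delta=[0,0]$ by definition. If not, $\delta=[-1/2,1/2]$ exactly when $\eta=(-1)^{n_0}\eta_0$. I need to show this is the case here, using Lemma \ref{lemma repet signs}: $\eta'=(-1)^{n_0+1}\alpha_0=(-1)^{n_0}$ in the opposite-type case, so $\eta\neq\eta'$ forces $\eta=(-1)^{n_0+1}$. I then have to reconcile the two sign conventions $\eta_0$ and $\alpha_0$ used in the definitions of $\Theta^\s$ and $\delta$, checking that $\eta\neq\eta'$ is precisely the condition for which the definition selects $\delta=[-1/2,1/2]$ rather than $[-1/2,-1/2]+[1/2,1/2]$. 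This sign bookkeeping is the only delicate step, so I will carry it out carefully, possibly distinguishing the cases $[-1/2,1/2]\in\m$ and $[1/2,1/2]\in\m$ given by Lemma \ref{lemma repet signs}. Assembling the three pieces gives the stated formula.
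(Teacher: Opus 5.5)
Your approach is the paper's own. Its proof is the one-line remark that one applies the definition of $\bar\m$ with $I=\varnothing$. Your identification of $T=\{b,\dots,e(\Theta_m)+1\}$, of $[b,e]\setminus T=\{e(\Theta_m)+2,\dots,e\}$, and of the added points with $\m(2e(\Theta_m)+3,\ell(s))$ (including the parity check and $l\le k$ via Lemma \ref{lemma bound ending point}) is correct and more explicit than what the paper writes. The same-type case $\delta=[0,0]$ is also fine. For $I=\varnothing$ it is cleaner to argue directly: $b=\tilde 1\in\{1,3/2\}$, and $b\le i\le 1-b$ is impossible. The values $0,1/2$ printed in Remark \ref{rem I} are a slip.

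The one step you leave open is $\delta$ in the opposite-type case, and it needs no case analysis. In the definition of $\Theta^\s$ for centered $\s$, $\Theta^\s$ is declared empty whenever $\eta=(-1)^{n_0+1}\cdot\eta_0$. The definition of $\delta$ selects $[-1/2,-1/2]+[1/2,1/2]$ under literally the same condition, with the same sign. You already know $\Theta\neq\varnothing$ from Proposition \ref{prop repet signs}, so that condition fails. Hence $\eta=(-1)^{n_0}\eta_0$ and $\delta=[-1/2,1/2]$.

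Be careful with the case-by-case route you sketch. Suppose you evaluate $\eta_0$ as in the paragraph defining $\bar\e$, where it equals $1$ when neither $[0,0]$ nor $[-1/2,1/2]$ lies in $\m$. Then the argument gives the wrong answer. For instance, $\m=[-1/2,-1/2]+[1/2,1/2]=\AD([-1/2,1/2],+1)$ has $n_0=0$ and $\eta'=1$. With $\eta=-1$, that reading gives $\eta=(-1)^{n_0+1}\eta_0$, hence the two points instead of $[-1/2,1/2]$.

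The sign must instead be read as $\alpha_0$, which equals $-1$ in the opposite-type case, as in the definition of $\Theta^\s$. This is the reading under which $\eta'=(-1)^{n_0+1}\alpha_0=(-1)^{n_0}$ and the formula for $\CT_{s,\eta}(\varnothing,+)$ both hold. With it, your own computation $\eta=(-1)^{n_0+1}=(-1)^{n_0}\alpha_0$ also closes the step.
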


\begin{rem}
    The fact that $2e(\Theta_m)+3\le \ell(s)$ follows from $\s(\m,\e)\le'(s,\eta)$, because since $\eta\neq \eta'$ we have $\ell(s')\le\ell(s)-2$.
\end{rem}

\begin{proof}
    Again, this proof consists in applying the definition of $\bar \m$ with $I=\varnothing$.
\end{proof}

Finally, the definition of $\bar \e$ is the same for the segments $\Delta$ with $\ell(\Delta)\ge 3$, i.e. $\bar \e(\Delta)=\tilde \e(\Delta)$. For the segments $[0,0]$ and $[-1/2,1/2]$ the definition of $\bar \e$ can be simplified:

\begin{prop}
If $\eta=-\eta'$, we have $\bar \e([0,0])=\e([0,0])$ and $\e([-1/2,1/2])=-1$.
\end{prop}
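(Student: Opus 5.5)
The statement to prove is this: if $\eta=-\eta'$ (so $\Theta\neq\varnothing$ by Proposition \ref{prop repet signs}), then $\bar\e([0,0])=\e([0,0])$ and $\bar\e([-1/2,1/2])=-1$. The plan is a direct unwinding of the definition of $\bar\e$, split according to whether $\rho$ has the same type as $G$. In the present setting $\s=(s,\eta)$, so $\e_0'=1$. Lemma \ref{lemma repet signs} gives $\eta'=(-1)^{n_0+1}\alpha_0$. The Remark following the definition of $\bar\e$ says the sign of $[0,0]$ (resp. $[-1/2,1/2]$) is governed by whether $p(\Theta_1)=[0,0]$ (resp. $p(\Theta_1)=[-1/2,1/2]$). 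So the whole task is to identify $\Theta_1$ when $\eta\neq\eta'$.

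\emph{Same type as $G$.} Lemma \ref{lemma repet signs} gives $[0,0]\in\m$, so $\alpha_0=\eta_0=\e([0,0])$. The condition $\eta\neq(-1)^{n_0+1}\eta_0$ puts us in the branch $\Theta=\Theta^{\tilde1-1,0}=\Theta^{0,0}$. The key point is that $\Theta_1$ is the smallest element of $\mathcal S^{=0}\cup\mathcal S^{\ge0}$ ending at $0$. Since $[0,0]\in\m$, such an element exists, and its projection is $[0,0]$. By the Remark after the definition of $\bar\e$, $p(\Theta_1)=[0,0]$ in exactly this case.

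The definition of $\bar\e([0,0])$ in the centered case, with $\eta\neq(-1)^{n_0+1}\eta_0$, then gives $\bar\e([0,0])=-\e_0'\eta_0=\eta_0$. Hence $\bar\e([0,0])=\e([0,0])$. The statement about $[-1/2,1/2]$ is vacuous here, because half-integral segments do not occur on this line.

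\emph{Opposite type.} Now $[0,0]$ cannot occur, so only $[-1/2,1/2]$ matters. The first step is to check that $[-1/2,1/2]\in\bar\m$, which should follow from the Proposition giving $\bar\m=\tilde\m+\m(\cdot,\cdot)+[-1/2,1/2]$.

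By the definition of $\bar\e([-1/2,1/2])$, it equals $\e_0'=1$ if some $p(\Theta_i)=[-1/2,1/2]$, and $\e_0'\eta_0$ otherwise. The second Remark after the definition of $\bar\e$ already says that in the centered case $\bar\e([-1/2,1/2])=\e_0'$ always. That would give $+1$, which contradicts the claim $-1$. So at this step I must re-examine the convention. Either $\e_0'$ should be read as $-1$ for tempered input, as the paragraph before the statement ("we can replace $\e_0'$ by $-1$") suggests, or the claim concerns the sign in the original datum, i.e.\ that $\e([-1/2,1/2])=-1$ as written.

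I would settle on the second reading: if $[-1/2,1/2]\in\m$ then its sign is $-1$. This is proved as follows. Since $\eta\neq\eta'$, the sequence $\Theta^{1/2,0}$ is nonempty, with $\Theta_1$ ending at $1/2$ and $\Theta_1\in\mathcal S^{=0}\cup\mathcal S^{\ge0}$. Condition (2) in the definition of $\Theta^k_1$ (with $i=1$) forces $\e(\Theta_1)=(-1)^1=-1$ whenever $p(\Theta_1)=[-1/2,1/2]$. If instead $p(\Theta_1)=[1/2,1/2]$, I would use the ordering $\prec$ together with Lemma \ref{lemma repet signs}, which gives $\e_0=-1$ for co-tempered data, to show that $[-1/2,1/2]$, if present, carries sign $-1$.

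The main obstacle is exactly this sign-convention ambiguity ($\e_0'=1$ versus the "$-1$" replacement) and reconciling it with the two Remarks. The integral case is a routine case check.
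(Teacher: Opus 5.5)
There is a genuine gap in the opposite-type half. You resolve the sign-convention clash the wrong way, and the statement you then try to prove is false.

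\textbf{The convention.} The sentence defining $\e_0'$ (``$1$ if $\s=(s,\eta)$, $-1$ otherwise'') is a typo. Everywhere the paper actually uses $\e_0'$, it takes $\e_0'=-1$ for centered $\s$ and $\e_0'=1$ for $\s\in\Seg^{>0}$:
\begin{itemize}
\item the display defining $\bar\e([0,0])$ reads $\e_0'\eta_0=-\eta_0$ and $-\e_0'\eta_0=\eta_0$;
\item the worked example with $\s\in\Seg^{>0}$ has $\e_0'=1$;
\item Proposition \ref{prop key tilde Theta and Delta} shows $\bar\e_0=-1$ in the centered case together with $\e_0'=\bar\e_0$;
\item the case $\eta=\eta'$ is stated as $\bar\e=-\e$.
\end{itemize}
Your own same-type line $\bar\e([0,0])=-\e_0'\eta_0=\eta_0$ already uses $\e_0'=-1$, which contradicts your opening ``so $\e_0'=1$''. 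That half reaches the right value only because you copied the display. Correspondingly, ``$\e([-1/2,1/2])=-1$'' in the statement is a typo for $\bar\e([-1/2,1/2])=-1$; the surrounding paragraph is explicitly about simplifying $\bar\e$.

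\textbf{Your chosen reading is false.} You read the claim as saying the input sign of $[-1/2,1/2]$ is $-1$. Here is a counterexample. Take $\rho$ of opposite type and $s_1=s_2=[-1/2,1/2]$.
\begin{itemize}
\item $\CT_{s_1,-1}(\varnothing)=([-1/2,1/2],-1)$, since $\m(2,2)=\varnothing$. Its $\eta'$ is $(-1)^{n_0}=-1$.
\item So applying $\CT_{s_2,-1}$ is the case $\eta=\eta'$, where $\bar\m=\m+\m(0,2)$ and $\bar\e=-\e$.
\item This produces the co-tempered datum $([-1/2,1/2],+1)+[1/2,1/2]+[-1/2,-1/2]$, which again has $n_0=1$ and $\eta'=-1$.
\end{itemize}
With $s=[-3/2,3/2]$ and $\eta=+1$, this datum lies in $\ATemp^{\le(s,\eta)}$ and $\eta\neq\eta'$, yet $[-1/2,1/2]$ carries sign $+1$. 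Here $\Theta_1=[1/2,1/2]^{\ge0}$, which is exactly the subcase where you planned to force sign $-1$ via $\prec$ and Lemma \ref{lemma repet signs}. That lemma's $\e_0=-1$ is a property of the Lanard-Mínguez sequence $\Delta$ and does not constrain the sign of $[-1/2,1/2]$.

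\textbf{The direct argument.} With the intended convention $\e_0'=-1$, the statement is a direct read-off of the definition, which is all the paper's one-line proof does.
\begin{itemize}
\item Opposite type, $\eta\neq\eta'$: here $\delta=[-1/2,1/2]$, so $[-1/2,1/2]\in\bar\m$. By definition, $\bar\e([-1/2,1/2])=\e_0'=-1$ if $p(\Theta_1)=[-1/2,1/2]$, and $\bar\e([-1/2,1/2])=\e_0'\eta_0=-\eta_0$ otherwise. In the latter case $\eta_0=1$. Either $[-1/2,1/2]\notin\m$, or it carries sign $+1$. Indeed, a copy with sign $-1$ lies in $\mathcal S^{=0}\cup\mathcal S^{\ge 0}$ for any nonzero multiplicity, satisfies condition (2) for $\Theta^{1/2,0}_1$, and precedes $[1/2,1/2]^{\ge0}$, so it would be $\Theta_1$. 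Either way $\bar\e([-1/2,1/2])=-1$.
\item Same type, $\eta\neq\eta'$: you only need $[0,0]\in\m$ (Lemma \ref{lemma repet signs}), so $\eta_0=\e([0,0])$, plus $\delta=[0,0]$. The ``otherwise'' branch then gives $\bar\e([0,0])=-\e_0'\eta_0=\eta_0=\e([0,0])$. Your identification of $\Theta_1$ there is harmless but unnecessary.
\end{itemize}
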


\begin{proof}
    The proof is direct.
\end{proof}

We now provide an example of our algorithm in its simplified version to compute the dual of a tempered datum:
\tikzset{
    segment/.style={line width=1.5pt, black},
    segment blue/.style={line width=1.5pt, blue},
    segment red/.style={line width=1.5pt, red},
    start pt/.style={circle, fill=blue, inner sep=1.5pt},
    end pt/.style={circle, fill=red, inner sep=1.5pt},
    point halo/.style={circle, fill=black, inner sep=2pt},
    point blue/.style={circle, fill=blue, inner sep=2pt},
    point red/.style={circle, fill=red, inner sep=2pt},
    theta link/.style={line width=3pt, green!50, opacity=0.7},
    point theta/.style={circle, fill=green!50, opacity=0.7,inner sep=2pt}
}

\begin{ex}
We reproduce a detailed manual calculation to further illustrate the simplified algorithm on tempered data. Take $\m=[0,0]+2\cdot[-1,1]+[-2,2]$ with $\e([0,0])=1$, $\e([-1,1])=-1$ and $\e([-2,2])=1$. Write $s_1=[0,0]$, $s_2=[-1,1]$, $s_3=[-1,1]$ and $s_4=[-2,2]$. Finally write $\e_i=\e(s_i)$ for any $1\le i\le 4$. We have $s_1\le'\ldots\le's_4$ and so:
$$\AD(\m,\e)=\CT_{s_4}\circ \ldots\circ \CT_{s_1}(\varnothing)$$
We then need to apply 5 steps of our algorithm.
For any $0\le i\le 4$, denote by $(\m^i,\e^i)=\CT_{(s_i,\e_i)}\circ\ldots\CT_{(s_1,\e_1)}(\varnothing)$ with $(\m^0,\e^0)$ being the empty datum. Our goal is to compute $(\m^4,\e^4)=\AD(\m,\e)$.

\begin{enumerate}
    \item  The first step is to compute $(\m^1,\e^1)=\CT_{s_1,\e_1}(\m^0,\e^0)$. Since $\m^0$ is empty we get $(\m^1,\e^1)=\m(1,\ell(s_1))+([0,0],\e_1)=([0,0],1)$.
    \item We now compute $(\m^2,\e^2)=\CT_{s_2,\e_2}(\m^1,\e^1)$. First note that $\e_2\neq \e_2$. We have $s(\m^2)=[0,0]^{=0}$ so $m=1$ and $\Theta_1=[0,0]^{=0}$. So finally we deduce that $\m^2={}^+[0,0]^++[0,0]+\m(3,3)=[1,1]+[0,0]$ and : $\e^2([0,0])=\e^1([0,0])=1$, $\e^2([-1,1])=-\e^1([0,0])=-1$.
    \item We now compute $(\m^3,\e^3)=\CT_{s_3,\e_3}(\m^2,\e^2)$. Since $\e_3=\e_2$ this step is easy. We have $\m^3=\m^2+\m(1,3)+[0,0]=\m^2+[0,0]+[-1,-1]+[1,1]$ and $\e^3=-\e^2$.
    \item For the last step we compute $\AD(\m,\e)=(\m^4,\e^4)=\CT_{s_4,\e_4}(\m^3,\e^3)$. Here we have $\e_4\neq \e_3$. In the increasing way for $\prec$ we have $s(\m^3)=[-1,-1]^{\le 0}+[0,0]^{\le 0}+[-1,1]^{=0}+[0,0]^{\ge 0}+[1,1]^{\ge 0}$. Moreover $\e^3([0,0])=-1$ and $\e^3([-1,1])=1$. We get $m=2$ and: $\Theta_1=[0,0]^{\ge 0}$, $\Theta_2=[1,1]^{\ge 0}$. So we get $\m^4={}^-[-1,-1]+{}^^-[0,0]+[-1,1]+[0,0]^++[1,1]^++[0,0]+\m(5,5)=[-2,-1]+[-1,0]+[0,0]+[-1,1]+[0,1]+[1,2]$. And finally, $\e^4([0,0])=\e^3([0,0])=-1$ and $\e^4([-1,1])=-\e^3([-1,1])=-1$.
    \begin{center}
    \begin{tikzpicture}[scale=0.8]
    \foreach \x in {-3,...,3} {
        \draw[dashed, gray!40] (\x, -3.0) -- (\x, 3.0);
        \node[above] at (\x, 3.0) {\small $\x$};
    }
    
    \draw[theta link] (0, 1.0) -- (1, 2.0);

    \node[point halo] at (-1, -2.0) {};
    \node[point blue] at (0, -1.0) {};

    \draw[segment red] (-1, 0) -- (1, 0);
    \fill[start pt] (-1, 1.0) circle; \fill[end pt] (1, 1.0) circle;
    
    \node[point blue] at (0, 1.0) {};
    
    \node[point halo] at (1, 2.0) {};

    \node at (0, -3.5) {$(\m^3,\e^3)$};
 
\end{tikzpicture}
\hspace{0.5cm}
\begin{tikzpicture}[scale=0.8]
    \foreach \x in {-3,...,3} {
        \draw[dashed, gray!40] (\x, -3.0) -- (\x, 3.0);
        \node[above] at (\x, 3.0) {\small $\x$};
    }
    
    \draw[segment] (-2, -2.5) -- (-1, -2.5);
    \draw[segment] (-1, -1.5) -- (0, -1.5);
    
    \node[point blue] at (0, -0.5) {};
    \draw[segment blue] (-1, 0.5) -- (1, 0.5);
   
    \draw[segment] (1, 2.5) -- (2, 2.5);
    \draw[segment] (0, 1.5) -- (1, 1.5);

    \node at (0, -3.5) {$\AD(\m,\e)=(\m^4,\e^4)$};
 
\end{tikzpicture}
\end{center}

\end{enumerate}
\end{ex}

\subsection{Constructive description of the co-tempered data}

Our algorithm naturally induces a new description of co-tempered data:

\begin{theorem}\label{coro pairing temp}
For any co-tempered symmetrical Langlands datum $(\m,\e)\in\ATemp$, there exists a unique tuple $(s_i,\eta_i)_{1\leq i \leq r}$ with $r\geq 1$, $s_1\subseteq \dots\subseteq s_r$ some centered segments, and $\eta_i\in\{\pm1\}$ for all $1\leq i\leq r$ with $\eta_{i+1}=\eta_i$ if $s_{i+1}=s_i$ for all $1\leq i\leq r-1$, such that:
$$(\m,\e)=\CT_{s_r,\eta_r}\circ\dots\circ \CT_{s_1,\eta_1}(\varnothing)$$
Moreover, in this case, we have
$\AD(\m,\e)=\sum_{i=1}^r(s_i,\eta_i)$.
\end{theorem}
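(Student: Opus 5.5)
The plan is to derive the statement from Theorem \ref{main theorem} by running the construction of an arbitrary tempered datum through the ascendant algorithm. Let $(\m,\e)\in\ATemp$. Since $\AD$ is an involution, $(\m',\e'):=\AD(\m,\e)$ lies in $\Temp$, so $\m'$ contains only centered segments. We write $(\m',\e')^{\ge 0}=\m'=\sum_{i=1}^r (s_i,\eta_i)$, ordered so that $s_1\le'\dots\le' s_r$. For centered segments, $\le'$ is exactly inclusion: $[-a,a]\le'[-a',a']$ iff $a\le a'$ iff $[-a,a]\subseteq[-a',a']$. Condition (1) in the definition of $\Symm^\e(G)$ forces equal segments to carry equal signs, so $s_{i+1}=s_i$ implies $\eta_{i+1}=\eta_i$. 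Here $r\ge 1$ because $(\m,\e)$ is non-empty, as in the statement of Corollary \ref{coro pairing temp intro}; the empty datum is the case $r=0$. As observed after the definition of $\T_\s$, we then have $(\m',\e')=\T_{(s_r,\eta_r)}\circ\dots\circ\T_{(s_1,\eta_1)}(\varnothing)$.

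\textbf{Existence.} We apply Theorem \ref{main theorem} inductively. Set $(\m'_i,\e'_i)=\sum_{j\le i}(s_j,\eta_j)$. We check that $(\m'_{i-1},\e'_{i-1})\in\Symm^{\le'(s_i,\eta_i)}$. This holds because every $(s_j,\eta_j)$ with $j<i$ satisfies $s_j\le' s_i$. The comparison is legitimate even though $\le'$ on signed segments is not total, since whenever $s_j=s_i$ the signs agree. Hence $\AD(\m'_{i-1},\e'_{i-1})\in\DSymm^{\le'(s_i,\eta_i)}$, and the commutative square gives
$$\AD(\m'_i,\e'_i)=\AD(\T_{(s_i,\eta_i)}(\m'_{i-1},\e'_{i-1}))=\CT_{s_i,\eta_i}(\AD(\m'_{i-1},\e'_{i-1})).$$
Starting from $\AD(\varnothing,+)=(\varnothing,+)$ and iterating, we obtain $(\m,\e)=\AD(\m',\e')=\CT_{s_r,\eta_r}\circ\dots\circ\CT_{s_1,\eta_1}(\varnothing)$. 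The same computation shows that $\AD(\m,\e)=\sum_i(s_i,\eta_i)$.

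\textbf{Uniqueness.} Suppose $(s_i,\eta_i)_{1\le i\le r}$ is any tuple with the stated properties and $(\m,\e)=\CT_{s_r,\eta_r}\circ\dots\circ\CT_{s_1,\eta_1}(\varnothing)$. The inclusion and sign conditions make each partial sum $\sum_{j<i}(s_j,\eta_j)$ lie in $\Symm^{\le'(s_i,\eta_i)}$. Hence each intermediate composite is well defined, that is, the argument at each step lies in the domain $\DSymm^{\le'(s_i,\eta_i)}$. The existence computation therefore applies verbatim and forces $\AD(\m,\e)=\sum_i(s_i,\eta_i)$. Since $\AD$ is a bijection, the multiset $\sum_i(s_i,\eta_i)$ is determined by $(\m,\e)$. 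A multiset of signed centered segments has a unique ordering satisfying $s_1\subseteq\dots\subseteq s_r$, because equal segments carry equal signs and inclusion of centered segments is a total preorder whose ties are equalities. This gives uniqueness of the tuple.

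\textbf{Main obstacle.} The only delicate point is domain bookkeeping: verifying that every intermediate datum lies in the set where the relevant $\CT$ is defined and where Theorem \ref{main theorem} applies. This reduces to the inclusion-implies-$\le'$ observation and the sign compatibility condition, both checked above. No further combinatorics is needed.
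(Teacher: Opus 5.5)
Your proposal is correct and follows the paper's proof: you write the tempered datum $\AD(\m,\e)$ as $\T_{(s_r,\eta_r)}\circ\dots\circ\T_{(s_1,\eta_1)}(\varnothing)$ and apply Theorem \ref{main theorem} $r$ times. You also spell out what the paper leaves implicit: the domain checks, and the uniqueness argument that any admissible tuple forces $\AD(\m,\e)=\sum_i(s_i,\eta_i)$, which admits only one ordering by inclusion.
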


\begin{proof}
    The dual $\AD(\m,\e)$ is a tempered datum so we can write $$\AD(\m,\e)=\sum_{i=1}^r(s_i,\eta_i)$$ We can assume that $(s_1,\eta_1)\le'\ldots\le'(s_r,\eta_r)$ and then, write $$\AD(\m,\e)=\T_{(s_r,\eta_r)}\circ\ldots\circ\T_{(s_1,\eta_1)}(\varnothing)$$ so by applying Theorem \ref{main theorem} exactly $r$ times, we get:
    $$(\m,\e)=\CT_{s_r,\eta_r}\circ\dots\circ \CT_{s_1,\eta_1}(\varnothing)$$
\end{proof}

\begin{rem}
  Note that by definition of $\CT_{s,\eta}$, the number of segments in $\CT_{s,\eta}(\m,\e)$ is strictly bigger than the number of segments in $\m$; we deduce that a tempered datum always have less segments than its co-tempered dual. 
\end{rem}

In the remainder of this section, we will use this new description of co-tempered data to get some new results about co-tempered data. These results include: reduction properties that facilitate the computation of the dual of a tempered datum (\ref{thm red}, \ref{prop red first segment}), and a direct description of the tempered part of co-tempered data (\ref{thm im phi=T}).

Our initial objective was to obtain a simple, direct characterization of co-tempered representations, or a closed formula for the dual of a tempered datum, which is a similar problem. One might hope to deduce such a result from our new constructive characterization. However, extracting a closed-form formula requires a precise understanding of how the initial sequence $\Theta$ evolves after each successive application of our operators. This combinatorial tracking turns out to be a non-trivial problem. 

To address this challenge, we will introduce the class $\Temp^d$ of \textit{decreasing} tempered data. Although we do not currently have a representation-theoretic interpretation for this class, it provides a framework where this combinatorial tracking can be fully resolved (see Proposition \ref{prop initial sequence}). Consequently, we obtain an explicit, direct formula for the Aubert dual of any element in $\Temp^d$ (see Theorem \ref{thm explicit tempd}).

\subsection{A useful reduction}\label{subsec red}

We start with a way to reduce the problem of computing the dual to a smaller class of tempered symmetrical data.

\begin{deff}
Let $(\m,\e)\in\Temp$; write $\m=s_1+\ldots+s_n$ with $s_1\subseteq \ldots\subseteq s_n$. There exist $1\le t \le n$ and a partition $\sqcup_{1\le j\le t} I_j=\llbracket 1,n\rrbracket$ such that for any $1\le j\le t$ and any $i,i'\in I_j$, we have $\e(s_i)=\e(s_{i'})$ and such that $I_j$ is an interval for any $1\le j\le t$. Now define $$\m^{red}=\sum_{j,~|I_j|~odd}s_{\max(I_j)}+\sum_{j,~|I_j|~even}(s_{\min(I_j)}+s_{\max(I_j)})$$

We say that $\m$ is reduced if $\m=\m^{red}$ and we write $\Temp_{red}$ for the set of reduced tempered symmetrical Langlands data. Finally, write $\ATemp_{red}\subset \ATemp$ for the dual of $\Temp_{red}$.
\end{deff}

\begin{lem}\label{lem differ on points}

Take $(\m,\e)$ and $(\m',\e')$ in $\ATemp$. Assume that there exists a set $X$ such that:
  \begin{itemize}
    \item $(\m,\e)=(\m',\e')+\sum_{x\in X}[x,x]$
    \item For any $x\in X$, we have $m_{\m'}([x,x])\ge 1$.
  \end{itemize}
Then $\Theta(\m,\e)=\Theta(\m',\e')$ and then, for any $k\ge 0$ and $\eta\in\{\pm1\}$ such that $(\m,\e)$ and $(\m',\e')$ are in $\ATemp_{k,\eta}$:
\begin{itemize}
    \item $\CT_{s,\eta}(\m,\e)=\CT_{s,\eta}(\m',\e')+\sum_{x\in X}[x,x]$
    \item For any $x\in X$, we have $m_{\CT_{s,\eta}(\m')}([x,x])\ge 1$.
\end{itemize}
with $s$ the centered segment of length $k$.
\end{lem}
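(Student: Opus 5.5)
We work directly from the definition of $\Theta^{\s}$ and $\CT_{\s}$ with $\s=(s,\eta)$ centered. The key observation is that adding extra copies of a point $[x,x]$ already present in $\m'$ changes neither the relative positions nor the eligibility of segments in the Lanard--Mínguez order, provided we keep track of the labels. We write $(y,\e)=s(\m,\e)$ and $(y',\e')=s(\m',\e')$.

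\textbf{Step 1: compare $y$ and $y'$.} For $x\neq 0$ the point $[x,x]$ is not centered, so its extra copies in $y$ are further copies of an element already present in $y'$. If $x=0$ (same type case), then $[0,0]$ is centered and the labels $\le 0,=0,\ge 0$ of the copies may shift with the parity of the multiplicity. We will need that the condition $m_{\m'}([0,0])\ge1$, together with the symmetry of $X$ forced by $\m$ and $\m'$ both being symmetric, makes the parity unchanged in the cases that occur. Failing that, we check directly that the element of $\{[0,0]^{=0},[0,0]^{\ge0}\}$ picked by $\Theta$ is the same in both data. We also note that $n_0$ and $\alpha_0$ have the same parity and value for $\m$ and $\m'$, so by Proposition \ref{prop repet signs} the dichotomy $\Theta=\varnothing$ versus $\Theta\neq\varnothing$ agrees for both data. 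The same holds for $\s(\m,\e)$, which only depends on the ending point and the sign by Lemma \ref{lemma repet signs}.

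\textbf{Step 2: $\Theta(\m,\e)=\Theta(\m',\e')$.} We argue inductively on the terms $\Theta_n$. Each $\Theta_{n+1}$ is defined as the $\prec$-smallest element of prescribed end exceeding $\Theta_n$, subject to a sign condition. Since every added copy is a copy of an element already in $y'$, the set of candidate underlying segments is the same in both data, so the minimum is attained at the same underlying labeled segment. We then take $\Theta_n$ to be the first copy, matching the convention $i_j=\min\{\ldots\}$. The sign conditions only involve centered segments, which are unchanged except for possibly $[0,0]$, and that case is handled in Step 1.

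\textbf{Step 3: compare $\CT$.} With $\Theta$ equal, the indices $i_j,i'_j$ point to the same underlying segments. In the construction of $\tilde\m$, the extra copies of $[x,x]$ are therefore untouched, that is, $\tilde\Phi_i=p(\Phi_i)$ for these copies. The sets $T$, $I$ (which is empty here) and $\delta$ depend only on $\Theta$, $n_0$ and $\eta_0$, so they coincide for both data. Hence $\bar\m=\bar\m'+\sum_{x\in X}[x,x]$.

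For the signs, the only centered added point is $[0,0]$. Its sign $\bar\e([0,0])$ is determined by $\eta_0$, by whether some $p(\Theta_i)=[0,0]$, and by $\eta$ versus $n_0$, and all of these agree for both data. Finally $m_{\bar\m'}([x,x])\ge m_{\tilde\m'}([x,x])\ge 1$: this holds either because the copy of $[x,x]$ in $\m'$ is not used by $\Theta$ (all copies beyond the first are unused), or, when it is the unique copy and is extended, because we must check that a point $[x,x]$ is added back via $\m(\cdot,\cdot)$. This last case may require the assumption on $X$ more carefully.

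\textbf{Main obstacle.} The main obstacle is the centered point $[0,0]$ in the same type case. There the multiplicity parity governs the labels $=0$ and $\ge0$, and hence both $\Theta_1$ in $\Theta^{b-1,0}$ and the sign bookkeeping. We would try first to show that elements $0\in X$ come with even multiplicity, using the equality of $\det$ and the sign-product conditions on co-tempered data, or else to treat that case by a direct computation with the explicit formulas given above for $\CT_{s,\eta}$ on $\ATemp$.
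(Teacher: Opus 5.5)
Your Step 2 is the right mechanism, and it is essentially the paper's argument. When $0\notin X$, the labelled segments occurring in $s(\m,\e)$ and $s(\m',\e')$ are the same as sets, and $n_0$ and $\alpha_0$ are unchanged. So every recursive choice defining $\Theta$ is identical, and the minimal-index convention for $i_j,i'_j$ leaves the extra copies of $[x,x]$ untouched. This gives $\Theta(\m,\e)=\Theta(\m',\e')$ and the first bullet. The paper organises the argument through Lemma \ref{lemme initial sequence}(2) (the points of $\Theta$ form a final block); your set-of-candidates formulation makes that detour unnecessary. The paper then asserts both bullets ``by definition of $\CT_{s,\eta}$'' and never examines the two places you flag.

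Those two places cannot be closed, because the statement as written fails there.

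(i) If $0\in X$ (same type), a single added $[0,0]$ flips the parity of $n_0$. Symmetry of $X$ says nothing about the multiplicity of $0$. No determinant or sign-product condition is available either, since $\ATemp$ sits inside $\Symm^\e(G)$ on one line. Concretely:
\begin{itemize}
\item $(\m',\e')=([0,0],\eta)$ is self-dual.
\item $(\m,\e)=2([0,0],\eta)=\AD(2([0,0],-\eta))$ is co-tempered, with $\s(\m,\e)=([0,0],-\eta)$.
\item For $(s,\eta)=([-1,1],\eta)$ one gets $\Theta(\m',\e')=\varnothing$ but $\Theta(\m,\e)=[0,0]^{\ge 0}$. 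Even the $\s$-independent $\Theta^{0,0}$ differs: $[0,0]^{=0}$ versus $[0,0]^{\ge0}$.
\item $\CT_{s,\eta}(\m,\e)=[-1,0]+[0,0]+[0,1]$, whereas $\CT_{s,\eta}(\m',\e')+[0,0]=3[0,0]+[1,1]+[-1,-1]$.
\end{itemize}
So the parity argument you hoped for cannot exist.

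(ii) The last bullet fails when the only copy of $[x,x]$ is consumed by $\Theta'$ and is not restored by the added points. In the opposite-type case:
\begin{itemize}
\item Take $\m'=[1/2,1/2]+[-1/2,-1/2]=\AD([-1/2,1/2],+)$ and $\m=2[1/2,1/2]+2[-1/2,-1/2]=\AD(2([-1/2,1/2],+))$.
\item Take $X=\{\pm1/2\}$ and $(s,\eta)=([-3/2,3/2],-1)$.
\item Then $\Theta=\Theta'=[1/2,1/2]^{\ge0}$ and $T=\{3/2\}=[b,e]$, and the first bullet holds.
\item But $\CT_{s,\eta}(\m')=[-3/2,-1/2]+[1/2,3/2]+[-1/2,1/2]$ contains no $[1/2,1/2]$.
\end{itemize}

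What your argument actually proves is a corrected lemma, with two changes:
\begin{itemize}
\item Allow $X$ to be a multiset and require $[0,0]$ to occur in $\sum_{x\in X}[x,x]$ an even number of times. This is the situation in the proof of Theorem \ref{thm red}, where the points of two centered segments are added.
\item Either drop the second bullet or strengthen the hypothesis, for instance to $m_{\m'}([x,x])\ge 2$ for $x\neq 0$. This suffices because each labelled point is consumed at most once, by $\Theta'$ or by $\iota'(\Theta')$, and for $[0,0]$ the summand $\delta$ restores a copy.
\end{itemize}
Under these hypotheses your Steps 1--3 go through essentially as written.
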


\begin{proof}
  Write $\Theta=\Theta(\m,\e)$ and $\Theta'=\Theta(\m',\e')$. Suppose that $\Theta$ contains a point; according to the last point of Lemma \ref{lemme initial sequence}, there exist two integers $i\le n$ such that $\Theta_j$ is a point for any $i\le j\le n$. Since $(\m,\e)$ and $(\m',\e')$ only differ on points, it is clear that $\Theta'_j=\Theta_j$ for any $1\le j\le i-1$. Consequently, $s=\Theta'_i$ cannot be of length $\ge 2$, because then this segment would also be in $\m$ and it would be smaller than $\Theta_i$, so we would have $\Theta_i=s$ of length 2. Then, we deduce that $\Theta'_i$ is also a point, and thus this has to be $\Theta_i$. With the same idea, we deduce that $\Theta_j=\Theta_j'$ for any $i\le j\le n$, and so $\Theta'=\Theta$ (and it would be the same if we assumed that $\Theta'$ contained a point). Since $\Theta=\Theta'$, we deduce the 2 points we claimed by definition of $\CT_{s,\eta}$.
\end{proof}

We now state the reduction theorem:

\begin{theorem}\label{thm red}
  Take $(\m,\e)\in \Temp$. We have:
  \begin{enumerate}
    \item $\AD(\m,\e)=\AD(\m^{red},\e^{red})+\sum_{s\in (\m-\m^{red})}\sum_{x\in s}[x,x]$
    \item $\Theta(\m,\e)=\Theta(\m^{red},\e^{red})$
  \end{enumerate}
\end{theorem}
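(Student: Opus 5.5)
We argue by induction on the number of segments of $\m$, using the ascendant description of Theorem \ref{coro pairing temp}. Write $\m=s_1+\ldots+s_n$ with $s_1\subseteq\ldots\subseteq s_n$ and sign blocks $I_1,\ldots,I_t$ as in the definition of $\m^{red}$. Then
$$\AD(\m,\e)=\CT_{s_n,\eta_n}\circ\dots\circ\CT_{s_1,\eta_1}(\varnothing).$$
The reduced datum is produced by the same chain with some steps deleted: inside a block $I_j$ of odd size we keep only the last step $\max(I_j)$, and inside a block of even size we keep the first and the last. So it is enough to understand what happens when we insert two extra consecutive steps carrying the same sign as their neighbours. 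Since $\m-\m^{red}$ has an even number of segments in each block, the deleted steps can be grouped in pairs of consecutive indices inside a block. We proceed block by block and, inside a block, pair by pair. Each time, we compare the chain with and without the pair, and we aim to show the two results differ only by points $[x,x]$ whose multiplicities are already at least $1$. Lemma \ref{lem differ on points} then propagates this difference through all the remaining operators $\CT_{s_i,\eta_i}$, and it also gives equality of the $\Theta$'s. This last fact is exactly point (2) of the statement, applied at the final step.

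The key local computation goes as follows. Let $(\m',\e')\in\ATemp$ be the datum reached just before the pair, and let $(s,\eta)$, $(s'',\eta)$ be the two inserted steps, followed within the block by further steps with the same sign $\eta$ (or the pair is preceded by a step with sign $\eta$). By Proposition \ref{prop repet signs}, a step $\CT_{s,\eta}$ applied to a datum whose $\s$ carries sign $\eta'=\eta$ has $\Theta=\varnothing$. By the simplified formula for $\bar\m$ in that case, it therefore only adds the points $\m(1,\ell(s))+[0,0]$ (resp. $\m(0,\ell(s))$) and flips all signs. Inside a block, after the first step, every further step is of this type, because Lemma \ref{lemma repet signs} forces the new $\eta'$ to equal the common sign. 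Hence two consecutive same-sign steps flip the signs twice, so the signs come back, and the pair adds the points $\m(1,\ell(s))+\m(1,\ell(s''))+2[0,0]$ (or the half-integral analogue). These are exactly the points $\sum_{x\in s}[x,x]+\sum_{x\in s''}[x,x]$. Their multiplicities are positive: the earlier step of the block already placed $\m(1,\ell(s_{\min I_j}))$, and $s_{\min I_j}\subseteq s$, which shows that the needed points exist.

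The main obstacle is justifying the commutation. We must check that deleting the pair leaves the later $\eta'$ unchanged, so that the subsequent steps of the block still have $\Theta=\varnothing$ and the first step of the next block still has $\Theta\neq\varnothing$. We must also check the multiplicity hypothesis for the points lying in $s''\setminus s_{\min I_j}$. For the first issue we would track the parity $(-1)^{n_0}$ and $\alpha_0$ from Lemma \ref{lemma repet signs}: adding an even number of centered points preserves $n_0$ modulo $2$, and the double sign flip preserves $\alpha_0$. For the second issue we would use that the last step of the block, $s_{\max I_j}\supseteq s''$, is kept. We would then reorder the comparison so that the pair is removed after that step has added its points, applying Lemma \ref{lem differ on points} once more if needed. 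Finally, summing over all removed pairs gives point (1) of the statement.
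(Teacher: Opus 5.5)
Your argument works for blocks of even size but breaks down for odd blocks of size at least $3$; this is a genuine gap. Your local computation only handles a deleted pair whose two steps are both of the \emph{add points and flip all signs} type, i.e.\ a pair preceded inside its block by a step of the same sign.

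For blocks of even size this suffices. There you keep $s_{\min(I_j)}$, every deleted pair is preceded by it, and your double flip together with the parity bookkeeping for $(-1)^{n_0}$ and $\alpha_0$ is correct.

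For odd blocks, $\m^{red}$ keeps $s_{\max(I_j)}$ and discards $s_{\min(I_j)}$, so one deleted pair necessarily contains the first step of the block. That step is applied to a datum with $\eta'=-\eta$ (or to $\varnothing$), so it is not of the type you analyse. By Proposition \ref{prop repet signs} its $\Theta$ is in general non-empty and it extends segments.

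Worse, the invariance you propose to check is false here. Once that pair is deleted, the kept step $\CT_{s_{\max(I_j)},\eta}$ is applied to the datum left by the previous block, whose $\eta'$ is $-\eta$. It therefore turns from a $\Theta=\varnothing$ step into a $\Theta\neq\varnothing$ step, and no parity tracking restores this. Deleting pairs after $s_{\min(I_j)}$ instead would leave $s_{\min(I_j)}$ in place, which is not $\m^{red}$.

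The missing ingredient is the paper's observation that, for a centered step, the following depend only on $\eta$ and $(\m',\e')$, not on $\ell(s)$: the sequence $\Theta^{(s,\eta)}(\m',\e')$, the set $T$, the term $\delta$ and all the signs. Only the range of added points $[i,i]+[-i,-i]$ grows with $e(s)$. Hence, for $s_1\subseteq s_3$, one has $\CT_{s_3,\eta}(\m',\e')=\CT_{s_1,\eta}(\m',\e')+\sum_{x\in s_3\setminus s_1}[x,x]$, whether or not $\Theta$ is empty. Combined with your double-flip computation for the two following steps, this yields $\CT_{s_3,\eta}\circ\CT_{s_2,\eta}\circ\CT_{s_1,\eta}(\m',\e')=\CT_{s_3,\eta}(\m',\e')+\sum_{x\in s_1}[x,x]+\sum_{x\in s_2}[x,x]$. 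The paper iterates this move (delete the two smaller of three consecutive same-sign steps, keep the third), which treats both parities of blocks at once.

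Separately, your positivity argument is wrong as stated. A first step of a block with $\Theta\neq\varnothing$ adds only the points of $\m(2e(\Theta_m)+3,\ell(s))$ and $\delta$, and its extensions can consume existing points. In the paper's worked example, $\CT_{[-2,2],+}$ removes $[1,1]$ and $[-1,-1]$. So such a step does not place $\m(1,\ell(s_{\min(I_j)}))$.

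Your remedy of comparing after the kept step works when that kept step is a $\Theta=\varnothing$ step, which is the even case. In the odd case the kept step is precisely the $\Theta\neq\varnothing$ one, so the multiplicity hypothesis of Lemma \ref{lem differ on points} needs a separate argument.
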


\begin{proof}

  We prove that whenever we have a succession of three segments of the same sign, the result is true with the symmetrical datum obtained by removing both largest ones instead of the reduced symmetrical datum. Since the reduced symmetrical datum can be obtained by doing this operation multiple times, we get our result.

  Take $k\ge 0$, $\eta\in\{\pm1\}$, and $(\m,\e)\in \ATemp_{k,\eta}$. Now, let $m\ge 3$ be an integer and let $s_1\subseteq\ldots\subseteq s_m$ be segments with $\ell(s_1)\ge k$. Also, take some signs $\eta_i\in\{\pm1\}$ for any $1\le i\le m$ with $\eta_i=\eta$ for any $1\le i\le 3$. Define $(\m^3, \e^3)=\CT_{s_m,\eta_m}\circ \ldots\circ \CT_{s_{3},\eta_3}(\m,\e)$ and $(\m^1,\e^1)=\CT_{s_m,\eta_m}\circ\ldots\circ \CT_{s_1,\eta_1}(\m,\e)$.

  Observe that since $\eta_1=\eta_2=\eta_3=\eta$, we have $$\CT_{s_3,\eta_3}\circ \CT_{s_2,\eta_2}\circ \CT_{s_1,\eta_1}(\m,\e)=\CT_{s_1,\eta_1}(\m,\e)+\sum_{x\in s_2}[x,x]+\sum_{x\in s_3}[x,x]$$
 
  But, regardless of the value of $\eta(\m,\e)$ is, we have
  $$\CT_{s_3,\eta_3}(\m,\e)=\CT_{s_1,\eta_1}(\m,\e)+\sum_{x\in s_3 \backslash s_1}[x,x]$$
  Then, by combining the last two equalities, we get
  $$\CT_{s_3,\eta_3}\circ \CT_{s_2,\eta_2}\circ \CT_{s_1,\eta_1}(\m,\e)=\CT_{s_3,\eta_3}(\m,\e)+\sum_{x\in s_2}[x,x]+\sum_{x\in s_1}[x,x]$$
and since $s_2$ and $s_1$ are both included in $s_3$, we can apply Lemma \ref{lem differ on points} exactly $m-3$ times to deduce that:
\begin{itemize}
  \item $(\m^3,\e^3)=(\m^1,\e^1)+\sum_{x\in s_2}[x,x]=\sum_{x\in s_1}[x,x]$
  \item $\Theta(\m^3,\e^3)=\Theta(\m^1,\e^1)$
\end{itemize}
This ends the proof.
  
\end{proof}

\begin{rem}
If we combine this reduction with our algorithm, we can compute the dual of a tempered datum $(\m,\e)$ in $m^{red}$ steps, where $m^{red}$ is the number of segments in $\m^{red}$.
\end{rem}

Here is another proposition that allows reduction:

\begin{prop}\label{prop red first segment}
  For any $(\m,\e)\in\Temp$ and $s$ its largest segment, let $\m^+$ be the multisegment obtained from $\m$ by replacing one copy of $s$ with ${}^+s^+$. Then:
  $$\AD(\m^+,\e)=\AD(\m,\e)+[b(s)-1,b(s)-1]+[e(s)+1,e(s)+1]$$
\end{prop}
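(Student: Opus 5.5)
The plan is to use the ascendant algorithm of Theorem \ref{coro pairing temp}. Write $\m=s_1+\ldots+s_n$ with $(s_1,\eta_1)\le'\ldots\le'(s_n,\eta_n)$, so that $s=s_n$ and
$$\AD(\m,\e)=\CT_{s_n,\eta_n}(\m',\e'),\qquad (\m',\e'):=\CT_{s_{n-1},\eta_{n-1}}\circ\dots\circ\CT_{s_1,\eta_1}(\varnothing).$$
Replacing $s_n$ by ${}^+s_n^+$ keeps the ordering, since ${}^+s^+$ is still the largest segment. It also keeps the sign, because $\e$ is transported. Hence
$$\AD(\m^+,\e)=\CT_{{}^+s^+,\eta_n}(\m',\e').$$
The whole statement therefore reduces to a one-step comparison: for $(\m',\e')\in\ATemp^{\le (s,\eta_n)}$, show that
$$\CT_{{}^+s^+,\eta}(\m',\e')=\CT_{s,\eta}(\m',\e')+[b(s)-1,b(s)-1]+[e(s)+1,e(s)+1].$$
Note that $(\m',\e')$ also lies in $\DSymm^{\le'({}^+s^+,\eta)}$, because $\le'$-boundedness is monotone.

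Next I would check that $\Theta^{\s}$ does not depend on the length of the centered segment. In the definition of $\Theta^{(s,\eta)}(\m',\e')$ the data used are only $\eta$, $n_0$, $\eta_0$ and $b=\tilde 1$; the segment $s$ enters only through $e=e(s)$. Consequently $\Theta^{({}^+s^+,\eta)}=\Theta^{(s,\eta)}$. It follows that $\tilde \m$ and $\tilde \e$ agree in the two cases, and so do $T$ and $\delta$. By Remark \ref{rem I} we have $I=\varnothing$ in both cases. The added part is therefore $\sum_{i\in[b,e]\setminus T}([i,i]+[-i,-i])$ (essentially, modulo the $\delta$ term), and replacing $e$ by $e+1$ only adds the index $i=e+1$. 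This index is not in $T$, since $T\subseteq[b,e]$ by Lemma \ref{lemma bound ending point}. The extra contribution is exactly $[e+1,e+1]+[-e-1,-e-1]$, and $-e-1=b(s)-1$ because $s$ is centered.

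For the signs, the rules defining $\bar\e$ on $[0,0]$ and $[-1/2,1/2]$ depend only on $\eta$, $n_0$, $\eta_0$ and $\Theta$, all of which are unchanged. Every other centered segment carries its sign from $\tilde\e$, which is also unchanged. So the signs coincide, and the claimed formula follows. Alternatively, one can phrase this through the simplified formulas of the previous subsection: replacing $\m(1,k)$ by $\m(1,k+2)$ (respectively $\m(0,k)$, $\m(2e(\Theta_m)+3,k)$) adds exactly the pair of points $\pm(k+1)/2$.

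The main obstacle is bookkeeping rather than any conceptual difficulty. I need to confirm that no ingredient of $\CT$ secretly depends on $\ell(s)$ beyond $e$. In particular I must check the case split defining $\Theta^\s$, which for centered $\s$ only uses the sign test. I also need to check the case $\m'=\varnothing$ separately using the explicit formula for $\CT_{s,\eta}(\varnothing,+)$. Finally, when $s$ is the unique segment (so $n=1$) the statement reduces to that same explicit formula, which again differs by the two extremal points.
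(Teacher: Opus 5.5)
Your argument is correct, but it follows a different route from the paper. The paper does not use the ascendant operators here. It runs a single step of the Lanard--M\'inguez algorithm directly on the tempered datum $\m^+$. There ${}^+s^+$ is the only segment ending in $e(s)+1$, and the $\Delta$-sequence consists of ${}^+s^+$ alone. Indeed, every other segment ends at most in $e(s)$, and a remaining copy of $s$ carries the same sign, which the sign condition excludes. Hence $\m^+_1=[b(s)-1,b(s)-1]+[e(s)+1,e(s)+1]$ and $((\m^+)^\#,\e^\#)=(\m,\e)$. The recursion $\AD(\m^+,\e)=\m^+_1+\AD((\m^+)^\#,\e^\#)$ then gives the formula immediately.

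You instead pass through Theorem~\ref{main theorem}. You write $\AD(\m,\e)$ and $\AD(\m^+,\e)$ as $\CT_{(s,\eta)}$ and $\CT_{({}^+s^+,\eta)}$ applied to the same datum, namely the dual of $\m$ with one copy of $s$ removed. You then check that for centered $\s$ every ingredient of $\CT_\s$ is independent of $s$: this covers $\Theta^\s$, $\tilde\m$, $\tilde\e$, $T$, $\delta$, the signs $\bar\e$, and $I=\varnothing$. The only exception is the range $[\tilde 1,e(s)]$ of added pairs of points, and $e(s)+1\notin T$ by Lemma~\ref{lemma bound ending point}.

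The comparison is as follows:
\begin{itemize}
\item The paper's route is essentially a one-line verification. It needs neither the main theorem, nor that lemma, nor a separate check of the empty datum.
\item Your route uses heavier machinery, but it establishes an identity of operators, $\CT_{({}^+s^+,\eta)}=\CT_{(s,\eta)}+[-e(s)-1,-e(s)-1]+[e(s)+1,e(s)+1]$, on all of $\DSymm^{\le'(s,\eta)}$.
\item Nothing in your argument uses co-temperedness of the input. So it also yields the same statement for non-tempered data whose $\le'$-maximal element is the centered $(s,\eta)$.
\end{itemize}
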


\begin{proof}
  We apply the Lanard-Mínguez algorithm and we find $\m^+_1=[b(s)-1,b(s)-1]+[e(s)+1,e(s)+1]$ and $((\m^+)^\#,\e^\#)=(\m,\e)$, which proves the result.
\end{proof}

\subsection{Some additional definitions}

Now, define a map $(.)_{temp}$ that extracts the tempered part of a symmetrical Langlands datum. Formally, for any symmetrical Langlands datum $(\m,\e)$, define $(\m,\e)_{temp}=(\m_{temp},\e_{temp})$ with $$(\m_{temp},\e_{temp})=(\sum_{s\in \m,~c(s)=0}s,\e_{|\{s\in\m|~c(s)=0\}})$$
We also write $(\m,\e)_{non-temp}=(\m_{non-temp},\e_{non-temp})$ with $\m_{non-temp}$ such that $\m=\m_{temp}+\m_{non-temp}$. Then $\m_{non-temp}$ contains only non-centered segments, and then $\e_{non-temp}$ is the empty function. In this case, we only write $\m_{non-temp}$ and we call it the non-tempered part of $(\m,\e)$.\\

We can easily see these maps on the regular Langlands data. Take $\pi=L(\mathfrak n;\phi,\eta)\in\Irr^G$ and write $(\m,\e)=\phi^{sym}_\pi$. Then $(\m,\e)_{temp}=\trans(\varnothing;\phi,\eta)$ and $(\m,\e)_{non-temp}=\trans(\mathfrak n;1,1)$.\\

We write $\varphi=(\AD(\cdot))_{temp}:\Temp\to\Temp$. We will compute the image of this map.\\

For any integer $j\ge 0$, define

$$\sigma_j = \left\{
  \begin{array}{ll}
    [-j+1,j-1] & \mbox{if } \rho \mbox{ is of the same type as }G \\
   
    [-j+1/2,j-1/2] & \mbox{if } \rho \mbox{ is not of the same type as } G
  \end{array}
\right.$$

Take $(\m,\e)\in \Temp$ and define $f(\m,\e)=((m_{\m}(\sigma_j),\e(\sigma_j)))_{j\ge 0}$. This formula defines a bijection $f:\Temp \to (\mathbb N\times\{\pm1\})^{(\mathbb N)}$. We define $(\mathbb N\times\{\pm1\})^{(\mathbb N)}$ as the subset of $(\mathbb N\times\{\pm1\})^{\mathbb N}$ whose elements are sequences such that every term, except a finite number of them, is $(0,1)$. We denote by 0 the element $((0,1),\ldots)$ and we identify $(\mathbb N\times\{\pm1\})^{(\mathbb N)}$ with $\cup_{n\ge 0}(\mathbb N\times\{\pm1\})^{n}$. \\

Finally, write $\m=s_1+\ldots+s_n$ with $s_1\subseteq\ldots\subseteq s_n$; then define $\mathcal E(\m,\e)=(\e(s_1),\cdots,\e(s_n))$. It defines a map $\mathcal E:\Temp \to\{\pm1\}^{(\mathbb N)}$. Write $\mathcal S=\{\pm1\}^{(\mathbb N)}$ that we also identify with $\cup_{n\ge 0}\{\pm1\}^n$. Be careful, $\mathcal E$ is not $pr_2\circ f$ since the multiplicity is taken into account for $\mathcal E$.

\subsection{Induction rules}
Let $k\ge 0$ be an integer, $s$ the centered segment of length $k$, $\eta$ a sign, and $(\m,\e)\in\ATemp_{k,\eta}$; we would like to express $f((\CT_{s,\eta}(\m,\e))_{temp})$ in terms of $f((\m,\e)_{temp})$.

\begin{deff}
  Let $n\ge0$ be an integer and $(m,e)=((m_i,e_i))_{1\le i\le n}\in(\mathbb N\times \{\pm1\})^n$.
  First define:
 
  \begin{itemize}
    \item $\delta_o(m,e)=((m_1,-e_1),(m_2,-e_1),\cdots,(m_n,-e_n))$
    \item $\delta_s(m,e)=((m_1+1,-e_1),(m_2,-e_1),\cdots,(m_n,-e_n))$
  \end{itemize}
  Now let
 
  \begin{itemize}
    \item $k_o(m,e) = \max\{j\in \mathbb N, m_i \mbox{ is odd and } e_i=(-1)^i~\mbox{ for any }i\le j\}$
    \item $k_s(m,e) = \max\{j\in \mathbb N, m_i \mbox{ is odd and } e_i=(-1)^{i+1}e_i~\mbox{ for any }i\le j\}$
  \end{itemize}
  In the following, we just write $k_o$ and $k_s$. We have $k_o\le n$; if $k_o=n$, write $(m_{k_o},e_{k_o})=(0,1)$. We define $\gamma_o(m,e)=(\bar m,\bar e)$ with $(\bar m_i,\bar e_i)=(m_i,e_i)$ for any $1\le i\le k_o$, and, if $n\ge k_o+2$, $(\bar m_i,\bar e_i)=(m_i,-e_i)$ for any $k_o+2\le i\le n$, and finally:
\[(\bar m_{k_o+1},\bar e_{k_o+1}) = \left\{
  \begin{array}{ll}
    (m_{k_o+1}-1,e_{k_o+1}) & \mbox{if } e_{k_o+1}=(-1)^{k_o+1} \mbox{ and } m_{k_o+1}\neq 0 \\
    (m_{k_o+1}+1,-e_{k_o}) & \mbox{if } e_{k_o+1}=(-1)^{k_o} \mbox{ or } m_{k_o+1}=0
  \end{array}
\right.\]
We also define $\gamma_s$ in the same way with $k_s$ instead of $k_o$.
\end{deff}

\begin{prop}\label{prop induction temp part} If $\rho$ and $G$ have a different type, denote $x=o$; otherwise, denote $x=s$. We have the following formula:

\[f((\CT_{s,\eta}(\m,\e))_{temp}) = \left\{
  \begin{array}{ll}
    \delta_x(f((\m,\e))_{temp}) & \mbox{if } \eta=\eta(\m,\e) \\
    \gamma_x((f(\m,\e))_{temp}) & \mbox{if } \eta=-\eta(\m,\e)
  \end{array}
\right.\]
\end{prop}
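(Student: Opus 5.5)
We split into the two cases $\eta=\eta(\m,\e)$ and $\eta=-\eta(\m,\e)$. In each we read off the centered part of $\CT_{s,\eta}(\m,\e)$ from the simplified description of $\CT_{s,\eta}$ on $\ATemp$ given in the subsection on the restriction of the algorithm to tempered data. Throughout, $\eta(\m,\e)$ is the sign $\eta'$ of $\s(\m,\e)=(s',\eta')$ from Lemma \ref{lemma repet signs}. By Proposition \ref{prop repet signs}, $\Theta=\varnothing$ exactly in the first case.

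\textbf{First case, $\eta=\eta'$.} The proposition for $\eta=\eta'$ gives $\bar\m=\m+\m(1,k)+[0,0]$ in the same-type case, resp. $\bar\m=\m+\m(0,k)$ in the opposite-type case, and $\bar\e=-\e$. The multisegments $\m(l,k)$ consist only of non-centered points, except that $[0,0]$ is centered. Hence the tempered part changes as follows. All multiplicities $m_\m(\sigma_j)$ are unchanged and all signs are negated. In the same-type case, in addition, the multiplicity of $\sigma_1=[0,0]$ goes up by one. This is exactly $\delta_s$ in the same-type case and $\delta_o$ in the opposite-type case. The one point to check is that the sign attached to $\sigma_1$ is $-e_1$ (using Lemma \ref{lemma repet signs}, $[0,0]\in\m$ when $\m\neq\varnothing$). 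The case $\m=\varnothing$ is handled separately using the explicit formula for $\CT_{s,\eta}(\varnothing,+)$.

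\textbf{Second case, $\eta=-\eta'$.} Here $\Theta=\Theta^{\tilde 1-1,0}(\m,\e)$ is non-empty, and its first term lies over $\sigma_1$. The main step is to identify the centered members of $\Theta$ in terms of $f(\m,\e)_{temp}$. The centered segments of $\m$ project under $s$ to $\lfloor m/2\rfloor$ copies in each of $\mathcal S^{\le 0}$ and $\mathcal S^{\ge 0}$, plus one copy in $\mathcal S^{=0}$ when the multiplicity $m$ is odd. The sequence $\Theta$ starts on $\sigma_1$ and increases the end by one at each step, so it can only climb through $\sigma_1,\sigma_2,\ldots$ via their $\mathcal S^{=0}$ copies: an $\mathcal S^{\ge 0}$ centered term is followed by a non-centered one, by Lemma \ref{lemme initial sequence}. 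Each step requires alternating signs, and the first sign is forced by the condition on $\Theta_1$. Hence $\Theta$ passes through $\sigma_1,\dots,\sigma_{k_x}$, precisely as long as $m_i$ is odd and $e_i$ has the alternating pattern defining $k_o$ (resp. $k_s$). It then ends at index $k_x+1$ in one of three ways: on the $\mathcal S^{\ge 0}$ copy of $\sigma_{k_x+1}$ when the multiplicity is even and nonzero with the right sign, or on a non-centered point, or nowhere. After that, the new centered data are read from $\tilde\m$ and $\bar\e$. Each $\mathcal S^{=0}$ term $\Theta_r=\sigma_r$ becomes ${}^+\sigma_r^+=\sigma_{r+1}$ with sign $-\e(\sigma_r)$ ($\e_0'=-1$). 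This shifts the first $k_x$ pairs up by one with alternating signs preserved, which gives $(\bar m_i,\bar e_i)=(m_i,e_i)$ for $i\le k_x$. At index $k_x+1$ a copy is either removed or added: this is the two-case rule in $\gamma_x$. The untouched centered segments above it get sign $\e_0'\e=-\e$, giving $(m_i,-e_i)$ for $i\ge k_x+2$. The added $\m(2e(\Theta_m)+3,\ell(s))$ is non-centered except possibly for the extra $[0,0]$ or $[-1/2,1/2]$, whose sign is given by the last simplification proposition.

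The hardest part is this bookkeeping at the boundary index $k_x+1$. One must verify that the multiplicity change ($-1$ versus $+1$) and the resulting sign match the two lines of $\gamma_x$. This includes the degenerate case $k_x=n$, and the interaction between $\sigma_1$ and the extra $\delta$ in the same-type case, where $\sigma_1=[0,0]$ both lies in $\Theta$ and receives $\delta$. I would first work out the small cases $k_x=0,1$ by hand to fix the sign conventions in the definitions of $k_o$ and $k_s$, and then argue by induction along $\Theta$.
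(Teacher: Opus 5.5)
Your proposal is correct and takes the paper's route: the paper's proof just says to unfold the definition of $\CT_{s,\eta}$ carefully. Your split via Proposition \ref{prop repet signs}, together with tracking the centered $\mathcal S^{=0}$-run of $\Theta$ up to index $k_x$ (with $\delta$ compensating at $\sigma_1$ and the untouched centered segments picking up the sign $\e_0'=-1$), is exactly that unfolding, carried out in more detail than the paper gives. Two harmless slips: in the opposite-type case $\Theta_1$ can be the point $[1/2,1/2]$ rather than a copy of $\sigma_1$ (this is just the $k_o=0$ branch), and the first non-centered term of $\Theta$ need not be a point (e.g.\ $[0,1]$), but neither affects the tempered part.
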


\begin{proof}
  We only have to use cautiously the definition of $\CT_{s,\eta}$; there is no difficulty here.
\end{proof}

\begin{rem}\label{rem phi T = gamma phi}
  Now, take $(\m,\e)\in\Temp_{k,\eta}$ and let $\eta'=\e(s_{max})$ with $s_{max}$ the largest segment in $\m$. According to Proposition \ref{prop induction temp part}, and using of course Theorem \ref{main theorem}, we have:
  $$
\varphi(\T_{s,\eta}(\m,\e)) = \left\{
  \begin{array}{ll}
    \delta_x(\varphi(\m,\e)) & \mbox{if } \eta=\eta' \\
    \gamma_x(\varphi(\m,\e)) & \mbox{if } \eta=-\eta'
  \end{array}
\right.
$$

\end{rem}

\subsection{Image of $\varphi$}
According to Remark \ref{rem phi T = gamma phi}, it is clear by induction that $\varphi$ factorizes through $\mathcal E$ since $\varphi(\T_{s,\eta}(\m,\e))$ depends only on $\eta'$ (and not on $s$). Denote by $\bar \varphi$ the map such that $\varphi=\bar \varphi\circ\mathcal E$.

We start by defining
$$\mathcal T=\{(\m,\e)\in\Temp~|~m_{\m}({}^-s^-)\neq 0 \mbox{ for any }s\in \m\}$$
Recall that by convention, $m_\m(\varnothing)=1$ for any $\m\in\Mult$.

\begin{prop}
The image of $\varphi$ is included in $\mathcal T$.
\end{prop}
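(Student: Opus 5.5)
The plan is to argue by induction along the constructive description of Theorem \ref{coro pairing temp}. Every tempered datum is $\T_{s_r,\eta_r}\circ\dots\circ\T_{s_1,\eta_1}(\varnothing)$ with $s_1\subseteq\dots\subseteq s_r$. Dually, every co-tempered datum is obtained from $(\varnothing,+)$ by successive applications of the operators $\CT_{s,\eta}$ restricted to $\ATemp^{\le(s,\eta)}$. It therefore suffices to prove the following property, call it $(P)$, for the tempered part of every co-tempered datum:
\[ m_{\m_{temp}}({}^-u^-)\neq 0 \quad\text{for every } u\in\m_{temp}. \]
For the base case, $\varphi(\varnothing)=\varnothing$, which lies in $\mathcal T$ trivially. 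For the first step, the explicit formula for $\CT_{s,\eta}(\varnothing,+)$ shows that its tempered part is either $([0,0],\eta)$, or $([-1/2,1/2],\eta)$, or empty. In each case ${}^-u^-=\varnothing$, which has multiplicity $1$ by convention.

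For the inductive step, let $(\m,\e)\in\ATemp^{\le(s,\eta)}$ satisfy $(P)$ and put $(\bar\m,\bar\e)=\CT_{s,\eta}(\m,\e)$. I will use the simplified description of $\bar\m$ from the subsection on tempered data. Since $I=\varnothing$, the added pieces $\m(l,k)$ consist of non-centered points together with one extra copy of $[0,0]$ (same type) or $[-1/2,1/2]$ (opposite type). Apart from these, the centered segments of $\bar\m$ fall into three kinds:
\begin{itemize}
\item centered segments of $\m$ left untouched;
\item ${}^+p(\Theta_r)^+$ with $\Theta_r\in\mathcal S^{=0}$ (the indices with $i_j=i'_j$);
\item segments $p(\Theta_r)^+$ or ${}^+p(\Theta_r)$ that become centered, which forces $c(\Theta_r)=\pm1/2$.
\end{itemize}
When $\eta=\eta'$ we have $\Theta=\varnothing$ by Proposition \ref{prop repet signs}. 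Then $\bar\m_{temp}=\m_{temp}+\delta$ with $\delta\in\{[0,0],[-1/2,1/2],\varnothing\}$, and $(P)$ is preserved immediately, since ${}^-\delta^-=\varnothing$ and the existing multiplicities only increase. So the work is in the case $\eta\neq\eta'$.

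In the case $\eta\neq\eta'$, the sequence $\Theta=\Theta^{\tilde 1-1,0}$ starts at $[0,0]$ or $[\pm1/2,1/2]$ and lies in $\mathcal S^{=0}\cup\mathcal S^{\ge0}$. By Lemma \ref{lemme initial sequence}, once it leaves the centered segments it never returns. So $\Theta$ has the shape: a run of centered segments of $\mathcal S^{=0}$ type with alternating signs, possibly one centered $\mathcal S^{\ge0}$ term, and then non-centered segments, which are eventually points. I will check $(P)$ for each new centered segment $u={}^+v^+$ of the second kind. Here ${}^-u^-=v$, so I must show that a copy of $v$ survives in $\bar\m$. The copy $v=p(\Theta_r)$ removed at step $r$ is replaced by the preceding $\Theta_{r-1}$ extended to ${}^+p(\Theta_{r-1})^+=v$, because consecutive centered terms of $\Theta$ are nested of length differing by $2$. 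For the first term, $v\in\{[0,0],[-1/2,1/2]\}$ is compensated by the added $\delta$. For old centered segments $w$ of $\m$ whose inner segment ${}^-w^-$ was a $\Theta_r$ and got extended, I argue the same way: ${}^-w^-=p(\Theta_r)$ is recovered from the extension of $\Theta_{r-1}$, or from $\delta$ when $r=1$. Here I use the multiplicity parity facts from the proof of Lemma \ref{lem induction Delta easy cases}, which say that an $\mathcal S^{=0}$ element has odd multiplicity.

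I expect the main obstacle to be this multiplicity bookkeeping. One must show that the copies consumed by the extension of $\Theta$ are always replenished, including the boundary cases: the $\mathcal S^{\ge0}$ centered term of $\Theta$, which removes two copies, and centered segments arising from $c(\Theta_r)=\pm1/2$. My first approach will be to track $m_{\bar\m}(\sigma_j)$ through the induction rules of Proposition \ref{prop induction temp part}. In those terms $(P)$ says that $m_j\neq0$ implies $m_{j-1}\neq0$. I will then check directly that $\delta_x$ and $\gamma_x$ preserve this downward-closed support condition. This reduces the problem to the explicit case analysis at index $k_x+1$ in the definition of $\gamma_x$, where the only subtlety is the case $m_{k_x+1}=0$ with a $+1$ added.
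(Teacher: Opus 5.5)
Your proposal is correct and follows essentially the paper's route. The decisive step is the same: reformulate $(P)$ as downward-closedness of the support of $f$ (i.e.\ $m_j\neq 0\Rightarrow m_{j-1}\neq 0$), then check via Proposition~\ref{prop induction temp part} that $\delta_x$ and $\gamma_x$ preserve it. Your multisegment-level bookkeeping of which copies of $\sigma_j$ are consumed and replenished only unpacks the multiplicity part of that proposition. One point to state explicitly: in the $-1$ branch of $\gamma_x$, the branch's sign condition together with the maximality of $k_x$ forces $m_{k_x+1}$ to be even and nonzero, hence $\ge 2$, so that entry cannot drop to $0$.
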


\begin{proof}
  According to Proposition \ref{prop induction temp part}, we know that the image of $f\circ \varphi$ is the subset of $(\mathbb N\times\{\pm1\})^{(\mathbb N)}$ generated by $0$ under $\delta$ and $\gamma$. Denote by $X$ this set. One can verify that this is exactly the set of the elements $((m_i,e_i))_{1\le i\le n}\in(\mathbb N\times \{\pm1\})^{(\mathbb N)}$ such that if $m_i=0$ for some $i\ge 0$, then $m_j=0$ for any $j\ge i$. Denote by $Y$ this set. First, observe that we clearly have $f(\mathcal T)=X$; then, it suffices to prove that $X\subset Y$. First, observe that $0\in Y$ and that $\delta$ clearly preserves $Y$, since $\bar m_i=m_i$ for $i\ge 2$ and $m_1\neq 0\implies\bar m_1\neq 0$. Now let us show that $\gamma$ also preserves $Y$. Take $(m,e)=((m_i,e_i))_{i\ge 0}\in Y$ and write $\gamma(m,e)=((\bar m_i,\bar e_i))_{i\ge 0}$. Take $1\le i\le k$; we have $\bar m_k=m_k\neq 0$ by definition of $k$. Now, take $i\ge k+2$ such that $\bar m_{i}=0$. Since $\bar m_j=m_j$ for any $j\ge k+2$ (and so for any $j\ge i$) and $(m,e)\in Y$, we deduce that $\bar m_j=0$ for any $j\ge i$. Finally, we see that it is impossible to have $\bar m_{k+1}=0$, so we conclude that $Y\subset X$.
 
\end{proof}

\subsection{Computing some iterations of $\gamma$ and $\delta$ and a section of $\bar \varphi$}
In this subsection, we do an important computation with the operators $\delta_o,\delta_s,\gamma_o$ and $\gamma_s$. This computation allows us to compute a section of $\bar \varphi$ and will allow us to compute the tempered part of the dual of some tempered data.

\begin{prop}\label{prop calcul delta gamma}
Take an integer $r\ge 1$, $n_1\ge 0$ another integer, and, if $r\ge 2$, $n_2\ge \ldots\ge n_r\ge n_{r+1}\ge 0$ some integers with $2n_1\ge n_2$. Finally, take $x\in\{o,s\}$. Now, let $$(m,e)=\gamma_x^{n_{r+1}}\circ\delta_x \circ \gamma_x^{n_r}\circ \delta_x\circ \ldots\circ \delta_x \circ \gamma_x^{n_1}(0)$$
In the following, we will write $\Gamma_x(n_1,\ldots,n_{r+1})=\gamma_x^{n_{r+1}}\circ\delta_x \circ \gamma_x^{n_r}\circ \delta_x\circ \ldots\circ \delta_x \circ \gamma_x^{n_1}$.

If $x=o$, we have $(m,e)=(m_i,e_i)_{1\le i\le n_1}$, with $m_i=1+|M_i^o|$ and $e_i = (-1)^{i+|E^o_i|}$, where
\begin{itemize}
  \item $M^o_i=\{2\le j\le r+1~|~n_j=2i-1\}$
  \item $E^o_i=\{2\le j\le r+1~|~n_j\mbox{ is even and } n_j/2 <i\}$
\end{itemize}
for any $1\le i\le n_1$. Notice that $\delta_o(0)=0$, so $$\Gamma_o(n_1,\ldots,n_{r+1})(\delta_o(0))=\Gamma_o(n_1,\ldots,n_{r+1})(0)=(m,e)$$

If $x=s$, assume moreover that $2n_1>n_2$; we have $(m,e) = (m_i, e_i)_{i \ge 1}$, with $m_i = 1 + |M^s_i|$ and $e_i = (-1)^{i + r - 1 + |E^s_i|}$ where:
\begin{itemize}
  \item $M^s_i = \{ 2 \le j \le r+1 \mid n_j=2i-1 \}$
  \item $E^s_i = \{ 2 \le j \le r+1 \mid n_j \text{ is odd and } (n_j+1)/2 < i \}$
\end{itemize}
for any $1\le i \le n_1$. Now, remark that $\gamma_s^2(0)=((1,+1),(1,-1))$ and $\gamma_s\circ\delta_s=((1,-1),(1,+1))$ are almost the same but with opposite signs. Then, we deduce that $$\Gamma_s(n_1,\ldots,n_{r+1})(\delta_s(0))=(m,-e)$$

\end{prop}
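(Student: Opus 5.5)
The plan is to prove the formula by induction on the number of operators applied, i.e.\ on $N:=n_1+r+\sum_{j\ge 2}n_j$, reading $\Gamma_x(n_1,\ldots,n_{r+1})$ from the right. It is convenient to strengthen the statement slightly: for every intermediate composite $\gamma_x^{a}\circ\delta_x\circ\gamma_x^{n_t}\circ\cdots\circ\gamma_x^{n_1}(0)$ with $0\le a\le n_t$, we claim the same closed formula, with the tuple $(n_1,\ldots,n_t,a)$ in place of $(n_1,\ldots,n_{r+1})$. This is legitimate because the hypotheses (monotonicity $n_2\ge\cdots$ and $2n_1\ge n_2$, resp.\ $>$) are inherited by prefixes, and because the closed formula only depends on the multiset of the $n_j$, $j\ge 2$, through the sets $M_i$ and $E_i$.

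\textbf{Base case.} We first compute $\gamma_x^{n_1}(0)$. Applying $\gamma_x$ repeatedly to $0$, the index $k_x$ grows by one at each step: the first $k_x$ entries are already of the form (odd multiplicity, alternating sign), and $\gamma_x$ appends a new entry $(1,\pm)$ with the correct next sign. Hence $\gamma_o^{n_1}(0)=((1,(-1)^i))_{1\le i\le n_1}$, and similarly for $s$ with the shifted sign convention. This matches the formula with $r=1$ and the sets $M_i,E_i$ empty (the exponent $r-1$ vanishes).

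\textbf{Effect of $\delta_x$.} Next we verify that $\delta_x$ flips all signs (and, for $x=s$, adds $1$ to $m_1$). In the closed formula, $\delta_x$ corresponds to appending a new index $j$ with current value $n_j=0$. For $x=o$, a value $n_j=0$ is even and satisfies $0/2<i$ for every $i$, so it adds $1$ to every $|E^o_i|$, which flips every sign; it lies in no $M^o_i$, so the multiplicities are unchanged. For $x=s$, the flip comes from $r\mapsto r+1$ in the exponent. The contribution to $m_1$ is the point where I expect to need care with the indexing convention relating $M^s_i$ to the value $n_j$.

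\textbf{Effect of $\gamma_x$ (main step).} Finally we check that one further $\gamma_x$ increments the last $n_j$ from $a$ to $a+1$. Using the closed formula, we determine $k_x$: entries $i$ with $M_i=\varnothing$ have odd multiplicity $1$, and their sign is correct exactly when the parity correction $|E_i|$ matches. Monotonicity of the $n_j$ forces the first failure to occur at $i=\lfloor a/2\rfloor+1$ or so, with the two cases of $\gamma_x$ (decrement versus increment of $m_{k+1}$) corresponding to $a$ odd versus $a$ even. Concretely:
\begin{itemize}
\item passing from $a=2i-2$ to $a=2i-1$ puts $j$ into $M_i$, so $m_i$ increases by $1$; it also removes $j$ from $E^o_{i'}$ for $i'>i$, which produces the flips of the entries beyond $k+1$;
\item passing from $a=2i-1$ to $a=2i$ removes $j$ from $M_i$ and adjusts $E$ accordingly.
\end{itemize}

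The main obstacle is exactly this case analysis: determining $k_x$ precisely, including showing that the entries $n_{j'}$ with $j'$ earlier than the last index, being larger than $a$, do not disturb the prefix. The hypothesis $2n_1\ge n_2$ (resp.\ $>$ for $x=s$) is what guarantees that $k_x+1\le n_1$, so that the length of the tuple stays $n_1$. The final remarks then follow directly: $\delta_o(0)=0$ is immediate, and for $x=s$ a direct computation of the first two steps gives the sign reversal, which then propagates through the rest of the composite because every operator commutes with a global change of signs.
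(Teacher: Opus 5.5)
For $x=o$ your plan is the paper's proof. The steps are the same: the base case $\gamma_o^{n_1}(0)=((1,(-1)^i))_{i\le n_1}$; the $\delta_o$-step as appending $n_{r+2}=0$, which enters every $E^o_i$ and no $M^o_i$; and the $\gamma_o$-step as raising the last $n_j$ from $a$ to $a+1$ when the preceding one is strictly larger, with $k_o=\lfloor a/2\rfloor$ and the branch of $\gamma_o$ decided by the parity of $a$. Two small fixes are needed.
\begin{enumerate}
\item The base case is $r=0$ (no $\delta$), not $r=1$. For $x=s$, the exponent at $r=0$ is $i-1$, which is exactly what gives $\gamma_s^{n_1}(0)=((1,(-1)^{i+1}))_i$.
\item In your strengthened statement with $t=1$, the constraint on $a$ is $a\le 2n_1$ (resp.\ $a<2n_1$), not $a\le n_1$.
\end{enumerate}

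The genuine gap is $x=s$, which the paper's proof also dismisses as analogous. The point you flagged is not an indexing subtlety: with $M^s_i=\{j:n_j=2i-1\}$ the claim is false. Since $\gamma_s(0)=((1,1))$ and $\delta_s$ raises $m_1$, you get $\Gamma_s(1,0)=\delta_s\gamma_s(0)=((2,-1))$. But $n_2=0$ lies in no $M^s_i$, so the formula predicts $m_1=1$. The correct condition is $n_j=2i-2$. This matches $\beta^s_i$ in Proposition~\ref{prop section}, where $\gamma_s^{2i-2}\circ\delta_s$ is what raises $m_i$.

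You must also pin down $k_s$. The stated value $\gamma_s\circ\delta_s(0)=((1,-1),(1,1))$ forces $k_s$ to test alternation relative to $e_1$, i.e.\ $e_i=(-1)^{i+1}e_1$, rather than a shifted fixed pattern. In addition, for $k_s=0$ and $m_1\neq 0$ the decreasing branch of $\gamma_s$ must always apply; this is needed, e.g., to get $\Gamma_s(2,1,1)=((1,1),(1,-1))$.

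With these corrections your induction goes through, but the parities are swapped relative to $x=o$.
\begin{enumerate}
\item Raising $a=2i-2$ to $2i-1$: here $m_i=2$, so $k_s=i-1$, and $\gamma_s$ takes the decreasing branch at entry $i$.
\item Raising $a=2i-1$ to $2i$: here $k_s=i$, because $E^s_{i+1}=\{r+1\}$ forces $e_{i+1}=e_i$. Then $\gamma_s$ takes the increasing branch at entry $i+1$, which exists precisely because $2n_1>n_2$.
\end{enumerate}

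Finally, consider your sign-reversal argument for the last remark, with $\nu(m,e)=(m,-e)$. It actually proves $\gamma_s^{n_1}\circ\delta_s(0)=\nu\bigl(\gamma_s^{n_1+1}(0)\bigr)$. Hence $\Gamma_s(n_1,\ldots,n_{r+1})(\delta_s(0))$ is the sign reversal of $\Gamma_s(n_1+1,n_2,\ldots,n_{r+1})(0)$, which has length $n_1+1$. The final identity of the statement therefore holds only after this shift of $n_1$.
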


\begin{proof}

Start with the case $x=o$. We prove this result by induction; assume the result is true for $(n_1,\ldots,n_{r+1})$ with $2n_1\ge n_2$ and $n_2\ge\ldots\ge n_{r+1}\ge 0$. Take $(\bar n_1,\ldots, n_{\bar r+1})$ verifying the same property. For any $1\le i\le \bar n_1$, we denote $\bar M_i^o$ and $\bar E_i^o$ for the sets we build from $(\bar n_i)_{1\le i\le \bar r+1}$ as we defined $(M_i^o,E_i^o)_{1\le i\le n_1}$ from $(n_i)_{1\le i\le r+1}$.

Take $(\bar n_1,\ldots,\bar n_{\bar r+1})=(n_1,\ldots,n_{r+1},0)$. We have $\bar n_1=n_1$. Now, remark that for any $1\le i\le n_1$, we have $\bar E_i^o=E_i^o\cup\{r+1\}$ and $\bar M_i^o=M_i^o$. Since, by definition, we have $\delta(m,e)=((m_i,-e_i))_{1\le i\le n_1}$, we deduce that $\delta_o(m,e)=((1+|\bar M_i^o|),(-1)^{i+|\bar E_i^o|})_{1\le i\le \bar n_1}$.

Now, assume that $r\ge 1$ and that $n_r>n_{r+1}$. Take $(\bar n_1,\ldots,\bar n_{\bar r+1})=(n_1,\ldots,n_{r+1}+1)$ and write $d=n_{r+1}$. Finally, write $(\bar m, \bar e)=\delta_o(m,e)$. First, remark that $M_i^o=E_i^o=\varnothing$ for any $1\le i\le \lfloor d/2\rfloor$.

If $d$ is odd, we have $(d+1)/2\le n_1$, and since $n_r>n_{r+1}$, we have $m_{(d+1)/2}=2$, so $k_o=(d-1)/2$, and since $e_{(d+1)/2}=(-1)^{(d+1)/2}$, we have $(\bar m_{(d+1)/2},\bar e_{(d+1)/2})=(m_{(d+1)/2}-1,e_{(d+1)/2})=(1+|\bar M^o_{(d+1)/2}|,(-1)^{(d+1)/2+|E^o_{(d+1)/2}|})$ because $\bar M^o_{(d+1)/2}= M^o_{(d+1)/2}\backslash\{r+1\}$ and $\bar E^o_{(d+1)/2}= E^o_{(d+1)/2}$.

If $d$ is even, if $d=2n_1$, write $(m_{d/2+1},e_{d/2+1})=(0,1)$. We have either $e_{d/2+1}=(-1)^{d/2}$ if $d<2n_1$, or $m_{d/2+1}=0$ otherwise. In both cases, we get $k_o=d/2$ and $(\bar m_{d/2+1},\bar e_{d/2+1})=(m_{d/2+1}+1,-e_{d/2})=(1+|\bar M^o_{d/2+1}|,(-1)^{d/2+1+|E^o_{d/2+1}|})$ because $\bar M^o_{d/2+1}=M^o_{d/2+1}\cup \{r+1\}$ and $\bar E^o_{d/2+1}=E^o_{d/2+1}\backslash \{r+1\}$.

So, in general, $k_o(m,e)=\lfloor d/2\rfloor$. For any $1\le i\le \lfloor d/2\rfloor$, we have $\bar M_i^o=M_i^o$ and $\bar E_i^o=E_i^o$. Now, if $\lfloor d/2\rfloor+2\le n_1$, take $\lfloor d/2\rfloor+2\le i\le n_1$. We have $\bar M_i^o=M_i^o$ and $\bar E_i^o=E_i^o\cup\{r+1\}$ if $d$ is odd, and $ E_i^o=\bar E_i^o\cup\{r+1\}$ if $d$ is even. So, finally, $(\bar m,\bar e)$ coincides with $((1+|M_i^o|),(-1)^{i+|E_i^o|})_{1\le i\le n_1}$.

To finish the proof, we need to show that the formula is true for $r=0$. We compute $\gamma_o^{n_1}=((1,-1), (1,1),\ldots,(1, (-1)^{n_1}))$ and this matches with the formula. This ends the proof.\\

The proof for $x=s$ is analogous to the case $x=o$.

\end{proof}

Take $((m_1,e_1),\ldots,(m_n,e_n))\in f(\mathcal T)$. For any $2\le i\le n$, write $\eta_i=1$ if $e_{i-1}=e_i$ and $\eta_i=0$ otherwise, and write $\eta_n=0$. Now, define for any $1\le i\le n$:
$$\beta^o_i=(\gamma_o^{2i-2}\circ \delta_o)^{\eta_i}\circ(\gamma_o ^{2i-1}\circ\delta_o)^{m_i-1}$$
and, for any $2\le i\le n$:
$$\beta^s_i=(\gamma_s^{2i-3}\circ \delta_s)^{\eta_i}\circ(\gamma_s ^{2i-2}\circ\delta_s)^{m_i-1}$$
Finally, write $\eta^o_1=\delta_{e_1=1}$ and $\eta_1^s=0$ if $e_1=(-1)^{m_1+\sum_{i=2}^n(m_i-1+\eta_i)}$ and 0 otherwise.

\begin{prop}\label{prop section}
We have:
$$((m_1,e_1),\ldots,(m_n,e_n))=\beta_1^o\circ\beta^o_2\circ\ldots\circ\beta^o_n\circ\gamma_o^n(0)=\delta_s^{m_1-1}\circ \beta_2^s\circ\ldots\circ\beta_n^s\circ\gamma_s^n\circ\delta_s^{\eta_1^s}(0)$$
\end{prop}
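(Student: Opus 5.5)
The plan is to reduce everything to Proposition \ref{prop calcul delta gamma}. Each $\beta^x_i$ is a composite of blocks $\gamma_x^{a}\circ\delta_x$. Hence the right-hand sides are operators of the form $\Gamma_x(n_1,\ldots,n_{r+1})$ applied to $0$ (or to $\delta_s(0)$ in the $s$ case). The composite is then read from right to left:
\begin{itemize}
  \item first $\gamma_x^n$ applied to $0$, which is the initial block with $n_1=n$;
  \item then the blocks of $\beta_n$, then those of $\beta_{n-1}$, and so on down to $\beta_1$.
\end{itemize}
The first step is to list the exponents $n_2\ge n_3\ge\ldots$ explicitly. For $x=o$, the block $\beta^o_i$ contributes $m_i-1$ copies of $2i-1$, followed (applied later) by $\eta_i$ copies of $2i-2$. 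Since $i$ decreases as we move outward, the sequence
\[
(2n-1)^{m_n-1},\ (2n-2)^{\eta_n},\ (2n-3)^{m_{n-1}-1},\ (2n-4)^{\eta_{n-1}},\ \ldots
\]
is nonincreasing, as the proposition requires. The constraint $2n_1\ge n_2$ holds because $n_2\le 2n-1<2n$, which is also the strict inequality needed in the $s$ case. For $\beta_1^o$ I interpret the exponent $\eta^o_1$ as an extra $\delta_o$ factor, i.e.\ a block with exponent $0$; I would first fix this reading by checking it on small examples such as $n=1,2$.

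The second step is to evaluate the formulas of Proposition \ref{prop calcul delta gamma} for this exponent list. In the $o$ case:
\begin{itemize}
  \item $M^o_i$ consists exactly of the $m_i-1$ indices with exponent $2i-1$, so the multiplicity $1+|M^o_i|=m_i$ is recovered immediately.
  \item $E^o_i$ counts the even exponents $n_j$ with $n_j<2i$. These are the $\eta_{i'}$-blocks with exponent $2i'-2<2i$, together with the possible exponent-$0$ block coming from $\eta^o_1$. Thus $|E^o_i|=\sum_{i'\le i}\eta_{i'}$, plus the $\eta_1^o$ contribution.
\end{itemize}
One then checks that $(-1)^{i+|E^o_i|}=e_i$. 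This is a telescoping induction on $i$: passing from $i-1$ to $i$ flips the sign factor $(-1)^i$, and adds $\eta_i$ to $|E^o_i|$. So the sign is preserved exactly when $\eta_i=1$, i.e.\ when $e_{i-1}=e_i$, which matches the definition of $\eta_i$. The base case $i=1$ is precisely what the choice of $\eta^o_1=\delta_{e_1=1}$ is designed to fix.

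The $s$ case follows the same pattern with three differences:
\begin{itemize}
  \item the exponents are shifted: $2i-2$ replaces $2i-1$, and $2i-3$ replaces $2i-2$;
  \item the initial factor $\delta_s^{m_1-1}$ is outermost and contributes $m_1-1$ exponents equal to $0$;
  \item the global sign $(-1)^{r-1}$ and the optional initial $\delta_s$ (via the last statement of Proposition \ref{prop calcul delta gamma}) fix the overall sign.
\end{itemize}
The multiplicities and signs do not come out of the formulas in the $o$ pattern, because the block roles are swapped relative to $M^s_i$ and $E^s_i$. So I would reindex carefully:
\begin{itemize}
  \item exponent $2i-1$ now arises from the $\eta_{i+1}$-blocks of $\beta^s_{i+1}$;
  \item the odd exponents counted by $E^s_i$ come from these same blocks;
  \item the multiplicity $m_i$ must instead be read off from the even exponents $2i-2$, together with the behaviour of $\delta_s$ on the first coordinate.
\end{itemize}
Here I would not rely only on the closed formula. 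I would redo the induction of Proposition \ref{prop calcul delta gamma} directly, using the definitions of $\gamma_s$ and $\delta_s$, tracking how an exponent $2i-2$ increments $m_i$.

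The main obstacle is this bookkeeping in the $s$ case. It involves matching the parity conventions, the global factor $(-1)^{r-1}$ where $r$ counts all $\delta_s$ factors (namely $\sum_i(m_i-1+\eta_i)$ plus the initial ones), and the definition of $\eta^s_1$, which is stated ambiguously. I expect that the condition $e_1=(-1)^{m_1+\sum_{i\ge2}(m_i-1+\eta_i)}$ is exactly what makes the total parity of $r$ give the right sign for $e_1$. I would confirm this by computing the case $n=1$, and afterwards let the telescoping argument handle $i\ge2$. Finally, membership in $f(\mathcal T)$ ensures $m_i\ge1$ for all $i\le n$, so that all exponents $m_i-1$ are nonnegative.
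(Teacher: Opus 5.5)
Your reduction is the paper's: list the exponents and invoke Proposition \ref{prop calcul delta gamma}. The $o$ case goes through as you describe, reading $\eta_1$ in $\beta^o_1$ as $\eta^o_1$. In the $s$ case the paper applies that proposition only to $\beta^s_2\circ\cdots\circ\beta^s_n\circ\gamma_s^n(0)$ and then applies $\delta_s^{m_1-1}$ by hand; this is equivalent to your exponent-$0$ blocks. You are also right that the printed $M^s_i$ must be $\{j : n_j=2i-2\}$: already $\delta_s\circ\gamma_s^2(0)=((2,-1),(1,1))$ has $m_1=2$. The paper's proof uses this corrected count tacitly.

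The step that fails is the global sign flip through the last statement of Proposition \ref{prop calcul delta gamma}, taken literally. Take $n=1$, $m_1=1$, $e_1=-1$, i.e.\ the element $((1,-1))\in f(\mathcal T)$ coming from $([0,0],-1)$.

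\begin{itemize}
\item The displayed composite is then $\gamma_s\circ\delta_s^{\eta^s_1}(0)$. By the values recorded in the paper ($\gamma_s(0)=(1,1)$ and $\gamma_s\circ\delta_s(0)=((1,-1),(1,1))$), this is $((1,1))$ or $((1,-1),(1,1))$. It is never $((1,-1))=\delta_s(0)$, so the identity as printed fails for either value of $\eta^s_1$: $\gamma_s^n\circ\delta_s(0)$ is one entry too long.
\item What the paper's remark actually uses is two facts: $\delta_s(0)=((1,-1))$ is $\gamma_s(0)=((1,1))$ with its sign reversed, and reversing all signs commutes with $\gamma_s$ and $\delta_s$ on non-zero inputs. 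So the flip must replace the innermost $\gamma_s$ by $\delta_s$, i.e.\ the composite should end in $\gamma_s^{\,n-\eta^s_1}\circ\delta_s^{\eta^s_1}(0)$.
\item The parity also needs fixing. Since $E^s_1=\varnothing$, the unflipped composite has $e_1=(-1)^r$ with $r=(m_1-1)+\sum_{i\ge 2}(m_i-1+\eta_i)$. So the flip ($\eta^s_1=1$) is needed exactly when $e_1=(-1)^{m_1+\sum_{i\ge 2}(m_i-1+\eta_i)}$. The paper's proof writes $e'_1=(-1)^{r'-1}$, which is off by a sign, as $n=1$ shows.
\end{itemize}

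Your planned $n=1$ check would expose both points. With the statement corrected this way, your bookkeeping goes through.
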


\begin{proof}
  For the first equality, we just need to apply the formula of Proposition \ref{prop calcul delta gamma}. The second one is a little bit trickier. We first apply the formula of Proposition \ref{prop calcul delta gamma} to compute $$\beta_2^s\circ\ldots\circ\beta_n^s\circ\gamma_s^n(0)=((1,e'_1),(m_2,e'_2),\ldots,(m_n,e'_n))$$ with $(e'_i)_{1\le i\le n}$ such that $e'_1=(-1)^{\sum_{i=2}^n(m_i-1+\eta_i)-1}$ and $e'_i=e'_{i+1}$ if and only if $e_i=e_{i+1}$ for any $1\le i\le n-1$. Then we see that:
  $$\delta_s^{m_1-1}\circ\beta_2^s\circ\ldots\circ\beta_n^s\circ\gamma_s^n(0)=((m_1,(-1)^{m_1-1}e'_1),(m_2,(-1)^{m_1-1}e'_2),\ldots,(m_n,(-1)^{m_1-1}e'_n))$$
  If $e_1=(-1)^{m_1-1}e_1'=(-1)^{m_1+\sum_{i=2}^n(m_i-1+\eta_i)}$, then:
  $$\delta_s^{m_1-1}\circ\beta_2^s\circ\ldots\circ\beta_n^s\circ\gamma_s^n(0)=((m_1,e_1),(m_2,e_2),\ldots,(m_n,e_n))$$
  otherwise,
  $$\delta_s^{m_1-1}\circ\beta_2^s\circ\ldots\circ\beta_n^s\circ\gamma_s^n(0)=((m_1,-e_1),(m_2,-e_2),\ldots,(m_n,-e_n))$$
 
  and, according to the last point of Proposition \ref{prop calcul delta gamma},
  $$\delta_s^{m_1-1}\circ\beta_2^s\circ\ldots\circ\beta_n^s\circ\gamma_s^n\circ \delta_s(0)=((m_1,e_1),(m_2,e_2),\ldots,(m_n,e_n))$$
 
  This ends the proof.
 
\end{proof}

\begin{deff}
For $x\in\{o,s\}$, use the notation $\theta_x(1)=\delta_x$ and $\theta_x(-1)=\gamma_x$.

For any $(m,e)=((m_1,e_1),\ldots,(m_n,e_n))$, there exists a unique $\e=(\e_1,\ldots,\e_u)\in \mathcal S$ such that $$\theta(\e_u\e_{u-1})\circ\ldots\circ\theta(\e_2\e_1)\circ\theta(\e_1)=\beta^o_1\circ\ldots\circ\beta^o_n\circ\gamma_o^n$$
We write $t'_o(m,e)$ for this element of $\mathcal S$.

For any $(m,e)=((m_1,e_1),\ldots,(m_n,e_n))$, there exists a unique $\e=(\e_1,\ldots,\e_u)\in \mathcal S$ such that
$$\theta(\e_u\e_{u-1})\circ\ldots\circ\theta(\e_2\e_1)\circ\theta(\e_1)=\delta_s^{m_1-1}\circ \beta_2^s\circ\ldots\circ\beta_n^s\circ\gamma_s^n\circ\delta_s^{\eta_1^s}$$
We write $t'_s(m,e)$ for this element of $\mathcal S$.

\end{deff}

\begin{rem}
  If $\rho$ has the same type as $G$, the map $t_s$ is a section of $\bar \varphi$ and if $\rho$ has a different type from $G$, the map $t_o$ is a section of $\bar \varphi$.
\end{rem}

The sections we built imply the following theorem:

\begin{theorem}\label{thm im phi=T}
  The image of $\varphi$ is $\mathcal T$.
\end{theorem}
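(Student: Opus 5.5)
The proof splits into two inclusions. The inclusion $\mathrm{Im}(\varphi)\subseteq\mathcal T$ is the preceding proposition, so only the reverse inclusion remains. Pass through the bijection $f$ and the factorization $\varphi=\bar\varphi\circ\mathcal E$. It then suffices to show that every $(m,e)=((m_1,e_1),\ldots,(m_n,e_n))\in f(\mathcal T)$ is of the form $f\circ\varphi(\m,\e)$ for some tempered $(\m,\e)$.

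The construction of $(\m,\e)$ goes as follows. Let $x=o$ if $\rho$ and $G$ have different types and $x=s$ otherwise. By Proposition \ref{prop section}, $(m,e)$ is obtained from $0$ by an explicit word in $\delta_x$ and $\gamma_x$. By the definition of $t'_x$, this word can be rewritten as $\theta_x(\e_u\e_{u-1})\circ\dots\circ\theta_x(\e_2\e_1)\circ\theta_x(\e_1)$ for a sign sequence $t'_x(m,e)=(\e_1,\ldots,\e_u)\in\mathcal S$. Take centered segments $s_1\subsetneq s_2\subsetneq\dots\subsetneq s_u$ with strictly increasing lengths (of the correct parity), and set $(\m,\e)=\sum_i(s_i,\e_i)$. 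Strict inclusions guarantee that the segments are pairwise distinct, so no sign-compatibility condition is violated, and $(\m,\e)\in\Temp$ with $\mathcal E(\m,\e)=(\e_1,\ldots,\e_u)$. Writing $(\m,\e)=\T_{(s_u,\e_u)}\circ\dots\circ\T_{(s_1,\e_1)}(\varnothing)$, apply Remark \ref{rem phi T = gamma phi} step by step. At step $i$ the largest segment already present has sign $\e_{i-1}$, so the step applies $\delta_x$ when $\e_i=\e_{i-1}$ and $\gamma_x$ when $\e_i=-\e_{i-1}$, that is, $\theta_x(\e_i\e_{i-1})$. The initial step from $\varnothing$ must be checked separately against the explicit formula for $\CT_{s,\eta}(\varnothing,+)$; it should match $\theta_x(\e_1)$ applied to $0$. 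With this, $f\circ\varphi(\m,\e)$ equals the composed word applied to $0$, which is $(m,e)$ by Proposition \ref{prop section}. This shows $\bar\varphi\circ t'_x$ is the identity on $f(\mathcal T)$, which is exactly the content of the remark that $t'_x$ is a section of $\bar\varphi$.

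The main obstacle is the bookkeeping needed to make the word of Proposition \ref{prop section} fit the form $\theta(\e_u\e_{u-1})\circ\cdots\circ\theta(\e_1)$. Any word in $\delta_x,\gamma_x$ defines $\e_1$ from its first letter and then each next sign recursively from the subsequent letters, so existence and uniqueness of $t'_x$ are clear. The delicate point is the first step. When $x=s$, the initial factor $\delta_s^{\eta^s_1}$ and the sign convention $\eta_1^s$ must match the value of $f\circ\varphi$ on a single centered segment, which is where the identity $\Gamma_s(\ldots)(\delta_s(0))=(m,-e)$ from Proposition \ref{prop calcul delta gamma} is used. I would also verify that the hypotheses $2n_1\ge n_2$ (resp. $2n_1>n_2$) in Proposition \ref{prop calcul delta gamma} hold for the words $\beta_i^x$, which requires $m_i\ge1$ for all $i\le n$. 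This holds because $(m,e)\in f(\mathcal T)$ has no zero multiplicity before its last nonzero term.
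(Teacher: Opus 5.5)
Your argument is the paper's own: the inclusion $\subseteq$ is the preceding proposition, and $\supseteq$ comes from realizing the word of Proposition~\ref{prop section} through $t'_x$, a tempered datum with strictly nested segments, and the induction rule of Remark~\ref{rem phi T = gamma phi}, which is exactly what the paper means by ``the sections we built imply the theorem''. One correction to your initial-step check: for $x=s$ the explicit formula $\CT_{s,\eta}(\varnothing,+)=\m(1,k)+([0,0],\eta)$ gives $f\circ\varphi(s,\eta)=((1,\eta))=\theta_s(-\eta)(0)$, not $\theta_s(\eta)(0)$, but this does not affect the image statement, since negating all signs of your datum changes only the first letter of the realized word and leaves every later $\theta_s(\e_i\e_{i-1})$ unchanged.
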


\subsection{Definition of $\Temp^d$}
We recall that $\rho$ is fixed. We let $x$ be $o$ when $\rho$ and $G$ are of different types and $x=s$ when $\rho$ and $G$ are of the same type.

\begin{deff}
  We define $\mathcal S_{red}=\mathcal E(\Temp_{red})$. Notice that the surjective map $(\cdot)^{red}:\Temp\to \Temp_{red}$ induces a surjective map $\mathcal S\to \mathcal S^{red}$; we still denote this by $(\cdot)^{red}$.
\end{deff}

\begin{deff}\label{def alpha tempd}
Take $(\e_1,\ldots,\e_n)\in \mathcal S_{red}$ and define $$\check {\mathcal O}(\e)=\{1\le i\le n-1~| ~\e_i=\e_{i+1}\}\cup\{n \}$$
Now, write $\check {\mathcal O}(\e)=\{j_1<\ldots<j_{r}<j_{r+1}=n\}$; we define
$$\alpha(\e)=(2(j_1-\delta_{x=o}\delta_{\e_1=1}),j_2-j_1-1,\ldots,j_r-j_{r-1}-1,n-j_r-1)\in \mathbb N^{r+1}$$
If $r=0$, we define $\alpha(\e)=(2(n-\delta_{x=o}\delta_{\e_1=1}))$. Now, take $\e\in\mathcal S$; we define $\check {\mathcal O}(\e)=\check {\mathcal O}(\e^{red})$ and $\alpha(\e)=\alpha(\e^{red})$.
\end{deff}

Write $\alpha(\e)=(\alpha_1,\ldots,\alpha_{r+1})$; remark that if $\e_1=-\e_2$, then $\alpha_{r+1}>0$.

\begin{deff}
  Take $\e\in\mathcal S$ and write $\alpha(\e)=(\alpha_1,\ldots,\alpha_{r+1})$ with $r\ge 0$. We say that $\e$ is decreasing if $r=0$ or $r\ge 1$ and $\alpha_{i+1}\le \alpha_i$ and, if $x=s$, $\alpha_1>\alpha_2$. We write $\mathcal S^d$ for the set of decreasing $\e\in\mathcal S$. We write $\Temp^d=\mathcal E^{-1}(\mathcal S^d)$ and call it the set of decreasing tempered data. Finally, write $\ATemp^d\subset \ATemp$ for the dual of $\Temp^d$.
\end{deff}

\begin{rem}\label{rem, e, O, alpha}
 Take $\e,\e'\in\mathcal S_{red}$ such that $\e_1=\e'_1$; then:
 $$\e=\e'\Leftrightarrow \check {\mathcal O}(\e)=\check {\mathcal O}(\e')\Leftrightarrow \alpha(\e)=\alpha(\e')$$
\end{rem}

\begin{deff}
  We define $\Temp^d_{red}:=\Temp^d\cap\Temp_{red}=(\Temp^d)^{red}$, $\ATemp^d_{red}$ its dual, and $\mathcal S^d_{red}=(\mathcal S^d)^{red}$.
\end{deff}

In the following, we will give a direct formula to compute the dual of an element of $\Temp^d$.

\subsection{The tempered part of the dual of a decreasing tempered data}

We start with a remark.

\begin{rem}
We have $t_x(\m,\e)\in\Temp^d$ for any $(\m,\e)\in\mathcal T$. This implies that $\phi$ surjects $\Temp^d$ on $\mathcal T$. Notice that if $x=o$ we have $t_o(\m,\e)\in \Temp^d_{red}$.
\end{rem}

\begin{deff}\label{def sigmao} Write $x=o$ if $\rho$ is of the same type as $G$ and write $x=s$ otherwise. Take $(\e_1,\ldots,\e_n)\in \mathcal S^d_{red}$, write $(\alpha_1,\ldots,\alpha_{r+1})=\alpha(\e_1,\ldots,\e_n)$, define:
$$\sigma_o(\e_1,\ldots,\e_n)=f^{-1}(\Gamma_o(\alpha_1/2,\alpha_2,\ldots,\alpha_{r+1})(0))\in \Temp$$
and 
$$\sigma_s(\e_1,\ldots,\e_n)=f^{-1}(\Gamma_s(\alpha_1/2-1,\alpha_2,\ldots,\alpha_{r+1})((1,\e_1))\in \Temp$$

\end{deff}

\begin{prop}\label{prop temp part tempd}
Let $(\m,\e)\in \Temp^d$. Write $(\e_1,\ldots,\e_n)=\mathcal E(\m^{red},\e^{red})$, we have:
$$\AD(\m,\e)_{temp}=\sigma_x(\e_1,\ldots,\e_n)$$

\end{prop}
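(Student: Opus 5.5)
First I reduce to the reduced case. By Theorem \ref{thm red}, $\AD(\m,\e)$ and $\AD(\m^{red},\e^{red})$ differ only by a sum of points $[x,x]$. A point $[x,x]$ is centered only when $x=0$. By the multiplicity/parity bookkeeping in the proof of Theorem \ref{thm red}, the reduction removes segments in pairs of equal sign, so the points $[0,0]$ added in the first formula of Theorem \ref{thm red} come in even number. I expect this to leave $f(\cdot_{temp})$ unchanged up to the convention on $\sigma_1=[0,0]$; this needs a check in the same-type case. Since $\alpha$ and $\sigma_x$ depend only on $\mathcal E(\m^{red},\e^{red})$, and by Remark \ref{rem phi T = gamma phi} the map $\varphi$ factors through $\mathcal E$, it suffices to treat $(\m,\e)\in\Temp^d_{red}$. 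It then remains to show $f(\varphi(\m,\e))=f(\sigma_x(\e_1,\dots,\e_n))$.

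Next I translate $\varphi$ into a word in $\delta_x,\gamma_x$. Write $\m=s_1+\dots+s_n$ with $s_1\subseteq\dots\subseteq s_n$, so that $\m=\T_{s_n,\e_n}\circ\dots\circ\T_{s_1,\e_1}(\varnothing)$. By Remark \ref{rem phi T = gamma phi}, step $i$ applies $\delta_x$ if $\e_i=\e_{i-1}$ and $\gamma_x$ if $\e_i=-\e_{i-1}$. The first step comes from the explicit value of $\CT_{s,\eta}(\varnothing,+)$: it gives $\gamma_x$, $\delta_x$, or the initial vector $(1,\e_1)$ according to the type and the sign $\e_1$. Hence
$$f(\varphi(\m,\e))=\theta_x(\e_n\e_{n-1})\circ\dots\circ\theta_x(\e_2\e_1)\,(\text{initial}).$$
The positions $j\in\check{\mathcal O}(\e)$ with $\e_j=\e_{j+1}$ are exactly the $\delta_x$ letters. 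Between two consecutive such positions $j_{t}<j_{t+1}$ there are $j_{t+1}-j_t-1$ letters $\gamma_x$, and the initial block has $j_1$ alternations. So the word is $\gamma_x^{\alpha_{r+1}}\delta_x\gamma_x^{\alpha_r}\cdots\delta_x\gamma_x^{N}$, where the initial exponent $N$ is $\alpha_1/2$ for $x=o$ and $\alpha_1/2-1$ applied to $(1,\e_1)$ for $x=s$. The correction $\delta_{x=o}\delta_{\e_1=1}$ in Definition \ref{def alpha tempd} absorbs the case where the first step is a $\delta_o$ acting on $0$, using $\delta_o(0)=0$. This matches $\Gamma_x(\alpha_1/2,\alpha_2,\dots,\alpha_{r+1})$ (respectively its $s$-variant) in Definition \ref{def sigmao}.

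The main obstacle is this index bookkeeping: the exact value of the initial exponent and starting vector in each of the four type/sign cases. Once it is settled, the claim is the identity just displayed. Proposition \ref{prop calcul delta gamma} only evaluates $\Gamma_x$ explicitly, so it is not needed for the identity itself. Its hypotheses $n_2\ge\dots\ge n_{r+1}$ and $2n_1\ge n_2$ (strict for $x=s$) are exactly the decreasing condition, so the proposition applies to compute $\Gamma_x(\alpha)$ explicitly and confirm well-definedness. I would check the base cases $n=1,2$ by hand against $\CT_{s,\eta}(\varnothing,+)$ and the worked example to fix the sign conventions for $x=s$. In particular I would verify the $\delta_s(0)$ versus $(1,\e_1)$ starting point via the relation $\Gamma_s(\dots)(\delta_s(0))=(m,-e)$.
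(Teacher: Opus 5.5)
\textbf{Where the proposal matches the paper.} Your second step is exactly the paper's proof. Proposition \ref{prop induction temp part} gives $f(\AD(\m,\e)_{temp})=\theta_x(\e_n\e_{n-1})\circ\cdots\circ\theta_x(\e_2\e_1)\circ\theta_x(\e_1)(0)$, and grouping the letters according to $\check{\mathcal O}$ gives $\Gamma_x(\alpha)$. Your index bookkeeping is correct:
\begin{itemize}
\item for $x=o$, $\delta_o(0)=0$ absorbs the case $\e_1=1$;
\item for $x=s$, $\theta_s(\e_1)(0)=(1,\e_1)$.
\end{itemize}
You are also right that Proposition \ref{prop calcul delta gamma} is not needed for this identity.

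\textbf{The gap: the reduction step.} Knowing that $\varphi$ factors through $\mathcal E$ does not let you replace $(\m,\e)$ by $(\m^{red},\e^{red})$, because $\mathcal E(\m,\e)\neq\mathcal E(\m^{red},\e^{red})$ in general. At the level of words, reduction deletes, inside each maximal block of equal signs, an even number of consecutive letters $\delta_x$, and it leaves $\e_1$ unchanged.
\begin{itemize}
\item For $x=o$ this is harmless, since $\delta_o^2=\mathrm{id}$. Equivalently, the removed segments have half-integral ends, so the points added in Theorem \ref{thm red} are not centered.
\item For $x=s$ it is not harmless. $\delta_s^2$ raises $m_1$ by $2$. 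Correspondingly, every removed centered segment contains $0$, so Theorem \ref{thm red} adds one centered point $[0,0]$ per removed segment.
\end{itemize}
Evenness does not help here, because $f$ records the multiplicity of $\sigma_1=[0,0]$, not its parity.

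\textbf{Counterexample.} You were right to flag the same-type case, but the check fails. Take $\rho$ of the same type as $G$ and $(\m,\e)=(3[0,0],+)$. This lies in $\Temp^d$, with $\mathcal E(\m^{red},\e^{red})=(+)$. Applying $\CT_{[0,0],+}$ three times to $\varnothing$ gives $([0,0],+)$, then $(2[0,0],-)$, then $(3[0,0],+)$. Each step is in the case $\eta=\eta'$, which adds one $[0,0]$ and flips the sign; in terms of $f$, this is $\delta_s^2(\gamma_s(0))=\delta_s^2((1,1))=(3,1)$. Hence $\AD(\m,\e)_{temp}=(3[0,0],+)$, whereas $\sigma_s(+)=f^{-1}((1,1))=([0,0],+)$.

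\textbf{Consequence and fix.} In the same-type case the statement holds only for reduced data. The paper's proof makes the same unjustified step (``we can assume that $(\m,\e)$ is reduced''), so this is a defect of the statement itself, not something you missed relative to the source. A correct version is as follows:
\begin{itemize}
\item for $x=o$, your argument goes through as written;
\item for $x=s$, either restrict to $\Temp^d_{red}$, or conclude that $\AD(\m,\e)_{temp}$ equals $\sigma_s(\e_1,\ldots,\e_n)$ with the multiplicity of $[0,0]$ increased by the number of segments of $\m-\m^{red}$. This is what Theorem \ref{thm red} actually yields.
\end{itemize}
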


\begin{proof}

Thanks to Proposition \ref{thm red} we can assume that $(\m,\e)$ is reduced.  As before, write $\theta_x(-1)=\gamma_x$ and $\theta_x(1)=\delta_x$.  We have:
$$f(\AD(\m,\e)_{temp})=\theta_x(\e_n\e_{n-1})\circ\ldots\circ\theta_x(\e_2\e_1)\circ \theta_x(\e_1)(0)$$
This is clear by induction according to Proposition \ref{prop induction temp part}. For the initialization see that when $x=o$ we have for any centered segment $\sigma$:
$$
 f(\phi(\T_{\sigma,\eta}(\varnothing))) = \left\{
    \begin{array}{ll}
        \delta_o(0)=0 & \mbox{if } \eta=1  \\
        \gamma_o(0)=(1,-1) & \mbox{if } \eta=-1  
    \end{array}
\right.
$$
and if $x=s$, for any centered segment $\sigma$, this time:

$$
 f(\phi(\T_{\sigma,\eta}(\varnothing))) = \left\{
    \begin{array}{ll}
        \gamma_s(0)=(1,1) & \mbox{if } \eta=1  \\
        \delta_s(0)=(1,-1) & \mbox{if } \eta=-1  
    \end{array}
\right.
$$
This ends the initialization.

Finally remark that
$$\theta_x(\e_n\e_{n-1})\circ\ldots\circ\theta_x(\e_2\e_1)\circ \theta_x(\e_1)(0)=f(\sigma_x(\e_1,\ldots,\e_n))$$

\end{proof}

Then, by combining this proposition with Proposition \ref{prop calcul delta gamma}, we get a direct formula to compute the tempered part of the dual of any element of $\Temp^d$. In the following we will compute the non-tempered part.

\subsection{On the increasing initial sequence}\label{subsect initial sequence}

In this subsection, we compute the increasing initial sequence $\Theta$ of $\AD(\m,\e)$ for any $(\m,\e)\in \Temp^d$ with the assumption that both largest segments in $\m$ have opposite signs and $\alpha_{r+1}<\alpha_r$ where $(\alpha_1,\ldots,\alpha_{r+1})=\alpha(\mathcal E(\m^{red},\e^{red}))$. It will be useful to compute the non-tempered part of duals of elements of $\AD(\m,\e)$.

To do that, we introduce some notation:

\begin{deff}
  Take $n\ge 1$ and $-1\le k\le n/2$ two integers and $\la_1>\ldots>\la_n>0$ some integers that are either all even or all odd. We define the multisegment $\m_k(\la_1,\ldots,\la_n)$ as the only symmetrical multisegment containing only non-centered segments and such that $\m_k^{\ge 0}(\la_1,\ldots,\la_n)=s_1+\ldots+s_n$ is the only multisegment verifying the following properties:
  \begin{enumerate}
    \item $(\ell(s_i))_{1\le i\le n}$ is a decreasing sequence
    \item $e(s_{i+1})=e(s_i)+1$ for any $1\le i\le n-1$ and $e(s_n)=(\la_1-1)/2$
    \item $\m_k(\la_1,\ldots,\la_n)$ has $(\la_{i}-\la_{i+1})/2-1$ segments of length $i$ for any $1\le i\le n-1$, and also $\lceil \la_n/2\rceil+k$ segments of length $n$.
  \end{enumerate}
\end{deff}
Since $k\le n/2$, we verify that all the segments in $\m_k(\la_1,\ldots,\la_n)$ have a strictly positive center.
\begin{ex}
For any $l$ even, we have $\m_0(l)=\m(0,l)$ and $\m_{-1}(l)=\m(2,l)$, and for any $l$ odd, we have $\m_0(l)=\m(1,l)$ as introduced in Subsection \ref{subsec some def}. To give an explicit example, we have:
$$\m_1^{\ge 0}(10,6)=[0.5,1.5]+[1.5,2.5]+[2.5,3.5]+[4.5,4.5]$$
\end{ex}

\begin{prop}\label{prop initial sequence} Let $(\m,\e)\in\Temp^d$ and $\Theta$ be the increasing initial sequence attached to $\AD(\m,\e)$. Write $\m^{red}=s_1+\ldots+s_n$ with $s_1\subseteq\ldots\subseteq s_n$, $\lambda_i=\ell(s_{n-i+1})$ for any $1\le i\le n$, and $(\alpha_1,\ldots,\alpha_{r+1})=\alpha(\mathcal E(\m^{red},\e^{red}))$. Suppose $\e(s_n)\neq \e(s_{n-1})$.

First, assume that $\rho$ and $G$ are of different types; recall that in this case, $\sigma_i=[-i+1/2,i-1/2]$ for any integer $i\ge 0$.

Suppose that $r=0$; then, if $\e(s_1)=-1$:
\begin{itemize}
  \item $\Theta_i=\sigma_i^{=0}$ for any $1\le i \le n$.
 
  \item $\sum_{i\ge n+1}\Theta_i=\m^{\ge 0}_{-1}(\la_1,\ldots,\la_n)$
  \item If $(\m,\e)$ is reduced, we have $\AD(\m,\e)=\sum_{i=1}^n\sigma_i+\m_0(\la_1,\ldots,\la_n)$ with $\sigma_i$ having sign $(-1)^{i}$ for any $1\le i\le n$.
\end{itemize}

Suppose that $r=0$; then, if $\e(s_1)=1$:
\begin{itemize}
  \item $\Theta_i=\sigma_i^{=0}$ for any $1\le i \le n-1$.
  \item $\sum_{i\ge n}\Theta_i=\m^{\ge 0}_0(\la_1,\ldots,\la_n)$
  \item If $(\m,\e)$ is reduced, we have $\AD(\m,\e)=\sum_{i=1}^{n-1}\sigma_i+\m_1(\la_1,\ldots,\la_n)$ with $\sigma_i$ having sign $(-1)^{i}$ for any $1\le i\le n-1$.

\end{itemize}

Suppose that $r\ge 1$ and that $\alpha_{r+1}<\alpha_{r}$. Write $d=\alpha_{r+1}$ and $j=\lceil d/2\rceil$; then:
\begin{itemize}
  \item $\Theta_i=\sigma_i^{=0}$ for any $1\le i \le j-1$.
  \item $\Theta_j=\sigma_j^{\ge 0}$ if $d$ is odd and $\Theta_j=\sigma_j^{=0}$ if $d$ is even.
  \item $\sum_{i\ge j+1}\Theta_i=\m^{\ge 0}_{\lfloor d/2\rfloor}(\la_1,\ldots,\la_{d+1})$
 
  \item Take an integer $i$ such that $\ell(\Theta_i)=\ell(\Theta_{i+1})+1$; then there are no segments in $\AD(\m,\e)$ of the shape $[b(\Theta_i)+1, x]$ with $x\ge e(\Theta_{i})+2$.

\end{itemize}

Now, assume that $\rho$ and $G$ are of the same type; recall that in this case, $\sigma_i=[-i+1,i-1]$ for any integer $i\ge 0$.

Suppose that $r=0$; then:
\begin{itemize}
  \item $\Theta_i=\sigma_i^{=0}$ for any $1\le i \le n$.
  \item $\sum_{i\ge n+1}\Theta_i=\m^{\ge 0}_{-1}(\la_1,\ldots,\la_n)$
  \item If $(\m,\e)$ is reduced, we have $\AD(\m,\e)=\sum_{i=1}^n\sigma_i+\m_0(\la_1,\ldots,\la_n)$ with $\sigma_i$ having sign $(-1)^{i+1}\e(s_1)$ for any $1\le i\le n$.
\end{itemize}

Suppose that $r\ge 1$ and that $\alpha_{r+1}< \alpha_{r}$. Write $d=\alpha_{r+1}$ and $j=\lfloor d/2\rfloor+1$; then:
\begin{itemize}
  \item $\Theta_i=\sigma_i^{=0}$ for any $1\le i \le j-1$.
  \item $\Theta_j=\sigma_j^{\ge 0}$ if $d$ is even and $\Theta_j=\sigma_j^{=0}$ if $d$ is odd.
  \item $\sum_{i\ge j+1}\Theta_i=\m^{\ge 0}_{\lfloor (d-1)/2\rfloor}(\la_1,\ldots,\la_{d+1})$
 
  \item Take an integer $i$ such that $\ell(\Theta_i)=\ell(\Theta_{i+1})+1$; then there are no segments in $\AD(\m,\e)$ of the shape $[b(\Theta_i)+1, x]$ with $x\ge e(\Theta_{i})+2$.

\end{itemize}

\end{prop}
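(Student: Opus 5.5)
We will prove Proposition \ref{prop initial sequence} by induction along the ascendant algorithm of Theorem \ref{coro pairing temp}. By Theorem \ref{thm red}(2) we have $\Theta(\m,\e)=\Theta(\m^{red},\e^{red})$, so we may assume $(\m,\e)$ reduced. Write $\m=s_1+\ldots+s_n$ with $s_1\subseteq\ldots\subseteq s_n$. Then
$$\AD(\m,\e)=\CT_{s_n,\e(s_n)}\circ\dots\circ\CT_{s_1,\e(s_1)}(\varnothing).$$
The induction is on $n$. At each step we track three things: the sequence $\Theta$ attached to the current co-tempered datum, its tempered part, and its non-tempered part. The tempered part is already known from Propositions \ref{prop temp part tempd} and \ref{prop calcul delta gamma}. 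The non-tempered part should, by the statement itself, be of the form $\m_k(\la_1,\ldots)$ for a suitable $k$.

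\emph{Base case.} For $n=1$ we use the explicit formula for $\CT_{s,\eta}(\varnothing,+)$, namely $\m(1,k)+[0,0]$, $\m(2,k)+[-1/2,1/2]$, or $\m(0,k)$. We then read off $\Theta$ directly from the definition in Subsection \ref{subsec Theta}. Along the way we check the identities $\m_0(l)=\m(0,l)$, $\m_{-1}(l)=\m(2,l)$ and $\m_0(l)=\m(1,l)$ recorded in the example.

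\emph{Inductive step.} This is where the alternation of signs enters. Because we are in the reduced case, no three consecutive signs agree. Moreover, the hypothesis $\e(s_n)\ne\e(s_{n-1})$ means that the last operator is a genuine extension step, that is the case $\eta\ne\eta'$ of Proposition \ref{prop repet signs}. In that case the simplified formula gives
$$\bar\m=\tilde\m+\m(2e(\Theta_m)+3,\ell(s))+\delta.$$
From this we do two things. First, we extend each segment of the old $\Theta$ by one on both sides, following $\iota'$. Second, we add the points $\m(2e(\Theta_m)+3,\ell(s))$ above the old top end. In the case $r=0$ one checks that extending $\Theta_i=\sigma_i^{=0}$ produces $\sigma_{i+1}$ with the alternating sign. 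One also checks that extending the chain $\m_{-1}(\la_2,\ldots)$ and adjoining the new points yields $\m_0(\la_1,\ldots,\la_n)$. This gives the claimed closed form for $\AD(\m,\e)$. We then recompute $\Theta$ on the new datum: its first terms are again the centered $\sigma_i^{=0}$, up to the index dictated by the $k_o$ or $k_s$ of Proposition \ref{prop induction temp part}. After that, $\Theta$ climbs through the non-centered chain, and that chain is precisely $\m_{-1}$ (respectively $\m_{\lfloor d/2\rfloor}$). The point of the monotonicity conditions $\alpha_{i+1}\le\alpha_i$, with $\alpha_{r+1}<\alpha_r$, is the following: when $\Theta$ reaches the end of the centered block, the smallest admissible segment ending one step higher is the shortest element of the non-centered chain, and not a stray point from an earlier block. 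The auxiliary fourth bullet, which says that there are no segments $[b(\Theta_i)+1,x]$ with $x\ge e(\Theta_i)+2$, is included so that it can serve as the strengthened induction hypothesis guaranteeing this minimality at the next step.

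\emph{Case $r\ge1$.} We split the word $\e$ into the blocks determined by $\check{\mathcal O}(\e)$. Steps where the sign repeats are $\delta$-steps; by Lemma \ref{lem differ on points}, such a step only adds points and leaves $\Theta$ unchanged. Steps inside an alternating block of length $\alpha_i$ are handled as in the case $r=0$. The number $d=\alpha_{r+1}$ of final alternating steps determines how far the chain $\m_{\lfloor d/2\rfloor}(\la_1,\ldots,\la_{d+1})$ extends and whether $\Theta_j$ is labelled $\ge0$ or $=0$. That label depends on the parity of $d$, because each extension step flips the parity of $n_0$ and hence the sign comparison in the definition of $\Theta^{k,0}$. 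The cases where $\rho$ has the same type as $G$ and where it has the opposite type run in parallel, with $\sigma_j$ shifted by $1/2$; this accounts for $j=\lceil d/2\rceil$ versus $j=\lfloor d/2\rfloor+1$.

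\emph{Main obstacle.} The hardest step will be proving minimality of each $\Theta_i$ with respect to the Lanard-M\'inguez order $\prec$ at the junction between the centered part and the non-centered chain. There, points accumulated from earlier blocks, and the label choices $\le0$, $=0$, $\ge0$ for centered segments of even or odd multiplicity, could compete. I would handle this first, using the fourth-bullet invariant together with the parity bookkeeping of multiplicities from Proposition \ref{prop calcul delta gamma}.
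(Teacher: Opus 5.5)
Your skeleton is the paper's. You reduce to $\Temp_{red}$ by Theorem~\ref{thm red}, induct on $n$ by peeling off $s_n$ (an extension step, since $\e(s_n)\neq\e(s_{n-1})$), push the old chain one step to the right, add the new points, and carry the fourth bullet as a strengthened hypothesis.

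The gap is in your case $r\ge1$. There you cross sign repetitions by asserting, via Lemma~\ref{lem differ on points}, that a $\delta$-step ``only adds points and leaves $\Theta$ unchanged''. This is false.
\begin{itemize}
\item A step with $\eta=\eta'$ adds $\m(0,\ell(s))$ (resp.\ $\m(1,\ell(s))+[0,0]$) \emph{and} replaces $\e$ by $-\e$.
\item Lemma~\ref{lem differ on points} only compares two data with the same signs that differ by points already present. It says nothing about a single $\delta$-step.
\item $\Theta$ really does change. Opposite type: $\AD(\sigma_a,-1)=\sigma_1+\m(2,2a)$ with $\e(\sigma_1)=-1$ has $\Theta=\sigma_1^{=0}+[3/2,3/2]+\dots+[a-1/2,a-1/2]$. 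Applying the $\delta$-step $\CT_{\sigma_b,-1}$ with $b\ge a$ gives $\sigma_1+\m(2,2a)+\m(0,2b)$ with $\e(\sigma_1)=+1$. Now $\sigma_1$ fails the first-term condition, and $\Theta=[1/2,1/2]+[3/2,3/2]+\dots+[b-1/2,b-1/2]$.
\item Same type: $\AD(\sigma_a,\eta)=\m(1,2a-1)+([0,0],\eta)$ has $\Theta_1=[0,0]^{=0}$. After $\CT_{\sigma_b,\eta}$ the multiplicity of $[0,0]$ is $2$, so $\Theta_1=[0,0]^{\ge0}$, and the next extension produces $[0,1]+[-1,0]$ instead of $[-1,1]$.
\end{itemize}
This reset is exactly why the chains of earlier blocks freeze and appear as separate summands $\m_{\lfloor\alpha_i/2\rfloor}(\dots)$ in Theorem~\ref{thm explicit tempd}. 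If $\Theta$ survived a $\delta$-step, the next alternating block would keep lengthening the old chain. Your induction would then output a single chain, which is the wrong formula.

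The paper never carries $\Theta$ across a sign repetition. It reads off the centered part of $\Theta$ for the final datum directly from Proposition~\ref{prop temp part tempd}, using $\alpha_r>d$ to fix the label of $\Theta_j$ and to stop the centered run there. It then follows only the non-centered chain through the last $d$ alternating steps, by induction on $n$ with $\alpha'=(\alpha_1,\dots,\alpha_r,\alpha_{r+1}-1)$.

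What you need to supply is the start of that last block. Right after the last sign repetition, $\Theta$ restarts on the freshly added points; in both examples above this is formally the case $d=0$ of the statement. That datum violates $\e(s_{n-1})\neq\e(s_{n-2})$, so the proposition cannot serve as induction hypothesis there. It has to be computed directly from the $\delta$-step formula; the paper is terse at this point as well.

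A smaller slip: extending $\m_{-1}(\la_2,\ldots,\la_n)$ and adding the new points yields $\m_{-1}(\la_1,\ldots,\la_n)$, not $\m_0$. This is the identity the paper uses; $\m_0$ has one more segment of length $n$. It agrees with the paper's initialization $\AD(\sigma_i,-1)=\sigma_1+\m_{-1}(2i)$ and with Theorem~\ref{thm explicit tempd}. So the index in the printed third bullets appears to be off by one, and you should not try to force $\m_0$ (resp.\ $\m_1$) out of the induction.
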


\begin{proof}
According to Theorem \ref{thm red}, it suffices to prove the result when $(\m,\e)$ is reduced, so we assume $(\m,\e)\in\Temp_{red}$.  When $r=0$, the first point comes from the formula of Proposition \ref{prop temp part tempd}, and when $r\neq 0$, the first two points come from this same formula (using $\alpha_r>d$ for the second one). We prove the other points by induction. Define $(\m',\e')$ with $\m'=s_1+\ldots+s_{n-1}$ and $\e'$ the restriction of $\e$. Since $\e(s_n)\neq \e(s_{n-1})$, we have $(\m^{red},\e^{red})=\T_{s_n,\e(s_n)}((\m')^{red},(\e')^{red})$; we can do that by induction. Write $\Theta'=\Theta(\AD(\m',\e'))$ and $\alpha'=\alpha(\AD(\m',\e'))$. We have $\AD(\m,\e)=\CT_{s_n,\e_n}(\AD(\m',\e'))$. Notice that since $(\m,\e)$ is reduced and $\alpha_{r+1}\ge 1$, we have $\alpha'=(\alpha_1,\ldots,\alpha_r,\alpha_{r+1}-1)$. We assume that the result is true for $(\m',\e')$, so we know what $\Theta'$ is, and then we apply the formula for $\CT_{s_n,\e_n}$.

First, if $r=0$ and $\e(s_1)=-1$: assume $n\ge 2$, then we also have $r'=0$, $\e'(s_1)=-1$, and we have $\alpha_1'=n-1$. It suffices to remark that:
$$\sum_{s\in\m^{\ge0}_{-1}(\la_2,\ldots,\la_n)}s^++\m^{\ge 0}(\la_2,\la_1)=\m_{-1}^{\ge0}(\la_1,\ldots,\la_n)$$
Then, we conclude by definition of $\CT_{s_n,\e_n}$.

The case $r=0$ and $\e(s_1)=1$ is similar when $n\ge 2$ since we have:
$$\sum_{s\in\m^{\ge0}_0(\la_2,\ldots,\la_n)}s^++\m^{\ge 0}(\la_2,\la_1)=\m_0^{\ge0}(\la_1,\ldots,\la_n)$$
In both cases, we need to initialize our induction. For any $i\ge 0$, we have $\AD(\sigma_i,-1)=[-0.5,0.5]+\m(2,2i)=\sigma_1+\m_{-1}(2i)$ with $[-0.5,0.5]$ having sign $-1$, and $\AD(\sigma_i,1)=\m(0,2i)$. This proves the case $n=1$; thus, we are done with the case $r=0$.

The case $r=0$ for $\rho$ of the same type as $G$ is completely analogous.\\

Now, suppose that $r\ge 1$. First, we treat the case $d$ even (and so $d'=d-1$ is odd). Then we already know by Proposition \ref{prop temp part tempd} that $\Theta_i=\sigma_i^{=0}$ for any $1\le i\le j$. Now, either we have $d=\alpha_1$ and then $j=\alpha_1/2$ so $\sigma_{j+1}$ is not in $\AD(\m,\e)$, or $d<\alpha_1$ and then $\sigma_{j+1}\in\AD(\m,\e)$ but since $d<\alpha_{r+1}$, we have
$\e(\sigma_{j+1})=(-1)^j$. In both cases, we deduce that $\Theta_{j+1}$ is not a centered segment. Observe that $j=j'$, and since $d'$ is odd, we have $\Theta'_j=\sigma_j^{\ge 0}$, so $\sigma_j^+$ is in $\AD(\m,\e)$. This is the smallest possible segment ending in $j+1/2$ and that is non-centered. We deduce that $\Theta_{j+1}=\sigma_j^+$. Now, remark that:
$$\sum_{s\in \m^{\ge 0}_{d/2-1}(\la_2,\ldots,\la_{d+1})}s^++\m^{\ge 0}(\la_2,\la_1)\in\AD(\m,\e)$$
This multisegment is just the non-centered segments of $\Theta'$ that we extended. Notice that this multisegment is $\m^{\ge 0}_{d/2-1}(\la_1,\ldots,\la_{d+1})$. The first segment in $\m^{\ge 0}_{d/2-1}(\la_1,\ldots,\la_{d+1})$ ends in $j+3/2=e(\Theta_{j+1})+1$ and is of the same length, so it has to be $\Theta_{j+2}$. Indeed, it is easy to see that if $s\in\Theta$, then ${}^-s^+\in\Theta$. Write $\m^{\ge0}_{d/2-1}(\la_1,\ldots,\la_{d+1})=t_{j+2}+\ldots+t_l$ with $e(t_i)+1=e(t_{i+1})$ for any $j+2\le i\le l-1$. We already proved that $t_{j+2}=\Theta_{j+2}$. We want to prove $\Theta_i=t_i$ for any $j+2\le i\le l$. Take $j+2\le i\le l-1$. Notice that by definition, we either have $t_{i+1}={}^-t_i^+$ and then $\ell(t_i)=\ell(t_{i+1})$, or $t_{i+1}={}^{--}t_i^+$. Suppose $t_i=\Theta_i$; then, if $t_{i+1}={}^-t_i^+$, we automatically have $\Theta_{i+1}=t_{i+1}$. If $t_{i+1}={}^{--}t_i^+$, we need to prove that ${}^-t_i^+\notin \AD(\m,\e)$. The only two possibilities to have ${}^-t_i^+\notin \AD(\m,\e)$ are $\Theta_{i}'={}^-(\Theta'_{i-1})^+$ or ${}^-t_i^+\in\AD(\m',\e')$ and it is not $\Theta_{i+1}$. The first thing never happens because, as we saw, $t_i=\Theta_{i-1}^+$ and $t_{i+1}=\Theta_{i}^+$, so we would have $t_{i+1}={}^-t_i^+$. The second case is impossible by our induction hypothesis. So, by induction, we deduce that $t_i=\Theta_i$ for any $j+1\le i\le l$. Then we have:
$$\sum_{i\le j+1}\Theta_i=\sigma_j^++\m^{\ge 0}_{d/2-1}(\la_1,\ldots,\la_{d+1})=\m^{\ge 0}_{d/2}(\la_1,\ldots,\la_{d+1})$$
The last point is true because if such a segment existed in $\AD(\m,\e)$ for the integer $i$, we would get a similar segment in $\AD(\m',\e')$ for $i-1$, which is impossible since we assumed the result was true for $(\m',\e')$.

The case of $\rho$ and $G$ of the same type and $d$ odd is very similar to the one we just treated.

The cases we need to treat are:
\begin{enumerate}
  \item $\rho$ and $G$ of the same type and $d$ even
  \item $\rho$ and $G$ of different types and $d$ odd
\end{enumerate}

They are both very similar; we treat the first one. So let us assume $\rho$ and $G$ are of different types and $d\ge 1$ is even.

We already know by Proposition \ref{prop temp part tempd} that $\Theta_i=\sigma_i^{=0}$ for any $1\le i\le d/2=j-1$. We know in this case that we have $d\le \alpha_2<\alpha_1$, but since $\alpha_1$ and $d$ are both even, we have $d\le \alpha_1-2$ and so $j=d/2+1\le \alpha_1/2$. Then again, by Proposition \ref{prop temp part tempd}, we have $m_{\AD(\m,\e)}(\sigma_{j})=2$ with sign $(-1)^{j+1}\e_1$, so $\Theta_{j}=\sigma_j^{\ge 0}$. For the non-tempered part, this is completely similar to the case we already treated ($r\ge 1$, $d$ even, $x=o$).

\end{proof}

\begin{rem}
  By combining this result with Theorem \ref{thm red} we get a formula to compute directly $\AD(\m,\e)$ for any $(\m,\e)\in \Temp$ such that $\alpha=(\alpha_1)$ (equivalently, $(\m,\e)$ never have an even number of consecutive segment sharing the same sign). This direct formula recovers the well-known fact that the supercuspidal representations are self dual.
\end{rem}



\subsection{A direct formula for $\Temp^d$}\label{subsect non temp part of the dual}

Using Proposition \ref{prop initial sequence}, we give a formula to compute the non-tempered part of the dual of an element of $\Temp^d$. It finally gives us a direct formula to compute the dual of any decreasing datum $(\m,\e)\in \Temp^d$.

Take $(\e_1,\ldots,\e_n)\in\mathcal S^d_{red}$ and write $\check {\mathcal O}=\check {\mathcal O}(\e_1,\ldots,\e_n)=\{j_1<\ldots<j_r<n=j_{r+1}\}$. Finally, write $\alpha(\e_1,\ldots,\e_n)=(\alpha_1,\ldots,\alpha_{r+1})$. For any $1\le i\le r+1$, write $j_i'=n-j_i+1$; see that $j_{r+1}'=1$.

Define $\Lambda(\e_1,\ldots,\e_n)$ as the following set:
$$\{(\la_1,\ldots,\la_n)\in(2\mathbb N)^n\cup (2\mathbb N+1)^n~|~\la_{j_i'}>\ldots>\la_{j'_{i-1}-1}\ge \la_{j_{i-1}'} ,~\forall i\in\llbracket 2,r\rrbracket, \mbox{ and }\la_{j_1'}>\ldots>\la_1 \}$$
By sending $(\m,\e)$ such that $\mathcal E(\m,\e)=(\e_1,\ldots,\e_n)$ to the decreasing sequence of its lengths, we get a bijection:
$$\ell_{\e_1,\ldots,\e_n}:\mathcal E^{-1}(\e_1,\ldots,\e_n)\stackrel{\sim}\longrightarrow\Lambda(\e_1,\ldots,\e_n)$$
Now, take $\la=(\la_1,\ldots,\la_n)\in \Lambda(\e_1,\ldots,\e_n)$;
define:

\[\m_{\e_1,\ldots,\e_n}(\la) = \left\{
  \begin{array}{ll}
    \m_{-\delta_{\e_1=-1}}(\la_{j_1'},\ldots,\la_n)+\sum_{i=1}^{r}\m_{\lfloor \alpha_i/2\rfloor}(\la_{j'_i},\ldots,\la_{j'_{i-1}-1}) & \mbox{if } (\la_1,\ldots,\la_n)\in(2\mathbb N)^n \\
    \m_{-1}(\la_{j_1'},\ldots,\la_n)+\sum_{i=1}^{r}\m_{\lfloor (\alpha_i-1)/2\rfloor}(\la_{j'_i},\ldots,\la_{j'_{i-1}-1}) & \mbox{if } (\la_1,\ldots,\la_n)\in(2\mathbb N+1)^n
  \end{array}
\right.\]

\begin{theorem}\label{thm explicit tempd}
Take $(\m,\e)\in\Temp^d$; write $\m^{red}=s_1+\ldots+s_n$ with $s_1\subseteq \ldots\subseteq s_n$, let $\la_i=\ell(s_n-i+1)$ for any $1\le i\le n$, and $\e_i=\e^{red}(s_i)$ for any $1\le i\le n$. Write $\la=(\la_1,\ldots,\la_n)$, which is an element of $\Lambda(\e_1,\ldots,\e_n)$, and $(\alpha_1,\ldots,\alpha_{r+1})=\alpha(\e_1,\ldots,\e_n)$. Then we have:

$$\AD(\m,\e)=\sigma_x(\e_1,\ldots,\e_n)+\m_{\e_1,\ldots,\e_n}(\la)+\sum_{s\in\m-\m^{red}}\sum_{x\in s}[x,x]$$
with the first term of the sum being the tempered part, and the rest of it the non-tempered part.

\end{theorem}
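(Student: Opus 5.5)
First I will reduce to the reduced case. By Theorem \ref{thm red}, $\AD(\m,\e)=\AD(\m^{red},\e^{red})+\sum_{s\in\m-\m^{red}}\sum_{x\in s}[x,x]$, which produces the last term of the formula. It is therefore enough to prove that for $(\m,\e)\in\Temp^d_{red}$ we have $\AD(\m,\e)=\sigma_x(\e_1,\ldots,\e_n)+\m_{\e_1,\ldots,\e_n}(\la)$. The tempered part is already given by Proposition \ref{prop temp part tempd}. The theorem thus reduces to an identity for the non-tempered part: $\AD(\m,\e)_{non-temp}=\m_{\e_1,\ldots,\e_n}(\la)$.

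I will prove this identity by induction on $n$, writing $(\m,\e)=\T_{s_n,\e_n}(\m',\e')$ with $\m'=s_1+\ldots+s_{n-1}$, so that $\AD(\m,\e)=\CT_{s_n,\e_n}(\AD(\m',\e'))$ by Theorem \ref{main theorem}. The induction splits into two cases, according to whether $\e_n=\e_{n-1}$ or $\e_n\neq\e_{n-1}$.

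\emph{Case $\e_n=\e_{n-1}$.} Here $n\in\check{\mathcal O}$ comes from $j_r=n-1$, and $\alpha_{r+1}=0$. Proposition \ref{prop repet signs} gives $\Theta=\varnothing$, so $\CT$ only adds the points $\m(1,k)$ or $\m(0,k)$ together with a centered segment. These points are exactly the new block $\m_{-\delta}(\la_1)$ or $\m_{-1}(\la_1)$ with a single length. This matches the formula once we check that the index bookkeeping works: $j_1'=1$, and the previous first block becomes an "internal" block with index $\lfloor\alpha_i/2\rfloor$.

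\emph{Case $\e_n\neq\e_{n-1}$.} Here the decreasing hypothesis guarantees that either $r=0$ or $\alpha_{r+1}<\alpha_r$ holds at the previous stage, or at the current one. Proposition \ref{prop initial sequence} then describes $\Theta$ completely. Its non-centered part is $\m^{\ge0}_{\lfloor d/2\rfloor}(\la_2,\ldots,\la_{d+1})$, or $\m^{\ge0}_{\lfloor (d-1)/2\rfloor}$ in the same-type case. Applying $\CT$ extends each of these segments by one on the right and adds the points of $\m(2e(\Theta_m)+3,\ell(s))$. This uses the identity $\sum_{s\in\m_k^{\ge0}(\la_2,\ldots)}s^+ +\m^{\ge0}(\la_2,\la_1)=\m_k^{\ge0}(\la_1,\ldots)$ from the proof of Proposition \ref{prop initial sequence}, or its variant with $k$ shifted by one when the last centered term $\Theta_j=\sigma_j^{\ge0}$ becomes non-centered. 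The result is that the first block $\m_\ast(\la_{j_1'},\ldots)$ acquires the new length $\la_1$. The other blocks are unchanged, because $\Theta$ only touches segments ending in the range belonging to the last $d+1$ lengths. The parity of $\la$, even or odd, selects which of the two displayed formulas applies, with the offsets $-\delta_{\e_1=-1}$ and $-1$ coming from the $r=0$ initialisation $\AD(\sigma_i,\pm1)$ computed in that proposition.

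The main obstacle is this second case. One has to verify that the blocks of $\AD(\m',\e')_{non-temp}$ lying outside the lengths covered by $\Theta$ are not disturbed, that is, no extra segment is extended and the indices $\lfloor\alpha_i/2\rfloor$ are preserved. One also has to check that the last bullet of Proposition \ref{prop initial sequence} (no segment $[b(\Theta_i)+1,x]$ with $x\ge e(\Theta_i)+2$) survives the step, so that the induction hypothesis remains usable. I would handle this by tracking $\alpha'=(\alpha_1,\ldots,\alpha_r,\alpha_{r+1}-1)$ as in the proof of Proposition \ref{prop initial sequence}, and by comparing ends block by block.
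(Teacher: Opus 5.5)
Your overall plan is the paper's. You reduce to $\Temp^d_{red}$ via Theorem \ref{thm red} and Proposition \ref{prop temp part tempd}, then peel off $s_n$ so that $\AD(\m,\e)=\CT_{s_n,\e_n}(\AD(\m',\e'))$. When $\e_n=\e_{n-1}$ you use Proposition \ref{prop repet signs}; when $\e_n\neq\e_{n-1}$ you use Proposition \ref{prop initial sequence} together with the identity $\sum_s s^+ +\m^{\ge0}(\la_2,\la_1)=\m_k^{\ge0}(\la_1,\ldots)$.

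\textbf{The block bookkeeping is backwards.} This bookkeeping is the actual content of the verification. Since $\la_i=\ell(s_{n-i+1})$, the first block $\m_{-\delta_{\e_1=-1}}(\la_{j_1'},\ldots,\la_n)$ (resp. $\m_{-1}(\ldots)$) carries the \emph{smallest} lengths, those of $s_1,\ldots,s_{j_1}$; that is why its offset depends on $\e_1$. The new length $\la_1$ always enters the \emph{last} block $(\la_1,\ldots,\la_{j_r'-1})$, whose offset is governed by $\alpha_{r+1}$. These two blocks coincide only when $r=0$. (The displayed formula's indices are garbled: the internal sum should run over blocks $i=2,\ldots,r+1$ with $\alpha_2,\ldots,\alpha_{r+1}$.)

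\textbf{Case 1.} Here $j_r=n-1$, so $r\ge1$ and $j_1'\neq1$. The correct check is as follows. The last block is created as the singleton $(\la_1)$ with $\alpha_{r+1}=0$, so in the even case it equals $\m_0(\la_1)=\m(0,\la_1)$. Every old block, the first included, keeps its index, because $\alpha_1,\ldots,\alpha_r$ are unchanged and $\CT$ leaves $\AD(\m',\e')$ untouched when $\Theta=\varnothing$.

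Your version fails as stated. Take $\e=(-1,1,1)$ with even lengths, which is reduced and decreasing with $\alpha=(4,0)$. Your bookkeeping predicts the new points are $\m_{-1}(\la_1)=\m(2,\la_1)$, whereas $\CT$ adds $\m(0,\la_1)$. Moreover, re-indexing the old first block would modify old segments, which cannot happen when $\Theta=\varnothing$.

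\textbf{Case 2.} The same correction is needed. It is the last block that passes from $(\la_2,\ldots,\la_{d+1})$, with the offset attached to $\alpha_{r+1}-1$, to $(\la_1,\ldots,\la_{d+1})$, with the offset attached to $\alpha_{r+1}=d$. The offset increases by one exactly when the previous stage has $\Theta_j=\sigma_j^{\ge0}$, which then contributes the extra segment $\sigma_j^+$. You mention this shift, but then attach the result to the first block, whose offset $-\delta_{\e_1=-1}$ (or $-1$) is fixed; these two claims are incompatible. The offsets $-\delta_{\e_1=-1}$ and $-1$ are produced only during the fully alternating stretch $r=0$, where first and last block coincide, and are frozen afterwards. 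With the blocks relabelled this way, your argument is the paper's.

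\textbf{Where Proposition \ref{prop initial sequence} is applied.} It must be applied to the previous stage $(\m',\e')$, the argument of $\CT$, justified by $\alpha_{r+1}-1<\alpha_{r+1}\le\alpha_r$; it is not applied ``at the current one''. When $\alpha_{r+1}=1$, the two largest segments of $\m'$ carry the same sign, so that proposition's standing hypothesis is not literally met. The paper passes over this boundary case too, but it has to be checked directly.
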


\begin{proof}
  According to Theorem \ref{thm red} and Proposition \ref{prop temp part tempd}, it suffices to assume that $(\m,\e)$ is reduced and show that:
  $$\AD(\m,\e)_{non-temp}=\m_{\e_1,\ldots,\e_n}(\la)$$
  with $\la=(\la_1,\ldots,\la_n)\in \Lambda(\e_1,\ldots,\e_n)$. We do this by induction. Remove from $(\m,\e)$ its largest segment and write $(\m',\e')$ for the result. Now, assume that the proposition is true for $(\m',\e')$, that is $\AD(\m',\e')_{non-temp}=\m_{\e_1,\ldots,\e_{n-1}}(\la')$ with $\la'=(\la_1',\ldots,\la_{n-1}')=(\la_1,\ldots,\la_n)$. We have: $$\AD(\m,\e)=\CT_{s_n,\e_n}(\AD(\m',\e'))=\CT_{s_n,\e_n}((\AD(\m',\e'))_{temp}+\m_{\e_1,\ldots,\e_{n-1}}(\la'))$$

  First, assume that $\e_n=\e_{n-1}$, so $\alpha_{r+1}=0$ and $\alpha(\m',\e')=(\alpha_1,\ldots,\alpha_r)$; then we compute first that: $$\m_{\e_1,\ldots,\e_{n}}(\la)=\m_{\e_1,\ldots,\e_{n-1}}(\la')+\m_{\e_n}(\la_1)=\m_{\e_1,\ldots,\e_{n-1}}(\la')+\m_{0} (\la_1)=\m_{\e_1,\ldots,\e_{n-1}}(\la')+\m(0,\la_1)$$

  where $\m(0,\la_1)$ is defined in Subsection \ref{subsec some def}.
  But by definition of $\CT_{s_n,\e_n}$, since $\e_n=\e_{n-1}$, we have $\CT_{s_n,\e_n}(\AD(\m',\e'))=\AD(\m',\e')+\m(0,\ell(s_n))$,
  and thus, since $\ell(s_n)=\la_1$, we get:
\begin{align*}
  \AD(\m,\e)_{non-temp}=&(\CT_{s_n,\e_n}((\AD(\m',\e'))_{temp}+\m_{\e_1,\ldots,\e_{n-1}}(\la')))_{non-temp}\\
  =&\m_{\e_1,\ldots,\e_n}(\la')+\m(0,\la_1)\\
  =&\m_{\e_1,\ldots,\e_n}(\la)
\end{align*}
Now, we assume that $\e_n\neq \e_{n-1}$. So $\alpha(\m',\e')=(\alpha_1,\ldots,\alpha_r,\alpha_{r+1}-1)$. Since $(\m,\e)\in\Temp^d_{red}$, we have: $\alpha_r\ge \alpha_{r+1}>\alpha_{r+1}-1$, so we can apply Proposition \ref{prop initial sequence}. Write $d=\alpha_{r+1}$.

First, assume that $\rho$ is of a different type from $G$. Then write $j=\lceil d/2\rceil$. Notice that $d+1=j'_r$.

We have:
\begin{align*}
  \m_{\e_1,\ldots,\e_n}(\la)=&\m_{-\delta_{\e_1=-1}}(\la_{j_1'},\ldots,\la_n)+\sum_{i=1}^{r-1}\m_{\lfloor \alpha_i/2\rfloor}(\la_{j'_i},\ldots,\la_{j'_{i-1}-1})+\m_{j}(\la_1,\ldots,\la_{d+2})\\
  \m_{\e_1,\ldots,\e_{n-1}}(\la')=&\m_{-\delta_{\e_1=-1}}(\la_{j_1'},\ldots,\la_n)+\sum_{i=1}^{r-1}\m_{\lfloor \alpha_i/2\rfloor}(\la_{j'_i},\ldots,\la_{j'_{i-1}-1})+\m_{\lfloor d/2\rfloor}(\la')
\end{align*}
and
$$\m_{j}^{\ge 0}(\la_1,\ldots,\la_{d+2})=\delta_{d\notin2\mathbb Z}\cdot \sigma_j^++\sum_{s\in\m_{\lfloor d/2\rfloor}(\la')}s^++\m^{\ge 0}(\la_2,\la_1)$$
But according to Proposition \ref{prop initial sequence}, we have
$$\Theta(\m',\e')^{\ge 0}=\delta_{d\notin2\mathbb Z}\cdot\sigma_j^{\ge 0}+\m_{j}^{\ge 0}(\la_2,\ldots,\la_{d+1})$$

Consequently, by definition of $\CT_{s_n,\e_n}$ when $\e_n\neq \e_{n-1}$, we get
\begin{align*}
  &\AD(\m,\e)_{non-temp}^{\ge 0}\\
  &=(\CT_{s_n,\e_n}((\AD(\m',\e'))_{temp}+\m_{\e_1,\ldots,\e_{n-1}}(\la')))_{non-temp}^{\ge 0}\\
  &=\m^{\ge 0}_{-\delta_{\e_1=-1}}(\la_{j_1'},\ldots,\la_n)+\sum_{i=1}^{r-1}\m^{\ge 0}_{\lfloor \alpha_i/2\rfloor}(\la_{j'_i},\ldots,\la_{j'_{i-1}-1})+\delta_{d\notin2\mathbb Z}\cdot \sigma_j^++\sum_{s\in\m_{\lfloor d/2\rfloor}(\la')}s^++\m^{\ge 0}(\la_2,\la_1)\\
  &=\m^{\ge 0}_{-\delta_{\e_1=-1}}(\la_{j_1'},\ldots,\la_n)+\sum_{i=1}^{r-1}\m^{\ge 0}_{\lfloor \alpha_i/2\rfloor}(\la_{j'_i},\ldots,\la_{j'_{i-1}-1})+\m^{\ge 0}_{j}(\la_1,\ldots,\la_{d+2})\\
  &=\m^{\ge 0}_{\e_1,\ldots,\e_n}(\la)
\end{align*}

So, by symmetry,
$$\AD(\m,\e)_{non-temp}=\m_{\e_1,\ldots,\e_n}(\la)$$

If $\rho$ is of the same type as $G$, the computation is completely analogous.
\end{proof}

Note that this result provides a direct and explicit construction of a large class of co-tempered data which exhausts all the possible tempered parts.

\printbibliography

\end{document}